\documentclass{article}
\usepackage{
amsmath,
amsfonts,
latexsym, 
amsrefs,
amssymb
}
\usepackage{mathtools}

\usepackage{hyperref}

\usepackage{bm}

\usepackage{tocloft}

\usepackage{pstricks}

\usepackage{subcaption}

\usepackage{tikz}
\usepackage{tikz,fullpage}
\usetikzlibrary{shapes,arrows,%
                petri,%
                topaths, automata}%
\usepackage{tkz-berge}

\usepackage{graphicx}
\setlength{\textwidth}{7in}
\setlength{\oddsidemargin}{0in}
\setlength{\evensidemargin}{0in}
\setlength{\topmargin}{-0.3in}
\setlength{\textheight}{9.3in}
\setlength{\footskip}{0.6in}
\setlength{\headsep}{0in}


\usepackage{enumerate}
\usepackage{mathrsfs}
\usepackage{wrapfig}

\flushbottom

\usepackage{color}

\topmargin=-0.15in
\oddsidemargin=0in
\evensidemargin=0in
\textwidth=6.5in

\numberwithin{equation}{section}

\newcommand{\rd}{{\rm d}}

\newcommand{\bZ}{{\mathbb Z}}

\newcommand{\beq}{\begin{equation}}
\newcommand{\bEq}{\end{equation}}

\newcommand{\bu}{{\bf{u}}}

\newcommand{\al}{\alpha}

\newcommand{\be}{\begin{equation}}
\newcommand{\ee}{\end{equation}}

\newcommand{\e}{{\varepsilon}}

\usepackage{amsmath} 
\usepackage{amssymb}
\usepackage{amsthm}

\setlength{\unitlength}{1cm}


\renewcommand{\cal}{\mathcal}

\newcommand{\wt}{\widetilde}

\newcommand{\ii}{\mathrm{i}} 
\newcommand{\dd}{\mathrm{d}}

\newcommand{\col}{\mathrel{\mathop:}}

\newcommand{\deq}{\mathrel{\mathop:}=}

\renewcommand{\epsilon}{\varepsilon}
\renewcommand{\leq}{\leqslant}
\renewcommand{\geq}{\geqslant}



\renewcommand{\le}{\leq}
\renewcommand{\ge}{\geq}


\renewcommand{\P}{\mathbb{P}}
\newcommand{\E}{\mathbb{E}}
\newcommand{\R}{\mathbb{R}}
\newcommand{\C}{\mathbb{C}}
\newcommand{\N}{\mathbb{N}}
\newcommand{\Z}{\mathbb{Z}}


\DeclareMathOperator{\re}{Re}
\DeclareMathOperator{\im}{Im}

\DeclareMathOperator{\OO}{O}
\DeclareMathOperator{\oo}{o}

\theoremstyle{plain} 
\newtheorem{theorem}{Theorem}[section]
\newtheorem*{theorem*}{Theorem}
\newtheorem{lemma}[theorem]{Lemma}
\newtheorem{assumption}[theorem]{Assumption}
\newtheorem*{lemma*}{Lemma}

\newtheorem*{corollary*}{Corollary}

\newtheorem*{proposition*}{Proposition}
\newtheorem{definition}[theorem]{Definition}
\newtheorem*{definition*}{Definition}

\theoremstyle{remark} 

\newtheorem*{example*}{Example}
\newtheorem{remark}[theorem]{Remark}

\newtheorem*{remark*}{Remark}
\newtheorem*{remarks*}{Remarks}

\makeatletter
\renewcommand{\subsection}{\@startsection
{subsection}
{2}
{0mm}
{-\baselineskip}
{0 \baselineskip}
{\normalfont\bf\itshape}} 
\makeatother



\usepackage{dsfont}
\usepackage{stmaryrd}

\newcommand{\nc}{\normalcolor}


\def\bZ{{\mathbb Z}}

\setcounter{secnumdepth}{5}




\def\@empty{}

\def\author#1{\par
    {\centering{\authorfont#1}\par\vspace*{0.05in}}
}

\def\titlefont{\fontsize{13}{15}\bfseries\boldmath\selectfont\centering{}}
\def\authorfont{\fontsize{13}{15}}
\def\abstractfont{\fontsize{8}{10}}

\let\affiliationfont\rhfont

\def\address#1{\par
    {\centering{\affiliationfont#1\par}}\par\vspace*{11pt}
}

\def\body{
\setcounter{footnote}{0}
\def\thefootnote{\alph{footnote}}
\def\@makefnmark{{$^{\rm \@thefnmark}$}}
}

\def\title#1{
    \thispagestyle{plain}
    \vspace*{-14pt}
    \vskip 79pt
    {\centering{\titlefont #1\par}}%
    \vskip 1em
}

\setlength\cftparskip{2pt}
\setlength\cftbeforesecskip{4pt}
\setlength\cftaftertoctitleskip{10pt}

\renewenvironment{abstract}{\par%
    \vspace*{6pt}\noindent 
    \abstractfont
    \noindent\leftskip18pt\rightskip18pt
}{%
  \par}


\usepackage{tocloft}

\makeatletter
\renewcommand{\section}{\@startsection
{section}
{1}
{0mm}
{-2\baselineskip}
{2\baselineskip}
{\normalfont\large\scshape\centering}} 
\makeatother

\newcommand{\tnorm}[1]{{\left\vert\kern-0.25ex\left\vert\kern-0.25ex\left\vert #1 
    \right\vert\kern-0.25ex\right\vert\kern-0.25ex\right\vert}}

\setcounter{tocdepth}{2}

\begin{document}

\title{Random band matrices in the delocalized phase, III: Averaging fluctuations}
{\let\thefootnote\relax\footnotetext{\noindent 
The work of J. Yin is partially supported by the NSF grant DMS-1552192.}}
\vspace{0.4cm}
%
%
\begin{minipage}[b]{0.5\textwidth}
\author{F. Yang }
\address{University of Pennsylvania\\
  fyang75@wharton.upenn.edu}
 \end{minipage}
\begin{minipage}[b]{0.5\textwidth}
 \author{J. Yin}
\address{University of California, Los Angeles\\
    jyin@math.ucla.edu}
 \end{minipage}

\begin{abstract}
 We consider a general class of symmetric or Hermitian random band matrices $H=(h_{xy})_{x,y \in \llbracket 1,N\rrbracket^d}$ in any dimension $d\ge 1$, where the entries are independent, centered random variables with variances $s_{xy}=\mathbb E|h_{xy}|^2$. We assume that $s_{xy}$ vanishes if $|x-y|$ exceeds the band width $W$, and we are interested in the mesoscopic scale with $1\ll W\ll N$.
Define the {\it{generalized resolvent}} of $H$ as $G(H,Z):=(H - Z)^{-1}$, where $Z$ is a deterministic diagonal matrix with entries $Z_{xx}\in  \mathbb C_+$ for all $x$. Then we establish a precise high-probability bound on certain averages of polynomials of the resolvent entries. 
As an application of this fluctuation averaging result, we give a self-contained proof for the delocalization of random band matrices in dimensions $d\ge 2$. More precisely, for any fixed $d\ge 2$, we prove that the bulk eigenvectors of $H$ are delocalized in certain averaged sense if $N\le W^{1+\frac{d}{2}}$. This improves the corresponding results in \cite{HeMa2018} under the assumption $N\ll W^{1+\frac{d}{d+1}}$, and in \cite{ErdKno2013,ErdKno2011} under the assumption $N\ll W^{1+\frac{d}{6}}$. For 1D random band matrices, our fluctuation averaging result was used in \cite{PartII,PartI} to prove the delocalization conjecture and bulk universality for random band matrices with $N\ll W^{4/3}$. 

\end{abstract}

\tableofcontents

\section{Introduction }

\subsection{Random band matrices.}\ 
Random band matrices $H=(h_{xy})_{x,y\in \Gamma}$ model interacting quantum systems on a large finite graph $\Gamma$ of scale $N$ with random transition amplitudes effective up to scale of order $W\ll N$. More precisely, we consider random band matrix ensembles with entries being centered and independent up to the symmetry condition $h_{xy}=\overline h_{yx}$. The variance $s_{xy}:=\mathbb E|h_{xy}|^2$ typically decays with the distance between $x$ and $y$ on a characteristic length scale $W$, called the {\it band width} of $H$. For the simplest one-dimensional model with graph $\Gamma=\{1,2,\cdots, N\}$ and $h_{xy}=0$ for $|x-y|\ge W$, we have a band matrix in the usual sense that only the matrix entries in a narrow band of width $2W$ around the diagonal can be nonzero. In particular, if $W=N/2$ and all the variances are equal, we recover the famous Wigner matrix ensemble, which corresponds to a mean-field model. 

In this paper, we consider the case where $\Gamma$ is a $d$-dimensional torus $\Z_N^d:=\{1,2, \cdots, N\}^d$ with $d\ge 1$, so that the dimension of the matrix is $N^d$ (with an arbitrary ordering of the lattice points). Typically, we take the band width $W$ to be of mesoscopic scale $1\ll W\ll N$. The band structure is imposed by requiring that the variance profile is given by 
\be\label{fxy}
s_{xy} = \frac{1}{W^d}f\left( \frac{x-y}{W}\right) \quad \text{with } \quad \sum_{x}s_{xy}=\sum_{y}s_{xy}=1,
\ee
for some non-negative symmetric functions $f$ that decays sufficiently fast at infinity. As $W$ varies, the random band matrices naturally interpolate between two classes of quantum systems: the random Schr\"odinger operator with short range transitions such as the Anderson model \cite{Anderson}, and mean-field random matrices such as Wigner matrices \cite{Wigner}. A basic conjecture about random band matrices is that a sharp {\it Anderson metal-insulator phase transition} occurs at some critical band width $W_c$. More precisely, the eigenvectors of band matrices satisfy a localization-delocalization transition in the bulk of the spectrum \cite{ConJ-Ref1, ConJ-Ref2,ConJ-Ref6}, with a corresponding sharp transition for the eigenvalues distribution \cite{ConJ-Ref4}:
\begin{itemize}
\item for  $W \gg W_c$, delocalization of eigenstates (i.e.\;conductor phase) and Gaussian orthogonal/unitary ensemble (GOE/GUE) spectral statistics hold;
\item  for  $W \ll W_c$, localization of eigenstates (i.e.\;insulator phase) holds and the eigenvalues converge to a Poisson point process. 
\end{itemize}
Based on numerics \cite{ConJ-Ref1, ConJ-Ref2} and nonrigorous supersymmetric calculations \cite{fy}, the transition is conjectured to occur at $W_c\sim \sqrt N$ in $d=1$ dimension. In higher dimensions, the critical band width is expected to be $W_c\sim \sqrt{\log N}$ in $d=2$ and $W_c=\OO(1)$ in $d\ge 3$. For more details about the conjectures, we refer the reader to \cite{Sch2009,Spencer1,Spencer2}. The above features make random band matrices particularly attractive from the physical point of view as a model to study large quantum systems of high complexity.  

So far, there have been many partial results concerning the localization-delocalization conjecture for band matrices. In $d=1$ and for general distribution of the matrix entries, localization of eigenvectors was first proved for $W\ll N^{1/8}$ \cite{Sch2009}, and later improved to $W\ll N^{1/7}$ for band matrices with Gaussian entries \cite{PelSchShaSod}. The Green's function was controlled
down to the scale $\im z\gg W^{-d}$ in \cite{Semicircle, Bulk_generalized}, implying a lower bound of order $W$ for the localization length of all eigenvectors. For 1D random band matrices with general distributed entries, the {\it weak} delocalization of eigenvectors in some {\it averaged sense} (see the definition in Theorem \ref{comp_delocal}) was proved under $W\gg N^{6/7}$ in  \cite{ErdKno2013}, $W\gg N^{4/5}$  in \cite{delocal}, and $W\gg N^{7/9}$ in \cite{HeMa2018}. The strong delocalization and bulk universality for 1D random band matrices was first rigorously proved in \cite{BouErdYauYin2017} for $W=\Omega(N)$. {In the series \cite{PartI}, \cite{PartII} and this paper, we relax the condition on band width to $W\gg N^{3/4}$. In particular, the main results were stated as Theorems 1.2-1.5 in \cite{PartI}, and this paper contains the last piece of the proof. We refer the reader to Section \ref{sec relation} for more details.}
We mention also that at the edge of the spectrum, the transition of the eigenvalue statistics for 1D band matrices at the critical band width $W_c\sim N^{5/6}$ was understood in \cite{Sod2010}, thanks to the method of moments. For a special class of random band matrices, whose entries are Gaussian with some specific covariance profile, some powerful supersymmetry techniques can be used (see \cite{Efe1997,Spencer2} for  overviews). With this method, precise estimates on the density of states  \cite{DisPinSpe2002} were first obtained for $d=3$. Then random matrix local spectral statistics were proved for $W=\Omega(N)$ \cite{Sch2014}, and delocalization was obtained for all eigenvectors when $W\gg N^{6/7}$ and the first four moments of the matrix entries match the Gaussian ones \cite{BaoErd2015} (these results assume complex entries and hold in part of the bulk). Moreover, a transition at the critical band width $W_c\sim N^{1/2}$ was proved in \cite{SchMT,Sch1,Sch2,1Dchara}, concerning the second order correlation correlation function of bulk eigenvalues.

The purpose of this paper is two-fold. {First, we will complete the proof of the strong delocalization and bulk universality for 1D random band matrices under the condition $W\gg N^{3/4}$ together with \cite{PartI, PartII}. This is also the main goal of this series of papers; see Section \ref{sec relation} below for more detailed discussions. More precisely, in this paper we will develop a novel graphical scheme for the fluctuation averaging estimates on the generalized resolvents to complete the proof of local law in \cite{PartII} under $W\gg N^{3/4}$, which is further used in \cite{PartI} for the proofs of the main results.}
Second, using the same fluctuation averaging estimate, we shall give a self-contained proof for the weak delocalization of random band matrices in dimensions $d\ge2$ under the assumption $W\gg N^{\frac{2}{d+2}}$. This kind of weak delocalization of bulk eigenvectors was proved under the assumptions $W\gg N^{\frac{6}{d+6}}$ in \cite{ErdKno2013,ErdKno2011}, $W\gg N^{\frac{d+2}{2d+2}}$ in \cite{delocal}, and $W\gg N^{\frac{d+1}{2d+1}}$ in \cite{HeMa2018}. (In \cite{delocal}, the authors claimed they can prove the weak delocalization under the condition $W\gg N^{\frac{4}{d+4}}$, which turns out to be wrong as pointed out in \cite{HeMa2018}.) One can see that our results strictly improve these previous {results}. Moreover, {as observed in \cite{ErdKno2013,ErdKno2011} the exponent $(d/6+1)^{-1}$ is closer to being sharp ($W_c=\OO(N^0)$ for $d\ge 3$) when $d$ increases, while our result improves the coefficient $1/6$ to $1/2$.} 
We remark that our proof can be also applied to 1D band matrix and gives a weak delocalization of the eigenvectors under $W\gg N^{\frac{3}{4}}$, however it is strictly weaker than the result in \cite{PartI}, where the {\it strong} delocalization of the bulk eigenvectors was proved under the same assumption. 

\subsection{Averaging fluctuations.}\ \label{sec_model0}
 In this subsection, we give the informal statements of the main results of this paper, and explain why we need a decent fluctuation averaging bound. 
One main result is the following weak delocalization of random band matrices in dimensions $d\ge2$. 

\begin{theorem}[Informal statement of Theorem \ref{comp_delocal}]\label{main informal1}
If the band width satisfies $N\ll W^{1+\frac{d}{2}}$, then most of the bulk eigenvectors cannot be localized sub-exponentially on any scale $l\ll N$.
\end{theorem}
 

Our basic tool for the proof of Theorem \ref{main informal1} is the resolvent (Green's function) defined as
\be\label{greens}G(z)=(H-z)^{-1},\quad z\in \mathbb C_+:=\{z\in \mathbb C: \im z>0\}.\ee
The Green's function was shown to satisfy that for any fixed $\e>0$,
\be\label{intro_semi}
\max_{x,y}|G_{xy}(z)-m(z)\delta_{xy}| \le \frac{W^\e}{\sqrt{W^d \eta}} ,\quad z= E+ \ii \eta,
\ee
with high probability for all $\eta\gg W^{-d}$ in \cite{Semicircle, Bulk_generalized} (see Theorem \ref{semicircle}), where $m$ is the Stieltjes transform of Wigner's semicircle law
\be\label{msc}
m(z):=\frac{-z+\sqrt{z^2-4}}{2} = \frac{1}{2\pi}\int_{-2}^2 \frac{\sqrt{4-\xi^2}}{\xi-z}\dd\xi,\quad z\in \C_+.
\ee
The bound \eqref{intro_semi} already implies a lower bound of order $W$ for the localization length, but is not strong enough to give the delocalization on any scale larger than $W$. The bound we need is that for any scale $W\ll l \ll N$, 
\be\label{intro_aver} \max_y \Big(\eta \sum_{x:|x-y|\le l} |G_{xy}(z)|^2\Big)=\oo(1) \quad \text{ with high probability}\ee
for some $W^{-d} \ll \eta \ll 1$. To get an improvement over the estimate \eqref{intro_semi}, as in \cite{delocal,HeMa2018}, we introduce the so-called $T$-matrix, whose entries 
\be \label{def: T}
T_{xy}:=\sum_{w}s_{xw}|G_{wy}|^2, \quad s_{xw}:=\mathbb E|h_{xw}|^2,
\ee
are local averages of $|G_{xy}|^2$. The importance of $T$ lies in the following facts: 
\begin{itemize}
\item[(i)] by a self-consistent equation estimate (see Lemma \ref{bneng}), we can bound 
\be\label{intro_Tself}
\max_{x,y}|G_{xy}-m\delta_{xy}|^2 \le N^\e \max_{x,y}T_{xy} 
\ee
with high probability for any small constant $\e>0$;

\item[(ii)] for any scale $l\gg W$, we can bound 
\be\label{intro_Tself2}
\sum_{x:|x-y|\le l}|G_{xy}|^2 \le \sum_{x:|x-y|\le l+\OO(W)}T_{xy}.
\ee

\end{itemize}

The key tool to estimate the $T$-matrix is a self-bounded equation for $T$ (see \eqref{Tequation}):
\be\label{T_equation}
T_{xy}=(1+\OO(1))\left(\frac{|m|^2S}{1-|m|^2S}\right)_{xy} +\sum_{w\ne y}\left(\frac{|m|^2S}{1-|m|^2S}\right) _{xw}\left(  |G_{wy}|^2-|m|^2T_{wy} \right),
\end{equation}
where $S=(s_{xy})$ in the matrix of variances. One observation of \cite{delocal} is that the behavior of $T$ is essentially given by 
\be\label{diffu}
\Theta:=\frac{|m|^2S}{1-|m|^2S}=\sum_{k=1}^\infty |m|^{2k}S^{k},
\ee
and the second term in \eqref{T_equation} can be regarded an error under proper assumptions on $W$ and $\eta$. By the translation invariance of $S$ in \eqref{fxy}, $S^k$ can be understood through a $k$-step random walk $\sum_{i=1}^k X_i$ on the torus $\{1,2,\cdots, N\}^d$ with single step distribution $\mathbb P(X_1 = y-x)=s_{xy}$. 
Also with
$$|m(z)|^2= 1- \alpha \eta + \OO(\eta^2), \quad z= E+\ii\eta, \quad \alpha:= 4/{\sqrt{4-E^2}},$$ 
we only need to keep the terms with $k=\OO(\eta^{-1})$ in \eqref{diffu}.
Then there is a natural threshold at $\eta_c=W^2/N^2$. For $k\ll \eta_c^{-1}$, the $k$-step random walk is almost a random walk on the free space $\Z^d$ without boundary, and behaves diffusively by CLT.  This consideration gives the following diffusion approximation of $T$ (see Appendix \ref{appd}):
\be\label{T_behav}
T_{xy}(z)\approx \Theta_{xy}(z) \sim \frac{1}{W^d + W^2 |x-y|^{d-2}}, \quad \text{for}\quad \eta=\im z\ge \frac{W^2}{N^2}.
\ee

The main part of the proof is to estimate the error term in \eqref{T_equation}. It turns out that for $x\ne y$, $T_{xy}\approx |m|^2 \mathbb E_x |G_{xy}|^2$, where $\E_x$ is the partial expectation with respect to the $x$-th row and column of $H$. 
Hence the error term is approximately a sum over 
fluctuations: $\sum_{w}\Theta_{xw} Q_w |G_{wy}|^2$, where $Q_w:=1-\mathbb E_w$. The main difficulty is that $|G_{xy}|^2$ and $|G_{x'y}|^2$ for $x\ne x'$ are not independent; actually they are strongly correlated for $\eta\ll 1$. Estimating the high moments of these sums requires an unwrapping of the hierarchical correlation structure among many resolvent entries. In \cite{delocal} and \cite{HeMa2018}, the authors adopted different strategies. For the proof in \cite{delocal}, a so-called {\it fluctuation averaging mechanism} in \cite{EKY_Average} was used. 
It relies on intricate resolvent expansions to explore the cancellation mechanism in sums of monomials of $G$ entries. In \cite{HeMa2018}, however, the authors performed a careful analysis of the error term in Fourier space, where certain {\it cumulant expansions} are used. In this paper, we will follow the line of \cite{EKY_Average,delocal} and prove a much finer fluctuation averaging estimate as we will outline below.

Let $b_x$ be any sequence of deterministic coefficients of order $\OO(1)$. 
Suppose we have some initial (rough) estimates on the $G$ entries: for some constant $\delta>0$ and deterministic parameters $\Phi $ and $ \Gamma$, we have
\be\label{initial}
\max_{x,y}|G_{xy}-\delta_{xy} m | \le \Phi , \quad  \max_y \sum_{ x\in \Z_N^d} \left(|G_{xy}|^2+|G_{yx}|^2\right) \le \Gamma^2 , \quad W^{-d/2} \le \Phi \le N^{-\delta}, \quad \Gamma\ge 1,
\ee
with high probability. The state of the art fluctuation averaging estimate was proved in \cite{EKY_Average}:
\be\label{rough1}
  \Big| \sum_{x:x\ne y} b_x \left(  |G_{xy}|^2-|m|^2T_{xy} \right)\Big| \le N^\epsilon\left(N^{d/2}\Phi^2 + N^d\Phi^4\right), \quad \eta \gg W^{-d},
\ee
with high probability for any constant $\e>0$. {In this paper, we prove the following stronger fluctuation averaging estimate, which is another main result of this paper. 

\begin{theorem}[Informal statement of Theorem \ref{YEniu}]\label{main informal2}
Under \eqref{initial}, we have that with high probability, 
\be\label{rough3}
 \Big| \sum_{x:x\ne y} b_x \left(  |G_{xy}|^2-|m|^2T_{xy} \right) \Big| \le N^\epsilon\left(1 +  \Gamma^{2}\Phi^2\right), \quad \eta \gg W^{-d}.
\ee
It also holds for a more general type of resolvents, called generalized resolvents, see Definition \ref{defn Genr}.
\end{theorem}
\nc Recall that $\Gamma^2\sim \eta^{-1}$ by the following Ward's identity for the resolvent entries:
\be\label{Ward}
\sum_{x}|G_{xy}|^2  = \sum_{x}|G_{yx}|^2  = \frac{\im G_{yy}}{\eta}, \quad z= E+ \ii \eta,
\ee
which can be proved using the spectral decomposition of $G$. Together with the initial input $\Phi=W^\e(W^d\eta)^{-1/2}$ by \eqref{intro_semi}, it is obvious that \eqref{rough3} is better than \eqref{rough1} by a factor of $W^d/N^d$. 
{ For 1D random band matrices, the gaining of the $W/N$ factor is essential to reduce the band width to $W\gg N^{3/4}$ in \cite{PartI}. We remark that Theorem \ref{YEniu} was stated as Lemma 2.14 in \cite{PartII}, but the full proof was not given there.} 
{
We refer the reader to Section \ref{sec relation} below for more detailed discussions. For the application to 1D random band matrices in \cite{PartI, PartII}, we shall use a slightly more general band matrix model, and a more general type of resolvent, called the generalized resolvent, which is an extension of the regular resolvent defined in \eqref{greens}. The notations will be introduced in Section \ref{sec_model}. }

On the other hand, for $d\ge 2$ \eqref{rough3} allows us to establish the weak delocalization of random band matrices under the assumption $W\gg N^{\frac{2}{d+2}}$ as we shall explain now. 
With $\max_{x,y}\Theta_{xy}=\OO(W^{-d})$ by \eqref{T_behav}, we obtain from \eqref{rough3} and \eqref{intro_Tself} that with high probability,
\be\label{fine1}
\max_{x,y}|G_{xy}-m\delta_{xy}|^2 \le N^{\e/2}T_{xy} \le \frac{N^\e}{W^d} \left(1 +  \eta^{-1}\Phi^2\right).
\ee
If we assume $W\gg N^{\frac2{d+2}}$, then $W^d\eta \gg 1$ for $\eta=W^2/N^2$ and the above bootstrapping estimate gives an improved estimate 
\be\label{intro_entry}
\max_{x,y}|G_{xy}-m\delta_{xy}|^2 \le \frac{N^\e}{W^d} \quad \text{with high probability. }
\ee
It seems that this estimate is still not good enough to conclude \eqref{intro_aver}. 
However, using \eqref{T_equation} and \eqref{T_behav}, we can obtain that for some sequence of deterministic coefficients $b_w$ of order $\OO(1)$,
$$\eta \sum_{x:|x-y|\le l}T_{xy} = \eta\frac{l^2}{W^2}\sum_{w} b_w Q_w |G_{wy}|^2\le \frac{N^\e l^{2}}{W^2} \left(\eta + \Phi^2\right)=\oo(1),$$
if we take $\eta=W^2/N^2$ and $\Phi^2=N^\e W^{-d}$ by \eqref{intro_entry}. This gives \eqref{intro_aver} by \eqref{intro_Tself2}, which implies the delocalization on any scale $l\ll N$, and hence concludes Theorem \ref{main informal1}.

{
The starting point of the proof for Theorem \ref{main informal2} is the same as the one in \cite{EKY_Average}, that is, we try to bound the high moments of the left-hand side of \eqref{rough3}, and use a graphical tool to organize the calculations. Here the indices of the resolvents are the vertices of the graphs and the $G$ entries are represented by the edges between vertices. However, some key new ideas are needed in order to improve \eqref{rough1} to \eqref{rough3}.  Notice that there are two natural scales for the band model---the global scale, $N$, and the local scale, $W$. Correspondingly, we observe a two-level structure of the graphs, that is, a global level structure, on which the distances between vertices are as large as $N$, plus many local level components, in which the distances between vertices are at most of order $W$. 
In particular, we find that for our model, while the local structures can be handled in similar ways as \cite{EKY_Average}, the global structures cause a lot of trouble. One subtle issue is that for the local structures the vertices can be summed in arbitrary ways, but for the global structure the summation order of the vertices matters a lot. To handle this issue, we define a novel graphical property, called the nested property, and show that the nested order gives the correct order for the summation over the vertices. Conversely, in order to preserve the nested property, we need to perform the graph operations in a specific order, which makes the local structures to be also harder to deal with than \cite{EKY_Average}. For more details of the main ideas, we refer the reader to Section \ref{sec_idea}.} 


\subsection{Relation with \cite{PartI} and \cite{PartII}.}\label{sec relation}\ 
This paper is the third part of a series of papers with \cite{PartI} and \cite{PartII} being the first two parts.
The main goal of this series is to prove Theorem \ref{mainthmstrong} below, which was listed as Theorems 1.2-1.5 in \cite{PartI}, and is completely proved by combining the key ingredients provided by each part of the series. 
More precisely, Theorem \ref{mainthmstrong} was proved in \cite{PartI} using the mean-field reduction method introduced in \cite{BouErdYauYin2017} and a novel quantum unique ergodicity estimate. Both the proofs for the main result and the quantum unique ergodicity estimate are based on a generalized resolvent estimate, that is, Theorem 4.5 of \cite{PartI}. This estimate was partially proved as Theorem 1.4 in \cite{PartII}. In fact, a full proof was only given in \cite{PartII} under the condition $W\gg N^{6/7}$ based on a weak fluctuation averaging estimate, Lemma 2.8 of \cite{PartII}. In order to relax the condition to $W\gg N^{3/4}$, one needs a stronger fluctuation averaging estimate, which is provided by Theorem \ref{YEniu} of this paper. Combining Theorem \ref{YEniu} and the arguments in \cite{PartII}, we are able to conclude the proof of Theorem 4.5 in \cite{PartI}, and hence fill in the last piece of the whole proof.   

As explained in Remark 2.13 of \cite{PartII}, without using the result of this paper, it is possible to prove the main result under $W\gg N^{4/5}$ using the methods in \cite{EKY_Average}, although a full proof was not given there because our setting is a little different from the one in \cite{EKY_Average} and checking all the necessary details will be rather lengthy. 
An alternative approach to regular resolvent estimate was developed in \cite{HeMa2018} under $W\gg N^{7/9}$ based on a Fourier space analysis method instead of the fluctuation averaging mechanism. But it is not clear whether the method can be extended to generalized resolvent defined in \eqref{defGzetag}. 

In addition, we remark that this paper is not just a ``supplement" to \cite{PartI} and \cite{PartII}. 
We believe that the new ideas and techniques developed for proving Theorem \ref{YEniu} (see Section \ref{sec_idea}) will be useful in   the study of other types of non-mean-field random matrices. To illustrate the applicability of Theorem \ref{YEniu}, we apply it to random band matrices in dimensions $d\ge 2$, 
and give a self-contained proof for delocalization of bulk eigenvectors under $W \gg N^{\frac{2}{d+2}}$ in Theorem \ref{comp_delocal}. This result is completely independent from \cite{PartI} and \cite{PartII}. 

\vspace{5pt}

The rest of this paper is organized as follows. In Section \ref{sec_main}, we introduce our model and present the main results Theorem \ref{comp_delocal} and Theorem \ref{YEniu}. With Theorem \ref{YEniu}, we prove the weak delocalization of random band matrices, Theorem \ref{comp_delocal}, in dimensions $d\ge 2$ in Section \ref{sec mainaddpf}. On the other hand, the proof of Theorem \ref{YEniu} is mainly based on two averaging fluctuation lemmas---Lemma \ref{Ppart} and Lemma \ref{Qpart}.  In Section \ref{sec_tool}, we introduce the notations and collect some tools that will be used in the proof of Lemma \ref{Ppart} and Lemma \ref{Qpart}. In Section \ref{sec_Ppart}, we reduce Lemma \ref{Ppart} to another averaging fluctuation lemma, i.e.\;Lemma \ref{Q1}, which has a similar form as Lemma \ref{Qpart}. Sections \ref{sec_graph1} and \ref{sec_graph2} consist of the main proof for Lemma \ref{Q1} and Lemma \ref{Qpart}.


\vspace{5pt}

\noindent{\bf Conventions.} The fundamental large parameter is $N$ and we regard $W$ as a parameter depending on $N$. All quantities that are not explicitly constant may depend on $N$, and we usually omit $N$ from our notations. We use $C$ to denote a generic large positive constant, which may depend on fixed parameters and whose value may change from one line to the next. Similarly, we use $c$, $\epsilon$ or $\delta$ to denote a generic small positive constant. If a constant depend on a quantity $a$, we use $C_a$ or $c_a$ to indicate this dependence. Also, in the lemmas and theorems of this paper, we often use notations $\tau,D$ when we want to state that the conclusions hold for {\it{any fixed}} small constant $\tau>0$ and large constant $D>0$. For two quantities $A_N$ and $B_N>0$ depending on $N$, we use the notations $A_N = \OO(B_N)$ and $A_N \sim B_N$ to mean $|A_N| \le CB_N$ and $C^{-1}B_N \le |A_N| \le CB_N$, respectively, for some constant $C>0$. We use $A_N=\oo(B_N)$ to mean $|A_N| \le c_N B_N$ for some positive sequence $c_N \downarrow 0$ as $N\to \infty$. 
For any matrix $A$, we use the notations 
$$\|A\|:=\|A\|_{l^2 \to l^2}, \quad \|A\|_{\max} := \max_{i,j}|A_{ij}|, \quad \|A\|_{\min} := \min_{i,j}|A_{ij}|.$$ 
In particular, for a vector $\bf v$, we shall also use the notation $\|\bf v\|_\infty \equiv  \|\bf v\|_{\max}$. 

\vspace{5pt}

\noindent{\bf Acknowledgements.} The second author would like to thank Benedek Valk{\'o} and L. Fu for fruitful discussions and valuable suggestions.


 \section{Main results}\label{sec_main}
 
 \subsection{The model.}\label{sec_model} \
All the results in this paper apply to both real symmetric and complex Hermitian random band matrices. For the definiteness of notations, we only consider the real symmetric case. 
We always assume that $N,W$ are integers satisfying 
\begin{equation}\label{WN}
N^c \le W \le N
\end{equation}
for some constant $c>0$. Moreover, all the statements in this paper only hold for sufficiently large $N$ and we will not repeat it everywhere.

We define the $d$-dimensional discrete torus  
$$ \Z_N^d:=\left(\Z \cap (-N/2, N/2]\right)^d.$$
The $d$-dimensional random band matrix is indexed by the lattice points, where we fix an arbitrary ordering of $\Z_N^d$. For any $x\in \Z^d$, we always identify it with its canonical representative 
\be\label{cani}
[x]_N: = (x + N\bZ^d) \cap \Z_N^d.
\ee
Moreover, for simplicity, we will always use the $l^\infty$ norm on $\Z_N^d$ lattice:
\be\label{simplei}
|x-y| \equiv |[x-y]_N|: = \max_{1\le i \le d} \left|[x_i-y_i]_N\right|, \quad x,y\in \bZ_N^d.
\ee


Keeping the application of our results to \cite{PartI,PartII} in mind, we shall use a slightly more general model than the one in the introduction. Let $H=(H_{xy})$ be an $N^d\times N^d$ real symmetric random matrix with centered matrix entries that are independent up to the symmetry constraint. We assume that that variances $\mathbb E |H_{xy}|^2 = s_{xy}$ satisfy
\be\label{bandcw1}
\frac{c_s}{W^d} \cdot {\bf 1} _{|x-y| \le c_sW} \le s_{xy}  \le \frac{C_s}{W^d} \cdot {\bf 1} _{|x-y| \le C_sW}, \quad x,y\in \bZ_N^d,
\ee
for some constants $c_s, C_s>0$. 
Then $H$ is a random band matrix with band width of order $W$. Moreover, up to a rescaling, we assume that
\be\label{bandcw2}
1-\zeta \le \sum_{y\in \Z_N^d} s_{xy} \le 1+ \zeta ,\quad x\in \Z_N^d,
\ee
for some $\zeta\in [0,1)$.  


\begin{assumption}[Band matrix $H_N$]\label{jyyuan} 
Let $ H\equiv H_N$ be an $N^d\times N^d$ real symmetric random matrix whose entries $(H_{xy}: x,y \in \Z_N^d)$ are independent random variables satisfying
\be\label{bandcw0}
\mathbb E H_{xy} = 0, \quad \E |H _{xy}|^2 = s_{xy}, \quad x , y \in \bZ_N^d,
\ee
where the variances $s_{xy}$ satisfy \eqref{bandcw1} and \eqref{bandcw2}. Then we say that $H$ is a random band matrix with (typical) bandwidth $W\equiv W_N$. Moreover, we define the $N^d \times N^d$ symmetric matrix of variances $S\equiv S^\zeta: = (s_{xy})_{x,y\in \Z_N^d}$.

We assume that the random variables $H_{xy}$ have arbitrarily high moments, in the sense that for any fixed $p\in \mathbb N$, there is a constant $\mu_p>0$ such that
\begin{equation}\label{high_moment}
\left(\mathbb E|H_{xy}|^p\right)^{1/p} \le \mu_p s_{xy}^{1/2},\quad x,y \in \Z_N^d,
\end{equation}
for all $N$. Our result in this paper will depend on the parameters $C_s$ and $\mu_p$, 
but we will not track the dependence on these parameters in the proof. 
\end{assumption}

An important type of band matrices satisfying the above assumptions is the periodic random band matrices studied in e.g.\;\cite{delocal, Semicircle, EKY_Average}, where the variances are given by \eqref{fxy}.

\begin{assumption}[Periodic band matrix $H_N$]\label{jyyuan2} 
We say that $H\equiv H_N$ is a periodic random band matrix with bandwidth $W\equiv W_N$ if it satisfies the Assumption \ref{jyyuan} and \eqref{fxy}.
\end{assumption}

Again for the applications in \cite{PartI,PartII}, we state our results for the following {\it{generalized resolvent}} (or {\it{generalized Green's function}}) of $H$. 


\begin{definition}[Generalized resolvent]\label{defn Genr}
Given a sequence of spectral parameters $z_x \in \mathbb C_+$, $x\in \Z_N^d$, we define the following generalized resolvent (or generalized Green's function)  $G( H,Z)$ as
\be\label{defGzetag}
 G(H,Z)=(H-Z)^{-1}, \quad Z_{xy}=\delta_{xy} z_x.
\ee
\end{definition}

If $z_x=z$ for all $x$, then we get the normal Green's function $G(z)$ as in \eqref{greens}.  
The key point of the generalized resolvent is the freedom to choose different $z_x$. In particular, the following choice of $Z$ is used in \cite{PartI,PartII} for 1D random band matrices: 
$$ z_x = z \cdot \mathbf 1_{1\le x \le W} + \tilde z \cdot \mathbf 1_{ x > W}$$ 
for some $z, \tilde z\in \mathbb C_+$ with $\im \tilde z < \im z$. 

\vspace{5pt}
For $z_x$'s with fixed imaginary parts, one can show that $(G_{xx}(H,Z))_{x\in \Z_N^d}$ satisfies asymptotically the following system of self-consistent equations for $(M_x)_{x\in \Z_N^d}\equiv (M_x(S,Z))_{x\in \Z_N^d}$:
\be\label{falvww}
M_x^{-1} =- z_x - \sum_{y\in \Z_N^d} s_{xy}M_y .
\ee
If $\zeta$ is small and the $z_x$'s are close to some $z\in \mathbb C_+$, then the above equations are perturbations of the self-consistent equation for $m(z)$ defined in \eqref{msc}: 
$$m^{-1}(z) = - z - m(z).$$
In particular, the following Lemma \ref{UE} shows that the solution $(M_x)_{x\in \Z_N^d}$ exists and is unique as long as $\zeta$ and $\max_x |z_x-z|$ are small enough. It is proved in Lemma 1.3 of \cite{PartII}.


\begin{lemma}\label{UE}
Suppose $z \in \mathbb C_+$ satisfies $|\re z|\le 2-\kappa$ and $|z|\le \kappa^{-1}$ for some (small) constant $\kappa>0$. Then there exist constants $c_0, C_0>0$ such that the following statements hold.
\begin{itemize}
\item (Existence) 
If  
\be\label{heiz}
 \zeta+\max_x |z_x - z| \le c_0, 
\ee
then there exists $(M_x(S,Z))_{x\in \Z_N^d}$ that solves \eqref{falvww} and satisfies 
\be\label{bony}
 \max_x\left|M_x(S,Z) - m(z)\right|\le C_0 \left(\zeta + \max_x |z_x - z|\,\right).
\ee


\item (Uniqueness) The solution $(M_x(S,Z))_{x\in \Z_N^d}$ is unique under \eqref{heiz} and the condition
\be\label{heiz2}
\max_x\left|M_x(S,Z)-m(z)\right|\le c_0.
 \ee
for parameters satisfying \eqref{heiz},  
\end{itemize}
 
\end{lemma}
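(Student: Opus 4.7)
My plan is to treat \eqref{falvww} as a fixed-point problem for the map
\[
T(M)_x \,:=\, -\pb{z_x + \sum_y s_{xy} M_y}^{-1}, \qquad x\in \Z_N^d,
\]
on a small closed ball around $m(z)\cdot \mathbf 1$, and to extract both existence and uniqueness from the invertibility of the linearization $I - m(z)^2 S$ at the base point $z_x \equiv z$, $\sum_y s_{xy} \equiv 1$, where by the Wigner identity $m^{-1} = -z - m$ the constant vector $m(z)\cdot \mathbf 1$ is an exact fixed point.

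For existence I would write $M = m\cdot \mathbf 1 + \epsilon$ in \eqref{falvww} and expand $M_x^{-1} = m^{-1} - m^{-2}\epsilon_x + O(|\epsilon_x|^2)$; using $m^{-1} = -z-m$ the system rearranges into
\[
(I - m^2 S)\,\epsilon \,=\, m^2(Z - zI)\mathbf 1 \,-\, m^3 e \,+\, R(\epsilon),
\]
where $e_x := 1-\sum_y s_{xy}$ satisfies $\|e\|_\infty \le \zeta$ and $R(\epsilon) = O(\|\epsilon\|_\infty^{\,2}) + O\bigl((\zeta + \max_x|z_x - z|)\,\|\epsilon\|_\infty\bigr)$. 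The operator $I - m^2 S$ is invertible: from $m\cdot(-z-m) = 1$ and $\im m > 0$ one derives $\im m\,(1 - |m|^2) = |m|^2\,\im z > 0$, so $|m(z)| < 1$ strictly for $z\in\C_+$, hence $|1/m^2| > 1$, while $S$ is nonnegative symmetric with $\mathrm{spec}(S)\subset[-(1+\zeta),1+\zeta]$; once $\zeta$ is smaller than $|1/m^2|-1$, the value $1/m^2$ avoids $\mathrm{spec}(S)$. I would then establish a $\kappa$-dependent upper bound $K=K(\kappa, c_s, C_s)$ on $\|(I-m^2 S)^{-1}\|$ by treating the $\mathbf 1$-direction separately (using $1 - m^2 = -\sqrt{z^2-4}\cdot m$, hence $|1-m^2| \ge c_\kappa$ on $|\re z|\le 2-\kappa$) and the orthogonal complement via the band-matrix spectral gap of $S$. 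Recasting as $\epsilon = \Phi(\epsilon)$ with $\Phi(\epsilon) := (I-m^2 S)^{-1}\qb{m^2(Z-zI)\mathbf 1 - m^3 e + R(\epsilon)}$, a standard contraction-mapping argument on the ball $\{\|\epsilon\|_\infty \le 2K\epsilon_0\}$, with $\epsilon_0 := \zeta + \max_x|z_x - z|$, produces a unique fixed point provided $\epsilon_0 \le c_0$ is small enough. This yields \eqref{bony} with $C_0 = 2K$.

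For uniqueness under \eqref{heiz} and \eqref{heiz2}, suppose $M, M'$ both satisfy \eqref{falvww} and lie in $\{\|M - m\mathbf 1\|_\infty \le c_0\}$. Subtracting the two equations and applying $A^{-1} - B^{-1} = A^{-1}(B-A)B^{-1}$,
\[
M_x - M'_x \,=\, M_x M'_x\,\bigl(S(M - M')\bigr)_x,
\]
i.e.\ $(I - DS)(M - M') = 0$ with diagonal $D_{xx} = M_x M'_x = m^2 + O(c_0)$. The invertibility of $I - m^2 S$ established above propagates, by standard perturbation, to $I - DS$ once $c_0$ is small enough relative to $K^{-1}$, whence $M = M'$.

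The main obstacle is the quantitative control of $\|(I - m^2 S)^{-1}\|$ that is uniform in $N$, $W$, and in $\im z > 0$. Since $|m(z)|$ can approach $1$ as $\im z \downarrow 0$, the naive bound $(1-|m|^2\|S\|)^{-1}$ can blow up; one must instead exploit that in the bulk the imaginary part of $m^2$ is generically nonzero---namely $\im(m^2) = 2\re(m)\im(m)\ne 0$ away from the imaginary axis---together with the explicit spectral gap of $S|_{\mathbf 1^\perp}$ for the band-matrix $S$, plus a separate argument on the imaginary axis $\re z = 0$. This universal $\ell^\infty$-stability estimate for the Wigner self-consistent equation is the technical heart of the lemma, and is where the constants $c_0, C_0$ depending only on $\kappa$ (and on $c_s, C_s$) are ultimately determined.
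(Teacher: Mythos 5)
The paper does not prove this lemma here at all: it is quoted verbatim from \cite{PartII} (Lemma 1.3 there), so the only comparison available is with the standard argument that your skeleton reproduces, namely perturbing around $m(z)\mathbf 1$, inverting the linearization $1-m^2S$, running a Banach fixed point in sup norm, and getting uniqueness from $(I-DS)(M-M')=0$ with $D_{xx}=M_xM_x'$. That skeleton is sound, and your identities $\im m\,(1-|m|^2)=|m|^2\im z$ and $1-m^2=-\sqrt{z^2-4}\,m$ are correct.

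The gap is in the step you yourself call the technical heart: the uniform bound on $(1-m^2S)^{-1}$. Your proposed mechanism --- treat the $\mathbf 1$-direction separately and use ``the band-matrix spectral gap of $S|_{\mathbf 1^\perp}$'' --- does not exist in this model: for $1\ll W\ll N$ the spectrum of $S$ accumulates at $1$ (the low-momentum modes have eigenvalues $\approx 1-cW^2|k|^2$ with $|k|$ as small as $2\pi/N$), so any estimate resting on a gap over $\mathbf 1^\perp$ degenerates as $N/W\to\infty$, and likewise your parenthetical ``once $\zeta<|1/m^2|-1$'' is precisely the non-uniform bound you later disavow, since $|m|\to1$ as $\im z\downarrow0$. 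What actually works in the bulk is gap-free: for real $\lambda\in[0,1]$ write $1-m^2\lambda=(1-\lambda)+\lambda(1-m^2)$ and use $\re(1-m^2)=\re(-\sqrt{z^2-4}\,m)\ge c_\kappa$, so $|1-m^2\lambda|\ge c_\kappa$ uniformly no matter how close $\lambda$ is to $1$; the only genuine danger is $\lambda$ near $-1$ when $E\approx 0$ (where $m^2\approx-1$), and there the needed input is not a gap at $1$ but the nonnegativity/core condition \eqref{bandcw1}, which forces $\mathrm{spec}(S)\subset[-1+\delta,1+\zeta]$ with $\delta=\delta(c_s)>0$ (your ``separate argument on the imaginary axis'' is left unspecified, and this is exactly what it must be). Finally, your contraction and the conclusion \eqref{bony} live in $\ell^\infty$, whereas the spectral considerations only give an $\ell^2$ resolvent bound; an $\ell^\infty\to\ell^\infty$ bound of the type \eqref{gbzz2} in Lemma \ref{inversebound} requires additional work exploiting the finite band width (this is also done in \cite{PartII}), and your proposal does not address the passage from $\ell^2$ to $\ell^\infty$. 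With the stability estimate repaired along these lines, the rest of your existence and uniqueness argument goes through.
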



In the rest of this paper, we always assume that \eqref{heiz} holds for sufficiently small $c_0>0$. In particular, for $z \in \mathbb C_+$ with $|\re z|\le 2-\kappa$ and $|z|\le \kappa^{-1}$, we have $\im m(z) \ge c$ for some constant $c>0$ depending on $\kappa$. Thus we can choose $c_0$ to be small enough such that 
\be\label{Im_lowbound}
\im M_x(S,Z) \ge c/2,\quad 1\le i \le N.
\ee 
Let $M\equiv M(S,Z)$ denote the diagonal matrix with entries $M_{xy}: = M_x\delta_{xy}$.  With \eqref{Im_lowbound}, we can get the following lemma, which was proved as Lemma 2.7 in \cite{PartII}.

\begin{lemma} \label{inversebound}
Suppose $z \in \mathbb C_+$ satisfies $|\re z|\le 2-\kappa$ and $|z|\le \kappa^{-1}$ for some (small) constant $\kappa>0$. Suppose \eqref{heiz} and \eqref{bony} hold for some small enough constant $c_0>0$. Then there exist constants $c_1, C_1>0$ such that 
\be\label{gbzz2}
\left \| (1-M^2S)^{-1} \right \|_{l^\infty\to l^\infty}< C_1,
\ee
and
\be\label{tianYz}
\left| \left[(1-M^2S)^{-1}\right]_{xy} -\delta_{xy}\right|\le
 \begin{cases}
 C_1W^{-d},\quad & {\rm  if} \quad |x-y|\le (\log N)^2W\\ 
 N^{-c_1\log N},\quad & {\rm  if} \quad |x-y|> (\log N)^2W
 \end{cases}\; .
 \ee
\end{lemma}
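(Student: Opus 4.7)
The plan is to establish \eqref{gbzz2} and \eqref{tianYz} by combining three ingredients: a uniform spectral estimate on $1-M^2S$ ensuring $\ell^2$-invertibility with bounded norm, a Combes-Thomas argument producing off-diagonal exponential decay on scale $W$, and a refined near-diagonal analysis producing the $W^{-d}$ prefactor. First I would start from the self-consistent relation $m^2(z)+zm(z)+1=0$, which gives $|1-m^2(z)|=|2+zm(z)|$; a direct computation shows $|2+zm(z)|^2 = 4-(\re z)^2 + O(\im z)$ on $\mathbb{C}_+$, so $|1-m^2(z)|\ge c_\kappa>0$ uniformly on the given spectral domain. Together with \eqref{bony} this gives $|1-M_x^2|\ge c_\kappa/2$ for every $x$, provided $c_0$ is small enough. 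Moreover, since $1/m^2$ has non-vanishing imaginary part for $z\in\mathbb{C}_+$, the spectrum of $m^2S$ (contained in $m^2\,\sigma(S)\subset m^2\cdot[-(1+\zeta),1+\zeta]$ in the constant-$M$ case, since $S$ is symmetric with $\|S\|_{\ell^2\to\ell^2}\le 1+\zeta$) stays uniformly separated from $1$, yielding $\|(1-m^2S)^{-1}\|_{\ell^2\to\ell^2}\le C_\kappa$. A perturbation argument using $\|M^2-m^2\Id\|_{\max}\le Cc_0$ extends this $\ell^2$-bound to the general $M$ case.

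For the off-diagonal decay, I would apply a Combes-Thomas estimate in weighted $\ell^2$. Since $S$ has range $C_sW$ by \eqref{bandcw1}, conjugating $1-M^2S$ by $e^{\alpha\phi}$ for Lipschitz $\phi$ and parameter $\alpha=c/W$ produces a bounded perturbation in $\ell^2$, so the conjugated operator remains invertible with comparable norm. Choosing $\phi(y)=|y-y_0|$ yields
\[
  \bigl|\bigl[(1-M^2S)^{-1}\bigr]_{xy_0}\bigr|\le C\exp(-c|x-y_0|/W)
\]
for all $x,y_0$, which immediately gives the $N^{-c_1\log N}$ part of \eqref{tianYz} for $|x-y|>(\log N)^2W$. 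For the near-diagonal $W^{-d}$ bound, the plan is to expand $(1-M^2S)^{-1}-I=\sum_{k\ge 1}(M^2S)^k$ (convergence secured by $\im z>0$ combined with the spectral gap from the previous paragraph) and to bound each term via a local central limit estimate for the underlying weighted random walk: $\bigl|[(M^2S)^k]_{xy}\bigr|\lesssim(1+Cc_0)^k(kW^2)^{-d/2}\exp(-c|x-y|^2/(kW^2))$ for $kW^2\lesssim N^2$. Summing over $k$, the complex phase $m^{2k}$ (together with $|1-m^2|\ge c_\kappa$) produces an effective cutoff through Abel summation and yields the refined estimate $\bigl|[(1-M^2S)^{-1}]_{xy}-\delta_{xy}\bigr|\le (C/W^d)\exp(-c|x-y|/W)$. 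The $\ell^\infty$-bound \eqref{gbzz2} then follows from $\sum_y e^{-c|x-y|/W}=O(W^d)$.

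The hard part will be justifying the sharp $W^{-d}$ prefactor in the near-diagonal regime: the series $\sum_k m^{2k}S^k$ is only conditionally convergent in dimensions $d=1,2$, so one must exploit the complex phase of $m^2$ carefully (via Abel summation, or via integration by parts in Fourier space when $S$ is translation-invariant as in Assumption \ref{jyyuan2}). Since $S$ under Assumption \ref{jyyuan} need not be translation-invariant, the decay estimates must additionally be propagated from a translation-invariant reference model by perturbation; the perturbation has finite range, and each iteration introduces a factor of order $c_0+\zeta$, which is small by \eqref{heiz}. These two difficulties together form the main technical challenge of the argument.
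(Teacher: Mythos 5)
You should note first that this paper does not actually prove Lemma \ref{inversebound}; it quotes it as Lemma 2.7 of \cite{PartII}, so the comparison below is with what a correct proof must contain rather than with a proof printed here. Your overall architecture ($\ell^2$ invertibility, Combes--Thomas decay on scale $W$, then a refined near-diagonal bound) is a sensible route, but the foundational step has a genuine gap. Your justification of $\|(1-m^2S)^{-1}\|_{\ell^2\to\ell^2}\le C_\kappa$ is the claim that $1/m^2$ has non-vanishing imaginary part on $\mathbb C_+$; this is false at $\re z=0$ (and nearly fails nearby), where $m$ is purely imaginary, so $m^2$ is real and tends to $-1$ as $\eta\downarrow 0$. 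In that regime $1-m^2t$ degenerates as $t\to -1$, so separation of $m^2\sigma(S)$ from $1$ cannot follow from $\sigma(S)\subset[-(1+\zeta),1+\zeta]$ alone: you must prove a constant spectral gap of $S$ at $-1$, i.e.\ $\sigma(S)\subset[-1+c,1+\zeta]$. This is exactly where the lower bound in \eqref{bandcw1} enters (e.g.\ via $\langle v,(1+S)v\rangle\ge \tfrac12\sum_{x,y}s_{xy}(v_x+v_y)^2-\zeta\|v\|^2$ together with a spectral lower bound for the indicator band kernel), and without it the conclusion of the lemma can genuinely fail for $E$ near $0$ (a bipartite-like variance profile, excluded by \eqref{bandcw1}, is the counterexample to keep in mind). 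Since your Combes--Thomas step and the $m\to M$ perturbation are built on this $\ell^2$ bound, the gap propagates through the whole argument.

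The second problem is the near-diagonal $W^{-d}$ bound. Beyond the conditional-convergence issue you flag yourself, the rescue you propose for general $S$ --- perturbing from a translation-invariant reference model with a small factor of order $c_0+\zeta$ per iteration --- misreads Assumption \ref{jyyuan}: \eqref{heiz} and \eqref{bandcw2} only control $\zeta$, the row sums and $\max_x|z_x-z|$, not the distance of $S$ from a convolution; the entrywise deviation of $S$ from any translation-invariant profile is of the same order $W^{-d}$ as $S$ itself, so no smallness is available and this step as described does not go through. A more robust route avoids the local CLT entirely: from $R:=(1-M^2S)^{-1}=I+M^2S+M^2SRM^2S$, Cauchy--Schwarz with $\|S\delta_y\|_{\ell^2}\le CW^{-d/2}$ and the $\ell^2$ bound give $|R_{xy}-\delta_{xy}|\le CW^{-d}$, and running the same identity for the exponentially conjugated operator $e^{\alpha\phi}(1-M^2S)e^{-\alpha\phi}$ with $\alpha\sim W^{-1}$ yields $|R_{xy}-\delta_{xy}|\le CW^{-d}e^{-c|x-y|/W}$, which gives both \eqref{tianYz} and, after summation in $y$, \eqref{gbzz2}. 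Note also that \eqref{gbzz2} does not follow from your plain Combes--Thomas bound with an $O(1)$ prefactor: that only gives $\ell^\infty$ norm of order $W^d$ (or $(\log N)^{2d}$ after splitting into near and far regimes), so the $W^{-d}$ prefactor and \eqref{gbzz2} must be established together rather than sequentially as in your plan.
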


\subsection{The main results.}\
In this subsection, we state the main results of this paper, including the new averaging fluctuation estimate and some applications of it in random band matrices. We first give the results on the delocalization of bulk eigenvectors of random band matrices as in Assumption \ref{jyyuan2}.

\subsubsection{Weak delocalization of random band matrices}\label{sec mainadd}
For simplicity of presentation, we will use the following notion of stochastic domination, which was first introduced in \cite{EKY_Average} and subsequently used in many works on random matrix theory, such as \cite{isotropic,delocal,Semicircle,Anisotropic}. It simplifies the presentation of the results and their proofs by systematizing statements of the form ``$\xi$ is bounded by $\zeta$ with high probability up to a small power of $N$".

\begin{definition}[Stochastic domination]\label{stoch_domination}
(i) Let
\[\xi=\left(\xi^{(N)}(u):N\in\mathbb N, u\in U^{(N)}\right),\hskip 10pt \zeta=\left(\zeta^{(N)}(u):N\in\mathbb N, u\in U^{(N)}\right)\]
be two families of nonnegative random variables, where $U^{(N)}$ is a possibly $N$-dependent parameter set. We say $\xi$ is stochastically dominated by $\zeta$, uniformly in $u$, if for any fixed (small) $\epsilon>0$ and (large) $D>0$, 
\[\sup_{u\in U^{(N)}}\mathbb P\left[\xi^{(N)}(u)>N^\epsilon\zeta^{(N)}(u)\right]\le N^{-D}\]
for large enough $N\ge N_0(\epsilon, D)$, and we will use the notation $\xi\prec\zeta$. Throughout this paper, the stochastic domination will always be uniform in all parameters that are not explicitly fixed (such as matrix indices, and $z$ that takes values in some compact set). 
If for some complex family $\xi$ we have $|\xi|\prec\zeta$, then we will also write $\xi \prec \zeta$ or $\xi=\OO_\prec(\zeta)$. 

(ii) As a convention, for two {\it deterministic} nonnegative quantities $\xi$ and $\zeta$, we shall use $\xi\prec\zeta$ if and only if $\xi\le N^\tau \zeta$ for any constant $\tau>0$. 


(iii) We say an event $\Xi$ holds with high probability (w.h.p.) if for any constant $D>0$, $\mathbb P(\Xi)\ge 1- N^{-D}$ for large enough $N$. More generally, given an event $\Xi$, we say $\Omega_N$ holds $w.h.p.$ in $\Xi$ if for any fixed $D>0$,
 $$\P( \Xi\setminus \Omega_N)\le N^{-D}$$
for sufficiently large $N$.
\end{definition}


We denote the eigenvectors of $H$ by $\{\mathbf u_\alpha\}_{\alpha \in \Z_N^d}$, with entries $\mathbf u_\alpha(x)$, $x\in \Z_N^d$. For $l\in \mathbb N$, we define the characteristic function $P_{x,l}$ projecting onto the complement of the $l$-neighborhood of $x$,
$$P_{x,l}(y):=\mathbf 1(|y-x|\ge l).$$
Define the random subset of eigenvector indices through 
$$\mathcal A_{\e,\kappa, l}:=\left\{ \alpha: \lambda_\alpha \in I_\kappa, \sum_x |u_\alpha(x)|\|P_{x,l}\bu_{\al}\|\le \e \right\}, \quad I_\kappa:=(-2+\kappa, 2-\kappa). $$
Our {first main result} is the following delocalization of bulk eigenvectors for random band matrices in dimensions $d\ge 2$, which was referred to as ``complete delocalization" in \cite{delocal}.

\begin{theorem}[Complete delocalization of bulk eigenvectors]\label{comp_delocal}
Suppose the Assumption \ref{jyyuan2} holds and $d\ge 2$. Suppose 
\be\label{NW}
N\prec W^{1+\frac{d}{2}}. 
\ee
Fix any constants $\kappa>0$ and $c>0$. For any $l\le N^{1 - c}$, we have 
$$\frac{|\mathcal A_{\e,\kappa, l}|}{N^d} \le C\sqrt{\e} + \OO_\prec(N^{-2c})$$
for any $\e>0$.
\end{theorem}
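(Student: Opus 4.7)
The plan is to combine Theorem~\ref{YEniu} with a spectral-decomposition argument, converting a quantitative averaged Green's function bound into the claimed eigenvector statement. The argument splits into two stages.

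In the first stage I follow the bootstrap outlined between \eqref{fine1} and the display after \eqref{intro_entry}. With $\eta = W^2/N^2$, the hypothesis $N \prec W^{1+d/2}$ ensures $W^d\eta \gg 1$. Starting from the semicircle estimate \eqref{intro_semi}, inserting the self-consistent $T$-equation \eqref{T_equation}, controlling its fluctuation sum by Theorem~\ref{YEniu}, and using the diffusive profile \eqref{T_behav} for $\Theta$ yields the entrywise improvement $\max_{x,y,E\in I_\kappa}|G_{xy}(E+i\eta)-m\delta_{xy}|^2 \prec W^{-d}$. Feeding this improved bound back into \eqref{T_equation} and combining with \eqref{intro_Tself2} produces the averaged estimate
\begin{equation}\label{gf_bd}
\max_{E\in I_\kappa,\,y\in\Z_N^d}\eta\sum_{x:|x-y|\le l}|G_{xy}(E+i\eta)|^2 \prec N^{-2c}\qquad\text{for } l\le N^{1-c}.
\end{equation}

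In the second stage I translate \eqref{gf_bd} into the eigenvector bound via spectral decomposition. If $\alpha \in \mathcal A_{\e,\kappa,l}$, then since $|u_\alpha(x)|,\,\|P_{x,l}\bu_\alpha\|\le 1$, the defining inequality of $\mathcal A$ yields
\[
\sum_x|u_\alpha(x)|^2\|P_{x,l}\bu_\alpha\|^2 \le \sum_x|u_\alpha(x)|\|P_{x,l}\bu_\alpha\| \le \e,
\]
equivalently $Q_{\alpha\alpha}:=\sum_{(x,y):|x-y|\le l}|u_\alpha(x)|^2|u_\alpha(y)|^2 \ge 1-\e$. Writing $\sum_{|x-y|\le l}|G_{xy}(E+i\eta)|^2 = \|\mathbf 1_{B_l(y)}G(E+i\eta)\delta_y\|^2$, summing over $y$, spectrally decomposing $G=\sum_\beta \bu_\beta\bu_\beta^*/(\lambda_\beta-E-i\eta)$, and integrating over $E \in I_\kappa$, the diagonal $\alpha=\beta$ part of the integrated expression contributes approximately $\pi \sum_{\alpha:\lambda_\alpha \in I_\kappa} Q_{\alpha\alpha} \ge \pi(1-\e)|\mathcal A_{\e,\kappa,l}|$, while \eqref{gf_bd} forces the full integral to be $\prec N^d \cdot N^{-2c}$.

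The main obstacle is controlling the off-diagonal spectral contribution $\sum_{\alpha\ne\beta}Q_{\alpha\beta}M_{\alpha\beta}$, where $M_{\alpha\beta}:=\eta\int_{I_\kappa}dE/[(\lambda_\alpha-E-i\eta)(\lambda_\beta-E+i\eta)]$ and $Q_{\alpha\beta}:=\sum_{x,y}\mathbf 1(|x-y|\le l)u_\alpha(x)\bar u_\beta(x)\bar u_\alpha(y)u_\beta(y)$. My plan is to replace the indicator kernel $\mathbf 1(|x-y|\le l)$ by a comparable positive semidefinite one (for instance a tent kernel $\tilde K_{xy}\propto(2l-|x-y|)_+$, obtained as the autoconvolution of two half-balls and dominating the indicator up to a constant), then combine Cauchy--Schwarz in the form $|\tilde Q_{\alpha\beta}|\le\sqrt{\tilde Q_{\alpha\alpha}\tilde Q_{\beta\beta}}$ with the Lorentzian decay $|M_{\alpha\beta}|\lesssim\eta\log N/(|\lambda_\alpha-\lambda_\beta|+\eta)$ via a Schur test; this should bound the off-diagonal sum by $C\sqrt{\e}\cdot N^d$. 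Combining all three estimates then yields $|\mathcal A_{\e,\kappa,l}|/N^d\le C\sqrt\e + O_\prec(N^{-2c})$, which is exactly the theorem.
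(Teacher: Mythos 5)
Your first stage is essentially the paper's own proof of Theorem \ref{main_thm}: the same bootstrap through the $T$-equation \eqref{T00}, the fluctuation averaging Theorem \ref{YEniu}, Lemma \ref{bneng} and the bound \eqref{zlende}. One correction there: with $\eta=W^2/N^2$ the hypothesis \eqref{NW} does \emph{not} give $W^d\eta\gg 1$ (at the boundary $N=W^{1+d/2}$ one has $W^d\eta=1$, and \eqref{NW} even allows $N$ to exceed $W^{1+d/2}$ by $N^\tau$ factors); this is exactly why the paper works at $\eta=W^{2+\delta}/N^2$ and tolerates $W^{\delta}$ losses in \eqref{priori_weak}, and you need the same regularization.

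The genuine gap is in your second stage. The paper does not reprove this conversion: having established \eqref{strong_semicircle} and \eqref{priori_weak}, it invokes \cite[Proposition 7.1]{delocal} verbatim, whereas you attempt to supply the argument, and the crucial off-diagonal estimate --- which you correctly identify as the main obstacle --- is not actually obtained by the tools you propose. First, a directional slip: for the diagonal lower bound you need a positive semidefinite kernel that is \emph{dominated} by the indicator at radius $2l$ (so that \eqref{priori_weak} at radius $2l$ upper-bounds the kernel quantity) and bounded below by a constant on $|x-y|\le l$ (so that the diagonal picks up $Q_{\alpha\alpha}\ge 1-\e$); a kernel ``dominating the indicator'' gives an inequality in the useless direction. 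More seriously, the Cauchy--Schwarz plus Schur-test bound cannot produce $C\sqrt{\e}\,N^d$. At $\eta=W^{2+\delta}/N^2$ the local law \eqref{jxw} gives of order $N^d\eta\ge W^{2}$ eigenvalues $\beta$ with $|\lambda_\beta-\lambda_\alpha|\le\eta$ (for $d\ge2$ under \eqref{NW}), each contributing $|M_{\alpha\beta}|\asymp 1$, while $\sqrt{\tilde Q_{\alpha\alpha}\tilde Q_{\beta\beta}}$ can be of order one; the Schur test with $\sum_\alpha\tilde Q_{\alpha\alpha}\le N^d$ and $\max_\alpha\sum_\beta|M_{\alpha\beta}|\prec N^d\eta\log^2N$ therefore yields at best $N^{2d}\eta\,\mathrm{polylog}=N^{2d-2}W^{2+\delta}\,\mathrm{polylog}$, which exceeds even the trivial main term $N^d$ by at least a factor $W^{2}$ for every $d\ge2$. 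Moreover, nothing in this scheme ever uses the defining property $\sum_x|u_\alpha(x)|\,\|P_{x,l}\bu_{\al}\|\le\e$ of $\mathcal A_{\e,\kappa,l}$, so there is no mechanism that could produce the factor $\sqrt{\e}$. The cross terms must be handled either by a positivity argument (which needs the $E$-integral over all of $\R$, hence spectral control outside the bulk that you do not have) or, as in the cited argument, by exploiting orthogonality of eigenvectors together with the localization of those in $\mathcal A_{\e,\kappa,l}$ to gain the $\e$-dependence; absent such an input, the reduction from \eqref{priori_weak} to the eigenvector statement is not established.
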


\begin{remark}\label{rem_comp_delocal}
For any fixed $\gamma,K>0$, we define another random subset of eigenvector indices 
$$\mathcal B_{K,l} :=\left\{\alpha: \lambda_\alpha \in I_\kappa, \exists\, x_0\in \Z_N^d \text{ s.t. } \sum_x |\mathbf u_\alpha(x)|^2 \exp\left[\left(\frac{|x-x_0|}{l}\right)^\gamma\right] \le K \right\}. $$
Notice that the set $\mathcal B_{K,l}$ contains all indices associated with eigenvectors that are exponentially localized in balls of radius $\OO(l)$. In fact, by \cite[Corollary 3.4]{ErdKno2011}, Theorem \ref{comp_delocal} implies that 
$$\lim_{N\to \infty}\mathbb E\frac{|\mathcal B_{K,l}|}{N} = 0,$$
i.e. the fraction of eigenvectors localized sub-exponentially on scale $l$ vanishes with high probability for large $N$. This explains the name ``complete delocalization".
\end{remark}

\begin{remark}
Using resolvents of $H$, {an analogous result} was proved in \cite{delocal} under the condition $N\ll W^{1+\frac{d}{4}}$, which turns out to be wrong: it should be $N\ll W^{1+\frac{d}{d+2}}$ instead. This condition was improved to $N\ll W^{1+\frac{d}{d+1}}$ later in \cite{HeMa2018}. In fact, by studying the evolution operator $e^{-\ii Ht}$, the complete delocalization was proved under the condition $N\ll W^{1+\frac{d}{6}}$ in \cite{ErdKno2013,ErdKno2011}. Our result improves all these results. 
\end{remark}

\subsubsection{Strong delocalization and universality of 1d random band matrices}
For 1d random band matrices whose entries are close to a Gaussian in the four moment matching sense, the following version of strong delocalization of bulk eigenvectors was proved in \cite{BaoErd2015} under the assumption $W\gg N^{6/7}$: 
\be\label{strong delocal}
\max_{\al: \lambda_\al \in I_\kappa} \|\bu_\al\|_\infty \prec N^{-1/2}.
\ee
Our delocalization result as given by Theorem \ref{comp_delocal} is certainly a weaker version of that result in some averaged sense. Based on the new fluctuation averaging estimate of this paper, i.e. Theorem \ref{YEniu} below, the same strong delocalization and bulk universality was proved in \cite{PartI}  for 1d random band matrices under a weaker assumption $W\gg N^{3/4}$. {We remark that using the estimate \eqref{rough1} in \cite{EKY_Average}, \cite{PartI} can only give the strong delocalization under the assumption $W\gg N^{4/5}$. If, instead of using a fluctuation averaging estimate, we use the Fourier space analysis in \cite{HeMa2018}, then \cite{PartI} may give the strong delocalization under the assumption $W\gg N^{7/9}$ (although there are a lot of details to verify because \cite{HeMa2018} dealt with regular resolvents).} 

\begin{theorem}[Main result of the series \cite{PartI}, \cite{PartII} and this paper]\label{mainthmstrong}
Suppose the Assumption \ref{jyyuan2} holds and $d=1$. Suppose $W\ge N^{3/4+\e}$ for some constant $\e>0$. Then for any constant $\kappa>0$, the estimate \eqref{strong delocal} holds. Moreover, the bulk eigenvalue statistics converge to those of the GOE (real case) or GUE (complex case).  
\end{theorem}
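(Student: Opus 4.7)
The plan is to combine the new fluctuation averaging estimate of Theorem \ref{YEniu} with the bootstrap strategy for the local law of the generalized resolvent developed in \cite{PartII}, then feed the resulting local law into the standard delocalization and universality pipeline. Concretely, there are three stages: (a) establish an entrywise local law for $G(z)$ down to optimal mesoscopic scale under the assumption $W\ge N^{3/4+\e}$; (b) derive $\|\bu_\al\|_\infty\prec N^{-1/2}$ from the local law; (c) deduce GOE/GUE bulk statistics by a standard three-step strategy.

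For stage (a), I would run the bootstrap scheme sketched around \eqref{T_equation}--\eqref{intro_entry} for the one-dimensional case. The input is an initial entrywise bound $\Phi$ and the Ward bound $\Gamma^2\lesssim \eta^{-1}$ from \eqref{Ward}. Plugging \eqref{rough3} into the self-consistent $T$-equation \eqref{T_equation}, using the decay of $\Theta$ given by the diffusion approximation \eqref{T_behav} and the bound \eqref{tianYz} on $(1-M^2S)^{-1}$, gives
\[
\max_{x,y}|G_{xy}-M_x\delta_{xy}|^2 \;\prec\; \max_{x,y} T_{xy} \;\prec\; \frac{1}{W}\,\bigl(1+\eta^{-1}\Phi^2\bigr).
\]
Iterating this inequality, the fixed point is reached once $W\eta \gg 1$; taking $\eta$ all the way down to $N^{-1+\delta}$ forces $N \ll W^{4/3}$, i.e.\ $W\gg N^{3/4}$, which is precisely the hypothesis. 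The decisive point is that the factor $W/N$ gained in \eqref{rough3} relative to \eqref{rough1} is what pushes the threshold from $N^{4/5}$ (the result of \cite{delocal}) down to $N^{3/4}$.

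For stage (b), the standard spectral-decomposition argument gives
\[
|\bu_\al(x)|^2 \;\le\; \eta\,\im G_{xx}(\lambda_\al + \ii\eta),
\]
for $\eta \ge N^{-1+\delta}$ and $\lambda_\al \in I_\kappa$. The local law from stage (a) implies $\im G_{xx} = \OO_\prec(1)$ at this scale, so $\max_x|\bu_\al(x)|^2 \prec N^{-1+\delta}$; since $\delta>0$ is arbitrary, \eqref{strong delocal} follows.

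For stage (c), I would follow the three-step approach that is now standard: (i) the local law of stage (a) yields rigidity of eigenvalues at the optimal scale in $I_\kappa$; (ii) for the Gaussian-divisible ensemble $H_t = \me^{-t/2}H + (1-\me^{-t})^{1/2}G^{\mathrm{GOE}}$ with $t = N^{-1+\delta}$, Dyson Brownian motion analysis (local relaxation/gap universality) gives GOE bulk statistics; (iii) a Green function comparison, powered by the local law on the resolvent and moment matching up to the fourth order after the small-$t$ perturbation, transfers the statistics back to $H$. The main obstacle is stage (a): the bootstrap is tight, and one must carefully track the inter-molecule and inner-molecule contributions from Theorem \ref{YEniu} at every scale, since only the sharp form \eqref{rough3} (not \eqref{rough1}) suffices to close the loop at the threshold $W\asymp N^{3/4}$.
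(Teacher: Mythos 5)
There is a genuine gap, and it sits exactly where you put the weight: stage (a) does not close at the scale you need. The self-consistent $T$-equation bootstrap of this paper controls $T_{xy}$ through $\Theta=(1-|m|^2S)^{-1}S$, and by \eqref{zlende} the entries of $\Theta$ carry the term $\frac{1}{N^d\eta}$ in addition to the diffusive profile; the diffusion approximation \eqref{T_behav} is only valid for $\eta\gtrsim W^2/N^2$. For $d=1$ and $W\asymp N^{3/4}$ this threshold is $\eta\asymp N^{-1/2}$, far above the scale $\eta\sim N^{-1+\delta}$ that your stages (b) and (c) require. If you push $\eta$ below $W/N$, the bound $\max_{x,y}T_{xy}\prec W^{-1}(1+\eta^{-1}\Phi^2)$ you wrote is false: already the contribution $\frac{1}{N\eta}\cdot\eta^{-1}\Phi^2$ with the a priori input $\Phi^2\gtrsim(W\eta)^{-1}$ is of size $\frac{1}{NW\eta^2}$, which at $\eta\sim N^{-1}$ is $N/W\gg1$, so the self-improving step in the spirit of \eqref{Phi2} cannot be iterated to the fixed point and no entrywise local law for the ordinary resolvent $G(z)$ is obtained at $\eta\sim N^{-1}$. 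Your ``$W\eta\gg1$ forces $N\ll W^{4/3}$'' computation implicitly replaces $\max_{x,y}\Theta_{xy}$ by $W^{-1}$ at all scales, which is exactly what fails below $\eta\sim W^2/N^2$. This is consistent with what the paper itself says: with its own methods it only obtains \emph{weak} (averaged) delocalization for $d=1$ under $W\gg N^{3/4}$, explicitly noted to be strictly weaker than \eqref{strong delocal}.

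Accordingly, the paper does not prove Theorem \ref{mainthmstrong} at all; it is quoted from \cite{PartI} (with \cite{PartII}), and the actual argument there is structurally different from your stages (a)--(c). The role of Theorem \ref{YEniu} in that proof is to control the \emph{generalized} resolvent $G(H,Z)$ with the two-level spectral parameter $z_x=z\,\mathbf 1_{1\le x\le W}+\tilde z\,\mathbf 1_{x>W}$, $\im\tilde z<\im z$ (this is why the present paper is set up for general $Z$), which feeds a mean-field reduction of the band matrix to an effective $W\times W$ problem; strong delocalization then comes from a quantum unique ergodicity statement proved via Dyson Brownian motion for that effective model, and bulk universality is transported back through this reduction rather than through a direct rigidity--DBM--Green-function-comparison pipeline applied to $H$ itself. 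So even granting your stage (b) inequality $|\bu_\al(x)|^2\le\eta\,\im G_{xx}(\lambda_\al+\ii\eta)$ and the standard three-step strategy in stage (c), both are blocked here because they presuppose the local law at $\eta\sim N^{-1}$ for the ordinary resolvent, which is precisely the ingredient that cannot be produced by the route you propose at $W\asymp N^{3/4}$.
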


 This result was proved as Theorem 1.2 and Theorem 1.4 in \cite{PartI} with the generalized resolvent estimate, Theorem 4.5, as a key input. The generalized resolvent estimate is proved by combining the arguments in \cite{PartII} with the fluctuation averaging estimate, Theorem \ref{YEniu}, below. More results were also proved in \cite{PartI}, including the local semicircle law of bulk eigenvalues and quantum unique ergodicity of bulk eigenvectors. The role of this paper in the whole series have been discussed in details in Section \ref{sec relation}. 
 
Next, we state the main  fluctuation averaging estimate of this paper, based on which we shall give a simple and self-contained proof of Theorem \ref{comp_delocal} in Section \ref{sec mainaddpf}. 

\subsubsection{Averaging fluctuations} 
Throughout the following discussion, we will abbreviate $G\equiv G(H,Z)$. Recall the $T$ variables defined in (\ref{def: T}). 
We add and subtract $\sum_{\al}S_{x\al}|M_\al|^2T_{\al y} $ 
so that      
$$
T_{xy} 
=\sum_{\al}S_{x\al}|M_\al|^2T_{\al y} +\sum_{\al}S_{x\al}\left( |G_{\al y}|^2-|M_\al|^2T_{\al y}\right),
$$
which immediately gives that
\begin{equation}\label{Tequation}
T_{xy}=\sum_{\al}\left[\left(1-S|M|^2\right)^{-1}S\right]_{x\al}\left(  |G_{\al y}|^2-|M_\al|^2T_{\al y} \right).
\end{equation}
Isolating the diagonal terms,  we can write the $T$-equation as 
\be\label{T0}
T_{xy}=T_{xy}^{\,0}+\sum_{\al: \al \ne y }\left[(1-S|M|^2)^{-1}S\right]_{x\al}\left(  |G_{\al y}|^2-|M_\al|^2T_{\al y} \right)
,
\ee
where
$$ T_{xy}^{\,0}:=\left[(1-S|M|^2)^{-1}S\right]_{xy}\left(  |G_{yy}|^2-|M_y|^2T_{yy} \right). $$

The {second main result} of this paper is the following fluctuation averaging estimate on the sum in \eqref{Tequation}. 
We introduce the notation  
 \begin{align}\label{tri} 
   \tnorm{G}^2(H,Z) := \max_y \sum_{ x\in \Z_N^d} \left(|G_{xy}|^2+|G_{yx}|^2\right).
    \end{align}


   
  


 \begin{theorem}[Averaging fluctuations] \label{YEniu} 
Fix any $z \in \mathbb C_+$ satisfying $|\re z|\le 2-\kappa$ and $|z|\le \kappa^{-1}$ for some constant $\kappa>0$. Suppose that Assumption \ref{jyyuan} holds.  Suppose that \eqref{heiz} holds for some sufficiently small constant $c_0>0$ (which implies that \eqref{bony}, \eqref{gbzz2} and \eqref{tianYz} hold). Assume that  
 \be\label{xiazhou}
 \min_x \left(\im z_x \right)\ge N^{-C_2} 
 \ee
 for some constant $C_2>0$. Let $\Phi$ and  $\Gamma$ be deterministic parameters satisfying 
 \be\label{GM1.5}
W^{-d/2} \le \Phi \le N^{-\delta}, \quad \Gamma\ge 1,
 \ee
for some constant $\delta>0$. If the following estimates hold,
 \begin{align}\label{GM1}
\|G - M\|_{\max} \prec \Phi, \quad  \tnorm{G}^2 \prec \Gamma^2 ,
 \end{align}
then for any deterministic sequence $\mathbf b=(b_x)_{x\in \Z_N^d}$ with $\|\mathbf b\|_\infty=\OO(1)$, we have 
\be\label{GM2}
  \max_y \Big|\sum_{x}b_{x}\left(  |G_{xy}|^2-|M_x|^2T_{xy} \right) \Big|\prec \Gamma^2\Phi^2 + 1.
\ee
\end{theorem}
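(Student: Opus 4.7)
The plan is to estimate arbitrarily high moments of the sum on the left-hand side of \eqref{GM2} and then convert to high-probability bounds via Markov's inequality. Fix $y=:\star$ and decompose as in \eqref{divide}:
\be\nonumber
\sum_{x:x\ne\star} b_x\bigl(|G_{x\star}|^2-|M_x|^2T_{x\star}\bigr)
\;=\;\sum_{x:x\ne\star} b_x\bigl(\E_x|G_{x\star}|^2-|M_x|^2T_{x\star}\bigr)
\;+\;\sum_{x:x\ne\star} b_x Q_x|G_{x\star}|^2.
\ee
The first sum is the deterministic ``self-consistent error'' for which a direct resolvent-expansion bound of size $\Phi^2$ on the typical summand, together with the $T$-equation \eqref{T0} and the bound \eqref{gbzz2}, already gives the claimed estimate up to easy book-keeping. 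The real work is the fluctuation piece $\sum_x b_x Q_x|G_{x\star}|^2$, and for it I will prove the $p$-th moment bound $\E|\cdot|^p\le N^{\epsilon p}(\Gamma^2\Phi^2+1)^p$ for every even $p$ and every small $\epsilon>0$.

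The first step is to set up a graphical language for expanding the $p$-fold product. Indices become \emph{atoms}, resolvent entries become edges carrying a charge ($G$ or $\ol G$), and each $Q_{x_i}$ operator paints a group of components with a distinct color, as in Section~\ref{sec_colors}. I will then group each free atom $x_i$ with all atoms created in its $W$-neighborhood into a \emph{molecule} $[x_i]$, and introduce the local $L^2$-averaged upper bounds $\Psi_{x_ix_j}$ of Definition~\ref{lzzay}, which satisfy $\Psi\prec \Phi$ pointwise and $\sum_x\Psi_{xy}^2\prec\Gamma^2$ by \eqref{Psi1} and \eqref{Ward}. The point is that any inter-molecule off-diagonal edge can be replaced by $\Psi$, after which the molecular graph governs the sum.

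Second, I will carry out repeated resolvent expansions of the form \eqref{resol_examp}, together with the ``$Q$-trick'' $\E[\mathcal A_i Q_{x_i}\mathcal G_{x_i}]=\E[(\mathcal A_i-\mathcal A_i^{(x_i)})Q_{x_i}\mathcal G_{x_i}]$ as illustrated in \eqref{fluc example}. This produces one extra $\Phi$ factor per free molecule (the ``first $\Phi$''), without breaking the \emph{IPC nested property} on the molecular graph, since every expansion only replaces an inter-molecule edge by a path between atoms in the same pair of molecules. Third, to extract the \emph{second} $\Phi$ per free molecule I will remove colors (rewriting a colored graph as a signed sum of colorless ones), then by local graph surgery within each molecule produce a \emph{simple charged atom}: a vertex $\alpha\in[x_i]$ carrying exactly two same-charge edges $G_{\alpha\beta_1}G_{\alpha\beta_2}$ to other molecules. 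Each such pair can be resolved by the self-consistent identity \eqref{charged_intro1}, where $(1-M^2S)^{-1}$ is effectively local by \eqref{tianYz}, so the new summation index $\gamma$ stays inside $[x_i]$ and the IPC structure is again preserved; the leading $Q_\gamma$-term is amenable to another $Q$-trick, producing the missing $\Phi$. Finally, summing the resulting $(2p+s)$-edge molecular graphs in the order dictated by the IPC nesting and using \eqref{Psi1}, I get a factor $\Gamma^{2p}\Phi^{s}$; together with the $\Phi^{2p}$ collected from the two $\Phi$-factors per free molecule, this yields the desired moment bound $(\Gamma^2\Phi^2+1)^p$ up to $N^{\epsilon p}$. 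Here the ``$+1$'' absorbs the boundary terms where a molecule carries only one free end or where expansions terminate before $\Psi$-replacement is possible.

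The main obstacle is step three: the simple-charged-atom construction is the only place where inner-molecule surgery happens, and it has to be carried out in a definite order (one molecule at a time, with color-removal and recoloring in between) while checking that (i) the IPC nested property on the molecular graph is never destroyed, (ii) the local nature of $(1-M^2S)^{-1}$ is compatible with the $W$-neighborhood definition of molecules, and (iii) only $G$–$G$ (or $\ol G$–$\ol G$) same-charge pairs are used, since the mixed $G\ol G$ analogue of \eqref{charged_intro1} would fail for $|m|=1-O(\eta)$. Once this combinatorial bookkeeping is in place, the remaining inputs — the bounds \eqref{intro_semi}, \eqref{gbzz2}, \eqref{tianYz}, the hypotheses \eqref{GM1}, and Ward's identity \eqref{Ward} — assemble into the final estimate, and Markov together with the arbitrary choice of $p$ gives \eqref{GM2}.
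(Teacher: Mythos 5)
Your plan for the fluctuation piece $\sum_{x\ne\star}b_xQ_x|G_{x\star}|^2$ is essentially the paper's own argument (graphical expansion, molecules and $\Psi$-variables, IPC nested structure, color removal, simple charged atoms resolved through the self-consistent identity with $(1-M^2S)^{-1}$, same-charge restriction), and that part of the outline is sound. The genuine gap is in how you dispose of the first sum in \eqref{divide}. You call $\sum_{x\ne\star}b_x\bigl(\E_x|G_{x\star}|^2-|M_x|^2T_{x\star}\bigr)$ a ``deterministic self-consistent error'' handled by a termwise $\Phi^2$ bound plus \eqref{T0} and \eqref{gbzz2}. It is not deterministic ($\E_x$ is only a partial expectation), and no termwise or Cauchy--Schwarz argument reaches the target. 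Expanding $\E_x|G_{x\star}|^2$ via \eqref{sq root formula} and cancelling the leading term $|M_x|^2\sum_w s_{xw}|G_{w\star}|^2$ against $|M_x|^2T_{x\star}$ leaves cross terms of the form $\sum_{x,w}c_{xw}\,G_{w\star}G_{wx}\overline{G_{x\star}}$ with $c_{xw}=\OO(W^{-d})\mathbf 1_{|x-w|\lesssim W}$, as in \eqref{gush1}. Bounding these directly with \eqref{Ward} and \eqref{chsz4} gives only $\OO_\prec(\Gamma^2\Phi)$ (three off-diagonal factors, two of which must be spent on the $\Gamma^2$ from the $x$-summation), which is one factor of $\Phi$ short of $\Gamma^2\Phi^2+1$; a cruder bound gives $N^{d/2}\Gamma\Phi^2$, which is even worse since $\Gamma^2\sim\eta^{-1}\ll N^d$ in the intended applications.

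To close this, the paper does not treat the $\E_x$ part by bookkeeping: it first rewrites $G_{w\star}G_{wx}$ through a vector self-consistent equation (Lemma \ref{478}) as $\sum_y d_{wy}Q_y(G_{y\star}G_{yx})$ plus controllable errors, and then proves a dedicated moment estimate (Lemma \ref{Q1}) for $\sum_{x,\alpha}c_{x\alpha}Q_x\bigl(G_{x\star}G_{x\alpha}\overline{G_{\alpha\star}}\bigr)$ using the same IPC-nested graphical machinery you reserve for the second sum. In other words, the ``easier'' half of \eqref{divide} still requires its own fluctuation-averaging argument with the $Q$-structure to extract the missing $\Phi$; as written, your proposal would only yield $\Gamma^2\Phi$ for this piece and hence would not prove \eqref{GM2}. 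You should either import the analogue of Lemma \ref{478} and run the moment estimate for the resulting $Q$-colored triple products, or explain an alternative mechanism producing the extra $\Phi$ for this term.
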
  

\begin{remark}
The above statements should be understood as follows. If \eqref{GM1} holds with probability $1-N^{-D}$ up to a factor $N^{\e}$, then there exists a constant $C>0$ independent of $\e$ and $D$ such that \eqref{GM2} holds with probability $1-N^{-D/C}$ up to a factor $N^{C\e}$.
In particular, Theorem \ref{YEniu} can only be applied for $\OO(1)$ many times. 
\end{remark}

The following notations have been used in the introduction. 

\begin{definition}[$P_k$ and $Q_x$]  \label{Ek}
We define $\E_x$ as the partial expectation with respect to the $x$-th row and column of $H$, i.e.,
$$\E_x(\cdot) := \E(\cdot|H^{[x]}),$$
where $H^{[x]}$ denotes the $(N^d-1)\times(N^d-1)$ minor of $H$ obtained by removing the $x$-th row and column (see Definition \ref{minors} for the general definition). For simplicity, we shall also use the notations 
$$P_x :=\E_x , \quad Q_x := 1-\E_x.$$
In the proof, we will follow the convention that $P_x(A)B\equiv [P_x (A)]B$ and $P_xAB\equiv P_x (AB)$, and similarly for $Q_x$.
 \end{definition}

\begin{proof}[Proof of Theorem \ref{YEniu}] 
We can use \eqref{GM1} to control the diagonal term by $\left||G_{yy}|^2-|M_y|^2T_{yy}\right| = \OO(1)$ with probability $1-\OO(N^{-D})$. 
Then it remains to control the off-diagonal terms. Fix any $y\in \Z_N^d$, and call it $\star$ which stands for a special index throughout the proof. We can write the off-diagonal terms as in \eqref{divide}. Then for the two terms on the right-hand side, we have the following two key lemmas. Note that by considering the real and imaginary parts separately, it suffices to assume that $b_x$'s are real.

\begin{lemma}\label{Ppart}
Suppose the assumptions of Theorem \ref{YEniu} hold, and $b_x$ are real deterministic coefficients satisfying $\max_x |b_x|=\OO( 1)$.
Then for any fixed (large) $p\in 2\mathbb N$ and (small) $\tau>0$, we have 
\be\label{eqn-Ppart}
 \mathbb E\Big| \sum_{x:x\ne \star} b_x\left(\mathbb E_x |G_{x \star}|^2-|M_x|^2T_{x \star}\right) \Big|^p
\le   \left[N^{\tau}\left(\Gamma^2\Phi^2 + 1\right)\right]^p 
\ee
for large enough $N$. 
\end{lemma}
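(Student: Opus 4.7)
The plan is to decompose $\E_x|G_{x\star}|^2 - |M_x|^2 T_{x\star}$ into pieces that are either bounded pathwise by Schwarz-type estimates from the hypothesis \eqref{GM1} or are of $Q_x$-averaging form that can be fed into the variant of Lemma \ref{Qpart} proved as Lemma \ref{Q1} in Section \ref{sec_Ppart}. The starting identity is the Schur complement $G_{x\star}=-G_{xx}\sum_\alpha H_{x\alpha} G^{(x)}_{\alpha\star}$ for $x\ne\star$; using the independence of $G^{(x)}$ from the $x$-th row and column, the fact that the $(H_{x\alpha})$ are centered independent, the splitting $|G_{xx}|^2 = P_x|G_{xx}|^2 + Q_x(|G_{xx}|^2)$, and the minor expansion $G^{(x)}_{\alpha\star} = G_{\alpha\star} - G_{\alpha x}G_{x\star}/G_{xx}$, one obtains
\[ \E_x|G_{x\star}|^2 - |M_x|^2 T_{x\star} = (P_x|G_{xx}|^2 - |M_x|^2)\,\tilde T^{(x)}_{x\star} + |M_x|^2\bigl(\tilde T^{(x)}_{x\star} - T_{x\star}\bigr) + \E_x\!\bigl[Q_x(|G_{xx}|^2)\,|Y_x|^2\bigr], \]
where $\tilde T^{(x)}_{x\star}:=\sum_\alpha s_{x\alpha}|G^{(x)}_{\alpha\star}|^2$ (which is measurable with respect to the minor $H^{[x]}$ and of size $\OO_\prec(\Phi^2)$) and $Y_x:=\sum_\alpha H_{x\alpha}G^{(x)}_{\alpha\star}$.

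The last two terms in the display are controllable by pathwise Schwarz bounds. The difference $\tilde T^{(x)}_{x\star} - T_{x\star}$ carries at least two off-diagonal $G$-factors by the minor expansion, so the entrywise bound $|G_{\alpha\beta}-\delta_{\alpha\beta}M_\alpha|\prec\Phi$ from \eqref{GM1}, Cauchy--Schwarz over the inner $\alpha$-sum, and the Ward-type bound $\sum_x|G_{x\star}|^2\le\tnorm{G}^2\prec\Gamma^2$ together yield $\OO_\prec(\Phi^2\Gamma^2)$ after summing against $b_x$. For the joint-expectation term, a further expansion via $1/G_{xx}=H_{xx}-z_x-(HG^{(x)}H)_{xx}$ represents $Q_x(|G_{xx}|^2)$ to leading order as linear in $\delta X_x := H_{xx}-(HG^{(x)}H)_{xx}+\sum_\alpha s_{x\alpha}G^{(x)}_{\alpha\alpha}$; integrating against the quadratic $|Y_x|^2$ produces Gaussian-type contractions that are once more quadratic combinations of $G^{(x)}$-entries bounded by the same Schwarz-plus-Ward strategy within $\OO_\prec(\Phi^2\Gamma^2)$, while any non-contracted residual has $Q$-averaging form and is again absorbed by Lemma \ref{Q1}.

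The substantive piece is $\sum_x b_x(P_x|G_{xx}|^2-|M_x|^2)\,\tilde T^{(x)}_{x\star}$, where a naive pointwise bound $|P_x|G_{xx}|^2-|M_x|^2|\prec\Phi$ would fall one factor of $\Phi$ short. From the identity $P_x(1/G_{xx})-1/M_x = -\sum_\alpha s_{x\alpha}(G^{(x)}_{\alpha\alpha}-M_\alpha)$ (a direct consequence of $1/G_{xx}=H_{xx}-z_x-(HG^{(x)}H)_{xx}$ and the self-consistent equation \eqref{falvww}), $P_x|G_{xx}|^2-|M_x|^2$ equals a linear combination of $s_{x\alpha}(G^{(x)}_{\alpha\alpha}-M_\alpha)$ modulo $\OO_\prec(\Phi^2)$; substituting $G^{(x)}_{\alpha\alpha}=G_{\alpha\alpha}+\OO_\prec(\Phi^2)$ and interchanging the summation order, the leading contribution becomes $\sum_\alpha(G_{\alpha\alpha}-M_\alpha)\sum_x b_x s_{x\alpha}\tilde T^{(x)}_{x\star}$, and the decomposition $G_{\alpha\alpha}-M_\alpha = Q_\alpha(G_{\alpha\alpha}) + (P_\alpha G_{\alpha\alpha}-M_\alpha)$ recasts it as a $Q_\alpha$-averaged sum with $H^{[\alpha]}$-measurable weight of size $\OO_\prec(\Phi^2)$, precisely an instance of Lemma \ref{Q1}. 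I expect the main obstacle to be a purely bookkeeping one: propagating independences through the chain of resolvent expansions, tracking boundary terms ($\alpha=x$, $\alpha=\star$, or $x$ within $\OO(W)$ of $\star$), and checking that the iterated $P$-to-$Q$ reductions terminate at an instance of Lemma \ref{Q1} with $\OO(1)$ coefficients. All the heavy graphical $Q$-averaging work is already contained in Lemma \ref{Q1}/\ref{Qpart}, so the role of Section \ref{sec_Ppart} is confined to this reduction.
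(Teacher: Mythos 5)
Your decomposition of $\E_x|G_{x\star}|^2-|M_x|^2T_{x\star}$ is essentially the same starting point as the paper's, but the error analysis misclassifies the genuinely hard pieces. The difference $\tilde T^{(x)}_{x\star}-T_{x\star}$ produces, via $G^{(x)}_{\alpha\star}=G_{\alpha\star}-G_{\alpha x}G_{x\star}/G_{xx}$, the cross term $-2\re\sum_\alpha s_{x\alpha}G_{\alpha\star}\overline{G_{\alpha x}}\,\overline{G_{x\star}}/\overline{G_{xx}}$, and after summing against $b_x$ a Schwarz-plus-Ward argument gives only $\OO_\prec(\Gamma^2\Phi)$, not $\OO_\prec(\Gamma^2\Phi^2)$: of the three off-diagonal factors, two must be spent in the sums over $x$ and $\alpha$ to produce $\Gamma^2$ (via \eqref{tri} or \eqref{chsz4}), leaving a single $\Phi$ from $G_{\alpha x}$, while bounding two factors by $\Phi$ instead forces an $N^{d/2}$ from the remaining single sum, exactly as in \eqref{bad_nest}. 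The same deficit occurs in your third piece: after taking $\E_x$ of $Q_x(|G_{xx}|^2)|Y_x|^2$ the leading (contracted) output is $\sum_{w\ne w'}s_{xw}s_{xw'}G_{ww'}G_{w\star}\overline{G_{w'\star}}$, again only $\OO_\prec(\Gamma^2\Phi)$ pathwise, and since $\E_x$ has already been taken there is no ``non-contracted residual in $Q$-form'' left to feed into Lemma \ref{Q1}. These trilinear sums are precisely the main terms of the paper's proof, i.e.\ \eqref{gush1}; the missing factor of $\Phi$ is recovered only by Lemma \ref{478}, which exploits the same-charge pair $G_{w\star}G_{wx}$ and the $(1-M^2S)^{-1}$ self-consistent equation to insert $Q_w$'s, and only then does Lemma \ref{Q1} apply. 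Your proposal never performs this step (which, as noted at the end of Section \ref{sec_colors}, is exactly the step that would fail for opposite charges), so the argument does not close at the stated order $\Gamma^2\Phi^2+1$.

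A secondary problem: the piece you single out as substantive, $(P_x|G_{xx}|^2-|M_x|^2)\tilde T^{(x)}_{x\star}$, is in fact the easy one. By Lemma \ref{qinggan}, $G_{xx}-M_x=M_x^2\cal Z_x+\OO_\prec(\Phi^2)$ with $P_x\cal Z_x=0$, hence $P_x|G_{xx}|^2-|M_x|^2=\OO_\prec(\Phi^2)$; combined with $\tilde T^{(x)}_{x\star}\prec\Psi_{x\star}^2$ and \eqref{chsz4}, the sum over $x$ is $\OO_\prec(\Gamma^2\Phi^2)$ directly. Your alternative reduction to $\sum_\alpha Q_\alpha(G_{\alpha\alpha})\,w_\alpha$ with $w_\alpha=\sum_x b_x s_{x\alpha}\tilde T^{(x)}_{x\star}$ is, moreover, not an instance of Lemma \ref{Q1}: that lemma concerns the specific form $\sum_{x,\alpha}c_{x\alpha}Q_x(G_{x\star}G_{x\alpha}\overline{G_{\alpha\star}})$ with deterministic coefficients, whereas your weight $w_\alpha$ is random and correlated with $G_{\alpha\alpha}$, so a separate fluctuation-averaging statement (of the type invoked inside Lemma \ref{qinggan}) would be needed — though this detour is unnecessary once the main-term issue above is resolved along the lines of Lemma \ref{478} and Lemma \ref{Q1}.
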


\begin{lemma}\label{Qpart}
Suppose the assumptions of Theorem \ref{YEniu} hold, and $b_x$ are real deterministic coefficients satisfying $\max_x |b_x|=\OO( 1)$.
Then for any fixed (large) $p\in 2\N$ and (small) $\tau>0$, we have  
\be\label{eqn-Qpart}
\E\Big|\sum_{x:x\ne \star} b_x  Q_x |G_{x \star}|^2\Big|^{ p}   \le   \left[N^{\tau } \left(\Gamma^2\Phi^2 + 1 \right)\right]^{ p}
\ee
for large enough $N$.
\end{lemma}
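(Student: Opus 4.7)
The plan is to estimate the $p$-th moment by expanding it as a sum of expectations of products of resolvent entries and tracking them via the graphical framework outlined in Sections \ref{sec_nested}--\ref{sec_colors}. Writing the moment as
\[
\E\Bigl|\sum_{x\neq \star} b_x Q_x|G_{x\star}|^2\Bigr|^{p}
= \sum_{\mathbf{x}} c_{\mathbf{x}}\, \E\prod_{i=1}^{p} Q_{x_i}\bigl(\mathcal{G}_{x_i}\bigr),
\qquad \mathcal{G}_{x_i} \in \{G_{x_i\star}\overline{G_{x_i\star}}\},
\]
I would first attach to each factor $Q_{x_i}(\mathcal{G}_{x_i})$ a distinct color ($Q_{x_i}$-color), group atoms within $W$-neighborhoods of each free $x_i$ into a molecule $[x_i]$, and represent the whole expression as a colored two-level graph (the molecular graph plus the inner structure of each molecule). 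The initial graph is manifestly IPC nested at the molecular level: each free molecule $[x_i]$ has two edge-disjoint paths ($G_{x_i\star}$ and $\overline{G_{x_i\star}}$) to $\star$. The target bound $(\Gamma^2\Phi^2+1)^p$ will arise by distributing a factor $\Gamma^{2p}$ from IPC nestedness (bounding inter-molecular edges by $\Psi_{[x_i][x_j]}$ variables and summing in the nested order via \eqref{Psi1}) and a factor $\Phi^{2p}$ from the $Q_{x_i}$ operators.

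Next I would carry out Step~1: perform the resolvent expansions of Lemma~\ref{lem_exp1}/Lemma~\ref{lemmai} only on edges incident to the free atoms, checking (via the observation that every expansion replaces an edge $\alpha\beta$ by a path between atoms in the same molecules as $\alpha,\beta$) that the IPC nested structure of the molecular graph is preserved. This extraction should already yield one $\Phi$ factor per free molecule, giving $\Gamma^{2p}\Phi^{p}$. Step~2 then removes the colors, rewriting each $\prod Q_{x_i}$ as a linear combination of colorless monomials; a key sub-lemma here will be that color removal does not decrease the IPC nested connectivity at the molecular level, because it merely reindexes resolvent entries without adding or deleting inter-molecular edges.

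Step~3 is to manipulate each graph so that inside every free molecule $[x_i]$ we can exhibit a \emph{simple charged atom}: an atom $\alpha_i$ with $|\alpha_i-x_i|\lesssim W$ that is incident to exactly two edges $G_{\alpha_i\beta_{1}}G_{\alpha_i\beta_{2}}$ pointing outside $[x_i]$, with matching (non-conjugate) charges. When $\alpha_i$ is decorated with extra edges, I would peel them off by further resolvent expansions inside the molecule, which is possible because the IPC nested structure depends only on the molecular skeleton. Step~4 then applies the self-consistent identity \eqref{charged_intro1} to each simple charged atom separately, converting $G_{\alpha_i\beta_{1}}G_{\alpha_i\beta_{2}}$ into $\sum_{\gamma}(1-M^2S)^{-1}_{\alpha_i\gamma}\bigl[Q_\gamma(G_{\gamma\beta_1}G_{\gamma\beta_2})+\mathcal{E}_\gamma\bigr]$. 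Using \eqref{tianYz}, $\gamma$ stays within molecule $[x_i]$ up to negligible tails, so IPC nestedness is preserved; the new $Q_\gamma$ inside the molecule then yields the \emph{second} $\Phi$ factor when one applies the same $P_\gamma A - P_\gamma A^{(\gamma)}$ identity as in \eqref{fluc example}. After handling the $p$ simple charged atoms one by one---each iteration requires redoing Steps~2--3 on the updated graphs---one arrives at $\Gamma^{2p}\Phi^{2p}$ plus error terms that are bounded by $1^p$ using \eqref{intro_semi}, giving \eqref{eqn-Qpart}.

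The hard part, which will dominate the length of the actual proof, is Step~3 combined with the iterative nature of Step~4: each application of \eqref{charged_intro1} can spawn new edges and destroy the ``simple'' status of charged atoms in \emph{other} molecules, so one must argue inductively that a carefully chosen ordering of molecules and a controlled book-keeping of the graph expansions keeps the IPC nested property of the molecular skeleton intact throughout all $p$ iterations, while also ensuring the error terms $\mathcal{E}_\gamma$ never spoil the $\Phi$-gain. The other subtlety is that the argument from \eqref{charged_intro1} fails for $G_{\alpha\beta_1}\overline{G_{\alpha\beta_2}}$ edges, so the graph reduction in Step~3 must specifically produce same-charge pairs of edges at each simple charged atom, which requires exploiting the structure $|G_{x\star}|^2 = G_{x\star}\overline{G_{x\star}}$ of the original summand and tracking charges carefully through all expansions.
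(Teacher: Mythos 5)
You have essentially reproduced the paper's own roadmap (Steps 0--4 of the introduction), and nothing in your outline points in a wrong direction: colors for the $Q_{x_i}$'s, molecules and the two-level structure, IPC nestedness preserved under resolvent expansions, color removal, creation of simple charged atoms, and the self-consistent identity \eqref{charged_intro1} are exactly the ingredients the paper uses for Lemma \ref{Qpart}. But as written the proposal stops at the level of that roadmap, and the two places where you defer the work are precisely where the mathematical content of the proof lies, so there is a genuine gap rather than a mere shortening. First, the ``one $\Phi$ factor per free molecule'' from Step 1 does not come from the expansions themselves: it comes from the identity $P_{x_j}Q_{x_j}=0$, which kills every fully expanded term in which a free index $x_j$ is not visited by the factors $\prod_{i\ne j}\cal G_{\gamma,i}$ (condition \eqref{gaszj}); the same bookkeeping forces $\deg(x_j)\ge 4$ with the ``3 positive $+$ 1 negative'' charge pattern \eqref{3113}, and that parity fact is what later guarantees that every simple free molecule contains a same-charge (simple charged) atom. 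You correctly note that Step 3 must produce same-charge pairs, but you give no mechanism for it; in the paper it is an automatic consequence of the Step 1/Step 2 charge accounting, not something arranged afterwards.

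Second, the iteration you flag as ``the hard part'' needs an explicit inductive invariant to terminate and to remain summable, and the proposal supplies none. The paper's invariant is the notion of a standard forest (Definition \ref{def: SF}) together with the counting bound of Lemma \ref{kaofu22}: a standard forest with $t$ free molecules, $t_2$ molecules in $\cal Pol_2$ and $t_s$ simple free molecules is $\OO_\tau\bigl(\Phi^{t_2+2t-t_s}\Gamma^{t_2+2t}\bigr)$, so the whole problem reduces to writing $\E\cal F$ as $\OO(1)$ many standard forests with $t_s=0$. Lemmas \ref{lemEk}, \ref{sjiyyz}, \ref{yuizhy765} and \ref{yuiz3i87} then show that color removal, regularization of simple free molecules, and the application of \eqref{charged_intro1} --- in the precise form of Lemma \ref{qingqz}, whose error graphs each carry a light weight, an internal off-diagonal edge, or an internal dashed line, i.e.\ an extra $\Phi$ or $W^{-d}$ --- each output $\OO(1)$ many standard forests in which the number of simple free molecules strictly decreases, while the IPC structure and the coefficient bounds \eqref{Calphak} are preserved. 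Your proposal asserts that ``a carefully chosen ordering and controlled book-keeping'' will do this but does not identify the invariant, the strict-decrease statement, or why the error terms of \eqref{charged_intro1} cannot recreate simple free molecules; it also treats the coinciding-index configurations (the polymers $\cal Pol_1$, $\cal Pol_2$, which are the source of the ``$+1$'' and of the $\Gamma\Phi$ contributions in \eqref{eqn-Qpart}) only with the vague remark that errors are ``bounded by $1^p$''. Until these are filled in, the argument is a correct plan but not a proof.
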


With Lemma \ref{Ppart} and Lemma \ref{Qpart}, using Markov's inequality we can prove \eqref{GM2}.
\end{proof}

\subsection{Proof of Theorem \ref{comp_delocal}.}\label{sec mainaddpf}\ 
In the proof, we shall use tacitly the following basic properties of stochastic domination $\prec$. 

\begin{lemma}[Lemma 3.2 in \cite{isotropic}]\label{lem_stodomin}
Let $\xi$ and $\zeta$ be two families of nonnegative random variables. Let $C>0$ be any constant. 

(i) Suppose that $\xi (u,v)\prec \zeta(u,v)$ uniformly in $u\in U$ and $v\in V$. If $|V|\le N^C$, then $\sum_{v\in V} \xi(u,v) \prec \sum_{v\in V} \zeta(u,v)$ uniformly in $u$.

(ii) If $\xi_1 (u)\prec \zeta_1(u)$ and $\xi_2 (u)\prec \zeta_2(u)$ uniformly in $u\in U$, then $\xi_1(u)\xi_2(u) \prec \zeta_1(u)\zeta_2(u)$ uniformly in $u$.

(iii) Suppose that $\Psi(u)\ge N^{-C}$ is deterministic and $\xi(u)$ satisfies $\mathbb E\xi(u)^2 \le N^C$ for all $u$. Then if $\xi(u)\prec \Psi(u)$ uniformly in $u$, we have $\mathbb E\xi(u) \prec \Psi(u)$ uniformly in $u$.
\end{lemma}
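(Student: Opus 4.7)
All three parts are routine consequences of the definition of $\prec$, relying on union bounds for (i) and (ii) and on Cauchy–Schwarz together with a truncation for (iii). The guiding principle throughout is the flexibility in choosing the probability exponent $D$ arbitrarily large in the definition of $\prec$, which absorbs the polynomial losses from union bounds and from the $N^{C/2}$ factor in Cauchy–Schwarz.

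For (i), fix small $\tau>0$ and large $D>0$, and note that the event $\{\sum_v \xi(u,v) > N^\tau \sum_v \zeta(u,v)\}$ is contained in $\{\exists\, v \in V: \xi(u,v) > N^\tau \zeta(u,v)\}$. By the uniform $\prec$ hypothesis, each individual event has probability at most $N^{-(D+C)}$ for large $N$, and a union bound over the at most $N^C$ elements of $V$ yields total probability at most $N^{-D}$, uniformly in $u$. For (ii), similarly, $\{\xi_1\xi_2 > N^\tau \zeta_1\zeta_2\} \subset \{\xi_1 > N^{\tau/2}\zeta_1\} \cup \{\xi_2 > N^{\tau/2}\zeta_2\}$, and a union bound over the two summands finishes.

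For (iii), since $\Psi$ is deterministic, the goal reduces to showing $\mathbb{E}\xi(u) \leq N^\tau \Psi(u)$ for any fixed $\tau>0$ once $N$ is large. I would split $\mathbb{E}\xi = \mathbb{E}[\xi \mathbf{1}_A] + \mathbb{E}[\xi\mathbf{1}_{A^c}]$ with $A = \{\xi \leq N^{\tau/2}\Psi\}$. The first term is trivially bounded by $N^{\tau/2}\Psi$. For the second, Cauchy–Schwarz yields
\[
\mathbb{E}[\xi \mathbf{1}_{A^c}] \;\leq\; \sqrt{\mathbb{E}\xi^2}\,\sqrt{\mathbb{P}(A^c)} \;\leq\; N^{C/2}\cdot N^{-D/2},
\]
where $D$ can be chosen arbitrarily large by the $\xi \prec \Psi$ hypothesis. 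Since $\Psi \geq N^{-C}$, taking $D \geq 3C$ makes this at most $N^{-C} \leq \Psi \leq N^{\tau/2}\Psi$, so $\mathbb{E}\xi \leq 2N^{\tau/2}\Psi \leq N^\tau \Psi$ for $N$ large. There is no substantial obstacle in this lemma; the only place where some care is needed is (iii), where both the moment hypothesis $\mathbb{E}\xi^2 \leq N^C$ and the lower bound $\Psi \geq N^{-C}$ are essential to close the Cauchy–Schwarz estimate without losing more than a polynomial factor $N^{\tau/2}$.
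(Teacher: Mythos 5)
Your proof is correct: the union-bound arguments for (i) and (ii) and the truncation plus Cauchy--Schwarz argument for (iii), using the freedom to take $D$ arbitrarily large together with $\mathbb{E}\xi^2\le N^C$ and $\Psi\ge N^{-C}$, are exactly what is needed. The paper itself gives no proof of this lemma (it simply cites Lemma 3.2 of the reference), and your argument coincides with the standard proof given there, so there is nothing to add.
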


Theorem \ref{comp_delocal} is a corollary of the following theorem, which gives estimates on the resolvent entries that are much finer than the one in Theorem \ref{semicircle}.

\begin{theorem}\label{main_thm}
Suppose the assumptions of Theorem \ref{comp_delocal} hold. 
Fix any constants $\kappa>0$ and $c>0$. Then for any fixed $\delta>0$, we have
\be\label{strong_semicircle}
\max_{x,y}|G_{xy}(z)-\delta_{xy}m(z)|  \prec W^{-d/2}
\ee
uniformly in $z\in \{z = E+\ii \eta \in \mathbf D(\kappa, \delta/2):\eta \ge W^{2+\delta}/{N^2}\}$. Moreover, for any $l\le N^{1-c}$,  
we have
\be\label{priori_weak}
\frac{\eta}{\im m}\sum_{x: |x-y| \le l} |G_{xy}(z)|^2 \prec N^{-2c}W^\delta,
\ee
for all $z=E+\ii \eta$ with $E\in (-2+\kappa, 2-\kappa)$ and $\eta={W^{2+\delta}}/{N^2}.$
\end{theorem}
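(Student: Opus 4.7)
The plan is to derive both \eqref{strong_semicircle} and \eqref{priori_weak} from a bootstrap built on the $T$-equation \eqref{T0} and fueled by the averaging fluctuation estimate of Theorem \ref{YEniu}. As initial inputs I would use the local semicircle law \eqref{intro_semi}, which furnishes the rough entrywise bound $\Phi_0^2 = W^\epsilon/(W^d\eta)$ (valid because $\eta\ge W^{2+\delta}/N^2 \gg W^{-d}$ under $N\prec W^{1+d/2}$), together with Ward's identity \eqref{Ward}, which delivers $\tnorm{G}^2\prec \eta^{-1}$ and therefore lets us take $\Gamma^2=\eta^{-1}$ in Theorem \ref{YEniu}.

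\emph{Stage 1: the entrywise bound \eqref{strong_semicircle}.} Fix $y$ and analyze \eqref{T0}. By \eqref{tianYz} from Lemma \ref{inversebound} together with the finite range of $S$, the kernel $[(1-S|M|^2)^{-1}S]_{x\alpha}$ is pointwise of order $W^{-d}$ and, up to an $N^{-c\log N}$ error, supported on $|\alpha-x|\lesssim (\log N)^2 W$. Rescaling by $W^d$ produces coefficients $b_\alpha=W^d[(1-S|M|^2)^{-1}S]_{x\alpha}$ of size $\OO(1)$ to which Theorem \ref{YEniu} applies, giving $T_{xy}\prec W^{-d}(1+\Gamma^2\Phi^2)$. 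Feeding this back into the self-consistent-equation input \eqref{intro_Tself} of Lemma \ref{bneng} upgrades $\Phi^2$ to
\[
\Phi_{\mathrm{new}}^2 \prec W^{-d}\pb{1+\eta^{-1}\Phi^2}.
\]
Since $\eta\ge W^{2+\delta}/N^2$ and $N\prec W^{1+d/2}$ force $W^d\eta \gtrsim W^\delta$, this map contracts $\Phi^2$ by a factor $W^{-\delta}$ per step and reaches the fixed point $\Phi^2 \prec W^{-d+\OO(\delta)}$ after $\OO_{d,\delta}(1)$ iterations. Redefining $\delta$ yields \eqref{strong_semicircle}.

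\emph{Stage 2: the averaged bound \eqref{priori_weak}.} Summing \eqref{T0} over $x$ in the ball $|x-y|\le l$, the summed coefficient $b'_\alpha:=\sum_{x:|x-y|\le l}[(1-S|M|^2)^{-1}S]_{x\alpha}$ is of size $\OO(l^2/W^2)$ by the diffusion estimate \eqref{T_behav} (for $d\ge 2$, $\sum_{|x|\le l}(W^d+W^2|x|^{d-2})^{-1}\sim l^2/W^2$). Rescaling by $W^2/l^2$ and feeding in the Stage-1 input $\Phi^2 \prec W^{-d+\OO(\delta)}$, Theorem \ref{YEniu} yields
\[
\sum_{x:|x-y|\le l} T_{xy} \prec \frac{l^2}{W^2}\pb{1+\eta^{-1}\Phi^2}.
\]
Passing from $T$ to $|G|^2$ via \eqref{intro_Tself2} and multiplying by $\eta$ gives $\eta\sum_{x:|x-y|\le l}|G_{xy}|^2\prec (l^2/W^2)(\eta+\Phi^2)$. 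Substituting $l\le N^{1-c}$, $\eta=W^{2+\delta}/N^2$, and $N^2\prec W^{d+2}$ makes both terms $\prec W^{\OO(\delta)}N^{-2c}$, which is \eqref{priori_weak} after absorbing $\im m \sim 1$.

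\emph{Main obstacle.} The delicate point is ensuring that the Stage-1 bootstrap only invokes Theorem \ref{YEniu} a bounded number of times, since the remark following that theorem warns that each invocation costs a bit of the $(\tau,D)$ budget. This is secured by the strict contraction factor $W^{-\delta}$: the gap between the initial $\Phi_0^2$ and the target $W^{-d}$ is at most a fixed power of $W$, so $\lceil d/\delta\rceil + \OO(1)$ iterations suffice, a number that depends only on $d,\delta$ and not on $N$. A secondary task is confirming the diffusion estimate \eqref{T_behav} on the regime $\eta\ge W^{2+\delta}/N^2$ where $N\prec W^{1+d/2}$ is used — but this is standard and is carried out in Appendix \ref{appd}.
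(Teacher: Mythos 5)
Your overall route is the same as the paper's: take $\Phi=(W^d\eta)^{-1/2}$, $\Gamma^2=\eta^{-1}$ from Theorem \ref{semicircle} and Ward's identity, feed the $T$-equation \eqref{T00} into Theorem \ref{YEniu} to get $T_{xy}\prec W^{-d}(1+\eta^{-1}\Phi^2)$, iterate via Lemma \ref{bneng} a bounded ($\OO(\delta^{-1})$) number of times to reach \eqref{strong_semicircle}, and then sum the $T$-equation over the ball with coefficient $\sim l^2/W^2$ to get \eqref{priori_weak}; your arithmetic in both stages matches the paper's.

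The one genuine defect is your justification of the coefficient bound in Stage 1. You invoke \eqref{tianYz} of Lemma \ref{inversebound} for the kernel $\bigl[(1-S|M|^2)^{-1}S\bigr]_{x\alpha}$, but that estimate concerns $(1-M^2S)^{-1}$, not $(1-|M|^2S)^{-1}$; the distinction is exactly the point stressed at the end of Section \ref{sec_colors}: $1-M^2S$ is nicely invertible because $\im m\gtrsim 1$, whereas $|m|=1-\OO(\eta)$ makes $(1-|m|^2S)^{-1}S=\Theta$ a long-range diffusive kernel. In particular your claim that the kernel is supported (up to $N^{-c\log N}$) on $|\alpha-x|\lesssim(\log N)^2W$ is false — $\Theta$ has the polynomial tail $W^{-2}\langle x-\alpha\rangle^{-(d-2)}$ plus a flat piece $(N^d\eta)^{-1}$, as in \eqref{zlende}. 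The repair is available and is what the paper uses: only $\|\mathbf b\|_\infty=\OO(1)$ is needed for Theorem \ref{YEniu} (no support condition), and $\max_\alpha\Theta_{x\alpha}\prec (N^d\eta)^{-1}+W^{-d}\lesssim W^{-d}$ in the regime $\eta\ge W^{2+\delta}/N^2$, $d\ge2$, by the estimate \eqref{zlende} proved in Appendix \ref{appd} — the same estimate you already rely on in Stage 2 (where, incidentally, you should keep the $(N^d\eta)^{-1}$ contribution $l^d/(N^d\eta)$ and check it is dominated by $l^2/W^2$, as it is here). A minor bookkeeping point: the fixed point of the iteration is $\Phi^2\prec W^{-d}$ directly (the $N^{\oo(1)}$ losses from the $\OO(1)$ applications of Theorem \ref{YEniu} are absorbed into $\prec$), so there is no need to "redefine $\delta$" — which would in any case alter the spectral domain in the statement.
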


Recall the Ward's identity \eqref{Ward} for the resolvent entries,
by \eqref{strong_semicircle}, we then have
$$ \frac{\eta}{\im m}\sum_{x} |G_{xy}|^2 = \frac{\im G_{yy}}{\im m} = 1+ \OO_\prec ( W^{-d/2})$$
for any $z\in \mathbf D(\kappa,\delta/2)$. Hence, for all $y$, $\left(\sqrt{\frac{\eta}{\im m}} G_{xy}\right)_{x\in \Z_N^d}$ is approximately a unit vector. The estimate \eqref{priori_weak} then means that this column vector of resolvent entries cannot be localized on any scale $l\ll N$. 

\begin{proof}[Proof of Theorem \ref{comp_delocal}]
Given \eqref{strong_semicircle} and \eqref{priori_weak} with $\delta>0$ being an arbitrarily small constant, the proof is exactly the same as the one for \cite[Proposition 7.1]{delocal}.
\end{proof}

It remains to prove Theorem \ref{main_thm}. We first record the following local law of $G(z)$ proved in \cite{Bulk_generalized,Semicircle}. It will serve as an a priori estimate for the proof of Theorem \ref{comp_delocal}. 
 
 \begin{theorem}[Local law]\label{semicircle}
Suppose the Assumption \ref{jyyuan2} holds. For any constants $\kappa, \delta >0$, we define the spectral domain
\be\label{spectral_D}
\mathbf D\equiv \mathbf D(\kappa,\delta):=\{z=E+\ii\eta: |E|\le 2-\kappa, \eta \ge W^{-d+\delta} \}.
\ee
Then the following local law holds uniformly in $z\in \mathbf D(\kappa,\delta)$:
\be\label{jxw}
\max_{x,y}|G_{xy}(z) - \delta_{xy}m(z)| \prec ( W^d\eta)^{-1/2}. 
 \ee 
\end{theorem}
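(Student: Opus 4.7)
The plan is to prove Theorem~\ref{semicircle} via the standard self-consistent equation strategy for local laws of band matrices, as carried out in \cite{Bulk_generalized, Semicircle}. Since Theorem~\ref{semicircle} only aims at the crude precision $(W^d\eta)^{-1/2}$, one does \emph{not} need the refined fluctuation averaging Theorem~\ref{YEniu} of the present paper; the classical fluctuation averaging from \cite{EKY_Average} suffices. Throughout I write $z = E+\ii\eta$ with $E\in[-2+\kappa,2-\kappa]$.

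First I would derive the approximate self-consistent equation for $G_{xx}$ from Schur's complement formula:
\[
\frac{1}{G_{xx}} \;=\; h_{xx} - z - \sum_{y,y' \neq x} h_{xy} G^{(x)}_{yy'} h_{y'x}
\;=\; -z - \sum_{y} s_{xy} G_{yy} + \Upsilon_x,
\]
where the error $\Upsilon_x$ collects the diagonal contribution $h_{xx}$, the quadratic fluctuation $(1-\mathbb E_x)\sum_{y,y'} h_{xy}G^{(x)}_{yy'}h_{y'x}$, and the rank-one correction $\sum_y s_{xy}(G_{yy}-G^{(x)}_{yy})$. Each piece can be controlled by Hanson–Wright type large deviation bounds for quadratic forms in the independent row $(h_{xy})_y$, combined with the Ward identity $\sum_{y} |G^{(x)}_{\alpha y}|^2 = \eta^{-1}\im G^{(x)}_{\alpha\alpha}$. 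Assuming the a priori control $\max_x \lvert G_{xx}-m\rvert \le \Lambda$ for some $\Lambda \ll 1$, these estimates yield
\[
|\Upsilon_x| \;\prec\; \sqrt{\tfrac{\Lambda + \im m}{W^d \eta}} \;+\; (W^d\eta)^{-1}.
\]

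Next, I would invoke the stability of the self-consistent equation as captured by Lemma~\ref{UE}: setting $z_x := z + \Upsilon_x$ and applying Lemma~\ref{UE} to the perturbed system gives $\max_x|G_{xx} - m(z)| \le C|\Upsilon_x|$ provided $\max_x|\Upsilon_x|$ is small enough. Feeding this back into the estimate of $\Upsilon_x$ and solving the resulting quadratic inequality produces the desired bound $\max_x |G_{xx} - m| \prec (W^d\eta)^{-1/2}$, and then the off-diagonal bound follows from
\[
G_{xy} \;=\; -G_{xx}\sum_{\alpha \neq x} H_{x\alpha} G^{(x)}_{\alpha y}, \qquad x\neq y,
\]
together with a second large deviation estimate applied to this linear form, using the Ward identity bound $\sum_\alpha s_{x\alpha}|G^{(x)}_{\alpha y}|^2 \prec (W^d\eta)^{-1}$ that comes out of the diagonal control already established.

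The substantive issue is that the self-consistent argument requires the a priori smallness $\Lambda \ll 1$, which is trivially true only for $\eta = O(1)$ via the norm bound $\|G\|\le\eta^{-1}$. To reach $\eta\ge W^{-d+\delta}$, I would run a standard continuity bootstrap in $\eta$: discretize $\eta$ on a grid of spacing $N^{-10}$, note that $G(z)$ is Lipschitz in $z$ with constant $\eta^{-2}\le N^{20}$ on this grid, and show that if the sharp bound holds at $z$ with spectral parameter $\eta$ then it also holds at $\eta' = \eta(1-N^{-3})$. The induction step uses the dichotomy from stability: either $|G_{xx}-m|\le C|\Upsilon_x|$ or $|G_{xx}-m|\ge c$, and the latter is ruled out by continuity from the preceding grid point. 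The main obstacle is precisely this gap-based dichotomy: it must be arranged so that the a priori hypothesis at $\eta$ already excludes the ``bad'' branch at $\eta'$, which forces the bootstrap to proceed in steps much smaller than the current $\Lambda$ itself; this is where the classical fluctuation averaging of \cite{EKY_Average} enters to show $\bigl|N^{-d}\sum_x(G_{xx}-m)\bigr|$ is smaller than $\max_x|G_{xx}-m|$ by a further factor $(W^d\eta)^{-1/2}$, closing the induction down to $\eta \sim W^{-d+\delta}$.
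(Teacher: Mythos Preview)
The paper does not give its own proof of Theorem~\ref{semicircle}; it is simply recorded as a known input, with the proof deferred entirely to \cite{Bulk_generalized, Semicircle}. Your outline is essentially the argument from those references, so there is nothing to compare against in this paper.

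Two small comments on your sketch itself. First, invoking Lemma~\ref{UE} for stability works, but it is a slightly roundabout route: the direct mechanism is the bound on $(1-m^2S)^{-1}$ from Lemma~\ref{inversebound}, which turns the linearized self-consistent equation $(1-m^2S)(G_{\cdot\cdot}-m) = -m^2\Upsilon + O(\Lambda^2)$ into $\Lambda \prec \max_x|\Upsilon_x| + \Lambda^2$. Second, your final sentence overstates the role of fluctuation averaging: for the entrywise precision $(W^d\eta)^{-1/2}$ claimed in \eqref{jxw}, the basic quadratic inequality $\Lambda \prec \sqrt{(\im m + \Lambda)/(W^d\eta)} + \Lambda^2$ already closes the continuity bootstrap down to $\eta\ge W^{-d+\delta}$ without any averaging gain. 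The classical fluctuation averaging of \cite{EKY_Average} is what upgrades the bound on $\sum_y c_y \mathcal Z_y$ from $\Phi$ to $\Phi^2$ (cf.\ the paper's Lemma~\ref{qinggan}), yielding the finer expansion $G_{xx}=m+m^2\mathcal Z_x+O_\prec(\Phi^2)$, but that refinement is not needed for \eqref{jxw} itself.
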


The following lemma shows that the size of $\left\|G - M\right\|_{\max}^2$ is controlled by $\|T\|_{\max}$ with high probability.

  
\begin{lemma}[Lemma 2.1 of \cite{PartII}]\label{bneng} 
Suppose the assumptions of Theorem \ref{YEniu} hold. Suppose there is a probability set $\Omega$ such that
\be\label{cllo}
 \mathbf 1_{\Omega} \|G - M \|_{\max}\le N^{-\delta}, \quad  \mathbf 1_{\Omega}\|T\|_{\max}\le \Phi^2 , 
\ee 
for some constant $\delta>0$ and some deterministic parameter $W^{-d/2} \le \Phi \le N^{-\delta}$. 
Then for any fixed (small) $\tau>0$ and (large) $D>0$, 
\be\label{34}
\P\left({\bf 1}_{  \Omega} \|G - M\|_{\max}\ge N^\tau \Phi \right)\le N^{-D}.
\ee 
\end{lemma}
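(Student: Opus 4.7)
The plan is to upgrade the crude a priori bound $\|G-M\|_{\max}\le N^{-\delta}$ on $\Omega$ into the sharp stochastic bound $\|G-M\|_{\max}\prec \Phi$, using only the hypothesis $\|T\|_{\max}\le \Phi^2$. The relation $\Phi = \sqrt{\|T\|_{\max}}$ reflects the standard ``square root'' phenomenon: each off-diagonal Green's function entry is controlled in $L^\infty$ by the square root of a corresponding $T$-entry through a large deviation estimate for linear forms in $H$, while the diagonal entries are controlled by solving a perturbed self-consistent equation whose fluctuations are of the same order.

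For off-diagonal entries $x\ne y$, I would apply the identity $G_{xy}=-G_{xx}\sum_\alpha H_{x\alpha}G^{(x)}_{\alpha y}$ and a large deviation estimate for linear forms, which (using the moment assumption \eqref{high_moment}) gives
\[
|G_{xy}|\prec |G_{xx}|\sqrt{\sum_\alpha s_{x\alpha}|G^{(x)}_{\alpha y}|^2} \,=\, |G_{xx}|\sqrt{T^{(x)}_{xy}}.
\]
The minor-to-full resolvent identity $G^{(x)}_{\alpha\beta}=G_{\alpha\beta}-G_{\alpha x}G_{x\beta}/G_{xx}$ allows me to compare $T^{(x)}_{xy}$ with $T_{xy}$ up to quantities involving $\|G-M\|_{\max}^2\le N^{-2\delta}$ on $\Omega$. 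Combined with $|G_{xx}|\le |M_x|+N^{-\delta}=\OO(1)$, this yields $\mathbf{1}_\Omega|G_{xy}|\prec \Phi$ for $x\ne y$.

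For diagonal entries, I would use Schur's complement to write $G_{xx}^{-1}+z_x=-\sum_\alpha s_{x\alpha}G^{(x)}_{\alpha\alpha}-Z_x$, where $Z_x$ is the centered quadratic form in the $x$-th row of $H$. A Hanson-Wright-type large deviation estimate gives $|Z_x|\prec \sqrt{T^{(x)}_{xx}}\prec \Phi$. Substituting the minor-to-full identity and subtracting the fixed-point relation \eqref{falvww} for $M_x$, then multiplying through by $G_{xx}M_x=\OO(1)$, yields
\[
v_x \,=\, M_x^2\sum_\alpha s_{x\alpha}v_\alpha + \OO_\prec(\Phi),\qquad v_x:=G_{xx}-M_x,
\]
where the $\OO_\prec(\Phi)$ error absorbs both $Z_x$ and the quadratic self-improvement terms bounded by the off-diagonal estimate from the previous step. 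Inverting $I-M^2S$ via \eqref{gbzz2} of Lemma \ref{inversebound} gives $\|v\|_\infty \prec \Phi$, completing the proof.

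The main obstacle I anticipate is organizing the argument so that the improved off-diagonal bound can be used to justify the linearization of the self-consistent equation for the diagonal entries, while keeping all stochastic-domination losses controlled in terms of the target exponents $(\tau,D)$. This is technical but essentially mechanical once the pieces are in place; the crucial structural ingredient is the invertibility bound \eqref{gbzz2}, which is itself ensured by the smallness condition $\zeta+\max_x|z_x-z|\le c_0$ underlying the hypotheses of Theorem \ref{YEniu}.
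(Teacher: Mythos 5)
Your outline follows essentially the same route as the proof this lemma is imported from (the paper itself only cites Lemma 2.1 of \cite{PartII} and does not reprove it, but the same ingredients appear in its proofs of Lemma \ref{xiyan} and Lemma \ref{qinggan}): Schur complement/resolvent expansions plus large deviation bounds to control the off-diagonal entries and the fluctuation term by $\sqrt{\|T\|_{\max}}\le\Phi$ on $\Omega$, then the linearized self-consistent equation inverted via the stability bound \eqref{gbzz2}. The only points needing care---in the quadratic large deviation step the bound is by $\max_\beta\bigl(T^{(x)}_{x\beta}\bigr)^{1/2}+\OO(W^{-d/2})$ rather than $\bigl(T^{(x)}_{xx}\bigr)^{1/2}$, and the quadratic error in the diagonal equation is of size $N^{-\delta}\|G-M\|_{\max}$ and must be self-absorbed after inverting $1-M^2S$---are routine and consistent with what you sketch.
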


To bound the $T$-variables, we use the $T$-equation as in \eqref{T0}:
\be\label{T00}
T_{xy}=T_{xy}^{\,0}+ \sum_{w}^{(y)} \Theta_{xw} \left( |G_{w y}|^2-|m|^2T_{wy}\right) ,
\ee
where
\be\label{zlende}
\Theta_{xw}:=\left[(1-|m|^2S)^{-1}S\right]_{xw}=\OO_\prec\left( \frac1{N^d\eta}+\frac{1}{W^2 \langle x-w\rangle^{d-2}}\right).
\ee
For the second estimate, we prove it in Appendix \ref{appd}. 
Now we prove Theorem \ref{main_thm} using Theorem \ref{YEniu}. 

\begin{proof}[Proof of Theorem \ref{main_thm}]
By Theorem \ref{semicircle} and Ward's identity \eqref{Ward}, it is easy to see that \eqref{GM1} holds with
\be\label{priori1}
\Phi=(W^d\eta)^{-1/2}\; , \qquad \Gamma^2=\eta^{-1} \; ,
\ee
for any $z\in \mathbf D(\kappa, \delta/2)$. For the $T_{xy}^0$ in \eqref{T00}, we can bound it as
\be\label{Txy0}
T_{xy}^0=(|m|^2+\OO_\prec(\Phi))\Theta_{xy}. 
\ee
Then using \eqref{zlende}, \eqref{GM2} and \eqref{priori1}, we can bound \eqref{T00} as
$$T_{xy}\prec \left( \frac{1}{N^d\eta}+ \frac{1}{W^d}\right) \left( 1+ \frac{ \Phi^2 }{\eta}\right) .$$
For $ z = E+\ii \eta \in \mathbf D(\kappa, \delta/2)$ with $\eta \ge W^{2+\delta}/{N^2}$, the above estimate gives
$$T_{xy}\prec  W^{-d}  + W^{-\delta} \Phi^2$$
under the conditions $d\ge 2$ and \eqref{NW}. Together with Lemma \ref{bneng}, it implies the following self-improving estimate:
\be\label{Phi2}
\|G-m\|_{\max}\prec \Phi  \Rightarrow \|G-m\|_{\max}\prec W^{-d/2} + W^{-\delta/2}\Phi .
\ee
After $\OO(\delta^{-1})$ many iterations of \eqref{Phi2}, we can conclude \eqref{strong_semicircle}.

Then we prove \eqref{priori_weak}. We have
\begin{align} 
\sum_{x : |x-y|\le l} |G_{xy}|^2 = \sum_{x : |x-y|\le l} \sum_{w}s_{wx} |G_{xy}|^2  \le  \sum_{w : |w-y|\le l+C_s W} \sum_{x}s_{wx} |G_{xy}|^2 =  \sum_{x : |x-y|\le l+C_s W} T_{xy} \nonumber\\
\prec \max_{w}\Big(\sum_{x : |x-w|\le l+C_s W} \Theta_{xw}\Big)\left( 1 + \sum_{w:w\ne y} \wt b_w \left( |G_{w y}|^2-|m|^2T_{wy}\right) \right)  \label{fluc_aver0}
\end{align}
for some real deterministic coefficients $\wt b_w$ satisfying $\max_w |\wt b_w|=\OO(1)$. In the above derivations, we used \eqref{fxy} in the first step, \eqref{bandcw1} in the second step, the definition of $T$ variables \eqref{def: T} in the third step, and the $T$-equation \eqref{T00} in the last step. 
We can bound the sum in \eqref{fluc_aver0} with \eqref{GM2} and \eqref{priori1}. Also with \eqref{zlende}, it is easy to prove that
$$\max_{y}\Big(\sum_{x : |x-y|\le l+C_s W} \Theta_{xy}\Big) \prec \frac{l^d}{N^d \eta} + \frac{l^2}{W^2}.$$
  Thus for $z=E+\ii \eta$ with $E\in (-2+\kappa, 2-\kappa)$ and $\eta={W^{2+\delta}}/{N^2}$, we have
\begin{align*} 
\sum_{x : |x-y|\le l} |G_{xy}|^2 \prec \frac{l^2}{W^2}\left( 1 + \frac{\Phi^2}{\eta} \right) \prec  \frac{l^2}{W^2} + \frac{l^2}{W^{2+d}\eta},
\end{align*}
where we used \eqref{strong_semicircle} in the second step. Then using \eqref{NW}, we obtain that for $\eta={W^{2+\delta}}/{N^2}$,
$$\frac{\eta}{\im m}\sum_{x: |x-y| \le l} |G_{xy}(z)|^2 \prec \eta \frac{l^2}{W^2} + \frac{l^2}{W^{2+d}} \prec N^{-2c}W^{\delta}.$$
This proves \eqref{priori_weak}.
\end{proof}

\subsection{Basic ideas for the proof.} \label{sec_idea} \
In this subsection, we discuss the basic ideas for the proof of Lemma \ref{Ppart} and Lemma \ref{Qpart}. 
In the rest of this section, we focus on \eqref{eqn-Qpart}, while the proof for \eqref{eqn-Ppart} is actually easier.


We expand the left-hand side of \eqref{eqn-Qpart} as a sum of the products of $2p$ resolvent entries. In fact, keeping track of the correlations among all the resolvent entries in each large product is rather involved. For this purpose, a convenient graphical tool was developed in \cite{EKY_Average} to organize the calculation, where the indices are the vertices of the graphs and the resolvent entries are represented by the edges between vertices. Moreover, in this paper all the graphs are rooted graphs, with the root representing the $\star$ index. In this paper, we shall extend the arguments in \cite{EKY_Average} and develop a graphical representation with more structural details. Also as in \cite{EKY_Average}, estimating the high moments requires an unwrapping of the hierarchical correlation structure among several resolvent entries, which will be performed using resolvent expansions in Lemma \ref{lem_exp1} and Lemma \ref{lemmai}, such as  
\be\label{resol_examp}
G _{xy}=G_{xy}^{(\al)}+\frac{G_{x\al}G_{\al y}}{G_{\al\al}}, \quad  \al\notin \{x,y\}, \quad  \text{or} \quad G _{xy} = - G_{xx}\sum_{\al}H_{x\al}G^{(x)}_{\al y}, \quad x\ne y.
\ee
Here for any $a\in \Z_N^d$, $G^{(a)}$ denotes the resolvent of the $(N^d-1)\times (N^d-1)$ minor of $H$ obtained by removing the $a$-th row and column (see Definition \ref{minors} below). The resolvent expansions are represented by graph expansions, i.e. expanding a graph into a linear combination of several new graphs. For example, applying the first expansion in \eqref{resol_examp} to the $G_{xy}$ edge in a graph gives two new graphs, where in one of them the $G_{xy}$ edge is replaced by the two edges $G_{x\al}$ and $G_{\al y}$. For the second expansion in \eqref{resol_examp}, we will create a new vertex $\alpha$ in the graph, which is in the $W$-neighborhood of $x$.




 Comparing \eqref{rough1} with \eqref{rough3} or \eqref{eqn-Qpart}, one can notice that we essentially replace the $N^d\Phi^2$ factor with the $\Gamma^2$ factor in \eqref{rough3}. The origin of these two factors is as following. In the high moments calculation, terms like
\be\label{termsimple0}
\sum_{x} c_{x} G_{x y_1}G_{x y_2} \quad  \text{or} \quad \sum_{x} c_{x} G_{x y_1}\overline{G_{x y_2} } , \quad c_x=\OO(1),
\ee
will appear in the expressions. The authors in \cite{EKY_Average} bounded them by $N^d \Phi^2$, which is not good enough when we consider band matrices (although it is sharp for mean-field random matrices with $W=N$).
Instead, we shall use the better estimate 
\be\label{termsimple}
 \sum_{x} \left|G_{x y_1}G_{x y_2}\right| \prec  \Gamma^2
\ee
by the second estimate in \eqref{GM1} and Cauchy-Schwarz inequality. This is the very origin of the $\Gamma^2$ factor in \eqref{eqn-Qpart}. 
In the rest of this subsection, we discuss the main difficulties and the new ideas to resolve them. In particular, the graphical tool plays an essential role in our approach.  


\subsubsection{The nested property}\label{sec_nested}

In order to apply the bound $\Gamma^2$ to the expressions as in \eqref{termsimple}, the order of the summation is important. For example, using \eqref{termsimple} we can bound the following sum as 
\be\label{good_nest}
\begin{split}
 \sum_{x_1,x_2,x_3} \left|G_{x_1 \star} G_{x_1 x_2} G_{x_1 x_3} G_{x_2\star}^2 G_{x_2 x_3}\right| = \sum_{x_2}|G_{x_2\star}|^2  \sum_{x_1}\left|G_{x_1 \star} G_{x_1 x_2}   \right| \sum_{x_3}\left|G_{x_1 x_3} G_{x_2 x_3}\right| \\
 \prec \Gamma^2\sum_{x_2}|G_{x_2\star}|^2  \sum_{x_1}\left|G_{x_1 \star} G_{x_1x_2}   \right|  \prec  \Gamma^2\sum_{x_2}|G_{x_2\star}|^2  \prec   \Gamma^6.
\end{split}
\ee
However, in some cases, we may not be able to find such a summation order to get enough number of $\Gamma$ factors. For example, the following sum is also an average of the product of 6 resolvent entries, but we can only get
\be\label{bad_nest}
\begin{split}
\sum_{x_1,x_2,x_3} \left|G_{x_1 \star} G_{x_1x_2} G_{x_1 x_3} G_{x_2\star}G_{x_2x_3} G_{x_3\star}\right|= \sum_{x_3} \left|G_{x_3\star}\right| \sum_{x_2} \left|G_{x_2\star}G_{x_2x_3} \right|\sum_{x_1}\left|G_{x_1 \star} G_{x_1x_2} G_{x_1 x_3} \right| \\
 \prec   {\Gamma^2 \Phi} \sum_{x_3} \left|G_{x_3\star}\right| \sum_{x_2} \left|G_{x_2\star}G_{x_2x_3} \right|  \prec  {\Gamma^4 \Phi} \sum_{x_3} \left|G_{x_3\star}\right|  \prec   \Gamma^5 ( N^{d/2}\Phi ),
\end{split}
\ee
using \eqref{termsimple} and \eqref{GM1}, where one $\Gamma$ factor is replaced by a $N^{d/2}\Phi$ factor. (Note that we get the same bound if we sum over $x_2$ or $x_3$ first.) 
This example shows that in general, we are not guaranteed to get enough number of $\Gamma$ factors in the high moment estimate if the indices of some expression do not satisfy the following {\it well-nested property}. 
Given an average of certain product of resolvent entries over free indices $x_1, \ldots, x_p$, we shall say that these indices are {\it well-nested} if there exists a partial order $\star \preceq x_{i_1} \preceq \cdots \preceq {x_{i_p}} $ such that for each $1\le k \le p$, there exist at least two resolvent entries that have pairs of indices $(x_{i_k},x_{\alpha_k})$ and $(x_{i_k},x_{\beta_k})$ with $ x_{\alpha_k},x_{\beta_k} \preceq x_{i_k}$. (Here ``$\preceq$" means a partial order, not the stochastic domination.) Note that if the indices are well-nested, then one can sum according to the order $x_{i_p}\to \cdots \ldots \to x_{i_1}$ to get a $\Gamma^{2p}$ factor. 
In our proof, we always start with expressions with well-nested indices. However, after several resolvent expansions, it will be written as a linear combination of much more complicated averages of monomials of resolvent entries. It is often very hard to check that the indices in the new expressions are also well-nested. This is one of the main difficulties in our proof.

To resolve the above difficulty, we try to explore some property that guarantees well-nested summation indices and, at the same time, is robust under the resolvent expansions. In terms of the graphical language, the well-nested property of indices is translated into a structural property of the graphs, which we shall call the {\bf ordered nested property}. 
Suppose we want to estimate the $p$-th moment in \eqref{eqn-Qpart}. After some (necessary) resolvent expansions, we will have graphs containing vertices $\{x_1, \ldots, x_p, \star\}$. Roughly speaking, a graph $\mathcal G$ has {\it ordered nested property} if its vertices $\{x_1, \ldots, x_p, \star\}$ can be partially ordered in a way 
\be\label{order}
\star\preceq x_{i_1} \preceq x_{i_2} \preceq \cdots \preceq x_{i_p} 
\ee 
such that each of the vertex $x_{i_k}$, $1\le k\le p$, has at least two edges connecting to the {\it preceding atoms} (here we say $a$ precedes $b$ if $a\prec b$). For example, the left graph in Fig.\,\ref{fig1pdf} corresponding to \eqref{good_nest} has ordered nested property, while the right graph in Fig.\,\ref{fig1pdf} corresponding to \eqref{bad_nest} does not.

\begin{figure}[htb]
\centering
\includegraphics[width=11cm]{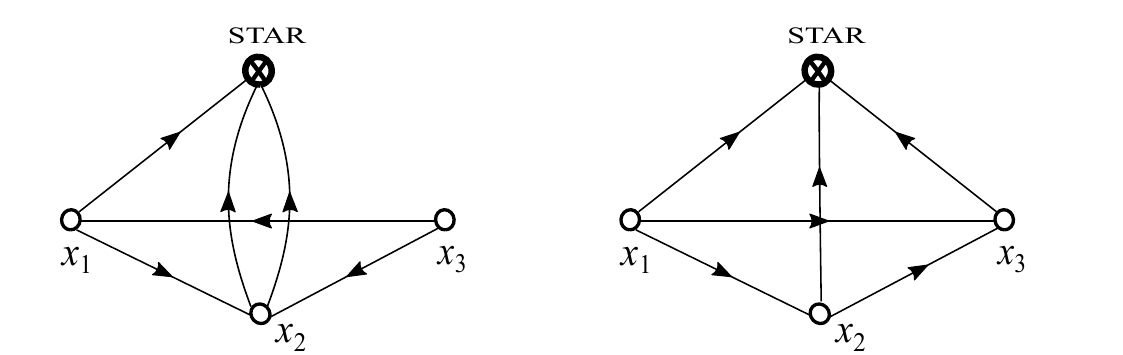}
\caption{The left graph represents \eqref{good_nest} and satisfies the ordered nested property with the order $\star \preceq x_2\preceq x_1 \preceq x_3$. The right graph represents \eqref{bad_nest} and does not satisfy the ordered nested property.}
\label{fig1pdf}
\end{figure}

Suppose a graph satisfies the ordered nested property with \eqref{order}, then one can sum over the vertices according to the order $\sum_{x_{i_1}} \sum_{x_{i_2}}\cdots \sum_{x_{i_p}}$. 
If the graph contains $2p+s$ edges, then $2p$ of them will be used in the above sum to give a $\Gamma^{2p}$ factor while the rest of the $s$ edges will be bounded by $\Phi^s$. 
However, the ordered nested property is hard to track under graph expansions, especially because the order of the vertices will change completely after each expansion. Fortunately, we find that the ordered nested property is implied by a stronger but more trackable structural property of graphs, which we shall call the {\bf independently path-connected (IPC) nested property}. A graph $\mathcal G$ with vertices $\{x_1, \ldots, x_p, \star\}$ is said to satisfy the {\it IPC nested property} (or has the {\it IPC nested structure}) if for each vertex, there are at least 2 separated paths connecting it to $\star$, and the edges used in these $2p$ paths are all distinct. One can show with pigeonhole principle that a graph with IPC nested structure always satisfies the ordered nested property. For example, the graphs in Fig.\,\ref{fig1pdf} do not satisfy the IPC nested property.  
On the other hand, the graphs in Fig.\,\ref{fig2pdf} have IPC nested structures and one can see that the vertices can be ordered as $\star\preceq x_2\preceq x_1$.

\begin{figure}[htb]
\centering
\includegraphics[width=11cm]{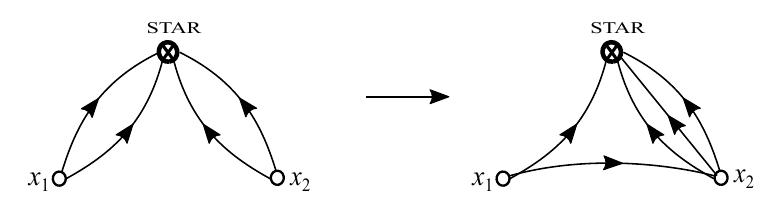}
\caption{The left graph represents $|G_{x_1  \star}|^2  |G_{x_2 \star}|^2 $. We apply the first resolvent expansion in \eqref{resol_examp} to $G_{x_1\star}$ and draw one of the new graphs on the right, where the $G_{x_1 \star}$ edge is replaced by two edges $G_{x_1x_2}G_{x_2\star}$, which still constitute a path from $x_1$ to $\star$. Here we omitted the $G_{x_2x_2}^{-1}$ factor in the second graph.}
\label{fig2pdf}
\end{figure}

In the proof, we always start with graphs with IPC nested structures. The main reason we introduce this stronger concept is that compared with the ordered nested property, it is much easier to check that the IPC nested property is preserved under resolvent expansions. 
Here the IPC nested property is preserved in the sense that if the original graph has IPC nested structure, then all the new graphs appeared in the resolvent expansions also have IPC nested structures. This in fact follows from a simple observation that, in resolvent expansions, we always replace an edge between vertices, say, $\alpha$ and $\beta$ with a path between the same two vertices $\alpha$ and $\beta$. In particular, the path connectivity from any vertex to the $\star$ vertex is unchanged. Hence we are almost guaranteed to have the IPC nested property (which implies the ordered nested property) at each step of our proof. However, we need to be very careful during the proof since the graph operations other than resolvent expansions may break the IPC nested structure, and this brings a lot of technical difficulties to our proof as we will see in Section \ref{sec_colors}.

\subsubsection {Two-level structures}  \label{sec_2level}
In estimating the $p$-th moment in \eqref{eqn-Qpart}, the initial graph will contain $p$ free indices, say $\{x_1, \ldots, x_p\}$. However, in some resolvent expansions, we will add new vertices to the new graphs, such as the new vertex $\alpha$ in the second expansion in \eqref{resol_examp}. Moreover, these indices lie within  $W$-neighborhoods around the free indices. Thus in general, we shall bound averages of products of the form
$$\prod_{i=1}^p \left( G_{\alpha^{(i)}_1\beta^{(i)}_1} \cdots G_{\alpha^{(i)}_{k_i}\beta^{(i)}_{k_i}}\right), \quad \max_{1\le k \le k_i}\Big|\alpha^{(i)}_k - x_i\Big| =\OO(W), \quad 1\le i \le p,$$
up to the choice of the charges of the resolvent entries. (Here the charge of a resolvent entry indicates whether it is a $G$ factor or a $\overline G$ factor.)
Unfortunately, the introduction of new indices breaks the connected paths from the free vertices to the $\star$ vertex. Hence we lose the IPC nested property of the free vertices $\{x_1, \ldots, x_p\}$, which, as we discussed above, helps us to get enough number of $\Gamma$ factors.

To handle this problem, we introduce the random variables $(\Psi_{xy})_{x,y \in \Z_N^d}$, see Definition \ref{lzzay}. They are roughly defined as the local $L^2$-averages of the $G$ entries with indices within $W$-neighborhoods of $(x,y)$:
$$|\Psi  _{xy}|^2:= {W^{-2d}}  \sum_{\max\{|x'-x|, |y'-y|\}\le N^\tau W}\left(|G_{x'y'}|^2+|G_{ y'x'}|^2\right),$$
for small constant $\tau>0$. 
It is easy to see that under \eqref{GM1},
\be\label{Psi1}
|\Psi  _{xy}| \prec N^\tau\Phi, \quad \sum_{x} |\Psi_{xy}|^2 \prec N^{2\tau}\Gamma^2.
\ee
The importance of the $\Psi$ variables is that they provide {\it locally uniform} bounds on the off-diagonal $G$ entries, i.e., for any free vertices $x_i$ and $x_j $,
\be\label{molecule1}
\max_{\max\{|\alpha - x_i|, |\beta - x_j |\}\le (\log N)^CW} \mathbf 1(\alpha \ne \beta)|G_{\alpha\beta}| \prec \Psi_{x_i x_j}.
\ee
This follows from a standard large deviation estimate; see the proof for (\ref{chsz2}). It then motivates us to organize the graphs according to certain subclasses of vertices. More specifically, we shall call the indices {\it{atoms}}, where the $\star$ index is called the $\star$ atom and the free indices $\{x_1, \ldots, x_p\}$ are called free atoms. We then group each free atom $x_i$ and the atoms within its $W$-neighborhood into a subclass called {\it{molecule}}, denoted by $[x_i]$. ({More precisely, an atom $\al$ belongs to the molecule $[x_i]$ only if $\al$ can only take values subject to the condition $|\al-x_i|\le N^\tau W$.} Note that even if an atom $\beta$ is {\it not} in the molecule $[x_i]$, {\it some} of its values can still lie in the $W$-neighborhood of $x_i$.) Here we are using the words ``atom" and ``molecule" in a figurative way. We now have a two-level structures for a particular graph, that is, the structure on the atomic level and the one on the molecular level (i.e., on the graph where each molecule is regarded as one vertex). We have the following simple observations: 
\begin{itemize}
\item although the graphs can keep expanding with new atoms added in, the graphs on the molecular level are always simple with the $\star$ atom and $p$ molecules $[x_i]$, $i=1, \ldots, p$;
\item by \eqref{molecule1}, for all the off-diagonal edges with one end in molecule $[x_i]$ and one end in molecule $[x_j]$, they can be bounded by the same $\Psi_{x_i x_j}$ variable;
\item the path connectivity from any molecule to the $\star$ vertex on the molecular level is preserved under resolvent expansions (since in each expansion, we replace some edge between atoms, say, $\alpha$ and $\beta$, with a new path between two atoms in the same molecules as $\alpha$ and $\beta$).
\end{itemize}
These facts together with \eqref{Psi1} make the molecular graphs and the $\Psi$ variables particularly suitable for defining the IPC nested property. That is, for a general graph, we say it satisfies the {IPC nested property} if the molecular graph with vertices $[x_i]$, $i=1,\ldots, p$, has this property. For this reason, we shall say that the IPC nested structure is an {\it inter-molecule structure}. For example, the molecular graph in Fig.\,\ref{fig3pdf} satisfies the IPC nested property. 
Now following the arguments in Section \ref{sec_nested}, as long as we keep the {IPC nested structure} of the molecular graphs, we can bound the inter-molecule edges by $\Psi$ variables, sum over the free indices according to the nested order, and apply the second bound in \eqref{Psi1} to get the desired factor $\Gamma^{2p}$ in the $p$-th moment estimate.

\begin{figure}[htb]
\centering
\includegraphics[width=15cm]{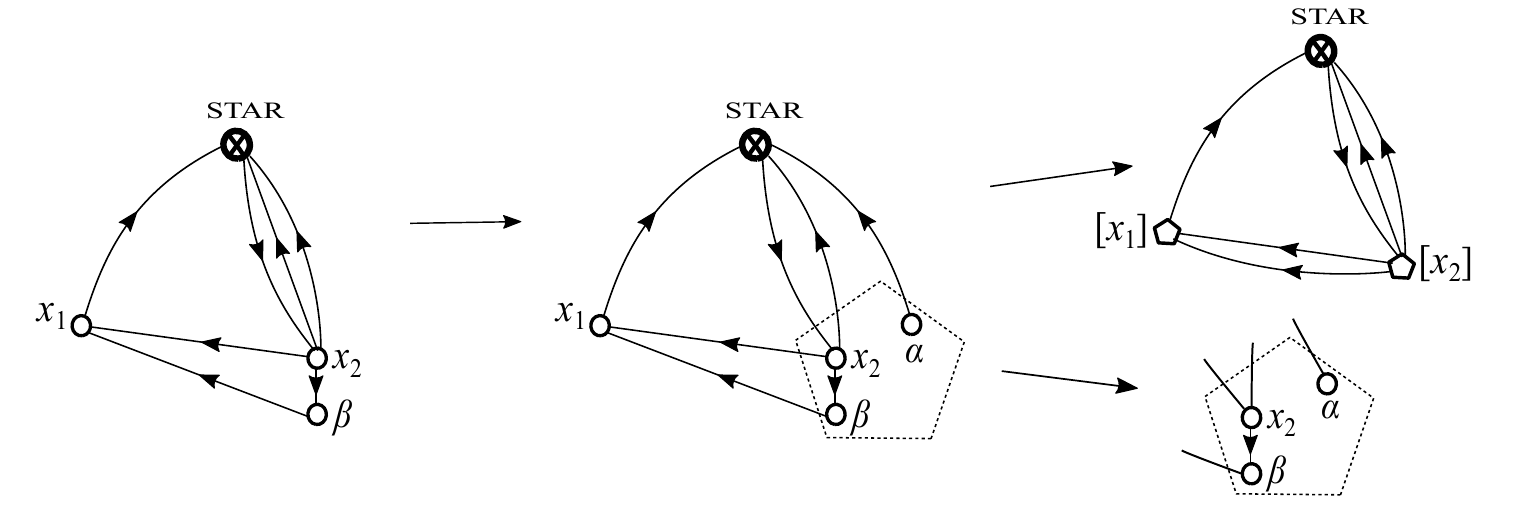}
\caption{Given a graph with two free atoms $x_1, x_2$ and an atom $\beta$ in the $W$-neighborhood of $x_2$, we perform the second resolvent expansion in \eqref{resol_examp} to the edge $G_{x_2\star}$ and get the middle graph, where we create a new atom $\alpha$ in the $W$-neighborhood of $x_2$.  We group $x_2$, $\alpha$ and $\beta$ into a single molecule $[x_2]:=\{x_2,\alpha,\beta\}$, i.e. the part inside the pentagon. The middle graph has a two-level structure, where we draw the molecular graph with molecules $[x_1]$ and $[x_2]$ on the top, and the structure inside the molecule $[x_2]$ (i.e. the inner-molecule structure) on the bottom. 
Again we have omitted some details in the graphs, such as the $G_{xx}$ and $H_{x\alpha}$ factors.}
\label{fig3pdf}
\end{figure}

Given the above definition, it is easy to check that the {IPC nested property} on the molecular graphs are preserved under resolvent expansions. Moreover, the above view of point of ``two-level structure" will also facilitate our following proof. In fact, besides the $\Gamma^2$ factors from the {IPC nested structure}, we still need to extract enough number of $\Phi$ factors. Roughly speaking, we will adopt the idea in \cite{EKY_Average}, which has led to the two extra $\Phi$ factors in \eqref{rough1} besides the factor $N^d\Phi^2$. The approach in \cite{EKY_Average} allows one to divide the graph into smaller subgraphs and bound each part separately. This is possible because only the total number of off-diagonal edges (i.e.\;the $\Phi$ factors) in the graph matters. But the same approach cannot be applied to our proof, because we need to maintain the IPC nested structure of the graph as a whole. As a result, some manipulations of the graphs in \cite{EKY_Average} that can destroy the IPC nested structure are not allowed. Instead, we shall organize our proof according to the two-level structure: the {\it inter-molecule} structure, and the {\it inner-molecule} structures (i.e.\;the subgraphs inside the molecules). In the proof, the inter-molecule structure are only allowed to be changed through resolvent expansions, since we need to keep the IPC nested property. We will show that the inter-molecule structures of the graphs only provide a $\Gamma^2\Phi$ factor in \eqref{rough3}. On the other hand, the rest of the $\Phi$ factor will come from graph operations which may change the inner-molecule structures but preserve the IPC nested structures. {This will be discussed in detail in next section.}


\subsubsection{The role of $Q_x$'s}\label{sec_colors}

In this subsection, we discuss the basics idea to obtain the $\Phi^2$ factor. {The mechanism for this improvement was first discovered in \cite{EKY_Average}, and it has played an essential role in the study of random band matrices in \cite{delocal}.} So far in the discussion, we have ignored the $Q_x$'s in \eqref{eqn-Qpart}. In fact, to bound the left-hand side of \eqref{eqn-Qpart}, we need to estimate averages of the following form
 \be\label{color_aver}
\mathbb E\sum_{\mathbf x} c_{\,\mathbf x} \cal G(\mathbf x), \quad   \mathbf x :=(x_1, \ldots, x_p), \quad c_{\,\mathbf x} =\OO(1), \quad \cal G(\mathbf x) :=\prod_{i=1}^{p}Q_{x_i} \left(\cal G_{x_i} \right),  
 \ee
where $\cal G_{ x_i} $ denotes the part of the expression obtained from the resolvent expansions of $|G _{{x_i}\star}|^2$. We will use colors to represent the $Q_x$'s in graphs, i.e.\;we associate to all the components in $\mathcal G_{x_i}$ a color called ``$Q_{x_i}$". 
To avoid ambiguity in the graphical expressions, we require that every component of the graph has a unique color, in the sense that every component belongs to at most one $Q_x$ group. 
In Fig.\,\ref{fig4pdf}, we give an example of a colorful graph.

\begin{figure}[htb]
\centering
\includegraphics[width=11cm]{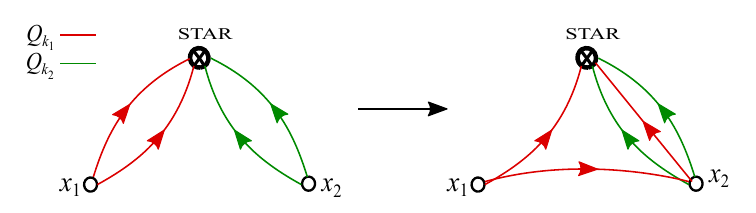}
\caption{We color the graphs in Fig.\,\ref{fig2pdf} with colors $Q_{x_1}$ (red) and $Q_{x_2}$ (green). The left graph now represents $Q_{x_1}(|G_{x_1  \star}|^2)  Q_{x_2}(|G_{x_2 \star}|^2) $, and the right graph represents $Q_{x_1}  (G_{x_1x_2}G_{x_2\star}G_{x_2x_2}^{-1}\overline{G_{x_1\star}^{(x_2)}})Q_{x_2}\left(  |G_{x_2\star}|^2\right) $, where we did not draw the $G_{x_2x_2}^{-1}$ factor. }
\label{fig4pdf}
\end{figure}

The idea of using averaging over $Q_x(\cdot)$ terms to get an extra $\Phi$ factor is central in \cite{EYY_B} and subsequently used in the proofs of fluctuation averaging results of many other works, e.g. \cite{EKYY1,Semicircle,EYY_rigid,PY}. In these papers, the authors studied the specific quantity $\sum_x b_x Q_x(G_{xx}^{-1})$, but we can apply the same idea to $\sum_{x\ne \star} b_x Q_x |G_{x  \star}|^2$.  
Roughly speaking, we can write the expectation of the product in $\cal G(\mathbf x)$ as
$$\mathbb E\left[  \cal A_i Q_{x_i} \left(\cal G_{x_i} \right) \right] = \mathbb E\left[ \left(\cal A_i - \cal A_i^{(x_i)} \right) Q_{x_i} \left(\cal G_{x_i} \right)  \right], $$
where $\cal A_i $ is the expression outside $Q_{x_i}$, $\cal A_i^{(x_i)}$ is any expression that is independent of the $x_i$-th row and column of $H$, and we have used $\mathbb E_{x_i}Q_{x_i}(\cdot)=0$ for the equality. It turns out that if $\cal A_i$ does not contain the $x_i$ index, then it is weakly correlated with the $x_i$-th row and column of $H$, and we can chose $\cal A_i^{(x_i)}$ such that the typical size of $(\cal A_i - \cal A_i^{(x_i)})$ is smaller than $\cal A_i$ by a $\Phi$ factor. 
If $\cal A_i$ contains the $x_i$ atom, then it already contains sufficiently many off-diagonal edges, i.e. $\Phi$ factors, as we need. We can perform the above operations to all the free indices $x_i$, $1\le i \le p$, and obtain an extra $\Phi^p$ factor. As an example, for $p=2$ and $x_1\ne x_2$, we can use the first resolvent expansion in \eqref{resol_examp} to write
\begin{align}\label{fluc example}
&\mathbb E \left(Q_{x_1} |G_{x_1\star}|^2\right)\left( Q_{x_2} |G_{x_2\star}|^2\right) \\
&= \mathbb EQ_{x_1} \left[\left(G_{x_1\star}^{(x_2)}+\frac{G_{x_1x_2}G_{x_2\star}}{G_{x_2x_2}}\right)\overline{\left(G_{x_1\star}^{(x_2)}+\frac{G_{x_1x_2}G_{x_2\star}}{G_{x_2x_2}}\right)}\right]\left( Q_{x_2} |G_{x_2\star}|^2\right) .
\end{align}
Thus for $\Gamma_2= Q_{x_1} |G_{x_1\star}|^2$, we can choose $\Gamma_2^{(x_2)}= Q_{x_1} |G^{(x_2)}_{x_1\star}|^2$ such that $(\cal A_2-\cal A_2^{(x_2)})$ contains 
at least one more off-diagonal edge of order $\Phi$ (see the right graph of Fig.\,\ref{fig4pdf}). In the actual proof, instead of using the free indices, we will use the concept of free molecules, but the main ideas are the same. 

The origin of the second $\Phi$ factor is more subtle, and was first identified in \cite{EKY_Average}. Roughly speaking, it comes from averages of the following form in \eqref{color_aver}: 
\be\label{charged_intro}
\sum_{\alpha } b_\alpha G_{\alpha \beta_1}G_{\alpha\beta_2},\quad b_\alpha=\OO(W^{-d})\mathbf 1\left( |\alpha - x_i|\lesssim W\right),
\ee
where $\beta_{1,2}$ are atoms outside the molecule $[x_i]$. 
A key observation of \cite{EKY_Average} is that $G_{\alpha\beta_1}G_{\alpha\beta_2}$ satisfies the self-consistent equation 
\be\label{charged_intro1}
G_{\alpha\beta_1}G_{\alpha\beta_2} = \sum_\gamma (1-m^2S)^{-1}_{\alpha \gamma}\left[Q_\gamma\left(G_{\gamma\beta_1} G_{\gamma\beta_2} \right)+\cal E_\gamma \right],
\ee
where $\cal E_\gamma$ denotes the error term for each $\gamma$, and it is smaller than the main term by a $\Phi$ factor. For the main terms, we get an average of the form
\be\label{charged_intro2}\sum_\gamma c'_\gamma Q_\gamma\left(G_{\gamma\beta_1} G_{\gamma\beta_2} \right), \quad c'_\gamma=\OO(W^{-d})\mathbf 1\left( |\gamma - x_i|\lesssim W\right),
\ee
which leads to another $\Phi$ factor by the argument in the previous paragraph. {One main difficulty in applying the above argument to our setting is that, different from \cite{EKY_Average}, we need to maintain the IPC nested property defined in Section \ref{sec_nested} throughout all the operations on the graphs. Roughly speaking, we will see that the above argument works due to the following reasons:}
\begin{itemize}
\item[(1)] the entries $(1-m^2S)^{-1}_{\alpha\gamma}$ are negligible for $|\alpha-\gamma|\ge (\log N)^2 W$ (see \eqref{tianYz}), so we can include $\gamma$ into the molecule $[x_i]$ such that the IPC nested structure of the graph is unchanged after replacing \eqref{charged_intro} with \eqref{charged_intro2};

\item[(2)] replacing \eqref{charged_intro} with the $\mathcal E_\gamma$ part also preserves the IPC nested structure; 

\item[(3)] 
each free molecule $[x_i]$ contains at least one atom $\alpha$ that is connected with two edges of the form \eqref{charged_intro}. 
\end{itemize}
Here (1) and (2) ensure the IPC nested structure of the new graphs, and (3) shows that we can get enough $\Phi$ factors from the free molecules. However, we still have the following technical issues, which make the above argument to be the trickiest part of our proof. 
\begin{itemize}

\item[(i)] We always start with a colorful graph. However, for the above arguments to work, the two edges $G_{\alpha\beta_1}G_{\alpha\beta_2}$ need to be colorless. Thus we first need to remove all the colors (i.e.\;the $Q_x$'s) from the graphs, i.e. write a colorful graph into a linear combination of colorless graphs. 

\item[(ii)] The atom $\alpha$ connected with the two edges $G_{\alpha\beta_1}G_{\alpha\beta_2}$ may be also connected with other edges. Thus we need to perform some operations to get {a new graph which} contains a (possibly different) atom $\alpha'$  that is connected with only two edges $G_{\alpha'\beta_1}G_{\alpha'\beta_2}$ and is in the same molecule as $\alpha$. We shall call such an atom a {\it simple charged atom}. 

\item[(iii)] The simple charged atoms in different molecule may share edges. 
Hence we have to handle them one by one, not as a whole. Moreover, each time we apply the previous argument from \eqref{charged_intro} to \eqref{charged_intro2}, we need to repeat the processes in (i) and (ii) again. 
\end{itemize}
{Due to these issues, the operations on the graphs have to be performed one by one in a carefully chosen order.} It is worth mentioning that the operations in (i) and (ii), although can be very complicated, are easy to check to preserve the IPC nested structures of the graphs. 

Finally, we remark that the above arguments for \eqref{charged_intro} cannot be applied to terms of the form $G_{\alpha\beta_1}\overline{G}_{\alpha\beta_2}$, since the $(1-m^2S)^{-1}$ in \eqref{charged_intro1} is well-behaved due to the nonzero imaginary parts of $m$, while $(1-|m|^2S)^{-1}$ in the case of $G_{\alpha\beta_1}\overline{G_{\alpha\beta_2}}$ is not since $|m| = 1- \OO(\eta)$.

\subsubsection{Summary of the proof}

Following the above discussions, our main proof for Lemma \ref{Ppart} and Lemma \ref{Qpart} consists of the following four steps.
\begin{itemize}
\item[{\bf Step 0:}] Develop a graphical tool which extends the previous ones used in e.g. \cite{EKY_Average,Semicircle}. This is the content of Section \ref{subsec: graph} and Section \ref{def_II}. 
 
\item[{\bf Step 1:}] Starting with the graphs in the high moment calculation, we perform graph expansions, identify the IPC nested structures and obtain the first $\Phi$ factor. This is the content of Sections \ref{sec_simple}-\ref{sec_simple2} and Section \ref{sec_step1}. This step, although contains the main new ideas of this paper as discussed in Sections \ref{sec_nested} and \ref{sec_2level}, is actually the relatively easier step of our proof. 

\item[{\bf Step 2:}] Remove the colors as discussed in the above item (i). This is the content of Section \ref{40+}.

\item[{\bf Step 3:}] Create simple charged atoms as discussed in the above item (ii). This is the content of Section \ref{step3}.

\item[{\bf Step 4:}] Deal with simple charged atoms using \eqref{charged_intro1}. This is the content of Section \ref{charged}.
\end{itemize}






 \section{Basic tools}  \label{sec_tool}
 
The rest of this paper is devoted to proving Lemma \ref{Ppart} and Lemma \ref{Qpart}. In this section, we collect some tools and definitions that will be used in the proof.
 

\begin{definition}[Minors] \label{minors}
For any $L\times L$ matrix $A$ and $\mathbb T \subset \{1, \dots, L\}$, $L\in \N$, we define the minor of the first kind $A^{[\mathbb T]}$ as the $(L-|\mathbb T|)\times (L-|\mathbb T|)$ matrix with
\begin{equation*}
(A^{[\mathbb T]})_{ij} \;\deq\; A_{ij}, \quad i,j  \notin \mathbb T.
\end{equation*}
For any $L\times L$ invertible matrix $B$, we define the minor of the second kind $B^{(\mathbb T)}$ as the $(L-|\mathbb T|)\times (L-|\mathbb T|)$ matrix with
\begin{equation*}
(B^{(\mathbb T)})_{ij}=\left( (B^{-1})^{[\mathbb T]}\right)^{-1}_{ij}, \quad i,j  \notin \mathbb T,
\end{equation*} 
whenever $(B^{-1})^{[\mathbb T]}$ is invertible. Note that we keep the names of indices when defining the minors. By definition, for any sets $\mathbb U,\mathbb T\subset \{1,\dots, L\}$, we have 
\be\label{ABTU}
(A^{[\mathbb T]})^{[\mathbb U]}=A^{[\mathbb T\cup \mathbb U]}, \quad
 (B^{(\mathbb T)})^{(\mathbb U)}=B^{(\mathbb T\cup \mathbb U)} .
\ee
For convenience, we shall also adopt the convention that for $i\in \mathbb T$ or $j\in \mathbb T$,
$$(A^{[\mathbb T]})_{ij}=0, \quad (B^{(\mathbb T)})_{ij}=0.$$
We will abbreviate $(\{a\})\equiv (a)$, $[\{a\}]\equiv [a]$, $(\{a,b\})\equiv (ab)$, $[\{a,b\}]\equiv [ab]$ and $\sum_x^{(\mathbb T)}:= \sum_{x \col x \notin \mathbb T}$. 
\end{definition}

\begin{remark}
In previous works, e.g. \cite{EKYY2,Bulk_generalized}, we have used the notation $(\cdot)$ for both the minor of the first kind and the minor of the second kind. Here we try to distinguish between $(\cdot)$ and $[\cdot]$ in order to be more rigorous.
\end{remark}

The following identities are easy consequences of the Schur complement formula. The reader can refer to, for example, Lemma 4.2 of \cite{Bulk_generalized} and Lemma 6.10 of \cite{EKYY2} for the proof.

\begin{lemma}[Resolvent identities]\label{resolvent_id}
For any $L\times L$ invertible matrix $B$ and $1\le i,j,k\le L$, we have
\begin{equation} \label{Gij Gijk}
B_{ij} \;=\; B_{ij}^{(k)} + \frac{B_{ik} B_{kj}}{B_{kk}},
\end{equation}
\begin{equation} \label{Gij Gijk2}
\frac{1}{B _{ii}}=\frac{1}{B_{ii}^{(k)}} - \frac{B_{ik}B_{ki}}{B _{ii} B_{kk}B^{(k)}_{ii}},
\end{equation}
and
\begin{equation} \label{sq root formula2}
\frac 1{B_{ii}} =  (B^{-1})_{ii}-\sum_{k,l}^{(i)}(B^{-1})_{ik}  B^{(i)}_{kl} (B^{-1})_{li} .
\end{equation}
Moreover, for $i \neq j$ we have
\begin{equation} \label{sq root formula}
B_{ij} = - B_{ii} \sum_{k}^{(i)} (B^{-1})_{ik} B_{kj}^{(i)} = - B_{jj} \sum_k^{(j)}  B^{(j)} _{ik} (B^{-1})_{kj}\,.
\end{equation}
{The above equalities are understood to hold whenever the expressions in them make sense.}
\end{lemma}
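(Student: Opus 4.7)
The plan is to derive all four identities from the classical Schur-complement (block matrix inversion) formula together with the defining relation $(B^{(k)})^{-1} = (B^{-1})^{[k]}$.

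First I would establish \eqref{Gij Gijk}. Writing $B$ in $2\times 2$ block form with the $k$-th row/column singled out, the standard block-inversion formula shows that the lower-right block of $B^{-1}$ inverts to the Schur complement $B^{[k]} - B_{-k,k} B_{kk}^{-1} B_{k,-k}$. By definition of the minor of the second kind, this Schur complement equals $B^{(k)}$, giving $B^{(k)}_{ij} = B_{ij} - B_{ik}B_{kj}/B_{kk}$ for $i,j\ne k$, which is exactly \eqref{Gij Gijk}. Identity \eqref{Gij Gijk2} is then immediate: take $j=i$ in \eqref{Gij Gijk} to get $B_{ii} - B_{ii}^{(k)} = B_{ik}B_{ki}/B_{kk}$, divide both sides by $B_{ii}B_{ii}^{(k)}$, and rearrange.

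Next I would prove \eqref{sq root formula2}. The Schur complement for the upper-left scalar block yields $(B^{-1})_{ii} = \bigl(B_{ii} - B_{i,-i}(B^{[i]})^{-1}B_{-i,i}\bigr)^{-1}$, i.e.\
$$ \frac{1}{(B^{-1})_{ii}} \;=\; B_{ii} - \sum_{k,l}^{(i)}B_{ik}(B^{[i]})^{-1}_{kl}B_{li}. $$
To convert this into a statement about $1/B_{ii}$ rather than $1/(B^{-1})_{ii}$, I would apply the same identity to $A := B^{-1}$ in place of $B$: then $A^{-1}=B$, $A_{ik}=(B^{-1})_{ik}$, and $(A^{[i]})^{-1}=((B^{-1})^{[i]})^{-1}=B^{(i)}$ by definition, which yields \eqref{sq root formula2} at once.

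Finally, \eqref{sq root formula} would be obtained by combining \eqref{Gij Gijk} with $B^{-1}B = I$. Writing the $(i,j)$ entry of $B^{-1}B=I$ (with $i\ne j$) as $(B^{-1})_{ii}B_{ij} = -\sum_{k}^{(i)}(B^{-1})_{ik}B_{kj}$ and substituting $B_{kj} = B^{(i)}_{kj} + B_{ki}B_{ij}/B_{ii}$ from \eqref{Gij Gijk}, the residual $B_{ij}$-terms on the right can be collected using the normalization $\sum_{k}(B^{-1})_{ik}B_{ki}=1$; after cancellation one obtains $B_{ij}=-B_{ii}\sum_{k}^{(i)}(B^{-1})_{ik}B^{(i)}_{kj}$. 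The symmetric form with $B^{(j)}_{ik}$ follows by the same manipulation starting from the column-$j$ identity of $BB^{-1}=I$ and using \eqref{Gij Gijk} with index $j$ removed. Since the claims are purely algebraic there is no real obstacle; the only thing to take care of is bookkeeping, namely not confusing the minor of the first kind $B^{[\mathbb T]}$ (delete rows/columns, no reinversion) with the minor of the second kind $B^{(\mathbb T)}$ (delete rows/columns of the inverse, then reinvert), and applying Schur complement to the correct matrix at each step.
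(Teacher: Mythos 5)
Your derivation is correct: all four identities follow as you describe from the block (Schur-complement) inversion formula together with the definition $B^{(\mathbb T)}=\bigl((B^{-1})^{[\mathbb T]}\bigr)^{-1}$, and the cancellation step for \eqref{sq root formula} via $\sum_k (B^{-1})_{ik}B_{ki}=1$ goes through exactly as claimed. This is essentially the same route the paper indicates (it gives no proof, but refers to the Schur complement formula and to Lemma 4.2 of \cite{Bulk_generalized} and Lemma 6.10 of \cite{EKYY2}), so nothing further is needed.
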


Next we introduce the $\Psi$ random variables, which are important control parameters for our proof.  

 
  \begin{definition}[Definition of $\Psi_{xw}$]\label{lzzay}
For any small constant $\tau>0$, we define positive random variables $\Psi_{xy}$ as 
$$|\Psi  _{xy}|^2\equiv |\Psi  _{xy}( \tau)|^2:= s_{xy} + \sum_{ |x-x'| \le N^{\tau}W}\sum_{ |y-y'|\le N^{\tau}W}\frac{1}{W^{2d}}\left(|G_{x'y'}|^2+|G_{ y'x'}|^2\right), \quad x,y\in \Z_N^d.$$
Similarly for any $\mathbb T \subset \{1, \dots, N\}$, we can define $\Psi^{(\mathbb T)}$ by replacing the $G$ entries with $G^{(\mathbb T)}$ entries in the above definition. For simplicity, we will often do not write out $\tau$ explicitly when using the $\Psi$ variables. 
\end{definition}

Note that $|\Psi_{xy}|$ is a local $L^2$-average of the $G$ entries with indices within an $N^\tau W$-neighborhood of $(x,y)$. 
The importance of the $\Psi$ variables is that they provide local uniform bounds on the $G$ entries, see (\ref{chsz2}) below. 
 
%
 
 Since we do not want to keep track of the number of $N^\tau$ factors in our proof, we introduce the following notations. 
For any non-negative variable $A$, we use $B= \OO_\tau(A)$ or $|B| \le N^{\OO(\tau)}A$ to mean that $ |B| \le N^{C\tau} A $ for some constant $C>0$ {independent of $\tau$}. We use $B\prec_\tau A$, $B\prec \OO_\tau(A)$ or $B= \OO_{\tau,\prec} (A)$ to mean that $ |B| \prec N^{C\tau} A $ for some constant $C>0$ {independent of $\tau$}. {In particular, $C\tau$ will be a small constant as long as $\tau$ is sufficiently small.} Moreover, we denote
$$\OO_\tau\left(f\left(\{\Psi\}_{x,y\in \Z_N^d}\right)\right):= \OO\left(N^{C\tau}  f\left(\{\Psi (\tau)\}_{x,y\in \Z_N^d}\right)  \right).$$
 where $f$ is a non-negative function of $\Psi$ variables. 
 
We will use the following lemma tacitly in the proof. It can be proved easily using the definition of high probability events. 
\begin{lemma}[Lemma B.1 of \cite{Semicircle}]\label{lem_partial}
Given a nonnegative random variable $X$ and a deterministic control parameter $\varphi$ such that $X\le \varphi$ with high probability. Suppose $\varphi\ge N^{-C}$ and $X\le N^C$ almost surely for some constant $C>0$. 
Then we have for any fixed $n\in \mathbb N$, 
\be\label{partial_P}
\mathbb E X^n =\OO(\varphi^n), \quad \text{ and } \quad \max_x \mathbb E_x X \prec  \varphi .
\ee
\end{lemma}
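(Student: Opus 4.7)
The proof is a routine application of the standard splitting technique for turning high-probability bounds into moment bounds, but the second assertion requires some care because we need to control a conditional expectation uniformly over $x$.

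For the first assertion, my plan is to fix $\epsilon>0$ and $D>0$, and then decompose
\begin{equation*}
\mathbb E X^n = \mathbb E\bigl[X^n\,\mathbf 1(X\le N^\epsilon\varphi)\bigr] + \mathbb E\bigl[X^n\,\mathbf 1(X> N^\epsilon\varphi)\bigr] \le N^{n\epsilon}\varphi^n + N^{Cn}\mathbb P(X>N^\epsilon\varphi).
\end{equation*}
By the hypothesis $X\prec\varphi$, the last probability is $\le N^{-D}$ for any $D>0$. Choosing $D\ge Cn+Cn$ (using $\varphi^n\ge N^{-Cn}$) makes the tail term dominated by $\varphi^n$. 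Since $\epsilon>0$ was arbitrary, we obtain $\mathbb E X^n\prec\varphi^n$, which in the paper's convention is written as $\mathbb E X^n=\OO(\varphi^n)$.

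For the second assertion, the natural attempt of combining Markov with $\mathbb E[\mathbb E_x X]=\mathbb E X\prec\varphi$ only yields polynomial decay, so it is too weak. Instead, I would truncate: set $\widetilde X\deq X\,\mathbf 1(X\le N^{\epsilon/2}\varphi)$, so that $0\le\widetilde X\le N^{\epsilon/2}\varphi$ deterministically, and consequently $\mathbb E_x \widetilde X\le N^{\epsilon/2}\varphi$ pointwise. The remainder $Y\deq X-\widetilde X\ge 0$ is supported on the exceptional event $\{X>N^{\epsilon/2}\varphi\}$, whose probability is at most $N^{-D'}$ for any $D'$. Using the deterministic bound $Y\le N^C$, I would estimate
\begin{equation*}
\mathbb E\Bigl[\max_{x}\mathbb E_x Y\Bigr] \le \sum_{x}\mathbb E[\mathbb E_x Y] = |\Z_N^d|\cdot\mathbb E Y \le N^d\cdot N^C\cdot N^{-D'}.
\end{equation*}
A single application of Markov's inequality then yields $\mathbb P(\max_x \mathbb E_x Y > N^{\epsilon/2}\varphi)\le N^{d+C-D'}/\varphi\le N^{d+2C-D'-\epsilon/2}$, which becomes $\le N^{-D}$ upon choosing $D'$ sufficiently large. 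Combined with $\mathbb E_x X\le N^{\epsilon/2}\varphi+\mathbb E_x Y$, this gives $\mathbb P(\max_x \mathbb E_x X>N^\epsilon\varphi)\le N^{-D}$, i.e.\ $\max_x\mathbb E_x X\prec\varphi$.

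The only subtle point — the ``main obstacle'', such as it is — is the second part: one cannot simply pass the high-probability bound through the conditional expectation $\mathbb E_x$, since $\mathbb E_x X$ is a genuine random variable depending on $H^{[x]}$. The trick is that the almost sure bound $X\le N^C$ makes the contribution of the exceptional event negligible even after taking the maximum over the $N^d$ indices, which lets us absorb the union bound into the arbitrarily small exponent $D'$. Everything else is deterministic bookkeeping with the parameters $C$, $\epsilon$, $D$, $D'$.
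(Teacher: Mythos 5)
Your proof is correct, and it is essentially the standard truncation argument that the paper itself does not spell out but delegates to Lemma B.1 of \cite{Semicircle}: split $X$ at a threshold of size roughly $\varphi$, use the almost sure bound $X\le N^C$ together with the arbitrarily fast decay of the exceptional probability to kill the tail, and for the conditional expectation control the exceptional part through $\E\bigl[\max_x\E_x Y\bigr]\le N^d\,\E Y$ plus one application of Markov's inequality, absorbing the $N^{d+2C}$ loss into the exponent $D'$. One cosmetic remark: since the hypothesis is the literal statement ``$X\le\varphi$ with high probability'' (no $N^\epsilon$ needed), you may take the truncation threshold to be $\varphi$ itself in the first part, which yields $\mathbb E X^n\le 2\varphi^n=\OO(\varphi^n)$ exactly as stated, rather than only $\mathbb E X^n\prec\varphi^n$; this changes nothing of substance in your argument or in any of its uses in the paper.
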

Note that by (\ref{xiazhou}), we have the deterministic bound 
\be\label{xiangmmz}
\|G\| \le \frac{1}{\min_i (\im z_x)} \le N^{C_2}.
\ee
This provides a deterministic bound on $X$ required by Lemma \ref{lem_partial} when $X$ is a polynomial of $G$ entries. 
 
The following lemma gives a large deviation bound that will be used in the proof of Lemma \ref{xiyan}. 
\begin{lemma}[Theorem B.1 of \cite{delocal}]\label{large_deviation}
Let $(X_i)_{i=1}^N$ be an independent families of random variables and $(b_{i})_{i=1}^N$ be deterministic complex numbers. Suppose all entries $X_i$ satisfy 
$$\mathbb E X_i=0, \quad \mathbb E| X_i |^2 = 1, \quad \left(\mathbb E|X_{i}|^p\right)^{1/p} \le \mu_p,$$
for all $p$ with some constants $\mu_p$. Then we have
\begin{equation}
\Big|\sum\limits_i {b_i X_i } \Big| \prec \Big( {\sum\limits_i {\left| {b_i} \right|^2 } } \Big)^{1/2} . 
\end{equation}
\end{lemma}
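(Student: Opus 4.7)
The plan is to prove the bound by the standard moment method, which converts a high-moment estimate into a stochastic-domination statement via Markov's inequality. Fix any even integer $p \in 2\N$ and set $S \deq \sum_i b_i X_i$ and $\sigma^2 \deq \sum_i |b_i|^2$. The target is to show
\be\label{prop-target}
\E |S|^{p} \le C_p \,\sigma^{p}
\ee
for some constant $C_p$ depending only on $p$ and the moment constants $\mu_2,\mu_3,\ldots,\mu_p$. Given \eqref{prop-target}, for any fixed constants $\tau>0$ and $D>0$ we take $p \ge D/\tau$ and apply Markov's inequality to conclude
\[
\P\pb{|S| \ge N^{\tau}\sigma} \le N^{-p\tau}\,\frac{\E|S|^p}{\sigma^p} \le C_p N^{-p\tau} \le N^{-D}
\]
for $N$ large, which is precisely $|S|\prec \sigma$.

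To establish \eqref{prop-target} (first for real $X_i$; the complex case reduces by splitting real and imaginary parts of $b_i$), expand
\[
\E |S|^{p} = \sum_{i_1,\ldots,i_p} b_{i_1}\cdots b_{i_p}\,\E\qb{X_{i_1}\cdots X_{i_p}},
\]
and partition the index tuple $(i_1,\ldots,i_p)$ according to the equivalence relation $k \sim \ell$ iff $i_k = i_\ell$. By independence of the $X_i$ and the zero-mean hypothesis, any tuple whose partition has a singleton block contributes zero. Thus only partitions $\pi$ of $\h{1,\ldots,p}$ into blocks of size $\ge 2$ survive. Each surviving partition has at most $\lfloor p/2 \rfloor$ blocks. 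For a fixed such partition with block sizes $q_1,\ldots,q_r$ (so $q_j\ge 2$ and $q_1+\cdots+q_r=p$), the surviving contribution is bounded in absolute value by
\[
\sum_{j_1,\ldots,j_r\text{ distinct}}|b_{j_1}|^{q_1}\cdots |b_{j_r}|^{q_r}\;\prod_{s=1}^r \E|X_{j_s}|^{q_s} \;\le\; \mu_p^{p}\,\sum_{j_1,\ldots,j_r}|b_{j_1}|^{q_1}\cdots|b_{j_r}|^{q_r},
\]
where in the last inequality we used $\mu_{q_s}\le \mu_p$ (by Hölder) and dropped the distinctness constraint. Since each $q_s\ge 2$, we have $\sum_j |b_j|^{q_s} \le \sigma^{q_s}$ (using $|b_j|\le \sigma$). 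Multiplying over $s$ and summing gives a bound of $\mu_p^p \sigma^{q_1+\cdots+q_r} = \mu_p^p\, \sigma^{p}$ per partition. The number of such partitions of $\h{1,\ldots,p}$ depends only on $p$, yielding \eqref{prop-target}.

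The proof is essentially routine combinatorics plus the hypothesis that all moments are uniformly bounded; there is no real obstacle. The one point to be careful about is the complex-valued case, where one writes $b_i = \alpha_i + \ii \beta_i$, expands $|S|^p = (S\bar S)^{p/2}$, and notes that the same partition argument applies to mixed products of $X_{i_k}$'s (they are still real, so $\bar X_i = X_i$), so no new ingredient is required. This result is standard (a Marcinkiewicz--Zygmund type bound), and indeed the statement is cited from \cite{delocal}; the argument above matches the proof given there.
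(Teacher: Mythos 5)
Your proof is correct. The paper itself gives no argument for this lemma---it is quoted directly as Theorem B.1 of \cite{delocal}---and your even-moment expansion (discard partitions with singleton blocks by centering and independence, bound each block sum $\sum_j |b_j|^{q_s}\le \sigma^{q_s}$ using $q_s\ge 2$ and $|b_j|\le\sigma$, then apply Markov) is the standard moment-method proof of such large deviation bounds and is essentially the route used in the cited reference. The only microscopic point is the choice of $p$ in the Markov step: to absorb the constant $C_p$ you should take $p$ strictly larger than $D/\tau$ (e.g.\ $p\ge (D+1)/\tau$), since with $p\tau=D$ exactly the inequality $C_pN^{-p\tau}\le N^{-D}$ need not hold; this is immaterial.
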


 We now collect some important properties of $\Psi$ variables in the next lemma. For simplicity, we introduce the following notations: consider a path $x=w_0 \to w_1 \to \cdots \to w_k \to w_{k+1}=y$ with each edge assigned a weight $\Psi_{w_i w_{i+1}}$, 
 we shall denote
\be\label{psipath}
\Psi_{(x, w_1 , w_2 , \cdots, w_k, y)}(\tau):=\Psi_{xw_1}(\tau)\Psi_{w_1 w_2}(\tau) \ldots \Psi_{w_{k-1} w_k}(\tau) \Psi_{w_k y}(\tau).
\ee
In particular, by convention we have $\Psi_{(x,  y)}(\tau)=\Psi_{xy}(\tau)$.

\begin{lemma}\label{xiyan}
Fix any sufficiently small constant $\tau>0$ and any subset $\mathbb T\subset \Z_N$ with $|\mathbb T|=\OO(1)$. Suppose \eqref{GM1} holds. Then we have the following statements.  
\begin{itemize}
\item 
We have for any $x,y\in \Z_N^d$,
\be\label{chsz1}
\Psi_{xy}(\tau)=\Psi_{yx}(\tau), \quad s_{xy}^{1/2} \le \Psi_{xy}(\tau) \prec N^{d\tau}\Phi .
\ee

\item We have for any $y\in \Z_N^d$,
\be\label{chsz4}
  \sum_{x} |\Psi_{xy}(\tau)|^2 \prec N^{2d\tau} \Gamma^2.
\ee

\item 
For any $\wt \tau\ge \tau+(\log N)^{-1/2} $, if for some constant $C>0$, 
\be\label{qren0}
\max\{|x-x'|, |y-y'|\}\le CN^\tau W ,
 \ee 
 then we have
\be\label{chsz3}
\Psi _{xy}(\tau)\le \Psi _{x'y'}(\wt \tau). 
\ee

\item 
  If for some constant $C>0$,
\be\label{qren}
\max\{|x-x'|, |y-y'|\}\le (\log N)^C W ,
 \ee
then we have 
\be\label{chsz2}
{\bf 1}(x\ne y)\left| G_{xy}  \right|\prec \Psi_{x'y'}\left(\tau\right) 
\ee
 If $x,y \notin \mathbb T$ and \eqref{qren} holds, then
\be\label{chsz2.1}
\left|G ^{(\mathbb T)}_{xy}- G_{xy}\right|\prec \sum_{k=1}^{|\mathbb T|}\sum_{(w_1, w_2 , \ldots, w_k) \in \mathcal P_k (\mathbb T)} \Psi_{(x', w_1 , w_2 , \cdots, w_k, y')}(\tau), 
\ee
where $\mathcal P_k (\mathbb T)$ is the collection of all the $k$ ordered indices in $\mathbb T$ and we recall \eqref{psipath}. The estimates \eqref{chsz2} and \eqref{chsz2.1} also hold if we replace the $\Psi$ variables with the $\Psi^{(x)}$ and $\Psi^{(xy)}$ variables.
\item 
For any $\wt \tau\ge \tau+(\log N)^{-1/2} $, we have
\be\label{chsz2.5}
 \Psi^{(\mathbb T)}_{xy}(\tau)\prec N^{d\tau}\Psi_{xy}(\wt \tau) + N^{d\tau} \sum_{k=1}^{|\mathbb T|}\sum_{(w_1, w_2 , \ldots, w_k) \in \mathcal P_k (\mathbb T)} \Psi_{(x, w_1 , w_2 , \cdots, w_k, y)}(\wt\tau)\ .
\ee 
In particular, it implies that 
\be\label{chsz4.0}
 \Psi^{(\mathbb T)}_{xy}(\tau)\prec\OO_\tau(\Phi),\quad  \sum_{x} |\Psi_{xy}^{(\mathbb T)}(\tau)|^2 \prec \OO_\tau (\Gamma^2).
\ee
\end{itemize}
\end{lemma}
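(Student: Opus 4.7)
The plan is to verify the five items essentially from the definition of $\Psi_{xy}(\tau)$ together with the a priori bounds in \eqref{GM1}, the Schur complement identities from Lemma \ref{resolvent_id}, and the large deviation bound of Lemma \ref{large_deviation}. I would treat the items in the listed order since each uses the previous ones.

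For \eqref{chsz1} and \eqref{chsz4}, symmetry is immediate after the change of variables $(x',y')\leftrightarrow (y',x')$, using $s_{xy}=s_{yx}$ and $|G_{x'y'}|^2+|G_{y'x'}|^2$ being symmetric. The lower bound is trivial from the definition. For the upper bound, I would invoke $\|G-M\|_{\max}\prec \Phi$ (available from \eqref{GM1} and Lemma \ref{lem_partial}) and bound every entry $|G_{x'y'}|^2$ in the defining box by $\OO_\prec(\Phi^2)$ (plus $\OO(1)$ on the few diagonal terms, which are divided by $W^{2d}$ and are negligible since $\Phi\ge W^{-d/2}$). For \eqref{chsz4}, swap the order of summation: $\sum_x \mathbf 1(|x-x'|\le N^\tau W)=\OO(N^{d\tau}W^d)$, then $\sum_{x'}(|G_{x'y'}|^2+|G_{y'x'}|^2)\le \tnorm{G}^2\prec N^{\OO(\tau)}\Gamma^2$ by \eqref{GM1}, and the remaining sum over $y'$ contributes another $\OO(N^{d\tau}W^d)$; combining with the prefactor $W^{-2d}$ gives $\OO(N^{2d\tau}\Gamma^2)$.

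Item \eqref{chsz3} is a direct monotonicity. If $\max\{|x-x'|,|y-y'|\}\le CN^\tau W$, then every pair $(x'',y'')$ contributing to $\Psi_{xy}(\tau)^2$ satisfies $|x''-x'|,|y''-y'|\le (1+C)N^\tau W$. Since $\wt\tau\ge \tau+(\log N)^{-1/2}$ implies $N^{\wt\tau-\tau}\ge \exp(\sqrt{\log N})\to\infty$, for $N$ large we have $(1+C)N^\tau\le N^{\wt\tau}$, so the defining box for $\Psi_{xy}(\tau)^2$ is contained in that of $\Psi_{x'y'}(\wt\tau)^2$, and the inequality follows term-by-term.

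The core step is item \eqref{chsz2}. The plan is to apply \eqref{sq root formula}, which gives for $x\ne y$
\begin{equation*}
G_{xy}=-G_{xx}\sum_{k}^{(x)} H_{xk}\,G^{(x)}_{ky}.
\end{equation*}
Conditional on $H^{[x]}$, the $H_{xk}$ are independent with variance $s_{xk}$, so Lemma \ref{large_deviation} together with \eqref{high_moment} yields
\begin{equation*}
|G_{xy}|\prec \Bigl(\sum_{k}^{(x)} s_{xk}\,|G^{(x)}_{ky}|^2\Bigr)^{1/2}\lec W^{-d/2}\Bigl(\sum_{|k-x|\le C_s W} |G^{(x)}_{ky}|^2\Bigr)^{1/2}.
\end{equation*}
Under \eqref{qren}, the sum on the right is contained in the box defining $|\Psi^{(x)}_{x'y'}(\tau)|^2$ (since $N^\tau W\gg(\log N)^C W$ after applying item (iii) to replace $\tau$ by a slightly larger parameter), so $|G_{xy}|\prec \Psi^{(x)}_{x'y'}(\tau)$. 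To pass from $\Psi^{(x)}$ to $\Psi$, apply \eqref{Gij Gijk}: this shows the defining sums of the two objects differ by terms that are themselves controlled by products of $\Psi$-type averages. Iterating \eqref{Gij Gijk} $|\mathbb T|$ times and bounding each removed vertex factor by \eqref{chsz2} yields \eqref{chsz2.1}; the analogous statements for $\Psi^{(x)}$, $\Psi^{(xy)}$ are obtained by repeating the argument with $G$ replaced by the appropriate minor. The hard part will be the bookkeeping of the $\tau$ parameters: each application of the large-deviation estimate and each replacement of $G^{(\mathbb T)}$ by $G$ loses a little in the $\tau$ exponent via item (iii), and one must verify that all the enlargements together still fit inside the $\wt\tau$ budget (this is where the extra $(\log N)^{-1/2}$ in (iii) becomes essential).

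Finally, item \eqref{chsz2.5} is obtained by substituting the inverted version of \eqref{chsz2.1} (expressing $G$ entries in terms of $G^{(\mathbb T)}$ entries plus $\Psi$-path corrections, derived identically) into the definition of $|\Psi^{(\mathbb T)}_{xy}(\tau)|^2$; after enlarging the averaging box from $\tau$ to $\wt\tau$ using (iii) to absorb the $N^{d\tau}$ counting factor coming from the $k$-tuples in $\mathcal P_k(\mathbb T)$, one obtains \eqref{chsz2.5}. The consequences \eqref{chsz4.0} then follow by applying \eqref{chsz1} and \eqref{chsz4} at parameter $\wt\tau$ to each term on the right of \eqref{chsz2.5}, using $|\mathbb T|=\OO(1)$ so that only finitely many path terms appear.
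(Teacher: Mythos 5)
Your treatment of \eqref{chsz1}, \eqref{chsz4} and \eqref{chsz3} matches the paper's (it dismisses these as easy consequences of Definition \ref{lzzay} and \eqref{GM1}), and your outline for \eqref{chsz2.1} and \eqref{chsz2.5} is in the same spirit as the paper's induction. However, there is a genuine gap at the core step \eqref{chsz2}. After one application of \eqref{sq root formula} and Lemma \ref{large_deviation} you obtain $|G_{xy}|\prec \bigl(\sum_{w}^{(x)} s_{xw}|G^{(x)}_{wy}|^2\bigr)^{1/2}$, i.e.\ a local $L^2$ average over the \emph{row} index only, carrying the prefactor $W^{-d}$. You then claim this is $\prec \Psi^{(x)}_{x'y'}(\tau)$ because "the sum is contained in the box defining $|\Psi^{(x)}_{x'y'}(\tau)|^2$". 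Containment of the index set is not enough: by Definition \ref{lzzay}, $|\Psi_{x'y'}|^2$ averages over \emph{both} indices with prefactor $W^{-2d}$, so the terms $|G_{wy}|^2$ with second index frozen at $y$ enter $|\Psi_{x'y'}|^2$ only with weight $W^{-2d}$. Your one-sided average is therefore only bounded by $W^{d}|\Psi^{(x)}_{x'y'}|^2$, i.e.\ you lose a factor $W^{d/2}$, which is fatal since typically $\Psi\sim\Phi$ and the lemma is precisely the nontrivial statement that the pointwise entry is dominated by the \emph{two-sided} local $L^2$ average.

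The paper closes exactly this hole by a second expansion in the column index: it first removes the minor superscript via \eqref{Gij Gijk} together with a self-improving absorption ($|G_{xy}|\prec(\sum_w^{(x)}s_{xw}|G_{wy}|^2)^{1/2}+\OO_\prec(\Phi|G_{xy}|)$ implies the same bound without the error term), then applies the symmetric estimate $|G_{wy}|\prec\bigl(\sum_{v}^{(y)}|G_{wv}|^2 s_{vy}\bigr)^{1/2}$ to each off-diagonal $G_{wy}$, and treats the diagonal term $w=y$ separately using $|G_{yy}|\sim 1$ — this is precisely where the $s_{xy}$ summand in the definition of $\Psi_{xy}$ is needed. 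Only after this second round does one arrive at $|G_{xy}|^2\prec\sum_{w,v}s_{xw}s_{vy}|G_{wv}|^2+s_{xy}\lesssim|\Psi_{x'y'}(\tau)|^2$, with both indices localized and the correct $W^{-2d}$ normalization. Without this step your bounds \eqref{chsz2}, and hence the downstream items \eqref{chsz2.1}, \eqref{chsz2.5}, \eqref{chsz4.0}, do not follow. (A secondary, fixable omission: in the induction for \eqref{chsz2.1} the composed $\Psi$-paths may revisit atoms of $\mathbb T$, and the paper erases these loops using $\Psi\prec N^{d\tau}\Phi\ll 1$ from \eqref{chsz1}; your write-up does not address how to return to paths indexed by $\mathcal P_k(\mathbb T\cup\{w\})$.)
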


From \eqref{chsz2}, one can see that the $\Psi$ variables serve as local uniform bounds on the $G$ (and $G^{(\mathbb T)}$) entries. Moreover, (\ref{chsz4}) shows that the sum of $|\Psi_{xy}|^2$ over $x$ or $y$ gives the factor $\Gamma^2$ (instead of $N^d \Phi^2$), which is one of the key components of the proof for Lemma \ref{Ppart} and Lemma \ref{Qpart}. 
 
 
\begin{proof}[Proof of Lemma \ref{xiyan}]   
Using Definition \ref{lzzay} and \eqref{GM1}, one can easily prove \eqref{chsz1}, \eqref{chsz4} and \eqref{chsz3}. Now we prove \eqref{chsz2}. We first consider the case $\mathbb T=\emptyset$. Since $\{h_{xw}\}$ entries are independent of the $G^{(x)}$ entries, then with \eqref{sq root formula} and the large deviation estimate in Lemma \ref{large_deviation}, we get that 
\be\label{miny2}
|G_{xy}| \prec |G_{xx}|  \Big(\sum_{w}^{(x)} s _{xw}|G^{(x)}_{wy}|^2\Big)^{1/2} \prec \Big(\sum_{w}^{(x)} s _{xw}|G^{(x)}_{wy}|^2\Big)^{1/2},\quad x\ne y ,
 \ee
where we used $|G_{xx}|\sim 1$ with high probability by \eqref{GM1} in the second step. 
Then with \eqref{Gij Gijk} and \eqref{GM1}, we obtain that for $x\ne y$,
 $$\sum_{w}^{(x)} s _{xw}|G^{(x)}_{wy}|^2\le 2 \sum_{w}^{(x)}s _{xw}|G _{wy}|^2+2\sum_{w}^{(x)}s _{xw}\frac{|G_{wx}G_{xy}|^2}{|G_{xx}|^2} = 2 \sum_{w}^{(x)}s _{xw}|G _{wy}|^2+\OO_\prec \left(\Phi^2|G_{xy}|^2\right).
 $$
Plugging this bound into \eqref{miny2}, we obtain that
\be\label{minywer}
|G_{xy}| \prec \Big(\sum_{w}^{(x)}s _{xw}|G _{wy}|^2\Big)^{1/2} + \OO_\prec \left(\Phi|G_{xy}|\right) \Rightarrow |G_{xy}| \prec \Big(\sum_{w}^{(x)}s _{xw}|G _{wy}|^2\Big)^{1/2}.
 \ee
With the same method, we can also prove that
\be\label{miny30}
|G_{xy}| \prec \Big(\sum_{v}^{(y)} |G^{(y)}_{xv}|^2 s _{vy}\Big)^{1/2},\quad x\ne y ,
\ee
and
\be\label{miny3}
|G_{xy}| \prec \Big(\sum_{v}^{(y)} |G_{xv}|^2 s _{vy}\Big)^{1/2},\quad x\ne y .
\ee
Now applying this bound \eqref{miny3} to $G_{wy}$'s in \eqref{minywer}, we obtain that
\be\label{minywerwss}
|G_{xy}| \prec \Big(\sum_{w,v}s _{xw}s_{vy}|G _{wv}|^2 + s_{xy}\Big)^{1/2} \lesssim \Psi_{x'y'}(\tau),\quad x\ne y ,
 \ee
where the $s_{xy}$ comes from the diagonal term with $w=y$ in \eqref{minywer}, and we used the Definition \ref{lzzay} and \eqref{qren} in the second step. Note that applying \eqref{miny3} to the $G^{(x)}_{wy}$ entry in \eqref{miny2}, we get that
$$|G_{xy}| \prec \Big(\sum_{w}^{(x)} s _{xw}s_{vy} |G^{(x)}_{wv}|^2 + s_{xy}\Big)^{1/2}\lesssim \Psi_{x'y'}^{(x)}(\tau), \quad x\ne y .$$
Similarly, applying \eqref{miny30} to to the $G^{(x)}_{wy}$ entry in \eqref{miny2}, we get that 
$$|G_{xy}| \prec \Big(\sum_{w}^{(x)} s _{xw}s_{vy} |G^{(xy)}_{wv}|^2 + s_{xy}\Big)^{1/2}\lesssim \Psi_{x'y'}^{(xy)}(\tau),\quad x\ne y .$$
Thus we have proved \eqref{chsz2}.

The estimate \eqref{chsz2.1} can be proved with mathematical induction in the indices of $\mathbb T$. By \eqref{Gij Gijk}, for $w\notin \{x,y\}$ we have
$$|G_{xy}^{(w)}-G_{xy}| = \left|\frac{G_{xw}G_{wy}}{G_{ww}}\right| \prec \Psi_{x'w}\Psi_{wy'},$$
where in the second step we used \eqref{chsz2} and $|G_{xx}|\asymp 1$ with high probability due to \eqref{GM1} . Now suppose for some set $\mathbb T$ with $|\mathbb T|=\OO(1)$ and $w\notin \mathbb T$, the estimate \eqref{chsz2.1} holds. Then we have
\begin{align}
&|G_{xy}^{(\mathbb T\cup \{w\})}-G_{xy}|  \prec  |G_{xy}^{(\mathbb T\cup \{w\})}-G_{xy}^{(\mathbb T)}| +\sum_{k=1}^{|\mathbb T|}\sum_{(w_1,  \ldots, w_k) \in \mathcal P_k (\mathbb T)}\Psi_{(x', w_1 , w_2 , \cdots, w_k, y')} \nonumber\\
&= \left|\frac{G_{xw}^{(\mathbb T)}G_{wy}^{(\mathbb T)}}{G_{ww}^{(\mathbb T)}}\right| + \sum_{k=1}^{|\mathbb T|}\sum_{(w_1, \ldots, w_k) \in \mathcal P_k (\mathbb T)} \Psi_{(x', w_1 , w_2 , \cdots, w_k, y')} \nonumber\\
& \prec \Big(\left|G_{xw}\right| +\sum_{k=1}^{|\mathbb T|} \sum_{(w_1,  \ldots, w_k) \in \mathcal P_k (\mathbb T)} \Psi_{(x', w_1 , \cdots, w_k, w)} \Big) \Big( \left|G_{wy}\right| + \sum_{l=1}^{|\mathbb T|} \sum_{(w'_1, \ldots, w'_l) \in \mathcal P_l (\mathbb T)} \Psi_{(w,w'_1,\cdots, w'_l, y')} \Big) \nonumber\\
&+\sum_{k=1}^{|\mathbb T|}\sum_{(w_1,  \ldots, w_k) \in \mathcal P_k (\mathbb T)} \Psi_{(x', w_1 , w_2 , \cdots, w_k, y')}\nonumber\\
& \prec  \sum_{k=1}^{|\mathbb T|+1}\sum_{(w_1,  \ldots, w_k) \in \mathcal P_k (\mathbb T\cup \{w\})} \Psi_{(x', w_1 , w_2 , \cdots, w_k, y')} \ . \label{shorten}
\end{align}
Here in the third step we used \eqref{chsz2}, the induction hypothesis and that $G_{ww}^{(\mathbb T)} = m + \OO_\prec(\Phi)  \sim 1$ with high probability. In the last step, for a path of the form $x'\to w_1 \to \cdots \to w_k \to w \to w'_1 \to \cdots \to w'_l \to y'$, we can find the smallest $1\le i \le k$ and the largest $1\le  j \le l$ such that $w_i = w'_j$, and then we can bound the weights in between as $\Psi_{(w_i, w_{i+1},\cdots,w_{j-1}',w_j')}\prec 1$ using \eqref{chsz1} as long as $\tau$ is sufficiently small. In other words, we erase all the loops in the path and get a shorter path from $x'$ to $y'$ without any loop. This explains the expression in \eqref{shorten}. Now by induction, we prove \eqref{chsz2.1} .

Finally, we prove \eqref{chsz2.5}. With Definition \ref{lzzay}, we can write
\begin{align}\label{mid_step}
|\Psi  _{xy}^{(\mathbb T)}(\tau)|^2= |\Psi  _{xy}( \tau)|^2+ \sum_{ |x-x'| \le N^{\tau}W}\sum_{ |y-y'|\le N^{\tau}W}\frac{1}{W^{2d}}\left(|G^{(\mathbb T)}_{x'y'}|^2-|G_{x'y'}|^2 +|G^{(\mathbb T)}_{ y'x'}|^2-|G_{y'x'}|^2\right).
\end{align}
Now using \eqref{chsz2} and \eqref{chsz3}, it is easy to show that
\begin{align*}
& |G^{(\mathbb T)}_{x'y'}|^2-|G_{x'y'}|^2+|G^{(\mathbb T)}_{ y'x'}|^2-|G_{y'x'}|^2 \\
& \prec \Psi_{x'y'}(\tau) \sum_{k=1}^{|\mathbb T|}\sum_{(w_1,  \ldots, w_k) \in \mathcal P_k (\mathbb T)} \Psi_{(x', w_1, \cdots, w_k, y')}(\tau) + \Big(\sum_{k=1}^{|\mathbb T|}\sum_{(w_1,  \ldots, w_k) \in \mathcal P_k (\mathbb T)} \Psi_{(x', w_1, \cdots, w_k, y')}(\tau)  \Big)^2  \\
&\prec \left(\Psi_{xy}(\wt \tau)\right)^2 +\Big(\sum_{k=1}^{|\mathbb T|}\sum_{(w_1,  \ldots, w_k) \in \mathcal P_k (\mathbb T)}\Psi_{(x, w_1, \cdots, w_k, y)}(\wt\tau)  \Big)^2 .
\end{align*}
Together with \eqref{mid_step}, we obtain \eqref{chsz2.5}. Then \eqref{chsz4.0} follows easily from \eqref{chsz2.5} using \eqref{chsz1} and \eqref{chsz4}.
\end{proof}

\section{Proof of Lemma \ref{Ppart}}\label{sec_Ppart}
 
In this section, we prove Lemma \ref{Ppart}. Our goal is to reduce Lemma \ref{Ppart} into another fluctuation averaging lemma---Lemma \ref{Q1}, whose proof will be postponed until Section \ref{sec_simple2}.

We first prove the following lemma on diagonal resolvent entries.
 
 \begin{lemma}\label{qinggan}
We define the $\cal Z$ variables as
 $$ \cal Z_x:= Q_x\left(\sum^{(x)}_{w,v}H_{xw}H_{xv}G^{(x)}_{wv}\right)  - H_{xx}. $$
Under the assumptions of Theorem \ref{YEniu} and \eqref{GM1}, we have that
\be\label{cming}
G_{xx} = M_x+M_x^2 {\cal Z}_x +\OO_\prec \left(\Phi^2\right ) \quad \text{ and }\quad \cal Z_x=\OO_\prec (\Phi). 
 \ee 
 \end{lemma}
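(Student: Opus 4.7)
The plan is to extract a self-consistent equation for $1/G_{xx}$ from the Schur complement formula, identify $\cal Z_x$ as the main stochastic fluctuation, and invert a stable linear equation so that the remaining terms contribute only $\OO_\prec(\Phi^2)$. The Schur complement formula for $G=(H-Z)^{-1}$ reads
$$\frac{1}{G_{xx}}=-z_x+H_{xx}-\sum_{w,v}^{(x)}H_{xw}H_{xv}G^{(x)}_{wv}.$$
Since $G^{(x)}$ depends only on $H^{[x]}$ and is therefore independent of the $x$-th row and column of $H$, I decompose the quadratic form via $\Id=P_x+Q_x$. Using $\E(H_{xw}H_{xv})=s_{xw}\delta_{wv}$, the $P_x$ part equals $\sum_w^{(x)}s_{xw}G^{(x)}_{ww}$ and the $Q_x$ part equals $\cal Z_x+H_{xx}$ by the very definition of $\cal Z_x$. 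The two $H_{xx}$ terms cancel and, after subtracting the self-consistent equation \eqref{falvww} for $M_x$, I obtain
$$\frac{1}{G_{xx}}-\frac{1}{M_x}=-\cal Z_x+\cal E_x,\qquad \cal E_x:=s_{xx}M_x-\sum_w^{(x)}s_{xw}(G^{(x)}_{ww}-M_w).$$

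The bound $\cal Z_x=\OO_\prec(\Phi)$ follows from the quadratic form large deviation estimate (Lemma \ref{large_deviation}) applied conditionally on $H^{[x]}$: the $(H_{xw})_{w\ne x}$ are then independent of the deterministic coefficients $G^{(x)}_{wv}$, so
$$|\cal Z_x+H_{xx}|\prec\Bigl(\sum_w s_{xw}^2|G^{(x)}_{ww}|^2\Bigr)^{1/2}+\Bigl(\sum_{w\ne v}^{(x)}s_{xw}s_{xv}|G^{(x)}_{wv}|^2\Bigr)^{1/2}\prec\Phi,$$
using $|G^{(x)}_{ww}|\asymp 1$ (from \eqref{GM1} and \eqref{Im_lowbound}), $|G^{(x)}_{wv}|\prec\Psi^{(x)}_{wv}\prec\Phi$ for $w\ne v$ (Lemma \ref{xiyan}), and $\sum_w s_{xw}=\OO(1)$. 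Combined with $|H_{xx}|\prec s_{xx}^{1/2}\le\Phi$ from \eqref{high_moment} and \eqref{GM1.5}, this gives $\cal Z_x=\OO_\prec(\Phi)$.

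For the $\OO_\prec(\Phi^2)$ error, set $\delta_x:=G_{xx}-M_x$. Multiplying the self-consistent identity by $G_{xx}M_x$ and using $G_{xx}=M_x+\OO_\prec(\Phi)$ produces $\delta_x=M_x^2\cal Z_x-M_x^2\cal E_x+\OO_\prec(\Phi^2)$. Inside $\cal E_x$ the term $s_{xx}M_x$ is $\OO(W^{-d})\le\OO(\Phi^2)$, and the piece $\sum_w^{(x)}s_{xw}(G^{(x)}_{ww}-G_{ww})=\sum_w^{(x)}s_{xw}G_{xw}G_{wx}/G_{xx}$ is $\OO_\prec(\Phi^2)$ by \eqref{Gij Gijk}, $|G_{xw}|\prec\Psi_{xw}$, and \eqref{chsz1}. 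Rewriting the last remaining contribution as $\sum_w^{(x)}s_{xw}\delta_w$ and rearranging yields the \emph{stable linear equation}
$$\bigl[(1-M^2S)\delta\bigr]_x=M_x^2\cal Z_x+\OO_\prec(\Phi^2),$$
which by $\|(1-M^2S)^{-1}\|_{\infty\to\infty}\le C_1$ from \eqref{gbzz2} inverts to $\delta_x=\sum_y[(1-M^2S)^{-1}]_{xy}M_y^2\cal Z_y+\OO_\prec(\Phi^2)$.

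The \emph{main remaining obstacle} is showing this sum equals $M_x^2\cal Z_x+\OO_\prec(\Phi^2)$. The diagonal correction $\bigl([(1-M^2S)^{-1}]_{xx}-1\bigr)M_x^2\cal Z_x$ is $\OO(W^{-d}\Phi)\le\OO(\Phi^3)$ by \eqref{tianYz}. For the off-diagonal sum $\sum_{y\ne x}c_y\cal Z_y$ with $c_y:=[(1-M^2S)^{-1}]_{xy}M_y^2$ (supported essentially on $|x-y|\le(\log N)^2W$ with $|c_y|=\OO(W^{-d})$, up to $N^{-c\log N}$ tails from \eqref{tianYz}), I use a second moment bound: since $P_w\cal Z_w=0$ and distinct $\cal Z_w,\cal Z_{w'}$ interact only through the single shared matrix entry $H_{ww'}$, a direct covariance calculation gives $\E|\cal Z_w|^2\prec\Phi^2$ and $|\E(\cal Z_w\overline{\cal Z_{w'}})|\prec\Phi^4$ for $w\ne w'$, whence $\E|\sum_{y\ne x}c_y\cal Z_y|^2\prec W^{-d}\Phi^2\le\Phi^4$, and Markov's inequality delivers the desired $\OO_\prec(\Phi^2)$ bound. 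Combining everything gives $G_{xx}=M_x+M_x^2\cal Z_x+\OO_\prec(\Phi^2)$, completing the proof.
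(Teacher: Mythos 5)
Your derivation of the self-consistent equation is sound and in fact mirrors the paper's route: Schur complement plus the $P_x+Q_x$ split, cancellation of $H_{xx}$, subtraction of \eqref{falvww}, expansion of $1/G_{xx}-1/M_x$ around $M_x$, and inversion of $(1-M^2S)$ via \eqref{gbzz2}--\eqref{tianYz} to reach $G_{xx}-M_x=M_x^2\cal Z_x+\sum_{y\ne x}c_y\cal Z_y+\OO_\prec(\Phi^2)$ with $c_y=\OO(W^{-d})\mathbf 1_{|x-y|\le(\log N)^2W}$. (Two small remarks: the paper gets $\cal Z_x\prec\Phi$ more cheaply from the identity $\cal Z_x=-Q_x(G_{xx}^{-1}-M_x^{-1})$ together with Lemma \ref{lem_partial} and \eqref{xiangmmz}, whereas your route needs a \emph{quadratic} large deviation bound, which is not what Lemma \ref{large_deviation} as stated provides — it only covers linear forms; this is standard but should be cited correctly.)

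The genuine gap is in your final step, the bound on $\sum_{y\ne x}c_y\cal Z_y$. First, the justification "distinct $\cal Z_w,\cal Z_{w'}$ interact only through the single shared matrix entry $H_{ww'}$" is false: $\cal Z_w$ involves $G^{(w)}$, which depends on the \emph{entire} $w'$-th row and column of $H$, and on the whole row $(H_{wv})_v$. The covariance bound $|\E(\cal Z_w\overline{\cal Z_{w'}})|\prec\Phi^4$ is true, but it is not a "direct covariance calculation": it requires the orthogonality identity $\E[Q_w(A)Q_{w'}(B)]=\E[(Q_{w'}Q_wA)(Q_wQ_{w'}B)]$ together with resolvent expansions (e.g.\ \eqref{Gij Gijk2}) showing $Q_{w'}Q_w(G_{ww}^{-1}-M_w^{-1})\prec\Phi^2$, i.e.\ it is exactly the $p=2$ case of the fluctuation averaging mechanism, not near-independence. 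Second, and more fatally, even granting $\E\bigl|\sum_{y\ne x}c_y\cal Z_y\bigr|^2\prec\Phi^4$, Markov's inequality only gives $\P\bigl(\bigl|\sum_yc_y\cal Z_y\bigr|>N^\epsilon\Phi^2\bigr)\lesssim N^{-2\epsilon}$, which does \emph{not} establish $\OO_\prec(\Phi^2)$: stochastic domination (Definition \ref{stoch_domination}) demands probability bounds $N^{-D}$ for every fixed $D$, hence moment bounds of arbitrarily high order. This is precisely why the paper at this point invokes the fluctuation averaging result \cite[Theorem 4.6]{Semicircle}, whose proof controls all $p$-th moments of $\sum_yc_y\cal Z_y$. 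To close your argument you would either have to cite that result (as the paper does) or carry out the high-moment expansion yourself; the second-moment computation alone is insufficient.
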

 
 \begin{proof}
Note that by \eqref{sq root formula2}, we have $\cal Z_x = - Q_x (G_{xx}^{-1}-M_x^{-1})$. Then by Lemma \ref{lem_partial} and \eqref{xiangmmz}, we have
\be\label{Zi}
\cal Z_x \prec \Phi . 
\ee
 Now applying \eqref{sq root formula2} and \eqref{Gij Gijk}, 
 we get that   
 $$ \frac 1{G_{xx}} =  -z_x -\sum_{y} s_{xy}G _{yy} - \cal Z_x + \OO_\prec(\Phi^2) ,\quad x\in \bZ^d_N. $$
With the definition of $M_x$ in \eqref{falvww}, we then get 
 $$G^{-1}_{xx}-M_x^{-1}=-\sum_{y} s_{ xy }\left(G _{yy}-M_y\right) - \cal Z_x+\OO_\prec(\Phi^2) .$$ 
By (\ref{GM1}), we have $G^{-1}_{xx}-M_x^{-1}=(M_x)^{-2}(M_x-G_{xx})+\OO_{\prec}(\Phi^2)$. Then we obtain that  
$$G_{xx} - M_x = M_x^2 \left(\sum_{y}s_{xy}(G_{yy}-M_{y})+\cal Z_x \right)+\OO_\prec \left(\Phi^2 \right) ,$$
which implies 
 $$G_{xx} - M_x=\sum_{y}\left[(1-M^2S)^{-1}\right]_{xy} M_{y}^2\cal Z_y+\OO_\prec \left(\|(1-M^2S)^{-1}\|_{l^\infty\to l^\infty} \Phi^2 \right)  . $$
By \eqref{gbzz2} and \eqref{tianYz}, we see that with some deterministic coefficients
$$c_y=\OO(W^{-d}) \cdot {\bf 1}\left(|x-y|\le (\log N)^2W\right),$$
we can write 
$$G_{xx} - M_x=M_x^2 \cal Z_x \; + \sum_{y }  c_y \cal Z_y + \OO_\prec \left(\Phi^2 \right) .$$ 
For the second term on the right-hand side, we can apply the fluctuation averaging results in \cite[Theorem 4.6]{Semicircle} to get 
$$
\sum_{y}  c_y \cal Z_y=\OO_\prec(\Phi^2) . 
$$
This completes the proof of Lemma \ref{qinggan}. 
  \end{proof} 
 

 
 Now we start proving Lemma \ref{Ppart}. Our goal for the rest of this section is to reduce Lemma \ref{Ppart} into Lemma \ref{Q1}, whose proof is postponed until Section \ref{sec_simple2}.
 Fix any $x\ne \star$. Recall \eqref{sq root formula}, we can write $ G_{x\star}$ as
\be\label{noV1}  
G_{x\star }=-G_{xx}\sum_{w}^{(x)}H_{xw}G^{(x)}_{w\star }  .
\ee
With the assumption \eqref{GM1} and \eqref{cming}, we know that 
\be\label{taiji}
G_{xx}\sim 1, \quad   \mathbf 1(w\ne x)G_{xw}\prec \Phi , \quad w.h.p.
\ee
Then using \eqref{chsz2}, \eqref{chsz2.5} and (\ref{noV1}), we get that for any fixed $0<\tau' <\tau$, 
\be\label{taiji22}
\sum_{ w}^{(x)}H_{xw}G^{(x)}_{w\star } \prec \Psi_{x\star}^{(x)}(\tau') \prec \OO_\tau\left(\Psi_{x\star}(\tau)\right), \quad {\text{and}} \quad \sum_{ w}^{(x)}H_{xw}G^{(x)}_{w\star } \prec \Psi_{x\star}(\tau) .
\ee
 On the other hand, we have the trivial bound \eqref{xiangmmz} on the ``bad event" with small probability.
 Note that $\Psi_{x\star}^{(x)}$ is independent of the $H$ entries in $x$-th row and column.
Then plugging \eqref{cming}, \eqref{taiji} and \eqref{taiji22} into \eqref{noV1} and using \eqref{partial_P}, we get that 
\begin{align}
       &\mathbb E_x  |G_{x\star}|^2  =|M_x|^2\sum_w^{(x)} s_{xw}|G^{(x)}_{w\star}|^2
       + 2|M_x|^2\re \Big(M_x \E_x  \Big(\cal Z_x\sum_{w,w'}^{(x)}H_{xw}H_{xw'}G^{(x)}_{w\star}\overline{G^{(x)}_{w'\star}}\Big)\Big) +
       \OO_\prec \left(\Phi^2(\Psi^{(x)}_{x\star}(\tau'))^2\right) \nonumber\\
       &=|M_x|^2\sum_w^{(x)} s_{xw}|G^{(x)}_{w\star}|^2
       + 2|M_x|^2\re \Big(M_x \E_x  \Big(\cal Z_x\sum_{w,w'}^{(x)}H_{xw}H_{xw'}G^{(x)}_{w\star}\overline{G^{(x)}_{w'\star}}\Big)\Big) +
       \OO_{\tau,\prec} \left(\Phi^2 \Psi^2_{x\star}(\tau)\right). \label{YYa}
\end{align}
  Next we apply \eqref{Gij Gijk} to $G^{(x)}_{w\star }$ in the first term on the right-hand side of \eqref{YYa}, i.e., 
  \be\label{Gkstar}
  G^{(x)}_{w\star }=G_{w\star }-\frac{G_{wx}G_{x\star }}{G_{xx}}.
  \ee
Since $|x-w|+|x-w'|=\OO(W)$ in \eqref{YYa}, using \eqref{chsz2}-\eqref{chsz2.5} we get that
\be \label{xiangta}
|G_{w\star}| + |G_{w'\star}|+ |G_{x\star}| \prec \Psi_{x\star}^{(x)} (\tau') \prec \OO_\tau\left(\Psi_{x\star}(\tau)\right). 
\ee   
Then with \eqref{taiji}, we obtain that with high probability, 
   \begin{equation}\label{haozy mn}
    \begin{split}
        \mathbb E_x  |G_{x\star}|^2    \; = \;& |M_x|^2\sum_w s_{xw}|G _{w\star}|^2 
 - 2\re \left(M_x \sum_w^{(x)} s_{xw} G _{w\star}\overline{ G_{wx}} \overline{ G_{x\star}}\right)
       \\
         \; + \; &   2|M_x|^2\re \left(M_x\E_x  \left(\cal Z_x\sum_{w,w'}^{(x)}H_{xw}H_{xw'}G^{(x)}_{w\star}\overline{G^{(x)}_{w'\star}}\right)\right) +\OO_{\tau,\prec}  \left(\Phi^2\Psi_{x\star}^2(\tau)\right)\ .
         \end{split}
      \end{equation}
 Here for the term in the second line, using the definition of $\cal Z_x$ we have that
\be\label{4z18}
\begin{split}
   \E_x \cal Z_x\sum^{(x)}_{w,w'}H_{xw}H_{xw'}G^{(x)}_{w\star}\overline{G^{(x)}_{w'\star}}
  =  \; &\E_x \sum^{(x)}_{w_1,w_2,w_3,w_4}H_{xw_1}H_{xw_2}G^{(x)}_{w_1\star}\overline{G^{(x)}_{w_2\star}} \left(H_{xw_3}H_{xw_4}-\delta_{w_3w_4}s_{xw_3}\right)G^{(x)}_{w_3w_4} \\
 - \; & \E_x H_{xx}\sum^{(x)}_{w_1,w_2}H_{xw_1}H_{xw_2}G^{(x)}_{w_1\star}\overline{G^{(x)}_{w_2\star}} \ .
 \end{split}
\ee
Recall that for any $w \in \mathbb Z_N^d$, $H_{x w }$ is independent of $G^{(x)}$ and $\E H_{x w}=0$. Then we see that the second line of \eqref{4z18} vanishes. For the first line, it is easy to calculate
$$\E_x  H_{xw_1}H_{xw_2} \left(H_{xw_3}H_{xw_4}-\delta_{w_3w_4}s_{xw_3}\right),$$ 
which is non-zero only when each $w$ index appears at least twice and all of the indices are in the $W$-neighborhood of $x$. Together with \eqref{chsz2} and \eqref{chsz2.1}, 
we obtain that 
\begin{align*}  
\eqref{4z18} =  \; & \;\OO_\prec (W^{-d} \Psi_{x\star}^2)+2\sum_{w\ne w'}^{(x)}s_{xw}s_{xw'}G^{(x)}_{ww'}G^{(x)}_{w\star}\overline{G^{(x)}_{w'\star}} \ .
\end{align*}
Again using \eqref{Gij Gijk}, we can write each $G^{(x)}$ entry as a combination of the $G$ entry with an error term as in \eqref{Gkstar}. Together with the bounds in \eqref{xiangta}, we obtain that 
\begin{align} 
\eqref{4z18} =   \; & \;\OO_\prec( \Phi^2\Psi_{x\star}^2)+2\sum_{w\ne w'}^{(x)}s_{xw}s_{xw'}G _{ww'}G _{w\star}\overline{G _{w'\star}} \ .
\end{align}
Plugging it into \eqref{haozy mn} and then using \eqref{def: T} and \eqref{chsz4}, we obtain that {\it{w.h.p.}},
\begin{align*}  
   \sum_{x\ne \star} b_x\left(\mathbb E_x |G_{x\star}|^2-|M_x|^2T_{x\star}\right)   = - 2 \sum_{x\ne \star}  b_x \re\left(M_x \sum^{(w)}_x  s_{xw} G _{w\star} \overline {G_{wx} G_{x\star}} \right) \\
     +4\sum_{x\ne \star}  b_x|M_x|^2 \re\left(M_x\sum^{(x)}_{w\ne w'} s_{xw}s_{xw'}G _{ww'}G _{w\star}\overline{G_{w'\star}}\right)
     +\OO_{\tau,\prec}(\Gamma^2\Phi^2).
\end{align*}
The contribution of the terms with $w$ or $w'$ equal to $\star$ can be easily bounded by $\OO_\prec(W^{-d}\Gamma^2)$. Then we can write 
\begin{align*}
\sum_{x\ne \star} b_x\left(\mathbb E_x |G_{x\star}|^2-|m|^2T_{x\star}\right)   
=& \re \left(\sum_{x,w: x\ne w}^{(\star)} c_{xw}  G _{w\star}G_{wx} \overline{G _{x\star} }\right)
+\OO_{\tau,\prec}\left(\Gamma^2{\Phi^2}\right) 
\end{align*}
for some deterministic coefficients $c_{xw}$ satisfying
$$
c_{xw}=\OO( W^{-d}){\bf 1}_{|x-w| = \OO((\log N)^2W)}.
$$
(Here we have used $\OO((\log N)^2W)$ instead of $\OO(W)$ due to the Lemma \ref{478} below.) Therefore, to prove Lemma \ref{Ppart}, it suffices to prove that
 \be\label{gush1}
\left|  \sum_{x,w: x\ne w}^{(\star)} c_{xw}  \left(G _{w\star}G_{wx}\right) \overline{G_{x\star}}\right| =\OO_{\tau,\prec} (1+\Gamma^2\Phi^2) \ .
 \ee
In fact, the $G _{w\star}G_{wx}$ term can be written as a linear combination of $Q_w$ terms as in the following lemma.  

   \begin{lemma}\label{478}  
Under the assumptions of Lemma \ref{Ppart}, 
for $w,w'\in \Z_N^d\setminus \{x\}$\nc, we have 
\be\label{saziy}   G_{xw}G_{xw'} =Q_x(G_{xw}G_{xw'}) +\sum_{y: y\ne w,w'}d_{xy}Q_{y}\left(G_{yw}G_{yw'}\right)+\OO_\prec (\Psi_{xw}\Psi_{xw'}\Phi +W^{-d}\Psi_{ww'} +W^{-d}\delta_{ww'} ),
 \ee
for some deterministic coefficients $d_{xy}=\OO(W^{-d}){\bf 1}_{|x-y|\le (\log N)^2W}.$
 \end{lemma}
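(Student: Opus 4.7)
The plan is to derive a self-consistent equation for $f_x := G_{xw} G_{xw'}$ in the spirit of the $T$-equation \eqref{T0} and then invert it using the localization bound \eqref{tianYz} from Lemma \ref{inversebound}. The relevant inverse here is $(1 - M^2 S)^{-1}$ rather than $(1 - |M|^2 S)^{-1}$, since we deal with an unconjugated product of off-diagonal entries; this is why the $\Phi$-factor in the error is genuinely present (as opposed to the conjugated case, which is intrinsically harder for the reasons noted at the end of Section \ref{sec_colors}).

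To derive the equation, I would start from the Schur-complement identity \eqref{sq root formula} to write
\begin{equation*}
G_{xw} G_{xw'} = G_{xx}^2 B_x, \qquad B_x := \sum_{\alpha,\beta}^{(x)} H_{x\alpha} H_{x\beta} G^{(x)}_{\alpha w} G^{(x)}_{\beta w'}.
\end{equation*}
Using Lemma \ref{qinggan} in the form $G_{xx}^2 = M_x^2 + 2 M_x^3 \mathcal Z_x + \OO_\prec(\Phi^2)$, the trivial bound $|B_x| \prec \Psi_{xw}\Psi_{xw'}$ (since $B_x = f_x/G_{xx}^2$ and $|G_{xx}| \sim 1$), and $\mathcal Z_x \prec \Phi$, I can replace $G_{xx}^2$ by $M_x^2$ up to an $\OO_\prec(\Phi \Psi_{xw}\Psi_{xw'})$ error. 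Splitting $B_x = P_x(B_x) + Q_x(B_x)$ and using the independence of $G^{(x)}$ from the $x$-th row to compute $P_x(B_x) = \sum_\alpha^{(x)} s_{x\alpha} G^{(x)}_{\alpha w} G^{(x)}_{\alpha w'}$, and then converting $M_x^2 Q_x(B_x)$ into $Q_x(f_x)$ modulo a further $\OO_\prec(\Phi\Psi_{xw}\Psi_{xw'})$ correction via Lemma \ref{lem_partial}, I arrive at
\begin{equation*}
f_x = M_x^2 \sum_\alpha^{(x)} s_{x\alpha}\, G^{(x)}_{\alpha w} G^{(x)}_{\alpha w'} + Q_x(f_x) + \OO_\prec(\Phi \Psi_{xw}\Psi_{xw'}).
\end{equation*}

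Next, I would convert $G^{(x)}_{\alpha w} G^{(x)}_{\alpha w'}$ to $G_{\alpha w} G_{\alpha w'}$ via \eqref{Gij Gijk}; each correction term contains an off-diagonal factor $G_{\alpha x}$ with $\alpha$ in the $C_s W$-neighborhood of $x$, hence $\OO_{\tau,\prec}(\Phi)$ by \eqref{chsz2}--\eqref{chsz3}, and summation against $s_{x\alpha}$ contributes another $\OO_{\tau,\prec}(\Phi\Psi_{xw}\Psi_{xw'})$ to the error. The constraint $\alpha \ne x$ can be dropped at the cost of $s_{xx}|f_x| \prec W^{-d}\Psi_{xw}\Psi_{xw'}$, while the $\alpha \in \{w,w'\}$ contributions are extracted as $\OO_\prec(W^{-d}\Psi_{ww'} + W^{-d}\delta_{ww'})$ using $|G_{ww}|,|G_{w'w'}|\sim 1$ and \eqref{chsz2}. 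This yields, for $x\ne w,w'$,
\begin{equation*}
f_x = M_x^2 \sum_{\alpha} s_{x\alpha} f_\alpha + Q_x(f_x) + \mathcal E_x, \qquad \mathcal E_x = \OO_{\tau,\prec}\bigl(\Phi \Psi_{xw}\Psi_{xw'} + W^{-d}\Psi_{ww'} + W^{-d}\delta_{ww'}\bigr).
\end{equation*}

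Finally, I would extend the equation to all $x \in \Z_N^d$ by tautologically defining $\mathcal E_x$ at $x \in \{w, w'\}$; the thus-defined $\mathcal E_w, \mathcal E_{w'}$ are $\OO_\prec(\Psi_{ww'} + \delta_{ww'})$ by direct size estimates. Inverting the linear system $(1 - M^2 S) f = Q(f) + \mathcal E$ using Lemma \ref{inversebound} gives $f_x = \sum_y (\delta_{xy} + d_{xy})(Q_y(f_y) + \mathcal E_y)$ with $d_{xy} = \OO(W^{-d}) \mathbf 1(|x-y|\le (\log N)^2 W)$, modulo a $N^{-c\log N}$ tail that is negligible. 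The contributions from $y \in \{w, w'\}$ to $\sum_y d_{xy} Q_y(f_y)$ are $\OO_\prec(W^{-d}(\Psi_{ww'} + \delta_{ww'}))$, and the propagated error $\sum_y d_{xy} \mathcal E_y$ is controlled by noting that $\Psi_{yw} \prec_\tau \Psi_{xw}$ for $|y-x| \le (\log N)^2 W$ via \eqref{chsz3}, summed against $d_{xy} = \OO(W^{-d})$ over $\OO_\tau(W^d)$ values of $y$, which reproduces the same size bound. The main obstacle will be this last bookkeeping: ensuring that all the diagonal corrections (from $\alpha \in \{x, w, w'\}$, from the $G^{(x)} \to G$ conversion, from $y \in \{w, w'\}$ in the inverted sum, and from the $(1-M^2 S)^{-1}$ tail) combine cleanly into the stated error without spurious $\Gamma$-factors.
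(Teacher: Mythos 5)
Your proposal follows essentially the same route as the paper: derive the self-consistent vector equation for $G_{xw}G_{xw'}$ from \eqref{sq root formula} with a $Q_x$ fluctuation term and an $\OO_\prec(\Phi\Psi_{xw}\Psi_{xw'})$ error, treat the rows $x\in\{w,w'\}$ by crude size bounds, invert $(1-M^2S)$ via \eqref{tianYz}, and use \eqref{chsz3} for the final bookkeeping, so the argument is correct. The only cosmetic differences (splitting $B_x=P_xB_x+Q_xB_x$ instead of computing $\E_x(G_{xw}G_{xw'})$ directly, and retaining $Q_y$-terms at $y\in\{w,w'\}$ which you then bound by $\OO_\prec(W^{-d}(\Psi_{ww'}+\delta_{ww'}))$ using $\Psi^{(w)}$-type bounds) do not change the substance.
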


\begin{proof}
By \eqref{sq root formula}, since $x$, $w$ and $w'$ are all different, we can write 
\begin{align}\label{step1}
\E_x G_{xw}G_{xw'}=&\E_x G^2_{xx}\sum_{\al,\al'} H_{x\alpha}H_{x\alpha'}G^{(x)}_{\alpha w}G^{(x)}_{\alpha' w'} = M_x^2 \sum_{\alpha}s_{x\alpha}G^{(x)}_{\alpha w}G^{(x)}_{\alpha w'}+\OO_\prec(\Psi_{xw}\Psi_{xw'}\Phi) \ ,\end{align}
where we used (\ref{taiji}) and (\ref{taiji22}) in the second step.
Furthermore, with \eqref{Gij Gijk}, \eqref{chsz3} and \eqref{chsz2}, we get
$$\E_x G_{xw}G_{xw'}=M_x^2 \sum_{\al}s_{x\alpha}G _{\alpha w}G _{\alpha w'}+\OO_\prec(\Psi_{xw}\Psi_{xw'}\Phi).
$$
Hence, for all $x\ne w,w'$, we have
$$  G_{xw}G_{xw'}= M_x^2 \sum_\al s_{x\alpha}G _{\alpha w}G _{\alpha w'}+Q_x \left(G_{xw}G_{xw'}\right)+\OO_\prec(\Psi_{xw}\Psi_{xw'}\Phi) . $$
For $x=w$, we can write {\it{w.h.p.}},
\begin{align*}
G_{ww}G_{ww'} &=  m^2 \sum_\al s_{w\al}G _{\alpha w}G _{\alpha w'}+ \left( G_{ww}G_{ww'}-m^2 \sum_\al s_{w\al}G _{\alpha w}G _{\alpha w'}\right) 
\\
&= m^2 \sum_\al s_{xw}G _{\alpha w}G _{\alpha w'}+ \OO_\prec(\Psi_{ww'} + \delta_{ww'}),
 \end{align*}
and we have a similar expression for the $x=w'$ case.  
Therefore, we get a vector equation for $(G_{xw}G_{xw'}: x\in \bZ^d_N)$, which gives that 
\begin{align*}
G_{xw}G_{xw'} &=\sum_{y:y\ne w, w'}\left[(1-M^2 S)^{-1}\right]_{xy} \left[ Q_y \left(G_{yw}G_{yw'}\right)+\OO_\prec(\Psi_{yw}\Psi_{yw'}\Phi)\right]\\
&+\OO_\prec\left(|(1-M^2 S)^{-1}_{xw}|  \Psi_{ww'}+|(1-M^2 S)^{-1}_{xw'}| \Psi_{ww'} + |(1-M^2 S)^{-1} _{xw}| \delta_{ww'}\right) \ .
\end{align*}
Using \eqref{tianYz} and \eqref{chsz3}, we conclude \eqref{saziy}. 
 \end{proof}

 By Lemma \ref{478}, we know that for some deterministic coefficients $\wt c_{xy}=\OO(W^{-d}){\bf 1}_{|x-y|\le 2 (\log N)^2W}$, 
   \begin{align*}
 \sum_{x,w: x\ne w}^{(\star)} c_{xw}  \left(G _{w\star}G_{wx}\right) \overline{G_{x\star}}
  & = \sum_{x,w: x\ne w}^{(\star)} \wt c_{xw} Q_w \left(G _{w\star}G_{wx}\right) \overline{G_{x\star}} + \sum_{x,w: x\ne w}^{(\star)} c_{xw} \OO_{\prec}\left( \Psi_{w\star}\Phi^2\Psi_{x\star} +W^{-d}\Psi^2_{x\star} \right)
       \\
  & = \sum_{x,w: x\ne w}^{(\star)} \wt c_{xw} Q_w \left(G _{w\star}G_{wx}\right)  \overline{G_{x\star}}+
       \OO_{\tau,\prec}\left(\Gamma^2\Phi^2\right) ,  
       \end{align*}     
where we used \eqref{chsz4} and $ \Phi^2\ge W^{-d}$ in the second step. 
Furthermore, with the resolvent expansion \eqref{Gij Gijk},
we get that for distinct $x,w$ and $\star$, 
\be\label{jsjahyzv}
Q_w\left( G _{w\star}G_{wx}\right)  \overline{G_{x\star}}
= Q_w \left(G _{w\star}G_{wx} \overline{G_{x\star}}\right) 
-  Q_w \left(G _{w\star}G_{wx} \frac{\overline{G_{xw}G_{w\star}} }{\overline {G_{ww}}}\right) 
+ Q_w \left( G _{w\star}G_{wx}\right)\frac{\overline{G_{xw}G_{w\star}} }{\overline {G_{ww}}}.
\ee
With (\ref{chsz2}), we get that for any fixed $0<\tau'<\tau$, 
$$G _{w\star}G_{wx}\prec \Psi^{(w)}_{w\star}(\tau')\Phi , \quad \quad\frac{\overline{G_{xw}G_{w\star}} }{\overline G_{ww}}\prec \Psi^{(w)}_{w\star}(\tau')\Phi. $$
Since these bounds are independent of the $H$ entries in $w$-th row and column, using $|Q_w (\cdot)| \le |\cdot| + |\mathbb E_w(\cdot)|$ and \eqref{partial_P}, we obtain that
$$Q_w\left( G _{w\star}G_{wx}\right) \overline{G_{x\star}}= Q_w \left(G _{w\star}G_{wx} \overline{G_{x\star}}\right) + \OO_\prec\left( (\Psi^{(w)}_{w\star}(\tau'))^2\Phi^2  \right) = Q_w \left(G _{w\star}G_{wx} \overline{G_{x\star}}\right)+ \OO_{\tau,\prec}\left( \Psi _{w\star}^2(\tau)\Phi^2 \right) ,$$
where we used \eqref{chsz2.5} in the second step. The last term then gives $\OO_{\tau,\prec}( \Gamma^2\Phi^2)$ when summing over $\wt c_{xw}$ by \eqref{chsz4}. Hence to prove Lemma \ref{Ppart}, it suffices to prove the following lemma.


\begin{lemma}\label{Q1}
Suppose the assumptions of Lemma \ref{Ppart} hold, and $c_{x\al}$ are deterministic coefficients satisfying 
$$c_{x\al }=\OO( W^{-d}){\bf 1}_{|x-\al|=\OO((\log N)^2 W) } .$$
Then for any fixed (large) $p\in 2\mathbb N$ and (small) $\tau>0$, we have 
 \be\label{gush20}
 \mathbb E\Big|  \sum_{x,\alpha:x\ne \al}^{(\star)} c_{x\al} Q_x \left(G _{x\star}G_{x\al}\overline {G _{\al \star}}\right)\Big|^p \le \left(N^\tau  \Gamma^2\Phi^2  \right) ^p.
 \ee
\end{lemma}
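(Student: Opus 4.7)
The plan is to bound the $p$-th moment by expanding $|\cdot|^p$ into a $p$-fold product
\[
\E \sum_{x_1,\alpha_1,\ldots,x_p,\alpha_p} \prod_{i=1}^p c^{(i)}_{x_i \alpha_i}\, Q_{x_i}(\cal G_i),\qquad \cal G_i := G_{x_i\star}G_{x_i\alpha_i}\overline{G_{\alpha_i\star}}
\]
(with half of the factors conjugated), and representing each summand as a rooted graph with root $\star$, atoms $\{x_i,\alpha_i\}_{i=1}^p$, and resolvent edges. Because $c_{x\alpha}$ is supported on $|x-\alpha|\le \OO((\log N)^2 W)$, I would group $x_i$ and $\alpha_i$, together with any auxiliary atom later produced by resolvent expansions, into a single free molecule $[x_i]$. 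This yields the two-level structure of Section \ref{sec_2level}, in which inter-molecule edges are bounded uniformly by the $\Psi_{x_i x_j}$ variables via \eqref{chsz2}, while intra-molecule edges are bounded by $\Phi$ directly.

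I would then carry out the four-step strategy of Section \ref{sec_idea}. In Step 1, the initial graph already satisfies the IPC nested property, since each molecule $[x_i]$ is joined to $\star$ by two edge-disjoint edges, $G_{x_i\star}$ and $\overline{G_{\alpha_i\star}}$; summing the inter-molecule $\Psi$-bounds over $x_p,\ldots,x_1$ in the induced partial order and applying $\sum_x|\Psi_{xy}|^2\prec\Gamma^2$ from \eqref{chsz4} produces a factor $\Gamma^{2p}$, while the intra-molecule edges $G_{x_i\alpha_i}$ and the off-diagonal edges generated by resolvent expansions of type \eqref{resol_examp} contribute a first factor $\Phi^p$. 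The expansions preserve the IPC structure because they replace an edge between two atoms by a new path whose endpoints lie in the same two molecules. In Step 2, each color $Q_{x_i}$ is stripped by writing $\E[\cal A_i\, Q_{x_i}(\cal G_i)] = \E[(\cal A_i-\cal A_i^{(x_i)})Q_{x_i}(\cal G_i)]$ with $\cal A_i^{(x_i)}$ independent of the $x_i$-th row and column, which turns colored graphs into colorless ones at the cost of one extra off-diagonal edge per color (or no cost when $\cal A_i$ already contains $x_i$). In Steps 3--4 I would then locate or, via additional resolvent expansions, create inside each molecule a simple charged atom $\alpha$ whose only external edges form a pair $G_{\alpha\beta_1}G_{\alpha\beta_2}$, and apply the $T$-matrix self-consistent identity of Lemma \ref{478} (displayed as \eqref{charged_intro1}) to convert each such pair into $\sum_\gamma c'_\gamma Q_\gamma(G_{\gamma\beta_1}G_{\gamma\beta_2})$ plus a smaller error term; re-applying the Step~2 color-removal trick to these new $Q_\gamma$'s supplies the second factor $\Phi^p$. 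Combining everything yields $\Gamma^{2p}\Phi^{2p}$ up to $N^{\OO(\tau)}$ losses, which is \eqref{gush20}.

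The main obstacle is verifying that the IPC nested structure is preserved throughout Steps 2--4. Resolvent expansions are safe by construction, but the color-removal, the creation of simple charged atoms, and the iterated application of \eqref{charged_intro1} all modify the inner-molecule structure in more intricate ways that could in principle sever one of the two edge-disjoint paths from a free molecule to $\star$. Moreover, simple charged atoms in distinct molecules may share edges, so Step 4 must be executed one molecule at a time, with Steps 2--3 re-run after each application to restore the required structure. The detailed bookkeeping that certifies every such graph operation preserves two edge-disjoint paths from every free molecule to $\star$---rather than any individual analytic estimate---is where the technical heart of the argument lies.
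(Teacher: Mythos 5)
Your plan contains the right raw ingredients, but it misidentifies where the second factor $\Phi^p$ comes from, and the part you call the technical heart (Steps 2--4: color removal, creation of simple charged atoms, and the self-consistent identity \eqref{charged_intro1}/Lemma \ref{qingqz}) is neither needed for this lemma nor applicable in the form you describe. The paper's proof of Lemma \ref{Q1} (Section \ref{sec_simple2}) is exactly your Step 1 and nothing more: choose molecules $\cal M_i=\{x_i,\al_i\}$, expand with respect to $(\{x_i\}_{i=1}^p,\cal E_D)$ via \eqref{Gp1}--\eqref{Gp2}, and use the colors $Q_{x_i}$ only through the soft observation $\E P_{x_j}Q_{x_j}=0$ (as in \eqref{gaszj}), which forces every free molecule to be visited by the other groups. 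Since each group $G_{x_i\star}G_{x_i\al_i}\overline{G_{\al_i\star}}$ already carries \emph{three} off-diagonal edges (the intra-molecule edge $G_{x_i\al_i}$ is off-diagonal because the sum is over $x\ne\al$), this gives at least $3p+t$ off-diagonal edges, see \eqref{3p+t}; reserving $2(t+r)$ of them for the nested $\Psi$-summation (\eqref{chsz4}) and bounding the rest by $\Phi$, with the polymer bookkeeping $2r\le p-t$, $t\le p$, yields $\Phi^{3p-t-2r}\Gamma^{2(t+r)}\le\Phi^{2p}\Gamma^{2p}$ as in \eqref{phir}. In other words, the second $\Phi$ per molecule is supplied by the edge $G_{x_i\al_i}$ together with the one-edge gain per free molecule from the $Q_{x_i}$'s; the heavy machinery is reserved for Lemma \ref{Qpart}, whose groups $|G_{x_i\star}|^2$ have only two edges each.

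Concerning your Steps 3--4: even if you insisted on running the standard-forest framework here, the molecules $\{x_i,\al_i\}$ contain an internal off-diagonal edge from the start, so they are never \emph{simple} free molecules in the sense of the paper (and Lemma \ref{kaofu22} already credits such non-simple molecules with the extra $\Phi$ via the term $\Phi^a$ in \eqref{internal_Psi}); the degree-4, charge-$(3{+}1)$ conditions \eqref{3113}--\eqref{3114} under which Lemma \ref{qingqz} is invoked are derived from the $|G_{x\star}|^2$ structure and do not hold here; the only same-charge pair at $x_i$ is $G_{x_i\star}G_{x_i\al_i}$, one edge of which is needed for the two disjoint paths to $\star$ (the very obstruction the paper notes below \eqref{adsolyai}), while the mixed-charge pair at $\al_i$ is unusable because $(1-|m|^2S)^{-1}$ is not well behaved (end of Section \ref{sec_colors}). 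Note also that the identity you write in Step 2, $\E[\cal A_i\,Q_{x_i}(\cal G_i)]=\E[(\cal A_i-\cal A_i^{(x_i)})Q_{x_i}(\cal G_i)]$, gains a $\Phi$ but does not remove the color, and if your Step 1 already produced $\Gamma^{2p}\Phi^{p}$ and this gain produced another $\Phi^p$, then a further $\Phi^p$ from Steps 3--4 would amount to claiming $\Gamma^{2p}\Phi^{3p}$, which this method cannot deliver. The repair is simple: drop Steps 2--4, keep your Step 1 together with the $\E P_{x}Q_{x}=0$ argument and the $(t,r)$ bookkeeping, and you recover the paper's proof of \eqref{gush20}.
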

 
\section{Graphical tools - Part I} \label{sec_graph1}

 Now to finish the proof of Theorem \ref{YEniu}, it suffices to prove the Lemma \ref{Qpart} and Lemma \ref{Q1}. 
To help the reader to follow the main idea of the proof, we start with the following easier lemma. Note that compared with (\ref{eqn-Qpart}), (\ref{gush2}) has one less $\Phi$ factor on the right-hand side. 

 \begin{lemma}\label{Q2}
 Suppose the assumptions of Theorem \ref{YEniu} hold, and $c_{x}$ are real deterministic coefficients such that $\max_x|c_x|=\OO(1)$. Then for any fixed $p\in 2\N$ and $\tau>0$, we have  
\be\label{gush2}
 \mathbb E\Big|\sum_{x }^{(\star)} c_{x} Q_x \left(G _{x\star} \overline G _{x\star}\right)\Big|^p \le \left( N^\tau  \Gamma^2\Phi \right)^p .
\ee
\end{lemma}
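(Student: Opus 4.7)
The plan is to bound the $p$-th moment and apply Markov's inequality. Expanding gives
$$\mathbb{E}\Big|\sum_x^{(\star)} c_x Q_x(|G_{x\star}|^2)\Big|^p = \sum_{x_1,\ldots,x_p} \Big(\prod_{i=1}^p c_{x_i}\Big)\, \mathbb{E}\prod_{i=1}^p Q_{x_i}(|G_{x_i\star}|^2).$$
Two sources of smallness must be extracted: a factor $\Gamma^{2p}$ from nested summation against the local $L^2$ control of the $\Psi$ variables, and one extra factor of $\Phi$ per $Q_{x_i}$, delivering the required $\Phi^p$. Coincidences among the $x_i$'s yield strictly fewer free summation indices and contribute smaller errors, so the analysis reduces to the case of pairwise distinct $x_i$'s.

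For the $\Gamma^{2p}$ factor, each $|G_{x_i\star}|^2$ already supplies two distinct edges between $x_i$ and $\star$. At the molecular level described in Section \ref{sec_2level}, every free molecule $[x_i]$ therefore enjoys two disjoint paths to $\star$, so the molecular graph satisfies the IPC nested property of Section \ref{sec_nested}; this structure is preserved under every subsequent resolvent expansion, since such expansions only replace one edge between two molecules by a path between the same two molecules. For the extra $\Phi^p$, I process the $Q_{x_i}$'s one at a time. Setting $\mathcal{A}_i := \prod_{j\ne i} Q_{x_j}(|G_{x_j\star}|^2)$ and using $\mathbb{E}_{x_i} Q_{x_i}(\cdot)=0$,
$$\mathbb{E}\bigl[\mathcal{A}_i \, Q_{x_i}(|G_{x_i\star}|^2)\bigr] = \mathbb{E}\bigl[(\mathcal{A}_i - \mathcal{A}_i^{(x_i)})\, Q_{x_i}(|G_{x_i\star}|^2)\bigr],$$
where $\mathcal{A}_i^{(x_i)}$ denotes the version of $\mathcal{A}_i$ with every $G_{x_j\star}$ replaced by $G_{x_j\star}^{(x_i)}$. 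The resolvent identity \eqref{Gij Gijk} gives
$$G_{x_j\star} - G_{x_j\star}^{(x_i)} = \frac{G_{x_j x_i}\, G_{x_i\star}}{G_{x_i x_i}},$$
which introduces an additional off-diagonal edge bounded by $\prec \Psi_{x_j x_i}\Psi_{x_i\star}$ via \eqref{chsz2}, using $|G_{x_i x_i}|\sim 1$ (from \eqref{cming}). Iterating this reduction for each $i=1,\ldots,p$ yields the $\Phi^p$ gain. Finally, bounding every surviving off-diagonal $G$-edge by the corresponding $\Psi$-variable and summing in the nested order dictated by the IPC property, inequality \eqref{chsz4} produces the desired bound $(N^\tau \Gamma^2 \Phi)^p$.

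The main obstacle is controlling the proliferation of correction terms during these successive reductions. Each replacement $\mathcal{A}_i \mapsto \mathcal{A}_i - \mathcal{A}_i^{(x_i)}$ spawns many new graphs in which $x_i$ now appears as an intermediate atom of some other $x_j$-to-$\star$ path, and one must verify at every stage that the IPC nested property of the molecular graph persists; otherwise the final summation could be forced into an order that replaces a $\Gamma^2$ factor by the strictly weaker $N^{d/2}\Phi$, as warned by the example \eqref{bad_nest}. The molecule-based graphical formalism of Section \ref{sec_2level} is designed precisely to make this invariant preservation transparent, after which the estimates follow routinely from \eqref{chsz2}, \eqref{chsz4}, and \eqref{chsz4.0}.
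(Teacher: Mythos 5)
Your proposal is correct and follows essentially the same route as the paper: expand the $p$-th moment, exploit $\E_{x_i}Q_{x_i}=0$ through resolvent expansions to gain one extra off-diagonal edge per free index, and harvest $\Gamma^{2p}$ by summing in the order supplied by the IPC/ordered nested structure via the $\Psi$-variables and \eqref{chsz4}, \eqref{chsz4.0}. The paper organizes the $Q$-cancellation slightly differently---it expands every edge and weight until fully independent with respect to $(\{x_i\}_{i=1}^p,\cal E_D)$ and then discards graphs in which a free atom $x_j$ is unvisited (since $P_{x_j}Q_{x_j}=0$), which yields the edge count \eqref{2p+t}, and it treats coinciding indices through the polymer ($\cal Pol_2$) bookkeeping rather than your ``reduce to pairwise distinct'' gloss---but these are organizational variants of the same argument.
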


In the proof of Lemma \ref{Qpart}, Lemma \ref{Q1} and Lemma \ref{Q2}, we use graphical tools, which will be introduced starting from this section.  
For example, the left-hand side of \eqref{gush2} can be written as 
\be\label{Qsimz}\E  \sum_{x_1,\cdots, x_p}^{(\star)}  \left(\prod_{i=1}^p c_{x_i} Q_{x_i} \left(G _{{x_i}\star} \overline G _{{x_i}\star}\right)  \right).
\ee
Then we will expand this expression with the resolvent expansions in Lemma \ref{resolvent_id}, and the graphical tools will help us to bound the long expressions as discussed in the introduction. 


In the rest of this paper, we suppose that the assumptions of Theorem \ref{YEniu} hold. In particular, we always assume \eqref{xiazhou}-\eqref{GM1}, and we will not repeat them again.
  
\subsection{Definition of  Graph  - Part 1.}  \label{subsec: graph} \
In this subsection, we introduce some basic components of the graphical tools needed to prove Lemma \ref{Q2} and Lemma \ref{Q1}.

\begin{definition} [Colorless graph] \label{def_colorlessg}

We consider graphs that contain the following elements.

\begin{itemize}
\item {\bf The star atom $\otimes$\;:} In each graph, there exists at most one star atom, which represents the $\star$ index. 

\item{\bf Regular atoms $\circ$\, :} Any vertex that is not the star atom is called a regular atom (or simply atom). 

 
\item{\bf  Labelled solid edges:} A solid edge that connects atoms $\al$ and $\beta$ represents a $G_{\al\beta}$ factor. 
Each solid edge has the following labels (see the example in \eqref{fig5}): 
\begin{itemize}
\item  a direction, which indicates whether it is $G_{\al\beta}$ or $G_{\beta\al}$;
\item  a charge, which indicates whether it is a $G$ factor or a $G^{*}(\equiv \overline G)$ factor;
\item  an independent set $\mathbb T$ for the $G^{(\mathbb T)}$ entry.
\end{itemize}
We sometimes ignore the direction, charge and independent set, and denote the edge by $Edge(\al, \beta).$ If we want to emphasize the independent set, then we will write  ${Edge}^{(\mathbb T)}(\al, \beta)$.

 \item {\bf Weights $\Delta$:}  A weight at atom $x $ represents a $G_{xx}$ factor or a $\left(G_{xx}\right)^{-1}$ factor. It is drawn as a solid $\Delta$ in the graph. We will introduce other types of weights later in Section \ref{def_II}. 
 Each weight has the following labels (see the example in \eqref{fig5}):
 \begin{itemize}
\item a flavor, which indicates whether it is a $G$ factor or a $G^{-1}$ factor, and we will use the notations $f_1$ (i.e. {\it flavor} 1) for $G$ factors and $f_2$ (i.e. {\it flavor} 2) for $G^{-1}$ factors in the graph;
\item  a charge, which indicates whether it is a $G$ factor or a $G^{*}(\equiv \overline G)$ factor;
 \item an independent set $\mathbb T$ for the $G^{(\mathbb T)}$ or $\left(G^{(\mathbb T)}\right)^{-1}$ entry.
\end{itemize}
We sometimes ignore the flavor, charge and independent set, and denote the weight by ${\rm{Weight}}(x)$. If we want to emphasize the independent set, then we will write  ${\rm{Weight}}^{(\mathbb T)}(x)$.
\end{itemize}

\end{definition} 

In the definition, we used the word ``atom" to illustrate various concepts in a more figurative way. We also remark that a weight is represented by a bubble diagram in the usual graphical language. The following \eqref{fig5} gives a simple example of a colorless graph:
\be\label{fig5}
 \parbox[c]{0.30\linewidth}{\includegraphics[width=\linewidth]{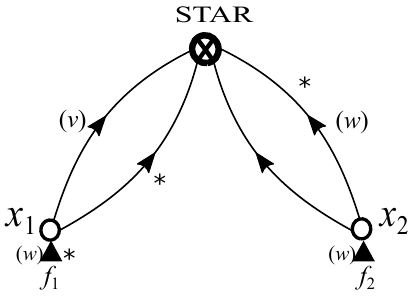}} \qquad 
 = G^{(v)}_{x_1\star}\overline G_{x_1\star}\overline G^{(w)}_{x_1x_1} G_{x_2\star}\overline G^{(w)}_{x_2\star}  (G^{(w)}_{x_2x_2} )^{-1}.
 \ee
Here the equality holds in the following sense.

\vspace{5pt}
\noindent{\bf Values of graphs:} 
For a graph $\mathcal G$, we define its value as the product of all the factors represented by its elements. We will almost always identify a graph with its value in the following proof. 
 
\vspace{5pt}

To represent the $Q_x$'s in the graphs, we introduce the concept of ``colors". There are $2N^d+1$ kinds of colors $\{P_x: x\in \bZ_N^d\}\cup\{Q_x: x\in \bZ_N^d\} \cup \{P_{\emptyset}\}$. Note that $P_x$ and $Q_x$ are related through $Q_x = 1- P_x$, but we treat them as different colors. Also by convention $P_{\emptyset}$ is an identity operator. 

\begin{definition} [Colorful graph]
A colorful graph is a graph with some edges and weights colored with $P_x$'s or $Q_x$'s. Moreover, each edge or weight can have at most one color, and we will regard the ``colors" as another type of labels of the edges and weights. For the edges and weights with the same color, we group them together as a product and apply $P_x$ or $Q_x$ on them.
\end{definition} 

As an example, the following graph has two colors $Q_{x_1}$ and $Q_{x_2}$: 
 \be\label{fig6}
  \parbox[c]{0.36\linewidth}{\includegraphics[width=\linewidth]{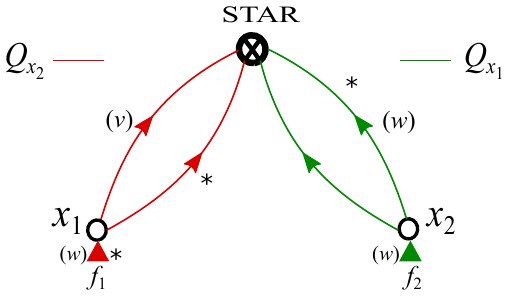}} = Q_{x_2}\left(G^{(v)}_{x_1\star}\overline G_{x_1\star}\overline G^{(w)}_{x_1x_1} \right) Q_{x_1}\left(G_{x_2\star}\overline G^{(w)}_{x_2\star}  (G^{(w)}_{x_2x_2})^{-1}\right).
 \ee

With the above graphical notations, we can express the resolvent expansions in \eqref{Gij Gijk} and \eqref{Gij Gijk2} as graph expansions as in the following lemma. Its proof is obvious.

\begin{lemma} \label{lem_exp1}
We have $G _{xy}=G_{xy}^{(w)}+\frac{G_{xw}G_{wy}}{G_{ww}}$, i.e.,   
  \be\label{Gp1}
   \parbox[c]{0.50\linewidth}{\includegraphics[width=\linewidth]{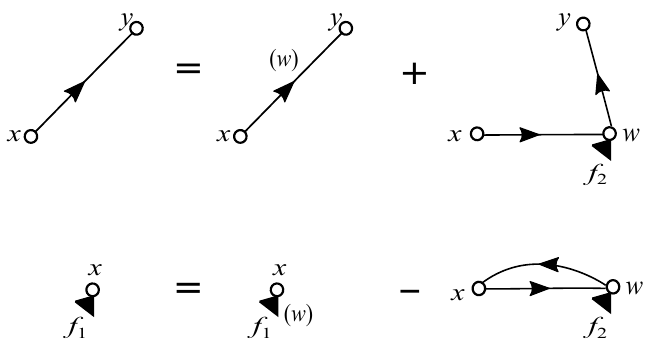}}
   \ee
We have $ (G _{xx})^{-1}=(G_{xx}^{(w)})^{-1} - \frac{G_{xw}G_{wx}}{G _{xx}G_{ww}G ^{(w)}_{xx}}$, i.e.,  
  \be\label{Gp2}
   \parbox[c]{0.45\linewidth}{\includegraphics[width=\linewidth]{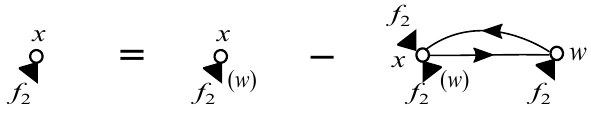}}
\ee  
 Here  the equality in each graph means the equality of the values of the graphs, not the equality in the graphical sense. These expansions preserve colors in the sense that after an resolvent expansion, each new component has the same color as its ancestor (i.e. the component from which it is expanded). 
 \end{lemma}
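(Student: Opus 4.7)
The proof plan is to reduce the lemma to two things already available: the scalar resolvent identities \eqref{Gij Gijk} and \eqref{Gij Gijk2} from Lemma \ref{resolvent_id}, and the linearity of the operators $P_x$, $Q_x$.

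First, the graphical equation \eqref{Gp1} at the level of values is exactly \eqref{Gij Gijk} with $B = H - Z$, after identifying the minor of the second kind $G^{(w)}$ with the resolvent of the $w$-removed minor of $H-Z$; similarly \eqref{Gp2} at the level of values is exactly \eqref{Gij Gijk2}. Thus no algebraic work is required. What remains is a mechanical bookkeeping check that the pictures on the right-hand sides, when read according to the conventions of Definition \ref{def_colorlessg}, give the correct product of factors: the directions and independent-set labels on the new solid edges must match the $G_{xw}$, $G_{wy}$ (or $G_{wx}$) and $G^{(w)}_{xy}$ (or $G^{(w)}_{xx}$) factors, and in \eqref{Gp2} the flavors $f_1$ and $f_2$ on the weights at $x$ and $w$ must combine to produce the correct $(G_{xx}\, G_{ww}\, G^{(w)}_{xx})^{-1}$ denominator.

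For the color-preservation claim, the point is simply that each of $P_x$ and $Q_x$ is a linear operator. If the edge or weight being expanded carries some color $\kappa$, and participates in the color group $\kappa\pb{\cal A\cdot F}$ where $F$ is the factor being rewritten and $\cal A$ is the product of the other $\kappa$-colored components, then applying \eqref{Gij Gijk} inside this group gives
\[ \kappa\!\pb{\cal A \cdot G_{xy}} \;=\; \kappa\!\pb{\cal A \cdot G^{(w)}_{xy}} \;+\; \kappa\!\left(\cal A \cdot \frac{G_{xw}G_{wy}}{G_{ww}}\right), \]
which is precisely the statement that we may assign color $\kappa$ to every new component produced by the expansion while leaving all untouched components with their original colors. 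The same argument applies verbatim to \eqref{Gp2}. I do not expect any substantive obstacle: the lemma is essentially a definitional unpacking of Lemma \ref{resolvent_id}, and the only point requiring care is the flavor/independent-set bookkeeping for the four weights and two edges appearing on the right-hand side of \eqref{Gp2}.
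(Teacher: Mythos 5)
Your proposal is correct and matches the paper's treatment: the paper declares the proof obvious, since \eqref{Gp1} and \eqref{Gp2} are just the identities \eqref{Gij Gijk} and \eqref{Gij Gijk2} of Lemma \ref{resolvent_id} applied to $B=H-Z$ and re-read through the graphical conventions, with the color-preservation claim following from linearity of $P_x$ and $Q_x$ exactly as you argue.
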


   
\begin{remark} 
In \eqref{Gp1} we have ``$+$" sign, while in \eqref{Gp2} we have ``$-$" sign. The $\pm$ signs are very hard to track, and actually they will not affect our proof. In the proof, we will try to be precise with the signs when we draw some specific graphs. However,  when we write or draw a general linear combination of graphs, we will always use the $+$ sign.  
\end{remark}


 
 
 \noindent{\bf Dashed \nc  edges:} In a graph, we use a dashed line connecting atoms $\al$ and $\beta$ to represent the factor $\delta_{\al\beta}$.  
For example, we have
 \be\label{fig3}
 \parbox[c]{0.26\linewidth}{\includegraphics[width=\linewidth]{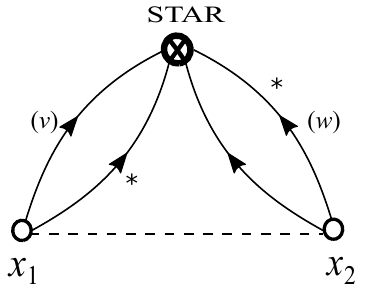}} \qquad =   \begin{cases}
   0 ,\quad  & x_1\ne x_2 \\
  G^{(v)}_{x_1\star}\overline G_{x_1\star}G_{x_1\star}\overline G^{(w)}_{x_1\star} , \quad & x_1=x_2 
  \end{cases}.
\ee
On the other hand, we use a dashed line with a cross ($\times$) to represent the $(1-\delta_{\al\beta})$ factor. The dashed lines and $\times$-dashed lines are useful in organizing the summation of indices represented by the regular atoms. For example, we can represent
$\sum_{x_1,x_2}G^{(v)}_{x_1\star}\overline G_{x_1\star} G_{x_2\star}\overline G^{(w)}_{x_2\star}$ 
by the graphs
\be\label{fig4}
 \parbox[c]{0.72\linewidth}{\includegraphics[width=\linewidth]{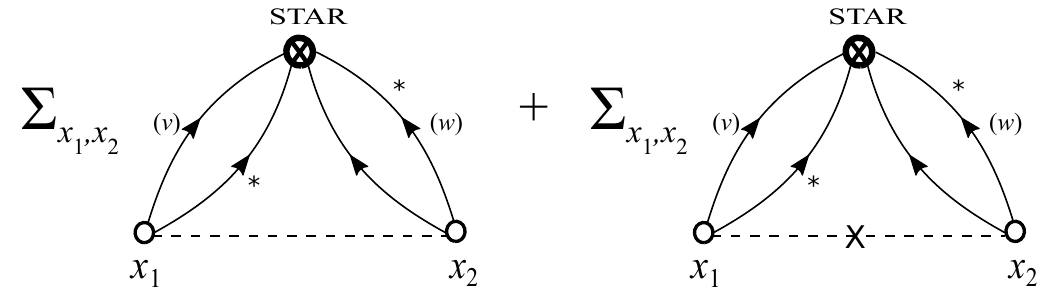}} 
 \ee
 For simplicity, 
 in the proof (not in the graph) we will also use the notation $x --- y$ (or $x -\times-y $)
to mean that {\it there is a dashed line connecting atoms $x$ and $y$} (or {\it there is a 
 $\times$-dashed line connecting atoms $x$ and $y$}). 

\vspace{5pt}

\noindent {\bf Dashed-line partition:}  Given a set of atoms $\{x_1, x_2, \cdots, x_n\}$. 
Let $\cal E_D$ be a collection of some dashed and $\times$-dashed edges between these atoms. We say $\cal E_D$ is a dashed-line partition of the atoms $\{x_1, x_2, \cdots, x_n\}$ if and only if it satisfies the following properties:
\begin{itemize}
\item {\bf completeness:} for any $i\ne j$, there is either a dashed edge or a $\times$-dashed edge in $\cal E_D$ between atoms $x_i$ and $x_j$;
\item {\bf self-consistency:} if $x_i$ and $x_j$ are connected by a dashed edge in $\cal E_D$, and $x_j$ and $x_l$ are also connected by a dashed edge in $\cal E_D$, then $x_i$ and $x_l$ must be connected by a dashed edge in $\cal E_D$. 
\end{itemize}
We say $\cal E_D$ is a dashed-line partition of a graph $\cal G$ if it is a dashed-line partition of all the atoms of graph $\cal G$.

\vspace{5pt}

For example, in the case $n=3$, we show the five possible dashed-line partitions in \eqref{G2sa}. The partitions with two dashed lines and one $\times$-dashed line are not {\it self-consistent}. 
\be\label{G2sa}
  \parbox[c]{0.60\linewidth}{\includegraphics[width=\linewidth]{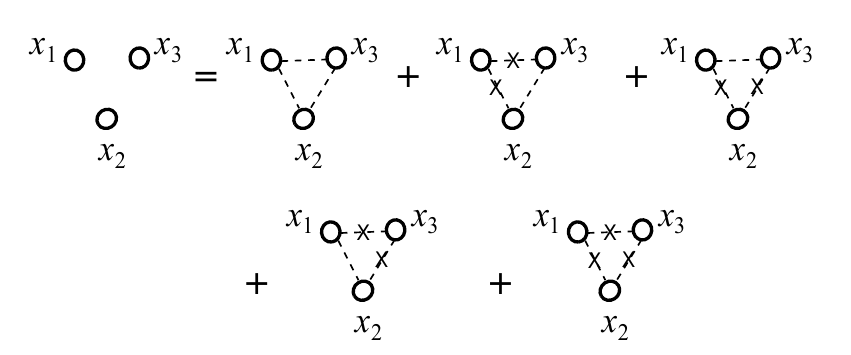}}
   \ee

\noindent {\bf Off-diagonal edges:} Let $\cal E_D$ be a dashed-line partition of a graph $\cal G$. If a solid edge connects atoms that are not equal under $\cal E_D$, then we shall call it an {\it{off-diagonal edge}}.

\vspace{5pt}

\noindent{\bf Fully expanded (fully independent):} Consider a subset of atoms $\{x_1, x_2, \cdots, x_n\}$ in graph $\cal G$. Let $\cal E_D$ be a dashed-line partition of $\cal G$. Then the restriction of $\cal E_D$ to $\{x_1, x_2, \cdots, x_n\}$ is a dashed-partition of these atoms. 

\begin{itemize}
\item We say a solid edge $Edge(\al, \beta)$ is fully expanded (fully independent) with respect to ($\{x_i\}_{i=1}^n , \cal E_D$) if its independent set union the end atoms $\{\alpha,\beta\}$ contains the set $\{x_1, x_2, \cdots, x_n\}$ after identification by $\cal E_D$. 
\item We say a weight on atom $\alpha$ is fully expanded (fully independent) with respect to ($\{x_i\}_{i=1}^n , \cal E_D$) if its independent set union the atom $\{\alpha\}$ contains the set $\{x_1, x_2, \cdots, x_n\}$ after identification by $\cal E_D$.
\end{itemize}

As defined above, if a solid edge $G_{\al\beta}^{(\mathbb T)}$ is fully expanded with respect to ($\{x_i\}_{i=1}^n , \cal E_D$), then $\mathbb T$ contains all the $x_i$ atoms which are non-equivalent to $\al$ or $\beta$ under $\mathcal E_D$. Similar property holds for weights. 
 
\vspace{5pt}

\noindent{\bf Independent of an atom:} Given a dashed-line partition $\mathcal E_D$ of atoms $\{x_1, x_2, \cdots, x_n\}$, we say that an edge (or a weight) is {\it{independent of atom $x_i$}} if the independent set of the edge (or the weight) contains an atom that is equivalent to $x_i$ under $\mathcal E_D$. In other words, an edge (or a weight) is said to be independent of an atom $x_i$ if it is independent of the $x_i$-th  row and column of $H$. Note that if a subgraph $\mathcal G$ is independent of atom $x_i$, then we have $P_{x_i}\mathcal G=\mathcal G$ and $Q_{x_i}\mathcal G=0$.

\vspace{5pt}

As discussed in Section \ref{sec_2level}, we next define the concept of molecules.

\vspace{3pt}


\begin{definition}[Molecules and Polymers]\label{def_poly}
{\rm{(i)}} {\bf Molecules:}  We partition the set of all the regular atoms into a union of disjoint sets $\cal M_j,$ $j=1 \ldots, p$. We shall call each 
 $\cal M_j$ a ``molecule" (even though the atoms in $\cal M_j$ may not be edge-connected). 
More precisely, the {\it{molecules}} are subsets of atoms that satisfy 
\be\label{yurenguodu}
\star \notin \cup_{j=1}^p\cal M_j,\quad \quad \{\star\} \cup \left(\cup_{j=1}^p\cal M_j\right)= \{\text{all atoms}\},\quad \text{ and }\ \ \cal M_i\cap \cal M_j=\emptyset\ \ \text{for } 1\le i < j\le p .
\ee

\vspace{5pt}

\noindent{\rm{(ii)}} {\bf Polymers:} Let $\cal G$ be a graph with $p$ molecules such that \eqref{yurenguodu} holds. We use the notations 
$$(1):  \cal M_i ---\cal M_j \quad \text{and} \quad (2):  \cal M_i--- \, \star$$
to mean that (1) there is a dashed line connecting an atom in $\cal M_i$ to an atom in $\cal M_j$,  and (2) there is a dashed line connecting an atom in $\cal M_i$ to the $\star$ atom. Then we define two subsets of molecules, $\cal Pol_1(\cal G)$ and $\cal Pol_2(\cal G)$, called ``polymers". A molecule $\cal M_i$ belongs to $\cal Pol_1(\cal G)$ if and only if there exists $\cal M_{i_1},\cdots, \cal M_{i_n}$ such that
$$ \cal M_i ---\cal M_{i_1} --- \cal M_{i_2} --- \cdots ---  \cal M_{i_n}--- \star \ .$$
Simply speaking, $\cal Pol_1(\cal G)$ consists of all the molecules that are connected to the star atom through a path of dashed lines. A molecule $\cal M_i$ belongs to $\cal Pol_2(\cal G)$ if and only if $\cal M_i \notin \cal Pol_1(\cal G)$ and there exists another $\cal M_j \notin \cal Pol_1(\cal G)$ such that $  \cal M_i---   \cal M_j.$
In other words, $\cal Pol_2(\cal G)$ consists of all the molecules that are not in $\cal Pol_1(\cal G)$ and have at least one dashed line-connected neighborhood. 

\vspace{5pt}

\noindent{\rm{(iii)}} {\bf Free molecules:} We say a molecule $\cal M_i$ is {\it free} if and only if  
$\cal M_i\notin \cal Pol_1\cup\cal Pol_2$. 
\end{definition}

For example, in Fig.\,\ref{polyexample}, we have
$$
\cal Pol_1=\{\cal M_4, \cal M_5\}, \quad \cal Pol_2=\{\cal M_3, \cal M_6, \cal M_7, \cal M_8\}.
$$
\begin{figure}[htb]
\centering
\includegraphics[width=12cm]{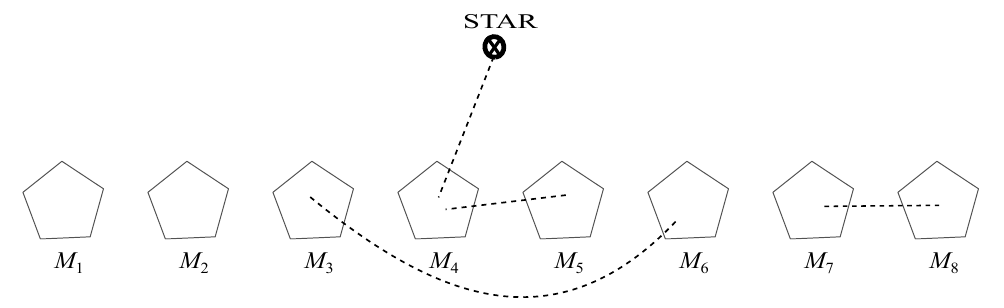}
\caption{We use pentagons to represent the molecules, and we only show the dashed lines between them. The pentagons do not really appear in the graph, and they are only drawn to help to understand the structures. }
\label{polyexample}
\end{figure}



\hspace{-0.27in} {\bf Degree:} Let  $\cal A$ denote any set of atoms in the graph. We define 
\be\label{degree}
{\deg}(\cal A) := \# \text{ of solid edges which connect atoms in } \cal A \text{ and } \cal A^c, 
\ee
i.e., the total number of solid edges which have one ending atom in $\cal A$ and the other one in $\cal A^c$. In particular, for any atom $x$, $\deg(x)$ denotes the number of solid edges attached to $x$, and for any molecule $\cal M_i$, $\deg (\cal M_i)$ denotes the number of solid edges that connect the atoms in $\cal M_i$ to the atoms outside the molecule. 
  
\vspace{5pt}
In the rest of this subsection, we introduce one of the most important graphical properties for the proof---the nested property of molecules.

\vspace{3pt}

\hspace{-0.27in} {\bf Path:}  Let $\cal G$ be a graph with molecules $\cal M_i$, $1\le i\le p$, such that \eqref{yurenguodu} holds. For some $1\le i\le p$, we say that there is {\bf a path from molecule $\cal M_i$ to $\star$}
if and only if there is a {\it{solid edge path}} connecting $\cal M_i$ to $\star$ in the molecular graph. In other words, the path is defined on the new graph where each molecule is viewed as a vertex. 
  
For example, let $\cal M_1=\{x_1\}$ and $\cal M_2=\{x_2,x_3\}$ in the following graph \eqref{pathexample}. Although there is no edge between atoms $x_2$ and $x_3$, there are still {\it{2 separated}} paths connecting $\cal M_1$ to $\star$, i.e., through $Edge(x_1,\star)$, and through $Edge(x_1,x_2)$ and $Edge(x_3,\star)$. Similarly, it is easy to see that there are {\it{3 separated}} paths connecting $\cal M_2$ to $\star$.
 \be\label{pathexample}
  \parbox[c]{0.40\linewidth}{\includegraphics[width=\linewidth]{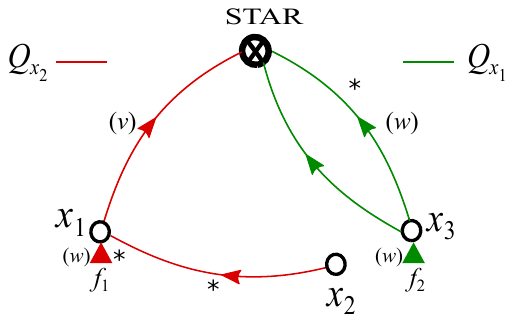}}
   \ee

 \begin{definition}[IPC Nested property] \label{def_nest}
 For a graph $\cal G$ with molecules $\cal M_i$, $1\le i\le p$, that satisfy \eqref{yurenguodu}, we say that it satisfies the {\bf independently path-connected (IPC) nested property} if
\begin{itemize}
\item for each molecule, there are at least 2 separated (solid edge) paths connecting it to $\star$;
\item the edges used in these $2p$ paths are all distinct. 
\end{itemize}
If a graph $\cal G$ satisfies the IPC nested property, then we say that it has an {\bf IPC nested structure}.
\end{definition}

For example, the graph in \eqref{pathexample} does not have an IPC nested structure, but the following one does. 
 \be
   \parbox[c]{0.40\linewidth}{\includegraphics[width=\linewidth]{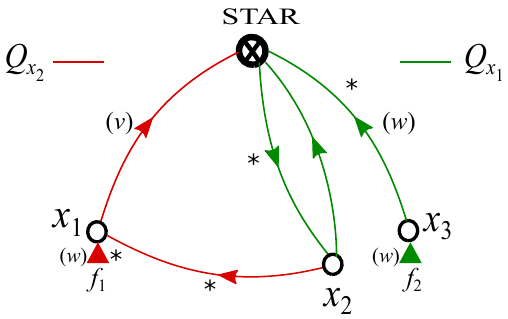}}
  \ee
 
 The IPC nested property implies the ordered nested property as discussed in the introduction. 

 \begin{lemma}[Ordered nested property]\label{zuomeng}
 Let $\cal G$ be a graph with a $\star$ atom and $p$ molecules that satisfy \eqref{yurenguodu}. Suppose $\cal G$ satisfies the IPC nested property, then it also satisfies the following {\bf ordered nested property}: 
 for any $t\in \mathbb N$, $1\le t\le p$, there exists $\pi=(\pi_1, \pi_2, \cdots,\pi_t) \in S_t$, the permutation group, such that 
\begin{equation}\label{suz}
\begin{split}
\text{\bf Ordered nested property}: \quad & \forall s\le t, \text{ there exist at least 2 solid edges connecting atoms in $\cal M_{\pi_s}$ }
 \\  
& \text{to  atoms in $ \{\star\} \cup\left(\cup_{s'<s}\cal M_{\pi_{s'}}\right)\cup \left(\cup_{t'>t}\cal M_{t'}\right)$.}
\end{split}
\end{equation}
\end{lemma}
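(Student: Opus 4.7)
The plan is to build the ordering $\pi_1,\dots,\pi_t$ greedily from $s=1$ to $s=t$. Relabelling molecules if necessary, assume $T=\{1,\dots,t\}$ is the set of indices to be ordered, and at step $s$ let $R_s := T\setminus\{\pi_1,\dots,\pi_{s-1}\}\subseteq T$ be the still-unordered indices, so $R_1=T$. Writing $V$ for the vertex set of the molecular graph (the $p$ molecules together with $\star$), the set $\{\star\}\cup\bigcup_{s'<s}\mathcal M_{\pi_{s'}}\cup\bigcup_{t'>t}\mathcal M_{t'}$ appearing in \eqref{suz} is exactly $V\setminus R_s$. So it suffices to show that at each step $s$ one can pick $\pi_s\in R_s$ such that $\mathcal M_{\pi_s}$ has at least two solid edges going to $V\setminus R_s$; the greedy choice then verifies \eqref{suz} for all $s\le t$.

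The key step is a pigeonhole argument based on the IPC data. The IPC hypothesis fixes a collection of $2p$ pairwise edge-disjoint solid paths, two per molecule, each ending at $\star$. Restricting to the $2|R_s|$ paths originating in molecules of $R_s$, these remain mutually edge-disjoint. Each such path $P$ starts in $R_s$ and ends at $\star\notin R_s$, so it has a well-defined \emph{last-exit edge}: the edge along $P$ going from its $R_s$-endpoint to its $(V\setminus R_s)$-endpoint, after which $P$ never re-enters $R_s$. This produces $2|R_s|$ distinct edges, each with one endpoint in $R_s$ and the other in $V\setminus R_s$. Distributing them by their $R_s$-endpoint among the $|R_s|$ molecules of $R_s$, the pigeonhole principle yields some molecule $\mathcal M_{\pi_s}\in R_s$ incident to at least two of them, which gives two solid edges from $\mathcal M_{\pi_s}$ to $V\setminus R_s$, as required.

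I do not expect any real obstacle here: the lemma is a purely combinatorial consequence of the IPC property and reduces to one pigeonhole step per greedy iteration. The only care needed is organizational -- setting up the recursion so that the two guaranteed edges at step $s$ land inside the \emph{allowed} target $\{\star\}\cup(\text{already ordered})\cup(\text{unselected outside }T)$, rather than among the unordered-but-selected molecules in $R_s\setminus\{\pi_s\}$. Tracking ``last-exit edges'' of the IPC paths, rather than arbitrary outgoing edges, is precisely what forces this.
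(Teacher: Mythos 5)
Your proposal is correct, and it follows the same route the paper intends: the paper's proof is just the remark that the IPC property yields the ordered nested property ``by a simple application of the pigeonhole principle,'' with details left to the reader. Your greedy construction with last-exit edges of the $2|R_s|$ edge-disjoint IPC paths and the pigeonhole step at each stage is a correct and complete instantiation of exactly that argument, including the identification of the target set in \eqref{suz} with $V\setminus R_s$.
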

\begin{remark}
Given the first $t$ molecules and $\pi \in S_t$, we can partially order them according to $ \cal M_{\pi_1} \preceq \cal M_{\pi_2} \preceq \cdots \preceq \cal M_{\pi_t}$. For the $\star$ atom and other molecules $\cal M_{t'}$, $t<t'\le p$, we define the partial order $\star \preceq \cal M_{t'} \preceq \cal M_{\pi_1}$ such that they are lower bounds of the subset $\{\cal M_1,\cdots, \cal M_t\}$. Then roughly speaking, the {\it{ordered nested property}} means that for any fixed $1\le t \le p$, there exists an order given by $\pi \in S_t$ such that each of the molecule $\cal M_s$, $1\le s\le t$, has at least 2 solid edges connecting to the preceding molecules. Note that $\cal M_s$, $1\le s\le t$, may or may not have solid edges connecting to molecules after it.
\end{remark}
 
\begin{proof} [Proof of Lemma \ref{zuomeng}]
A simple application of the pigeonhole principle shows that a graph with IPC nested property can always be rearranged to have the ordered nested property. Here we skip the details and leave it to the reader.  
\end{proof}

%

The Fig.\,\ref{nestexample} gives an example of the ordered nested property with $t=5$ and $\pi=(2,4,3,5,1)\in S_5$.  Note that the choice of $\pi$ is not unique for the ordered nested property. For example, we can also choose $\pi=(1, 4, 2, 3,5) $.
 \begin{figure}[htb]
\centering
\includegraphics[width=10cm]{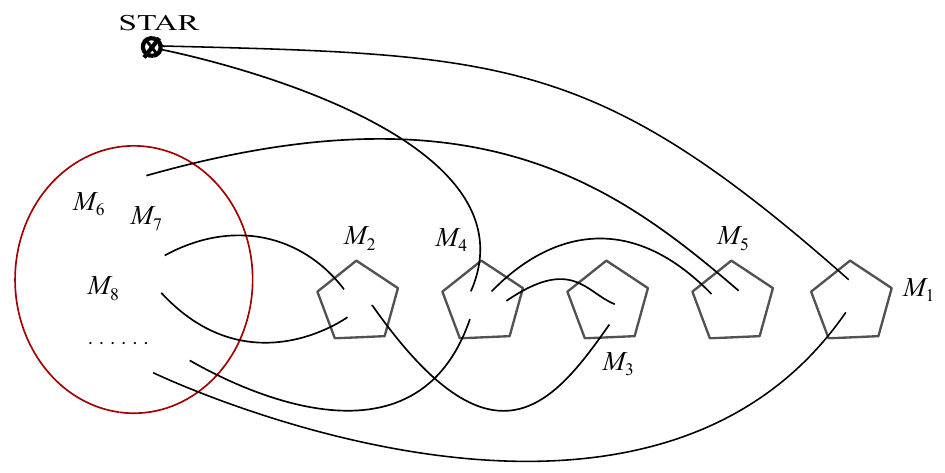}
\caption{The pentagons represent the molecules $\cal M_1,\cdots,\cal M_5$, the red circle represents the part $\left(\cup_{t'>t}\cal M_{t'}\right)$, and we only draw the solid edges used in the ordered nested property.}
\label{nestexample}
\end{figure} 
Given a graph with a large number of vertices, it is usually not easy to check whether the ordered nested property holds or not. 
To make things worse, after each expansion the order of vertices can be totally different, which makes the ordered nested property hard to track under graph expansions. On the other hand, the IPC nested property is often much easier to track. In particular, the following lemma shows that the IPC nested property is preserved under the resolvent expansions in \eqref{Gp1} and \eqref{Gp2}. Then Lemma \ref{zuomeng} guarantees that we have the desired ordered nested structure at each step of the proof.

 \begin{lemma}\label{Lumm} 
 Let $\cal G$ be a graph with $p$ molecules $\cal M_i$, $1\le i\le p$, that satisfy \eqref{yurenguodu}. Suppose $\mathcal G$ satisfies the IPC nested property. If we expand an edge or a weight in $\cal G$ using \eqref{Gp1}-\eqref{Gp2} and  
denote the resulting two graphs as
$$\cal G = \cal G_1+\cal G_2,$$
 then both $\cal G_1$ and $\cal G_2$ have IPC nested structures. 
  \end{lemma}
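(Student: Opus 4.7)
The plan is to check, for each of the four atomic replacements encoded by \eqref{Gp1}--\eqref{Gp2}, that the molecular edge-connectivity witnessing the IPC property survives the expansion. The key observation is that the IPC nested property is a purely combinatorial property of the \emph{molecular multigraph}, i.e.\ the multigraph whose vertices are $\{\star\}\cup\{\cal M_1,\dots,\cal M_p\}$ and whose edges are those solid edges of $\cal G$ whose two endpoints lie in different molecules. Weights, dashed and $\times$-dashed lines, independent-set labels, and solid edges internal to a single molecule all play no role in this notion. Following the convention described at the end of Section~\ref{sec_2level}, I would assume throughout that whenever an expansion introduces a new atom $w$ it is placed in the molecule containing one of the endpoints of the expanded object (i.e.\ in $[x]$ or $[y]$ for \eqref{Gp1}, or in $[x]$ for \eqref{Gp2}).

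First I would handle the first terms $\cal G_1$ of both \eqref{Gp1} and \eqref{Gp2}. Here the only change from $\cal G$ is an enlargement of the independent set attached to one edge or one weight; the multiset of solid edges and their endpoints is identical to that of $\cal G$. Consequently the molecular multigraph of $\cal G_1$ coincides with that of $\cal G$, the same $2p$ edge-disjoint paths continue to witness the IPC property, and there is nothing further to check.

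Next I would treat the second term $\cal G_2$ of \eqref{Gp1}, which is the only substantive case. The edge $G_{xy}$ is deleted and replaced by the two new edges $G_{xw}$ and $G_{wy}$ together with a weight at $w$. With $w$ placed in the molecule of one endpoint, say $w\in[x]$, the new edge $G_{xw}$ is internal to $[x]$ and invisible at the molecular level, while $G_{wy}$ spans exactly the same molecular pair $([x],[y])$ as the deleted edge $G_{xy}$. The molecular multigraph is therefore unchanged, and the unique witnessing path (if any) that previously used $G_{xy}$ can be re-routed by substituting $G_{wy}$; all other witnessing paths are untouched, and global edge-disjointness is preserved. The corner case in which $w$ is a pre-existing atom lying in some third molecule $[w]\neq[x],[y]$ is handled analogously: the single molecular edge $[x]-[y]$ is replaced by the two-step molecular path $[x]-[w]-[y]$ through brand-new molecular edges, and performing the same substitution inside the offending witnessing path preserves edge-disjointness because the two new edges appear nowhere else in $\cal G_2$.

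Finally, for the second term $\cal G_2$ of \eqref{Gp2}, the weight at $x$ is expanded into a product of weights together with two new solid edges $G_{xw}$ and $G_{wx}$; crucially, \emph{no} solid edge of $\cal G$ is deleted. The molecular multigraph of $\cal G_2$ therefore contains that of $\cal G$ as a sub-multigraph, so the original $2p$ edge-disjoint witnessing paths remain valid without any modification, and the IPC nested property is preserved (indeed strengthened). The only delicate point in executing this plan is the molecular assignment of the newly created atom $w$ — this is what keeps the molecular multigraph unchanged (or only enlarged) in each case — but once the convention of Section~\ref{sec_2level} is adopted, the argument reduces to combinatorial bookkeeping on the molecular multigraph, with no quantitative estimate needed.
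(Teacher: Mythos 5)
Your proposal is correct and is essentially the paper's argument, spelled out in more detail: the paper's proof is the one-line observation that \eqref{Gp1}--\eqref{Gp2} always replace an edge between two atoms by a path between the same two atoms (possibly passing through the molecule of $w$), so path connectivity to $\star$ at the molecular level, and hence the IPC nested structure, is untouched. One small remark: in the paper's usage of \eqref{Gp1}--\eqref{Gp2} the atom $w$ is always a pre-existing atom (typically some $x_i$, forming its own molecule), so the case you call a corner case is in fact the main one, while genuinely new atoms only arise from the expansions \eqref{Gp3}--\eqref{Gp6} and are treated separately in Lemma \ref{Lumm22}; your case analysis covers this either way.
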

 \begin{proof} It follows trivially from the definition of the IPC nested property and the graph expansions in \eqref{Gp1} and \eqref{Gp2}. In fact, we observe that in the expansions \eqref{Gp1}-\eqref{Gp2}, we always replace an edge between two atoms with a path between the same two atoms. In particular, the path connectivity from any atom to the $\star$ atom is unchanged.
 \end{proof}


  \subsection{Proof of Lemma \ref{Q2}.} \label{sec_simple} \
In this subsection, we prove Lemma \ref{Q2} using the graphical tools introduced in last subsection. It suffices to prove the following bound for \eqref{Qsimz}:
\be\label{sfayz}
 \E \sum_{x_1,\cdots, x_p}^{(\star)}  \left(\prod_{i=1}^p c_{x_i} Q_{x_i} \left(G _{{x_i}\star} \overline G _{{x_i}\star}\right)  \right)
 \prec \OO_\tau \left(\Gamma^2\Phi\right)^{p}.
  \ee
Let $\cal G $ be the graph that represents 
\be\label{Gform}
\cal G = \prod_{i=1}^p Q_{x_i}   \left(G _{{x_i}\star} \overline G _{{x_i}\star}\right) .
\ee
For example, in the case $p=3$, we have
\be\label{G0-s}
\parbox[c]{0.36\linewidth}{\includegraphics[width=\linewidth]{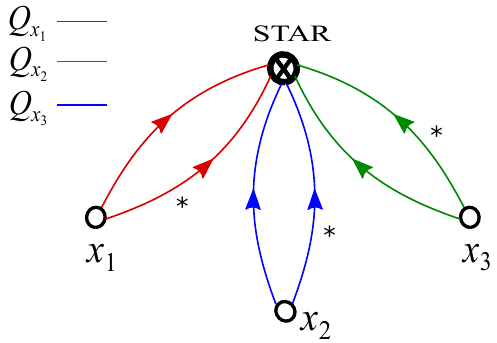}}
\ee
 Then $\cal G $ can be written as the sum of $ \cal E_D  \cdot \cal G$, where $\cal E_D$ ranges over all possible dashed-line partitions of the atoms $x_1, x_2, \cdots, x_p$. Since there are only $C_p$ different partitions, where $C_p>0$ is a constant depending only on $p$, we only need to prove that for any fixed $\cal E_D $,
\be\label{yikezz}
  \E \sum_{x_1,\cdots, x_p}^{(\star)} \left( \prod_{i=1}^p c_{x_i} \right)
\left( \cal E_D  \cdot \cal G \right)
 \prec\OO_\tau\left(\Gamma^2\Phi\right)^{p}.
  \ee
Now we expand the edges in $\cal G$ 
using the expansions \eqref{Gp1} and \eqref{Gp2} with respect to the $\left(\{x_i\}_{i=1}^p, \cal E_D\right)$ as following. 

\vspace{10pt}

\noindent{\bf Expansions with respect to ($\{x_i\}_{i=1}^p, \cal E_D)$:}  
For a graph $\cal G$, if all of its solid edges or weights are already fully expanded with respect to ($\{x_i\}_{i=1}^p, \cal E_D)$, then we stop. Otherwise, we can find a non-fully expanded solid edge $Edge(\al, \beta)$ or a non-fully expanded weight on atom $\al$. Then there exists a $x_i$, $1\le i\le p$, such that $x_i$ is not equal to any ending atom $\alpha,\beta$ of the solid edge (or the atom $\al$ of the weight) and is not equal to any atom in the independent set of the solid edge (or the weight), either. Then we expand this edge $Edge(\al, \beta)$ (or the weight on atom $\al$) using \eqref{Gp1} or \eqref{Gp2} with $x_i$ playing the role of the atom $w$.   
%
%
%
For instance, for the solid edge representing $G_{x_1\star}$ in the graph, the independent set is $\emptyset$. Hence we only need to find a $x_i$ atom such that there is a $\times$-dashed line in $\cal E_D$ that connects $x_i$ and $x_1$. If there is no such $x_i$, then we leave it unchanged. Otherwise,  we expand $G_{x_1\star}$ with \eqref{Gp1} as
 $$ G_{x_1\star}=G^{(x_i)}_{x_1\star}+G_{x_1x_i}\left(G_{x_ix_i}\right)^{-1}G_{x_i\star}.$$
 
After an expansion, every old graph is either unchanged or can be written as a linear combination of two new graphs. Then for each new graph, if there exists a non-fully expanded solid edge or weight, we again expand it with respect to some $x_i$ using \eqref{Gp1} or \eqref{Gp2}. We keep performing the same process to the newly appeared graphs at each step, and call this process the {\bf expansions with respect to ($\{x_i\}_{i=1}^p, \cal E_D)$}.
The following is an example with $p=2$ and two steps of expansions:
\be\label{Expan1}
\parbox[c]{0.86\linewidth}{\includegraphics[width=\linewidth]{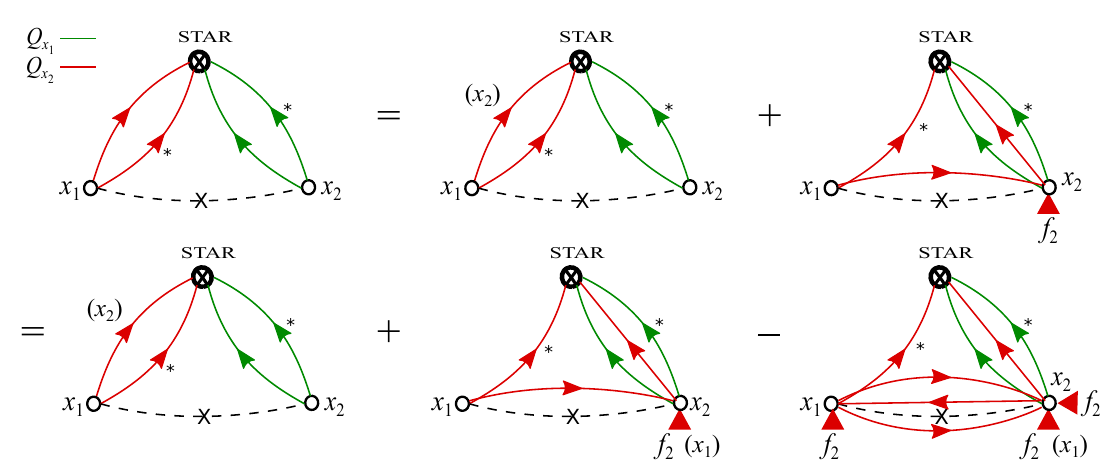}}
   \ee 
Here in the first step, we expanded $G_{x_1\star}$ with respect to $x_2$ using \eqref{Gp1}, and in the second step we expanded $(G_{x_2x_2})^{-1}$ with respect to $x_1$ using \eqref{Gp2}. (We also need to expand the first graph in the second row, but we did not draw it for simplicity.) Note that in the second row of \eqref{Expan1}, the leftmost red solid edge is fully expanded, and the red weight in the middle graph is also full expanded. 

During the process of expansions with respect to ($\{x_i\}_{i=1}^p, \cal E_D)$, it is easy to see that 
in every step of the expansion, each new graph satisfies one of the following conditions: 
\begin{itemize}
\item either everything in the new graph is the same as the old graph except that the size of the independent set of some solid edge/weight is increased by one, 

\item or some solid edge/weight in the old graph is replaced by some other (path of) solid edges and weights in the new graph, and the total number of solid edges are increased at least by one.
\end{itemize}
By definition, in the latter case, the newly appeared solid edges are all off-diagonal under $\cal E_D$. Hence every new solid edge provides a factor $ \Phi $, and graphs with sufficiently many off-diagonal edges will be small enough to be considered as error terms. 


Now we perform the expansions of the graphs $\cal E_D \cdot \cal G$ in \eqref{yikezz} with respect to ($\{x_i\}_{i=1}^p, \cal E_D)$. With the above observation, it is easy to show that for any fixed (large) $M\in \N$, there exists a constant $K_{M,p}\in \N$ such that after $K_{M,p}$ steps of expansions the following holds:
\be\label{Ejuzy}
  \cal E_D  \cdot \cal G = \sum_{\gamma } \cal E_D  \cdot \cal G _{  \gamma }+ \sum_{  \gamma' }\cal E_D  \cdot 
  \cal G _{   \gamma' }^{\text{\it error}},
\ee
where (1) each $\cal G _{  \gamma }$ is fully expanded with respect to ($\{x_i\}_{i=1}^p, \cal E_D)$, (2) each graph $\cal G _{  \gamma'}^{\it error}$ contains at least $M$ off-diagonal edges, and (3) the total number of terms on the right-hand side of (\ref{Ejuzy}) is bounded by some constant $C_{M,p}>0$. Note that the expansion in (\ref{Ejuzy}) is not unique, 
but any expansion with the above properties (1)-(3) will work for our proof. 
 

Since $M$ can be arbitrary large, 
to prove \eqref{yikezz} it suffices to show that for any fixed $\cal E_D$ 
and $\gamma $, 
\be\label{yikezz2}
  \E \sum_{x_1,\cdots ,x_p}^{(\star)} \left( \prod_{i=1}^p c_{x_i} \right)
\left( \cal E_D  \cdot \cal G_{ \gamma }\right)
 \prec \OO_\tau \left(\Gamma^2 \Phi \right)^{p}.
  \ee
For a graph $\cal E_D  \cdot \cal G_{ \gamma }$, we choose the molecules as 
$$  \cal M_i=\{x_i\}, \quad 1\le i\le p.$$ 
Since $x_1,\cdots, x_p \ne \star$, we have $\cal Pol_1(  \cal E_D  \cdot \cal G_{ \gamma } )=\emptyset$. Without loss of generality, we assume that 
\be\label{deftxxah}
 \cal Pol_2\left(  \cal E_D  \cdot \cal G_{ \gamma }\right)=\{\cal M_{t+1},\cal M_{t+2}, \cdots,  \cal M_{p}\}, 
 \ee 
for some $0\le t\le p$. 
Recall that $\cal G_{\gamma }$ comes from  $\cal G$, which has the form \eqref{Gform}. Then by the definition of our expansion process, 
one can easily see that $\cal G_\gamma$ has the following form: 
 \be\label{g0gam}
  \cal G_{ \gamma }=\prod_{i=1}^{p}Q_{x_i} \left(\cal G_{ \gamma ,i} \right),
  \ee
where $\cal G_{ \gamma,i} $ denotes the part of the graph coming from the expansions of $G _{{x_i}\star} \overline G _{{x_i}\star}$ inside the $Q_{x_i}$ in \eqref{Gform}. (Note that $\cal G_{ \gamma,i}$ is a colorless graph.) 
Now we pick some $i\ne j$ such that $x_i\ne x_j$, i.e., there is a $\times$-dashed line in $\cal E_D$ that connects $x_i$ and $x_j$. Since all the edges and weights in $\cal G_{ \gamma,i}$ are fully expanded, if the atom $x_j$ is not visited in $\cal G_{ \gamma,i} $ (i.e., if $x_j$ is not an ending atom of some edge in $\cal G_{ \gamma,i}$), then $x_j$ must be in the independent set of every edge and weight in $\cal G_{ \gamma,i}$. In other words, 
 $$ \text{atom $x_j$ is not visited in $\cal G_{ \gamma,i}$} \implies \text{$\cal G_{ \gamma,i}$ is independent of the atom $x_j$.} $$
 Now for any free molecule $\cal M_{j}$, $1\le j\le t$, we have $x_j \ne x_i$ for all $i\ne j$. Then we can extend the above statement to get
\begin{equation*}
\begin{split}
 &\text{atom $x_j$ is not visited in $\prod_{i\ne j} \cal G_{ \gamma,i}$}\\
 \implies &\text{ $\prod_{i\ne j} \cal G_{ \gamma,i}$ is independent of the atom $x_j$}\\
  \implies &\text{ $\prod_{i\ne j} Q_{x_i}(\cal G_{ \gamma,i})$ is independent of the atom $x_j$} \\
 \implies &\E\,  \cal G_{ \gamma}
 =\E\, P_{x_j} \cal G_{ \gamma}
 =\E \Big(\Big(\prod_{i\ne j} Q_{x_i}(\cal G_{ \gamma,i}) \Big)\cdot P_{x_j}\left(Q_{x_j}\cal G_{ \gamma , j}\right)\Big)=0,
 \end{split}
\end{equation*}
where  we used $ P_{x_j} Q_{x_j}=0$ in the last step. Therefore we only need to consider the
graphs in which
\be\label{gaszj}
 \text{for any } j: \ 1\le j\le t, \ \text{atom $x_j$ is visited in $\prod_{i\ne j} \cal G_{ \gamma , i} $}.    
\ee

Now it is instructive to count the number of off-diagonal edges. Recall that initially there are $2p$ off-diagonal edges in $\mathcal G$ (see \eqref{Gform}), and the newly appeared solid edges during the expansions are all off-diagonal.
Therefore, all the solid edges in $\prod_{i=1}^{p} \cal G_{ \gamma,i}$ are off-diagonal, and each of them provides a factor $ \Phi $. Note that if the free molecule $\mathcal M_{j}$ is visited $\prod_{i\ne j} \cal G_{ \gamma , i} $, then we must have used \eqref{Gp1} or \eqref{Gp2} in some step of expansion and picked the graph with at least one more off-diagonal edge. Thus \eqref{gaszj} implies that we must have at least $t$ more solid edges. This gives that 
\be\label{2p+t}
\text{\# of off-diagonal edges in } \prod_{i=1}^{p} \cal G_{ \gamma,i} \ge 2p+t . 
\ee
These extra $t$ off-diagonal edges are crucial to our proof.

Next we find a high probability bound on the graph $\cal G_\gamma$ in \eqref{g0gam}. Assume that, ignoring the directions and charges, $\cal G_\gamma$ consists of some weights and $m$ solid edges
 \be\label{solid_notes}
 {Edge}^{(\mathbb T_i)}(\al_{i}, \beta_{i}) \quad \text{with color $Q_{x_i}$ or $P_{x_i}$}, \quad 1\le i\le m ,
 \ee
 where some $x_i$ can be $\emptyset$, i.e. some edges are colorless.
Then with \eqref{chsz2}, \eqref{chsz2.5} and Lemma \ref{lem_partial}, for any fixed constants $0< \tau' <\tau$, we can bound $\mathcal G_\gamma$ as
\begin{equation}\label{def F1}
\begin{split}
\mathcal G_\gamma &\prec \prod_{i=1}^m \left(\Psi_{\al_i\beta_i}^{(\mathbb T_i x_i)}( \tau') + \delta_{\al_i\beta_i}\right) \\
&\prec \OO_\tau \left( \prod_{i=1}^m \left(\Psi_{\al_i\beta_i} (\tau)+ \sum_{k=1}^{|\mathbb T_i \cup \{x_i\}|} \sum_{(w_1 ,\cdots, w_k)\in \mathcal P_k(\mathbb T_i \cup \{x_i\})}\Psi_{(\al_i, w_1 ,\cdots, w_k, \beta_i)}(\tau) + \delta_{\al_i\beta_i}\right) \right).
\end{split}
\end{equation}
Inspired by \eqref{def F1}, given any dashed-line partition $\cal E_D$, we define the $\Psi$-graphs of $\mathcal G_\gamma$. 
\begin{definition}[$\Psi$-graphs]\label{Psi-graph}
Each $\Psi$-graph is a colorless graph consists of
\begin{itemize}
\item a star atom and regular atoms, 
\item solid edges without labels, where a solid edge connecting $\al$ and $\beta$ atoms represents a $\Psi_{\al\beta}$ factor if $\al-\times-\beta$ in $\mathcal E_D$ and a factor  $1$ otherwise, 
\item dashed edges,  
\item and no weights.
\end{itemize}
Moreover, each $\Psi$-graph is obtained from $\cal G_\gamma$ by 
\begin{itemize}
\item removing all the weights,

\item replacing each edge in \eqref{solid_notes} with a solid edge path 
\be \label{psi_paths}
(\al_i, w_1 ,\cdots, w_k, \beta_i) \quad \text{ with }\quad (w_1 ,\cdots, w_k)\in \mathcal P_k(\mathbb T_i \cup \{x_i\}),
\ee
 
\item removing all the labels including directions, charges, independent sets and colors,

\item and keeping all the dashed edges.
\end{itemize}
Again we define the value of each $\Psi$-graph as the product of all the factors represented by its elements, and we will always identify a $\Psi$ graph with its value in the following proof.
\end{definition}

Note that from any graph $\mathcal G_\gamma$, we can produce multiple $\Psi$-graphs by choosing different paths \eqref{psi_paths} for the edges in \eqref{solid_notes}. Since each independent set contains fewer than $p$ atoms and there are at most $C_{M,p}$ many solid edges for some constant $C_{M,p}>0$ (where $M$ is defined above \eqref{Ejuzy}), the number of different $\Psi$-graphs is bounded by some constant $\wt C_{M,p}>0$. Then we label these graphs as $\Psi(\cal G_\gamma, \mathcal E_D, {\bm\xi})$. Note that ${\bm \xi} \equiv {\bm\xi}(\cal G_\gamma,\mathcal E_D)$, and one can also regard it as the label for the paths in \eqref{psi_paths}.

Recall that $\mathcal G_\gamma$ has $p$ molecules $\cal M_i$, $1\le i\le p$, with $\cal M_1, \cdots, \cal M_t$ being free molecules. Then we define the molecules of $\Psi(G_\gamma, \mathcal E_D, {\bm\xi})$, called {\it $\Psi$-molecules}. 

\begin{definition}[$\Psi$-molecules]
For the non-free molecules of $\cal G_\gamma$ that are connected through dashed-lines, we combine them into one single molecule in $\Psi(\cal G_\gamma, \mathcal E_D, {\bm\xi})$ (note that we do not combine atoms). We shall call a $\Psi$-molecule {\it non-free} if it is connected to the $\star$ atom through a dashed line; otherwise we call it {\it free}. 
\end{definition}

We can also define $\Psi$-polymers as in Definition \ref{def_poly} for $\Psi$-molecules. However, for the molecules in $\cal Pol_2$ that are connected to each other through dashed-lines, we have combined them into one bigger $\Psi$-molecule, which is now free in $\Psi(\cal G_\gamma, \mathcal E_D, {\bm\xi})$. On the other hand, for the molecules in $\cal Pol_1$ that are connected through dashed-lines, we also combine them into one single $\Psi$-molecule. Hence a $\Psi$-molecule is either free or is connected with the $\star$ atom directly with dashed edges, which shows that there is really no need to introduce the concept of $\Psi$-polymers.

%
%
%
 
 In the proof of this subsection, all the $\Psi$-molecules are free since $\cal Pol_1= \emptyset$ in $\mathcal G_\gamma$. 
Now we keep all the free molecules ${\cal M}^{\Psi}_{s}=\cal M_s$, $1\le s \le t$, and denote the new $\Psi$-molecules by ${\cal M}^{\Psi}_{t+1}, \cdots, {\cal M}^{\Psi}_{t+r}$. 
%
%
%
From the above definitions and \eqref{def F1}, one can immediately obtain the following lemma.

\begin{lemma}\label{lem F1}
We have 
\be\label{def F10}
\mathcal E_D\cdot \cal G_\gamma \prec \OO_\tau\Big(\sum_{\bm \xi} \Psi(\cal G_\gamma, \mathcal E_D, {\bm\xi})\Big) .
\ee
Moreover, each $\Psi(\cal G_\gamma, \mathcal E_D, {\bm\xi})$ satisfies the IPC nested property with $\Psi$-molecules ${\cal M}^{\Psi}_1, \cdots , {\cal M}^{\Psi}_{t+r}$. 

\end{lemma}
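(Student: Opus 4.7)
The plan is to verify the two assertions of the lemma separately: the probabilistic bound \eqref{def F10}, and the IPC nested property of each $\Psi$-graph.

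First, the bound \eqref{def F10} is essentially a graphical reformulation of \eqref{def F1}. Starting from the expression of $\cal G_\gamma$ in \eqref{g0gam} as a product of weights and edges, I would bound every diagonal weight $G^{(\mathbb T)}_{xx}$ (and its inverse) by $\OO(1)$ using $|G_{xx}-M_x|\prec\Phi$ from \eqref{GM1} together with the lower bound \eqref{Im_lowbound} on $\im M_x$. For each solid edge $G^{(\mathbb T_i)}_{\alpha_i\beta_i}$ carrying color $Q_{x_i}$ or $P_{x_i}$, \eqref{chsz2} combined with Lemma \ref{lem_partial} (used to absorb the color label into the effective independent set so that the partial expectation $\mathbb E_{x_i}$ can be removed) gives the bound $\Psi^{(\mathbb T_i\cup\{x_i\})}_{\alpha_i\beta_i}(\tau')+\delta_{\alpha_i\beta_i}$. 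Then \eqref{chsz2.5} rewrites this bound as a sum of $\Psi$-path products through intermediate atoms chosen from $\mathbb T_i\cup\{x_i\}$, at the cost of an $N^{\OO(\tau)}$ factor absorbed into $\OO_\tau(\cdot)$. Each joint choice of intermediate atoms across all edges defines one value of $\bm\xi$, and the total number of such choices is bounded by a constant depending only on $M$ and $p$.

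Second, for the IPC nested property I would argue by inheritance through three stages. The initial graph $\cal G$ in \eqref{Gform} trivially has IPC nested structure with singleton molecules $\cal M_i=\{x_i\}$, since the factor $G_{x_i\star}\overline G_{x_i\star}$ already provides two distinct solid edges from each $x_i$ to $\star$, and these $2p$ edges are pairwise distinct. By Lemma \ref{Lumm}, the IPC nested property is preserved under every resolvent expansion \eqref{Gp1}--\eqref{Gp2} used to produce $\cal G_\gamma$, so $\cal G_\gamma$ also has the IPC nested structure with molecules $\cal M_i=\{x_i\}$. Passing from $\cal G_\gamma$ to a $\Psi$-graph then involves replacing each labeled solid edge by an unlabeled solid-edge path with the same endpoints (possibly passing through intermediate atoms drawn from $\mathbb T_i\cup\{x_i\}$), which preserves atomic path-connectivity to $\star$ and transforms any edge-disjoint collection of atomic paths into an edge-disjoint collection of (possibly longer) paths in the $\Psi$-graph.

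It remains to handle the grouping of dashed-connected non-free molecules into larger $\Psi$-molecules. Contracting a dashed-connected cluster $\cal M_{j_1},\ldots,\cal M_{j_q}$ into a single $\Psi$-molecule $\cal M^{\Psi}$ can only improve IPC nested on the molecular level: each constituent $\cal M_{j_a}$ already contributes two edge-disjoint solid-edge paths to $\star$ in the lifted $\Psi$-graph, and after contraction each such path still starts at $\cal M^{\Psi}$ (with any internal portion through $\cal M^{\Psi}$ truncated away). Since the $2p$ original paths are pairwise edge-disjoint and $t+r\le p$, selecting two inherited paths per $\Psi$-molecule yields $2(t+r)$ edge-disjoint paths witnessing IPC nested. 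The main technical subtlety I expect is precisely this truncation step---ensuring that after deleting the portions of paths that now lie inside a combined $\Psi$-molecule one still has valid edge-disjoint external paths---but this reduces to a routine pigeonhole argument: since the original paths use $\ge 2p$ distinct edges and at most those lying strictly inside a single $\cal M^{\Psi}$ can be lost during contraction, at least two edge-disjoint survivors per $\Psi$-molecule can always be chosen.
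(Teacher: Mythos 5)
Your proposal is correct and follows essentially the same route as the paper: the bound \eqref{def F10} is exactly the estimate \eqref{def F1} obtained from \eqref{chsz2}, \eqref{chsz2.5} and Lemma \ref{lem_partial}, and the IPC nested property is inherited through the same three stages (the initial graph \eqref{Gform}, preservation under the expansions via Lemma \ref{Lumm}, and the edge-to-path replacement plus merging of dashed-connected molecules). Your truncation/pigeonhole elaboration of the final merging step is a more explicit version of what the paper states tersely, but it is the same argument.
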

\begin{proof} 
The bound \eqref{def F10} follows from \eqref{def F1}. Since the initial graph $\cal G$ in \eqref{Gform} has an IPC nested structure, then by Lemma \ref{Lumm}, $\cal G_\gamma$ also has an IPC nested structure. In the definition of the $\Psi$-graphs, we always replace an edge between two atoms as in \eqref{solid_notes} with a path between the same two atoms as in \eqref{psi_paths}. In particular, the path connectivity from any atom to the $\star$ atom is unchanged. Hence each $\Psi(\cal G_\gamma, \mathcal E_D, {\bm\xi})$ also satisfies the IPC nested property with the molecules $\cal M_1, \cdots, {\cal M}_{p}$. Finally, combining the non-free molecules into $\Psi$-molecules does not break the IPC nested structure. This finishes the proof. 
\end{proof}

Since the number of $\Psi$-graphs is bounded by $\wt C_{M,p}$, to conclude \eqref{yikezz2} it suffices to prove that 
\be\label{yikezz2EE2}
\sum^{(*)}_{x_1, x_2,\cdots, x_p } \left(\prod_{i=1}^{p}  c_{x_i} \right)  \E\Psi(\cal G_\gamma, \mathcal E_D, {\bm\xi})
\prec \OO_\tau\left(\Gamma^2\Phi\right)^{p}.
\ee 
By Lemma \ref{zuomeng}, $\Psi(\cal G_\gamma, \mathcal E_D, {\bm\xi})$ satisfies the ordered nested property, hence there exists $\pi\in S_{t+r}$ such that \eqref{suz} holds for $\Psi$-molecules.  Without loss of generality, we assume that $\pi=(1,2,\cdots,  t+r )\in S_{t+r}$. Then \eqref{suz} shows that for each $1\le s\le t+r$,  
\be\label{ljsadfl}
\begin{split}
& \text{there exist } \beta_s, \wt\beta_s\in   \{\star\} \cup \left(\cup_{s'<s} {\cal M}^\Psi_{ s' }\right) \text{such that there are two off-diagonal edges }\\
& \text{connecting the atoms in ${\cal M}^\Psi_s$ to $\beta_s$ and $\wt\beta_s$, respectively.}
\end{split}
\ee
We now estimate $\Psi(\cal G_\gamma, \mathcal E_D, {\bm\xi})$. There are at least $2p+t$ off-diagonal solid edges by \eqref{2p+t}, and the above ordered nested property used $2(t+r)$ of them. Each of the other $2p-t-2r$ solid edges is bounded by $\OO_{\tau,\prec}(\Phi)$. 
Note that we have only combined two molecules only if their $x$ atoms {\it always take the same value}.  
Hence we have that 
$$\Psi(\cal G_\gamma, \mathcal E_D, {\bm\xi})\prec \OO_{\tau}\Big(\Phi^{2p-t-2r}\cdot \prod_{1\le s\le t+r} \Psi_{\wt x_{s} y_s}\Psi_{\wt x_{s} \wt y_s}\Big),$$
where $\wt x_s$ is an atom in ${\cal M}^\Psi_s$, and $y_s$ ($\wt y_s$) belongs to the same molecule as $\beta_s$ ($\wt \beta_s$) and $y_s, \wt y_s \in \{\star,\wt x_1, \cdots, \wt x_{t+r}\}$. 

Plugging it into the left-hand side of \eqref{yikezz2EE2}, we obtain that
\begin{align}
 & \sum^{(*)}_{x_1,\cdots, x_p } \left(\prod_i^{p} c_{x_i } \right) \E\Psi(\cal G_\gamma, \mathcal E_D, {\bm\xi}) \nonumber \prec \OO_{ \tau} \left(\E \sum_{\wt x_1,\cdots, \wt x_{t+r}}^{(\star)} \Phi^{2p-t-2r}\cdot \prod_{1\le s\le t+r} \Psi_{\wt x_{s} y_s}\Psi_{\wt x_{s} \wt y_s} \right) \nonumber\\
 &\prec \OO_{ \tau} \left( \Phi^{2p-t-2r} \E \sum_{\wt x_1}  \Psi_{\wt x_{1} y_1} \Psi_{\wt x_{1} \wt y_1}\sum_{\wt x_2}  \Psi_{\wt x_{2} y_2} \Psi_{\wt x_{2} \wt y_2}  \cdots   \sum_{\wt x_{t+r}}\Psi_{\wt x_{t+r} y_{t+r}} \Psi_{\wt x_{t+r} \wt y_{t+r}}    \right) \nonumber \\
 & \prec \OO_{ \tau} \left( \Phi^{2p-t-2r} \Gamma^{2(t+r)}\right)  , \label{phir0}
\end{align}
 where in the second step we used \eqref{ljsadfl} such that one can sum over the $\wt x$'s according to the order $\wt x_{t+r}, \cdots, \wt x_1$, and in the third step we used \eqref{chsz4} to get a $\Gamma^2$ factor for each sum. Since $\Phi \ll 1$ and $\Gamma\ge 1$ by \eqref{GM1.5}, the factor $\Phi^{2p-t-2r} \Gamma^{2(t+r)}$ increases as $r$ increases. However, since we have only combined the molecules in $\cal Pol_2$, we must have $2r \le p-t$. Hence we can bound \eqref{phir0} by 
 $$\eqref{phir} \prec \OO_{ \tau} \left( \Phi^{2p-t-(p-t)} \Gamma^{2t+(p-t)}\right) \prec \OO_{ \tau} \left( \Gamma^{2p}\Phi^{p} \right), $$
where we used $t\le p$ in the second step. This proves \eqref{yikezz2EE2}, which finishes the proof of Lemma \ref{Q2}.

\subsection{Proof of Lemma \ref{Q1}. }\label{sec_simple2}\
The proof of Lemma \ref{Q1} is very similar to the one for Lemma \ref{Q2} in the previous subsection.  As in \eqref{sfayz}, 
we need to prove that 
  $$
  \sum_{x_1, x_2,\cdots, x_p, \al_1, \al_2\cdots \al_p}^{(\star)} \left(\prod_{i=1}^{p} (c_{x_i \al_i })^{\#_i}\right) \E\prod_{i=1}^pQ_{x_{i}} 
    \left(G _{ x_i \star} G _{  x_{i}\al_i}  \overline G _{\alpha_i \star }\right) ^{\#_i} \prec \OO_\tau\left(\Gamma^2\Phi^2 \right)^{p},
  $$
where
  $$
  A^{\#_i}:=\begin{cases}
A, \quad i\in 2\Z+1
\\
\overline A, \quad i\in 2\Z
  \end{cases} .
  $$ 
 Let $\cal G $ be the graph which represents 
$$\cal G =\prod_{i=1}^pQ_{x_{i}} 
    \left(G _{ x_i \star} G _{  x_{i}\al_i}  \overline G _{\alpha_i \star }\right) ^{\#_i} .$$
The following is an example of the graph with $p=3$. 
\be\label{G011-s}
\parbox[c]{0.42\linewidth}{\includegraphics[width=\linewidth]{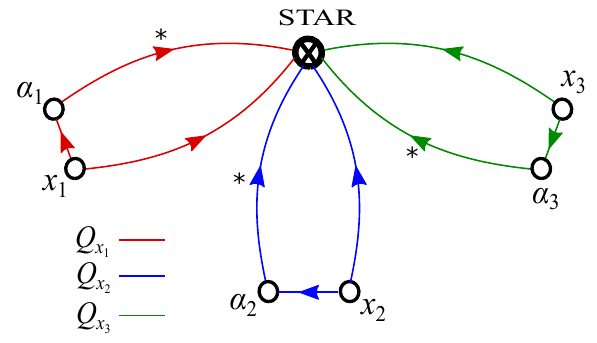}}
   \ee 
Let $\cal E_D$ be a dashed-line partition of atoms $x_i$ and $\al_i$, $1\le i\le p$.  Since in \eqref{gush20} we sum over $x\ne \al$, there is always a $\times$-dashed line between $x_i$ and $\al_i$ in $\cal E_D$.  Now as in \eqref{yikezz}, we only need to prove that for any fixed $\mathcal E_D$,
\be\label{yikezz11}
  \sum_{x_1, x_2,\cdots, x_p, \al_1, \al_2, \cdots, \al_p}^{(\star)} \left(\prod_i^{p} (c_{x_i \al_i })^{\#_i}\right)  \E 
\left( \cal E_D  \cdot \cal G\right)
 \prec\OO_\tau\left(\Gamma^2\Phi^2 \right)^{ p}.
  \ee
As in previous subsection, we expand the solid edges and weights with respect to ($\{x_i\}_{i=1}^p, \cal E_D)$. Note that during the expansions, 
we only add $x$ atoms into the independent set or make more visits to the $x$ atoms through off-diagonal solid edges. In particular, this process does not create weights at the $\alpha$ atoms. Moreover, for each atom $\al_i$, we always have $\deg (\al_i)=2$.

As in \eqref{Ejuzy}, after a constant number of steps of expansions 
we can write 
\be\label{Ejuzy2}
  \cal E_D  \cdot \cal G = \sum_{\gamma } \cal E_D  \cdot \cal G _{  \gamma }+ \sum_{\gamma' }\cal E_D  \cdot 
  \cal G _{ \gamma' }^{\text{\it error}},
\ee
where in each $\cal G _{\gamma}$, all edges and weights are fully expanded with respect to ($\{x_i\}_{i=1}^p, \cal E_D)$, and in each $\cal G _{  \gamma' }^{\text{\it error}}$, the total number of the off-diagonal edges is larger than some constant $M>0$. Since $M$ can be arbitrarily large, to prove \eqref{yikezz11}, we only need to show that for any fixed $\cal E_D$ and $\gamma$, 
\be\label{yikezz2EE}
    \sum^{(*)}_{x_1, x_2,\cdots, x_p, \al_1, \al_2\cdots \al_p} \left(\prod_{i=1}^{p} (c_{x_i \al_i })^{\#_i}\right)  \E
\left( \cal E_D  \cdot \cal G_{ \gamma}\right)
 \prec\OO_\tau\left(\Gamma^2\Phi^{2} \right)^{p}.
  \ee
  
For a graph $\cal E_D  \cdot \cal G_{ \gamma}$, we choose the molecules as
 $$\cal M_i=\{x_i, \al_i\},\quad 1\le i\le p.$$
Again we have $\cal Pol_1=\emptyset$.
Without loss of generality, we assume that 
\be\label{deftxxahEE}
\cal Pol_2(  \cal E_D  \cdot \cal G_{ \gamma} )=\{\cal M_{t+1},\cal M_{t+2}, \cdots,  \cal M_{p}\}
\ee
for some $0\le t\le p$. Now we repeat the argument from \eqref{deftxxah} to \eqref{phir0}, where the only difference is that \eqref{2p+t} is replaced by 
\be\label{3p+t}
\text{\# of off-diagonal edges in } \prod_{i=1}^{p} \cal G_{ \gamma,i} \ge 3p+t , 
\ee
because there are $3p$ off-diagonal edges in the original graph $\cal G$. Here $\cal G_{ \gamma,i} $ denotes the part of the graph coming from the expansions of $\left(G _{ x_i \star} G _{  x_{i}\al_i}  \overline G _{\alpha_i \star }\right) ^{\#_i}$  inside the $Q_{x_i}$.  Then we can define the $\Psi$-graphs $\Psi(\cal G_\gamma, \cal E_D, \bm \xi)$. To conclude \eqref{yikezz2EE}, it suffices to prove that for
\be\label{yikezz2EE3}
\sum^{(*)}_{x_1, x_2,\cdots, x_p, \al_1, \al_2\cdots \al_p} \left(\prod_{i=1}^{p} (c_{x_i \al_i })^{\#_i}\right)  \E\Psi(G_\gamma, \mathcal E_D, {\bm\xi})
\prec \OO_\tau\left(\Gamma^2\Phi^2\right)^{p}.
\ee
As in Lemma \ref{lem F1}, $\Psi(G_\gamma, \mathcal E_D, {\bm\xi})$ satisfies the ordered nested property, and, without loss of generality, we assume that  \eqref{ljsadfl} holds.
There are at least $3p+t$ off-diagonal solid edges by \eqref{3p+t}, and the above ordered nested property used $2(t+r)$ of them. Each of the other $3p-t-2r$ solid edges is bounded by $\OO_{\tau,\prec}(\Phi)$.  We know $c_{x_s \alpha_s}\ne 0$ only if $|x_s-\al_s|=\OO((\log N)^2 W)$. Moreover, we have combined two molecules only if each of them contains an atom such that these two atoms {\it always take the same value}.  Hence if we pick any atom $\wt x_s$ in ${\cal M}^\Psi_s$, then any other atom $\wt\alpha_s$ in ${\cal M}^\Psi_s$ satisfies $|\wt\al_s - \wt x_s|=\OO((\log N)^2 W)$. Then for $1\le s\le t+r$, with \eqref{chsz3} we have $\Psi_{\wt \alpha_s \beta_s}(\tau)  \prec \Psi_{\wt x_s y_s}(\wt \tau)$ and $  \Psi_{\wt \alpha_s \wt \beta_s}(\tau)  \prec \Psi_{\wt x_s \wt y_s}(\wt \tau) $ for $\wt \tau = \tau+(\log N)^{-1/2} $, 
where $y_s$ ($\wt y_s$) belongs to the same molecule as $\beta_s$ ($\wt \beta_s$) and $y_s, \wt y_s \in \{\star,\wt x_1, \cdots, \wt x_{t+r}\}$. Each of the other $3p-t-2r$ solid edges is bounded by $\OO_{\tau,\prec}(\Phi)$.
Thus we obtain that 
$$\Psi(\cal G_\gamma, \mathcal E_D, {\bm\xi})\prec \OO_{\wt\tau}\Big(\Phi^{3p-t-2r}\cdot \prod_{1\le s\le t+r} \Psi_{\wt x_{s} y_s}\Psi_{\wt x_{s} \wt y_s}\Big).$$
Plugging it into the left-hand side of \eqref{yikezz2EE}, we obtain that
\begin{align}
 & \sum^{(*)}_{x_1,\cdots, x_p, \al_1,\cdots,\al_p} \left(\prod_i^{p} (c_{x_i \al_i })^{\#_i}\right) \E\Psi(G_\gamma, \mathcal E_D, {\bm\xi}) \nonumber \prec \OO_{\wt\tau} \left(\E \sum_{\wt x_1,\cdots, \wt x_{t+r}}^{(\star)} \Phi^{3p-t-2r}\cdot \prod_{1\le s\le t+r} \Psi_{\wt x_{s} y_s}\Psi_{\wt x_{s} \wt y_s} \right) \nonumber\\
 &\prec \OO_{\wt\tau} \left( \Phi^{3p-t-2r} \E \sum_{\wt x_1}  \Psi_{\wt x_{1} y_1} \Psi_{\wt x_{2} \wt y_1}\sum_{\wt x_2}  \Psi_{\wt x_{2} y_2} \Psi_{\wt x_{2} \wt y_2}  \cdots   \sum_{\wt x_{t+r}}\Psi_{\wt x_{t+r} y_{t+r}} \Psi_{\wt x_{t+r} \wt y_{t+r}}    \right) \nonumber \\
 & \prec \OO_{\wt\tau} \left( \Phi^{3p-t-2r} \Gamma^{2(t+r)}\right) \prec \OO_{\tau} \left( \Phi^{3p-t-2r} \Gamma^{2(t+r)}\right), \label{phir}
\end{align}
 where in the second step we used \eqref{ljsadfl} such that one can sum over the $\wt x$'s according to the order $\wt x_{t+r}, \cdots, \wt x_1$, in the third step we used \eqref{chsz4} to get a $\Gamma^2$ factor for each sum, and in the last step we replaced $\wt \tau $ with $\tau$ by using $\wt\tau \le 2\tau$. Since $\Phi \ll 1$ and $\Gamma\ge 1$ by \eqref{GM1.5}, the factor $\Phi^{3p-t-2r} \Gamma^{2(t+r)}$ increases as $r$ increases. However, since we have only combined the molecules in $\cal Pol_2$, we must have $2r \le p-t$. Hence we can bound \eqref{phir} by 
 $$\eqref{phir} \prec \OO_{ \tau} \left( \Phi^{3p-t-(p-t)} \Gamma^{2t+(p-t)}\right) \prec \OO_{ \tau} \left( \Phi^{2p} \Gamma^{2p}\right), $$
where we used $t\le p$ in the second step. This proves \eqref{yikezz2EE3}, which concludes Lemma \ref{Q1}.

\section{Graphical tools - Part II} \label{sec_graph2}
In this section, we prove Lemma \ref{Qpart}. The proof is more involved than the ones for Lemma \ref{Q2} and Lemma \ref{Q1}, and we need to introduce some new types of components to our graphical tools. 

\subsection{Definition of Graph  - Part 2.} \label{def_II} \
\vspace{5pt}

\noindent{\bf Dotted edges}:
The dotted edge connecting atoms $\al$ and $\beta$ represents an $H_{\al\beta}$ factor. Since we only consider the real symmetric case, there is no need to label its direction and charge. (On the other hand, in the complex Hermitian case, we indicate either the direction or the charge of the dotted edge. This is one of the main differences from the real case.)
For example, we have
 \be\label{Dotlines}
\parbox[c]{0.18\linewidth}{\includegraphics[width=\linewidth]{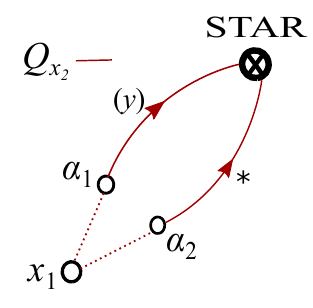}}\quad = Q_{x_2}\left(G^{(y)}_{\al_1\star}\overline{G_{\al_2\star}}H_{x_1\al_1}H_{x_1\al_2}\right).
 \ee
 
\vspace{5pt}

\noindent{\bf Weights and light weights}: 
We now introduce some new types (flavors) of weights in addition to $f_{1,2}$ introduced in Definition \ref{def_colorlessg}:
$$ f_3: \ \cal Y_{x}:=P_x(G^{-1}_{xx})=-z_x -\sum_\al s_{x\al}G^{(x)}_{\al\al} \ ,  \quad \quad f_4:    {\cal Y_{x }^{-1}}.$$
They are also drawn as solid $\Delta$ in graphs. It is important to observe that the $f_3$ or $f_4$ type of weights on atom $x$ are independent of the $x$-th row and column of $H$. By (\ref{GM1}) and \eqref{Zi}, it is easy to see that for the four types of weights we have
$$G_{xx}-M_x \prec \Phi, \quad G^{-1}_{xx}-M_x^{-1}\prec \Phi,\quad  \cal Y_x-M_x^{-1}\prec \Phi,\quad   {\cal Y^{-1}_x}-M_x \prec \Phi \ . $$
Correspondingly, we define the following four types of {\it light weights}: 
 $$f_1: G_{xx}-M_x,\quad f_2:G^{-1}_{xx}-M_x^{-1}\quad f_3: \cal Y_x-M_x^{-1}\quad f_4:{\cal Y^{-1}_x}-M_x.$$
They are drawn as hollow $\Delta$ in graphs. Furthermore, we define two more types of light weights: 
$$f_5: H_{xx},\quad f_6: W^{-d/2}.$$
Note that $H_{xx}\prec W^{-d/2}$ by (\ref{high_moment}).
For example, with the above definitions, we have
\be\label{wLW}
 \parbox[c]{0.18\linewidth}{\includegraphics[width=\linewidth]{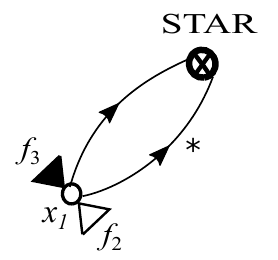}}\quad= G_{x_1\star}\overline{G_{x_1\star}} \cal Y_x (G^{-1}_{xx}-M_k^{-1}).
\ee
One can see that a regular weight provides a factor of order $\OO(1)$, while a light weight provides a factor of order $ \Phi $ like an off-diagonal edge. 

Now with the new graphical components introduced above, we give the graphical representations of more types of resolvent expansions.  
  
 \begin{lemma}\label{lemmai}
We have the following identities.
\begin{itemize}
  \item For $x\ne y$, we have $ G _{xy} = - G_{xx}\sum_{\al}H_{x\al}G^{(x)}_{\al y}$, i.e.,
 \be\label{Gp3}
\parbox[c]{0.40\linewidth}{\includegraphics[width=\linewidth]{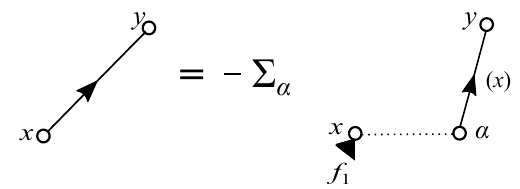}}
   \ee

 \item
 We have  
$$\left(G _{xx}\right)^{-1}=  \cal Y_{x} - \cal Z_x,\quad \cal Z_x:=-H_{xx}+Q_x\left(\sum_{\al,\beta}H_{x\al}H_{x\beta}G^{(x)}_{\al\beta}\right)
=-H_{xx}+\sum_{\al,\beta}H_{x\al}H_{x\beta}G^{(x)}_{\al\beta}-\sum_\al s_{x\al} G^{(x)}_{\al\al},$$
i.e.,
\be \label{Gp4}
\parbox[c]{0.82\linewidth}{\includegraphics[width=\linewidth]{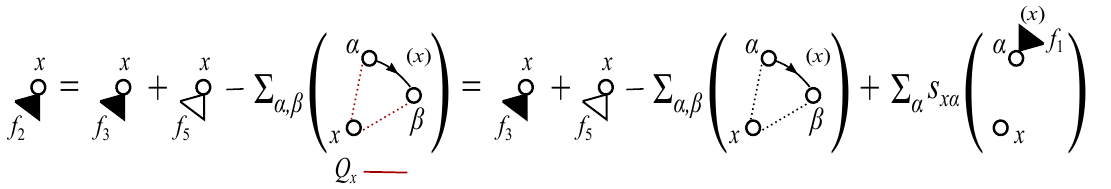}}
   \ee
Here we used the convention that we assign the value $1$ to an atom $x$ with no solid edge or weight attached to it.  

\item We have $G _{xx} =  \sum_{m=1}^\infty (\cal Y_{x})^{-m} (\cal Z_x)^{m-1},$ i.e.,  
\be\label{Gp5}
\parbox[c]{0.34\linewidth}{\includegraphics[width=\linewidth]{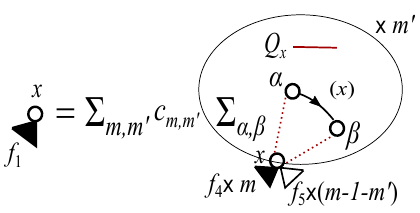}}
\quad c_{m,m'}:=(-1)^{m-1-m'}\begin{pmatrix}m-1\\m'\end{pmatrix},\ \ m\ge 1, \ \ 0\le m'\le m-1.
\ee
Here we omitted some details in the graph, i.e., for fixed $m\ge 1$ and $0\le m'\le m-1$, the graph should contain $m'$ copies of the part inside the big circle. The graph \eqref{Gp5} was indeed obtained by expanding the $(m-1)$-th power of $\cal Z_x$ 
using the binomial theorem. 
 
\item  For $x\ne y$, we have
$$
 \cal  Y_{x} =  \cal     Y^{(y)}_{x} - \sum_{\al}s_{x\al}G^{(x)}_{\al y}(G^{(x)}_{yy})^{-1}G^{(x)}_{y\al}
,\quad \quad \quad \quad 
\frac1{\cal  Y_{x}}  =\frac1{\cal     Y^{(y)}_{x} } + \frac1{\cal  Y_{x}}   \frac1{\cal     Y^{(y)}_{x} }\sum_{\al}s_{x\al}G^{(x)}_{\al y}(G^{(x)}_{yy})^{-1}G^{(x)}_{y\al}
$$
i.e.,  
\be\label{Gp6}
\parbox[c]{0.60\linewidth}{\includegraphics[width=\linewidth]{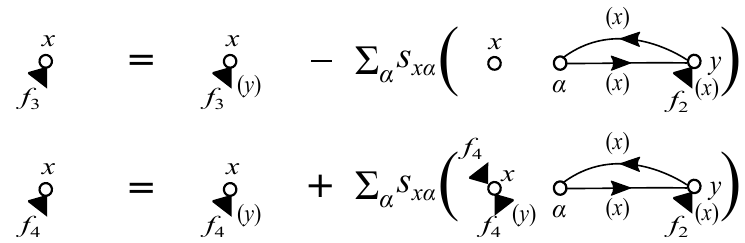}}
\ee

\end{itemize}
\end{lemma}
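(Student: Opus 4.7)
All four identities in Lemma \ref{lemmai} are algebraic consequences of the Schur complement formula together with the definitions of $\mathcal{Y}_x$ and $\mathcal{Z}_x$, so the plan is simply to derive each identity in turn and then translate the computation into the prescribed graphical form. The first identity is merely a restatement of \eqref{sq root formula} from Lemma \ref{resolvent_id}, so the only thing to do is to draw the picture: the $H_{x\alpha}$ becomes a dotted edge, the $G^{(x)}_{\alpha y}$ becomes a solid edge with the superscript $(x)$, and the prefactor $G_{xx}$ becomes a weight of flavor $f_1$ at $x$.

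For the second identity, the plan is to apply Schur's formula
\[
(G_{xx})^{-1}=H_{xx}-z_x-\sum_{\alpha,\beta}^{(x)}H_{x\alpha}G^{(x)}_{\alpha\beta}H_{x\beta},
\]
and then split the double sum as $(P_x+Q_x)(\,\cdot\,)$. The $P_x$-part collapses using $P_x(H_{x\alpha}H_{x\beta})=s_{x\alpha}\delta_{\alpha\beta}$ (valid because $\alpha,\beta\ne x$ in the sum and the $H_{x\cdot}$ row is independent of $G^{(x)}$) and produces $-\sum_\alpha s_{x\alpha}G^{(x)}_{\alpha\alpha}$, which together with $-z_x$ is precisely $\mathcal Y_x$; what remains equals $H_{xx}-Q_x(\sum H_{x\alpha}H_{x\beta}G^{(x)}_{\alpha\beta})=-\mathcal Z_x$. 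This yields $(G_{xx})^{-1}=\mathcal Y_x-\mathcal Z_x$, and the two equivalent forms of $\mathcal Z_x$ in the statement are just the expanded and unexpanded versions of $Q_x(\cdots)$.

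For the third identity, once we have $(G_{xx})^{-1}=\mathcal Y_x-\mathcal Z_x=\mathcal Y_x(1-\mathcal Y_x^{-1}\mathcal Z_x)$, the geometric series gives
\[
G_{xx}=\sum_{m=1}^{\infty}\mathcal Y_x^{-m}\mathcal Z_x^{\,m-1}.
\]
Convergence as a formal expansion is not an issue here since we only use it as an expansion rule inside graphs (and moreover $\mathcal Y_x^{-1}\mathcal Z_x\prec\Phi$ by Lemma \ref{qinggan}, so any finite truncation has controlled remainder). To match the coefficients $c_{m,m'}=(-1)^{m-1-m'}\binom{m-1}{m'}$ in the picture, I would write $\mathcal Z_x=-H_{xx}+A_x$ with $A_x:=Q_x(\sum H_{x\alpha}H_{x\beta}G^{(x)}_{\alpha\beta})$ being the part drawn inside the large circle in \eqref{Gp5}, then expand $\mathcal Z_x^{\,m-1}=(-H_{xx}+A_x)^{m-1}$ by the binomial theorem; the exponent of $(-H_{xx})$ is $m-1-m'$, giving exactly the stated sign and coefficient. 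The graphical reading records $m$ occurrences of the $\mathcal Y_x^{-1}$ weight, $m-1-m'$ occurrences of the $f_5$ light weight $H_{xx}$, and $m'$ copies of the subgraph inside the circle.

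Finally, for the fourth identity, the plan is to compute $\mathcal Y_x-\mathcal Y_x^{(y)}$ directly from the definitions: using $\mathcal Y_x=-z_x-\sum_\alpha s_{x\alpha}G^{(x)}_{\alpha\alpha}$ and the corresponding $(y)$-version $\mathcal Y_x^{(y)}=-z_x-\sum_\alpha^{(y)}s_{x\alpha}G^{(xy)}_{\alpha\alpha}$, and then applying \eqref{Gij Gijk} to write $G^{(x)}_{\alpha\alpha}=G^{(xy)}_{\alpha\alpha}+G^{(x)}_{\alpha y}(G^{(x)}_{yy})^{-1}G^{(x)}_{y\alpha}$ for $\alpha\ne y$. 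The $\alpha=y$ contribution to $\sum_\alpha s_{x\alpha}G^{(x)}_{\alpha y}(G^{(x)}_{yy})^{-1}G^{(x)}_{y\alpha}$ recovers the $-s_{xy}G^{(x)}_{yy}$ term that was missing from the $(y)$-sum, so the full sum over $\alpha$ (with no exclusion) appears in the final formula, exactly as stated. The companion identity for $1/\mathcal Y_x$ is the trivial algebraic rearrangement
\[
\frac{1}{\mathcal Y_x}-\frac{1}{\mathcal Y_x^{(y)}}=\frac{\mathcal Y_x^{(y)}-\mathcal Y_x}{\mathcal Y_x\mathcal Y_x^{(y)}}.
\]
None of these steps presents a genuine obstacle; the only real care required is bookkeeping of signs and making sure the graphical conventions (independent sets, weights, and the convention that an isolated atom carries value $1$ in \eqref{Gp4}) faithfully encode each algebraic identity.
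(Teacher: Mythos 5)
Your proposal is correct and follows the only natural route, which is also the paper's implicit one: the paper states Lemma \ref{lemmai} without a separate proof, treating all four identities as direct consequences of the Schur complement formulas in Lemma \ref{resolvent_id} (together with the definitions of $\cal Y_x$, $\cal Z_x$, the minor convention $G^{(xy)}_{yy}=0$, and the geometric/binomial expansions), exactly as you derive them. Your bookkeeping of the $P_x/Q_x$ split, the coefficients $c_{m,m'}$, and the $\al=y$ term in the fourth identity all check out.
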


\begin{remark}
Note that the resolvent expansions in Lemma \ref{lem_exp1} are used to unravel the weak correlation between the edges (or the weights) and the atom, say $w$, that they are {\bf not} attached to. More precisely, each resolvent expansion in Lemma \ref{lem_exp1} expresses an edge (or a weight) into the sum of a term that is independent of the atom $w$ and an error term that is of higher order. 
On the other hand, the resolvent expansions in Lemma \ref{lemmai} will be used to unravel the dependence of the edges (or the weights) on the atom, say $x$, that they are attached to. In fact, the dependence is mainly through the dotted edges attached to $x$.
For an illustration of this principle, the reader can refer to e.g. the proof of Lemma \ref{lemEk} below.
\end{remark}

One can see that we create new atoms in the expansions in Lemma \ref{lemmai}, such as the atom $\al$ in \eqref{Gp3}. Some of the new atoms may be connected with the $\star$ atom with dashed edges, and hence create molecules in ${\cal P}ol_1$. Also it is important to put these new atoms into some molecules. If the expansions happen at the atom $x$, then the new molecules are all within an $\OO(W)$-neighborhood of $x$ and we put them into the molecule that contains $x$. In fact, in the proof we will always classify the new atoms in this way such that each molecule has diameter at most $\OO((\log N)^C W)$; see Definition \ref{def: SF} below. Moreover, under this classification, the IPC nested property still holds by the following lemma. 

 


\begin{lemma}\label{Lumm22}
Let $\cal G$ be a graph with $p$ molecules $\cal M_i$, $1\le i\le p$, that satisfy \eqref{yurenguodu}. Fix any (large) $D>0$. Let $x_0$ be an atom in molecule $\mathcal M_{x_{i_0}}$. Then for an edge or a weight attached to atom $x_0$, we can expand it using \eqref{Gp3}-\eqref{Gp6} and write $ \cal G$ as a linear combination of new graphs $\cal G_\gamma$:
 $$
 \cal G=\sum_\gamma \cal G_\gamma + \OO_\prec\left( N^{-D}\right) .
 $$
 We define the molecules in $\cal G_\gamma$ as
\be\label{new_atom}
 \cal M_{j}(\cal G_\gamma)= \begin{cases}{}
 \cal M_{j}(\cal G),  &  \text{ if } \ j\ne i_0
 \\\cal M_{j}(\cal G)\cup \{\text {new atoms}\},  & \text{ if } \ j=i_0 
 \end{cases},
\ee
that is, we include all the new atoms appearing from the expansions  \eqref{Gp3}-\eqref{Gp6} into the molecule containing $x_0$, and leave all other molecules unchanged. Then under this setting, each new graph $\cal G_\gamma $ has an IPC nested structure. 
\end{lemma}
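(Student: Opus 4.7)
The plan is to mirror the proof of Lemma \ref{Lumm}: we observe that each of the expansions \eqref{Gp3}-\eqref{Gp6} is a local modification of $\cal G$ that, under the convention \eqref{new_atom}, preserves or only enlarges the molecular graph whose vertices are the molecules $\cal M_1,\ldots,\cal M_p$ and whose edges are the inter-molecular solid edges. Since the IPC nested property is defined purely on this molecular graph, it suffices to check case by case that the multiset of inter-molecular solid edges is either unchanged or strictly enlarged. All new atoms created by an expansion at $x_0$ are coerced into $\cal M_{i_0}$, and because every dotted edge $H_{x_0\alpha}$ produced by such an expansion carries a variance factor vanishing outside the $\OO(W)$-neighborhood of $x_0$, these new atoms stay close to $x_0$ so that $\cal M_{i_0}$ continues to have diameter $\OO(W)$.

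The case analysis runs as follows. For \eqref{Gp3}, the solid edge $G_{x_0 y}$ is rewritten as $-G_{x_0 x_0}\sum_\alpha H_{x_0 \alpha}G^{(x_0)}_{\alpha y}$; the weight $G_{x_0x_0}$ and the dotted edge $H_{x_0\alpha}$ are invisible on the molecular graph, while the new solid edge $G^{(x_0)}_{\alpha y}$ has one endpoint ($\alpha$) in $\cal M_{i_0}$ and the other in the same molecule as $y$, so the inter-molecular edge between $\cal M_{i_0}$ and $\cal M_y$ (or the intra-molecular edge, if $y\in \cal M_{i_0}$) is simply relabeled and every path to $\star$ in the molecular graph is preserved. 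For \eqref{Gp4} and \eqref{Gp5}, the object being expanded is a weight at $x_0$, which is invisible on the molecular graph; the only new solid edges come from $\cal Z_{x_0}$ and are of the form $G^{(x_0)}_{\alpha\beta}$ with both endpoints among new atoms, hence both inside $\cal M_{i_0}$, so the molecular graph does not change. For \eqref{Gp6}, the weight $\cal Y_{x_0}^{\pm 1}$ is split into $\cal Y_{x_0}^{(y)\pm 1}$ (still a weight) plus additional terms containing the new solid edges $G^{(x_0)}_{\alpha y}$ and $G^{(x_0)}_{y\alpha}$ with $\alpha\in \cal M_{i_0}$; these can only add inter-molecular edges between $\cal M_{i_0}$ and $\cal M_y$ and never delete any. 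In all four cases the molecular graph is at worst augmented, so the IPC nested structure of $\cal G$ transfers to every new $\cal G_\gamma$.

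Finally, the $\OO_\prec(N^{-D})$ error appears solely because \eqref{Gp5} is an \emph{infinite} series. Using $\cal Z_{x_0}\prec \Phi\le N^{-\delta}$ from Lemma \ref{qinggan}, together with the deterministic bound \eqref{xiangmmz} on $\|G\|$ to control the factors $(\cal Y_{x_0})^{-m}$, the tail of \eqref{Gp5} beyond $m_0:=\lceil D/\delta\rceil + 1$ is $\OO_\prec(N^{-D})$, and the remaining finitely many graphs are each covered by the case analysis above. I do not anticipate a deeper difficulty: the only genuinely subtle point is the convention \eqref{new_atom}, because if one instead placed new atoms into fresh singleton molecules then the inter-molecular edge in \eqref{Gp3} would split into a longer molecular path and the ordered nested structure of Lemma \ref{zuomeng} could have to be rebuilt from scratch. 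The rule \eqref{new_atom} rules this out by construction, so that the verification reduces to a direct inspection of the four identities in Lemma \ref{lemmai}.
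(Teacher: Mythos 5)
Your proposal is correct and follows essentially the same route as the paper: the paper's proof likewise truncates the series \eqref{Gp5} to a constant number of terms using the bound \eqref{Zi} on $\cal Z_x$, and then observes that under the convention \eqref{new_atom} the expansions \eqref{Gp3}--\eqref{Gp6} only alter inner-molecule structure (replacing or adding inter-molecular solid edges between the same molecules), so the IPC nested property, being an inter-molecule property, is preserved. Your case-by-case inspection of the four identities is just a more explicit spelling-out of the paper's one-line observation.
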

\begin{proof}
First, notice that by (\ref{Zi}), we only need to keep a constant number of terms in \eqref{Gp5}. Then the lemma follows trivially from the definition of the IPC nested structure and the graphs in Lemma \ref{lemmai}. 
In fact, it is easy to see that under \eqref{new_atom}, the graphs in \eqref{Gp3}-\eqref{Gp6} only change the inner-molecule structures but do not affect the IPC nested property, which is an inter-molecule structural property by definition.
\end{proof}
 
\subsection{Proof of Lemma \ref{Qpart}: step 1.}\label{sec_step1} \
 Clearly, Lemma \ref{Qpart} is a stronger version of Lemma \ref{Q2}. In fact, the step 1 of the proof 
is just repeating the proof of Lemma \ref{Q2} until \eqref{2p+t}. 

\vspace{5pt}

 \noindent{\bf Step 1:} {\bf Expansion with respect to ($\{x_i\}_{i=1}^p , \cal E_D$)}. As in Section \ref{sec_simple}, the first step is to do the {\bf expansions with respect to ($\{x_i\}_{i=1}^p, \cal E_D)$}, and reduce the problem to proving that 
 \be\label{yikezz33}
  \E \sum_{x_1,\cdots, x_p}^{(\star)} \left( \prod_{i=1}^p b_{x_i} \right)
\left( \cal E_D  \cdot \cal G_{ \gamma}\right)
 \prec \OO_\tau\left(1+\Gamma^2\Phi^2\right)^{p},\quad  \cal G_{\gamma}=\prod_{i=1}^{p}Q_{x_i} \left(\cal G_{ \gamma,i} \right),
 \ee
where $\cal G_{ \gamma,i} $ denotes the part of the graph coming from the expansions of $G _{{x_i}\star} \overline G _{{x_i}\star}$. It is easy to see that the following conditions hold. 


\begin{enumerate}
 \item $\cal E_D $ is a dashed-line partition of the $\star$ atom and $x_i$ atoms, with $x_i-\times- \star$ for $1\le i\le p$.
 
\item In each $\cal G _{ \gamma , i}$, there are only solid edges and $f_2$ type of weights. All edges and weights are fully expanded with respect to ($\{x_i\}_{i=1}^p , \cal E_D$).

\item  We choose the molecules as $\cal M_i=\{x_i\}$. So far, we have $\cal Pol_1=\emptyset$. (In the later part of the proof, we will add more atoms to each molecule and $\cal Pol_1$ can be nonempty.) Moreover, we assume without loss of generality that the free molecules are $\cal M_{1},\cal M_{2}, \cdots,  \cal M_{t}$. 

\item For each $ 1\le i\le p$, the graph $\cal G _{\gamma, i}$ contains two separated paths connecting $\cal M_i$ to the $\star$ atom. 

\item 
For $1\le i\le t$, we have $\deg(x_i)\ge 4$ and $\deg(x_i)\in 2\Z$. This is due to the condition (\ref{gaszj}) and the fact that each expansion in \eqref{Gp1} and \eqref{Gp2} increases $\deg(x_i)$ by zero or two. 

\item 
If $\deg(x_i)=4$, the charges of the 4 solid edges must be ({\it 3 positive+1 negative}) or ({\it 1 positive+3 negative}). This follows immediately from the expansion process and the graphs in \eqref{Gp1}-\eqref{Gp2}. We did not emphasize this condition before, since it was not used in the previous proof. 
   
\item Without loss of generality, we can assume that for any $j\ne i$ such that $x_{j}=x_{i}$, the atom $x_{j}$ is {\bf not} visited in $ \cal G_{ \gamma , i}$. In fact, one can visit $x_i$ instead of $x_j$ in $\cal G_{\gamma, i}$.
\end{enumerate}


By \eqref{phir0}, we have that 
\be
\text{ LHS of \eqref{yikezz33}} \prec \OO_\tau \left(1+\Phi^{p}\Gamma^{p+t}\right).
\label{alkdjfyzjhs}
 \ee
 Suppose we can get $t$ more $\Phi$ factors. Then we have 
 \be
 \text{ LHS of \eqref{yikezz33}} \prec \OO_\tau \left(1+\Phi^{p+t}\Gamma^{p+t}\right) \prec \OO_\tau \left(1+\Phi^{2}\Gamma^{2}\right)^p\label{heur}
 \ee
 for any $0\le t\le p$. Thus the main goal of our proof is to show that each free molecule provides an extra factor $\Phi$, and hence $t$ more $\Phi$ factors in total.


We now explain briefly the basic strategy of our proof. Assume that
\be\label{num_pol}
|\cal Pol_1|=t_1,\quad |\cal Pol_2|=t_2, \quad |\{\text{free molecules}\}|=t,\quad t+t_1+t_2=p.
\ee
Then we have the following cases.
 \begin{itemize}
 \item For each molecule $\cal M_i$ in $\cal Pol_1$, we can obtain a factor $\OO(1)$ from $b_{x_i}$. 
 
\item Consider any molecule $\mathcal M_{x_{i_0}} \in \cal Pol_2$ that contains the atom $x_{i_0}$. By definition, there are $n\ge 1$ other molecules that are connected with $\mathcal M_i$ through dashed lines, say ${\mathcal M}_{i_1}, \cdots, {\mathcal M}_{i_n}$. Note that each $\mathcal G_{\gamma,i_k}$, $0\le k \le n$, is expanded from two off-diagonal edges $G_{x_{i_k}\star} \overline{G_{x_{i_k}\star}}$, and in the expansions $\mathcal G_{\gamma,i_k}$ always contains at least two separate paths of off-diagonal edges from the molecule $\cal M_{i_k}$ to $\star$. We call the two off-diagonal edges connected with $\mathcal M_{i_k}$ as ${\rm Edge}({\alpha_{i_k}, \beta_{i_k}})$ and ${\rm Edge}({\alpha'_{i_k},\beta_{i_k}})$ for $\alpha_{i_k},\alpha_{i_k}'\in \cal M_{i_k}$. Moreover, in the proof we will choose the molecules such that any two atoms in the same molecules have distance at most $\OO((\log N)^C W)$. In particular, all the atoms in $\mathcal M_{i_k}$, $0\le k \le n$, are within a $\OO((\log N)^C W)$-neighborhood of $x_{i_0}$. Then by \eqref{chsz2}, the above $(2n+2)$ off-diagonal edges are bounded by
\be\label{strat_phir}
 \sum_{x_{i_0}}  \prod_{0\le k \le n}|b_{x_{i_k}}| \Psi_{x_{i_0}\beta_{i_k}}\Psi_{x_{i_0}\beta'_{i_k}}. 
\ee
In the above sum, $2n$ of the $\Psi$ factors can be bounded by $\OO_{\tau,\prec}(\Phi^{2n})$. The rest of two $\Phi$ factors, say $\Psi_{x_{i_0}\beta_{i_0}}\Psi_{x_{i_0}\beta'_{i_0}}$, will be summed over $x_{i_0}$ and gives a factor $\sum_{x_{i_0}} \Psi_{x_{i_0}\beta_{i_0}}\Psi_{x_{i_0}\beta'_{i_0}} \prec \OO_\tau(\Gamma^2)$. 
Hence, each molecule $\cal M_{i_0}$ in $\cal Pol_2$ provides a factor of order 
$$ (\Gamma^2\Phi^{2n})^{\frac1{n+1}}\le (\Gamma^2\Phi^{2})^{1/2}\le \Gamma \Phi$$ 
for any $n\ge 1$. Here we emphasize that in order to be able to do the sum, we need the ordered nested property as in \eqref{ljsadfl}.
 

\item For each free molecule $\cal M_i\notin \cal Pol_1\cup\cal Pol_2 $, due to the ordered nested property, we obtain a factor $\Gamma^2$ when summing over $\sum_{x_i} $ as in \eqref{phir}. Furthermore, because of the condition (v) above, the existence of each free molecule increases the total number of off-diagonal edges in $\cal G_\gamma$ at least by 1. Every such edge provides a factor $ \Phi $. To conclude the proof, we still need  to extract one more $\Phi$ factor from each free molecule. If $\deg(x_i)>4$, then $\deg(x_i)\ge 6$ by condition (v), which increases the total number of off-diagonal edges at least by 2. This already gives the factor $\OO_{\prec}(\Phi^2)$ for each free molecule with degree larger than 4. 

 
\item Now we consider the free molecules $\cal M_i$ with $\deg(x_i)=4$. Recall the condition (vi) above. Without loss of generality, we assume  that there are {\it 3 positive} solid edges and {\it 1 negative} solid edge connected with the atom $x_i$ and they look like 
  $$  G_{x_i \beta_1} G_{x_i \beta_2} G_{x_i \beta_3} \overline{G}_{x_i \beta_4} ,$$
where $\beta_k \ne x_i$, $1\le k\le 4$. If we only consider this term, then it was proved in \cite{EKY_Average} that when summing over $\sum_{x_i} b_{x_i}$, 
\be\label{adsolyai}
  \sum_{x_i} b_{x_i}  G_{x_i \beta_1}G_{x_i \beta_2} G_{x_i \beta_3} \overline{G}_{x_i \beta_4}  \prec  \Phi^5, 
 \ee
i.e., each charged (non-neutral) atom provides an extra factor $\OO_\prec(\Phi)$. However, this is not an optimal bound for our goal, because two of the solid edges in $G_{x_i \beta_1} G_{x_i \beta_2} G_{x_i \beta_3} \overline{G}_{x_i \beta_4} $ have to be used in the ordered nested property. In other words, two of them should provide $\Gamma^2$ factors when taking the sums.  

\item Hence the main goal is to obtain an extra factor $\Phi$ as in \eqref{adsolyai} while keeping the IPC nested structure. 
\end{itemize}


\subsection{Proof of Lemma \ref{Qpart}: step 2.}\label{40+} \
So far, we have $ \cal G_{ \gamma}=\prod_{i=1}^{p}Q_{x_i} \left(\cal G_{ \gamma,i} \right)$. Our goal of this step is to write $\mathcal E_D\cdot \cal G_{ \gamma}$ as a linear combination of colorless graphs. 

\vspace{5pt}

\noindent{\bf Step 2: Removing the colors  $Q_{x_i}$.} We first study the single piece $Q_{x_i}\cal G_{ \gamma,i}$. Due to the condition (ii) below \eqref{yikezz33}, we know that in $\cal G_{ \gamma ,i}$ the edges and weights that are not attached to atom $x_i$ must be independent of the atom $x_i$ under $\mathcal E_D$. Thus we can rewrite 
\be\label{aljjayz}
\cal G_{ \gamma,i}= \cal G^{in}_{ \gamma ,i}\cal G^{out}_{ \gamma ,i},\quad \text{with} \quad P_{x_i}\cal G^{out}_{ \gamma ,i}=\cal G^{out}_{ \gamma ,i},
\ee
where $\cal G^{in}_{ \gamma ,i}$ consists of the solid edges and weights that connect to atom $x_i$, and $\cal G^{out}_{ \gamma ,i}$ consists of the rest of $\cal G_{ \gamma,i}$. Moreover there are only $f_2$ weights in $\cal G^{in}_{ \gamma ,i}$. If in $\cal G^{in}_{ \gamma ,i}$ the number of $f_2$ weights is $n$ and $\deg(x_i)=2s$, then locally it should look like the graph in (\ref{EKgr}) (with $x_i$ replaced by $x$). 
 Since $Q_{x_i} \left(\cal G_{ \gamma,i} \right)= \cal G^{out}_{ \gamma ,i} (\cal G^{in}_{ \gamma ,i}-P_{x_i} \cal G^{in}_{ \gamma ,i}),$ it suffices to write $P_{x_i}(\cal G^{in}_{ \gamma,i})$ as a linear combination of some colorless (local) graphs. This is the content of the following lemma.
   


   \begin{lemma}\label{lemEk}
   Let $\cal G_0$ be a colorless graph as following:
  \be\label{EKgr}
  \parbox[c]{0.25\linewidth}{\includegraphics[width=\linewidth]{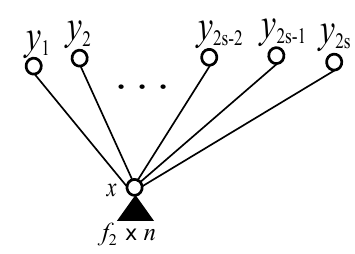}}
   \ee 
where we did not draw the labels for the edges and weights, and moreover, the labels for these $n$ weights can be different. We also assume that 
   $$
   x-\times-y_j,\quad 1\le j\le 2s,
   $$
i.e., there is a $\times$-dashed line between atom $x$ and each atom $y_j$ (although for simplicity we did not draw them in the above graph). Then performing the expansions in Lemma \ref{lemmai} with respect to atom $x$ and applying $P_{x}$, we get that for any fixed $D>0$, 
\be\label{Gzah}
   \E _x\cal G_0=\sum_\kappa \cal F_\kappa+\OO_\prec(N^{-D}) , \quad  \text{with} \quad \cal F_\kappa=\sum_{\vec \al} C^{\kappa}(\vec \al )\cdot \cal G_\kappa (\vec \al),
  \ee
where the total number of $\cal F_\kappa$ is of order $\OO(1)$, $\vec \al=(\al_1, \al_2\cdots, \al_{s' })$, $s'\equiv s'(\kappa)$, is the vector of newly added atoms, $C^{\kappa}(\vec \al )$ are complex-valued deterministic coefficients, and $\cal G_\kappa (\vec \al)$ are graphs which satisfy the following conditions.  

\begin{enumerate}
\item Each graph $\cal G_\kappa (\vec \al)$ looks like 
 \be\label{EKgr2r}
\parbox[c]{0.35\linewidth}{\includegraphics[width=\linewidth]{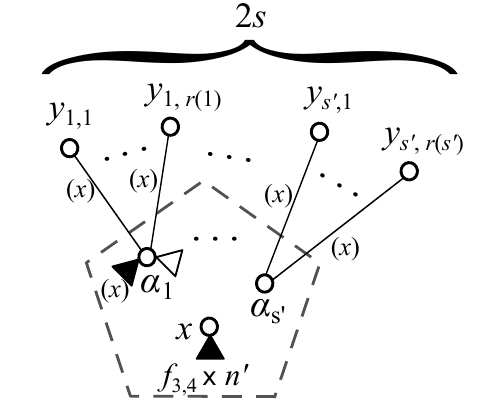}}
   \ee 
where we have some new atoms $\al_k$, $1\le k\le s'(\kappa)$. We emphasize again that the pentagon does not really appear in graph, and it is only used to help us to understand the structures.

\item There are no dotted lines in the graph. There are no solid edges connected with the atom $x$. There may be solid edges between $\al_k$ atoms. 
 
\item 
For each $1\le j\le 2s$, the solid edge $Edge(x, y_j)$ in \eqref{EKgr} was replaced with $Edge(\alpha_{t_j}, y_j)$ in \eqref{EKgr2r} for some $1\le t_j\le s'$. (Here $j\ne j'$ does not necessarily imply $t_j\ne t_{j'}$.) Furthermore, the $Edge(\alpha_{t_j}, y_j)$ keeps all the labels (direction and charge) of $Edge(x, y_j)$ except that the atom $x$ is added into the independent set of $Edge(\alpha_{t_j}, y_j)$.

 \item Except the edges $Edge(\alpha_{t_j}, y_j)$, $1\le j\le 2s$, there are no other solid edges and weights attached to $y_j$. 
 
 \item  In \eqref{EKgr2r}, 
 we have the following $\times$-dashed lines 
 $$
(1):  x-\times-y_j,\quad 1\le j\le 2s; \quad (2):  x-\times-\al_k,\quad 1\le k\le  s'; \quad  (3): \al_k-\times -\al_{k'},\quad 1\le k\ne k'\le s'.$$

\item atom $x$ only has $f_3$ and $f_4$ types of weights attached to it, while each atom $\alpha_k$ only has $f_1$ type of weights and $f_6$ type of light weights (i.e. $W^{-d/2}$ factors) attached to it. Moreover, the atom $x$ is in the independent set of each $f_1$ weight on $\al$ atoms. 

\item For each atom $\al_k$,  
 \begin{align*}
 &\deg(\al_k )\ne 2 \implies \text{there exists at least one $f_6$ type of light weight attached to it}. 
 \end{align*}
 \item We have
\be\label{swlkj}
 \left|C^{\kappa}(\vec \al )\right|=O  \left( W^{-ds'(\kappa)}  \right) {\bf 1}\left(\max_k |\al_k- x|=\OO(W)\right) .
 \ee
 \end{enumerate} 
\end{lemma}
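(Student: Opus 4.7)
The plan is to use the resolvent expansions in Lemma \ref{lemmai} to rewrite every $G$-factor attached to $x$ as a polynomial in the $H_{x\cdot}$ entries (the ``dotted edges'') with coefficients that are independent of the $x$-th row and column of $H$, and then to compute $\mathbb E_x$ by a cumulant/Wick pairing of these dotted edges. First I would replace each of the $n$ weights $(G_{xx})^{-1}$ in \eqref{EKgr} by $\cal Y_x-\cal Z_x$ using \eqref{Gp4}, and each of the $2s$ solid edges $G_{xy_j}$ by $-G_{xx}\sum_{\al}H_{x\al}G^{(x)}_{\al y_j}$ using \eqref{Gp3}; then I would replace the resulting $G_{xx}$ factors by the truncated geometric series \eqref{Gp5} in $\cal Y_x^{-1}$ and $\cal Z_x$, stopping at some large but constant power $m_0$. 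Using $\cal Z_x\prec \Phi$ from \eqref{Zi} and the deterministic bound \eqref{xiangmmz}, the truncation error is $\OO_\prec(N^{-D})$ for $m_0=m_0(D)$ large enough, which produces the $N^{-D}$ error in \eqref{Gzah}. After this step, $\cal G_0$ is written as a finite sum of monomials of the form
\[
\prod_r \cal Y_{x}^{\,\eps_r}\; \cdot\; (-H_{xx})^{a} \cdot \prod_{\ell} H_{x\al_\ell}\; \cdot\; \Pi^{(x)},
\]
where each $\eps_r\in\{+1,-1\}$, the $\al_\ell$'s are summation indices restricted to $|\al_\ell-x|\le C_sW$ with explicit deterministic coefficients $\prod_\ell s_{x\cdot}$ coming from the ``non-$Q_x$'' piece of $\cal Z_x$, and $\Pi^{(x)}$ is a product of $G^{(x)}$ entries and $\cal Y_x^{\pm 1}$ weights, all of which are independent of the $x$-th row and column.

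Next I would apply $\mathbb E_x$ to this representation. Since every factor outside the $H_{x\cdot}$'s is already $\sigma(H^{[x]})$-measurable, only a pure expectation over the independent centered family $(H_{x\al})_{\al}$ (with $\mathbb E H_{x\al}^2=s_{x\al}$ and higher moments controlled by \eqref{high_moment}) remains. A standard cumulant expansion (equivalently, a Wick-type pairing) expresses each such expectation as a sum over partitions $\pi$ of the dotted edges into blocks of size $\ge 2$, with a block of size $k$ contributing the joint cumulant of the $k$ corresponding $H_{x\cdot}$ variables. For a block of size $k\ge 2$ attached to a single new atom $\al$ (obtained after collapsing the block's indices), the joint cumulant is bounded by $\mu_k s_{x\al}^{k/2}=\OO(W^{-dk/2})$ from \eqref{high_moment}. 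By construction the pairing forces each surviving summation atom $\al_k$ to receive an even multiplicity of dotted edges; pure $2$-point pairings give a factor of $s_{x\al_k}=\OO(W^{-d})$ per new atom (and a $\delta_{\al\al'}$ which I merge the indices by), yielding the bound $|C^\kappa(\vec\al)|=\OO(W^{-ds'(\kappa)})$ in \eqref{swlkj}. A block of size $k>2$ at an atom $\al_k$ costs only $\OO(W^{-dk/2})$ instead of $(W^{-d})^{k/2}$, so one loses a factor of $W^{-d/2}$ relative to pure pairing; I encode each such deficit as an $f_6$ light weight of value $W^{-d/2}$ attached to $\al_k$. This is precisely what is needed for condition (vii): if $\deg(\al_k)\ne 2$ then the block at $\al_k$ has size $\ne 2$, and hence carries at least one $f_6$ factor. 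Similarly, a block that contains one of the $H_{xx}$ edges from the $\cal Z_x$ or \eqref{Gp5} expansions has size $\ge 2$ and contributes a deficit that is absorbed into an $f_6$ light weight. The support of the coefficients in \eqref{swlkj} follows because each $s_{x\al_k}$ in the product vanishes unless $|\al_k-x|\le C_sW$.

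It then remains to read off the structural properties. After taking $\mathbb E_x$, every dotted edge has been replaced by a deterministic factor (either a variance $s_{x\al}$, or a higher cumulant bounded as above), so condition (ii) holds. Each of the original $2s$ solid edges $G_{xy_j}$ in \eqref{EKgr} was turned into $-G_{xx}\sum_{\al}H_{x\al}G^{(x)}_{\al y_j}$, and after the pairing its ``$H_{x\al}$'' hook got identified with some new atom $\al_{t_j}$, which gives condition (iii): the edge becomes $G^{(x)}_{\al_{t_j}y_j}$, i.e.\ $Edge(\al_{t_j},y_j)$ with the same direction and charge and with $x$ added to its independent set. Property (iv) is immediate because the only edges touching $y_j$ in \eqref{EKgr} were the $2s$ $G_{xy_j}$'s. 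The $\times$-dashed constraints in (v) hold because the indices $\al_k$ are restricted to $|\al_k-x|\le C_sW$ but the $y_j$'s were already $\times$-dashed from $x$ (and for $\al_k\ne \al_{k'}$ we sum over independent indices, while coincidences are accounted for separately). Property (vi) follows from the expansion choices: the only weights on $x$ that survive $\mathbb E_x$ are the $\cal Y_x^{\pm 1}$ factors ($f_3,f_4$), while on each new atom $\al_k$ we are left with products of $G^{(x)}_{\al_k\al_k}$-type factors (from the $\mathcal Z_x$ expansion of $\sum_\beta s_{x\beta}G^{(x)}_{\beta\beta}$ and from collapsed pairings) which are $f_1$ weights carrying $x$ in their independent set, together with the $f_6$ light weights described above. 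The main obstacle of the proof is therefore combinatorial and bookkeeping: one must check carefully, on the level of the graphical representation, that all of the above matching between pairings, new atoms and the structural items (i)--(viii) is consistent for every term produced by Steps A--E, and that all terms of order beyond the prescribed truncation are absorbed into the $\OO_\prec(N^{-D})$ remainder.
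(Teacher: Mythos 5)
Your overall route is the same as the paper's: expand every factor attached to $x$ via \eqref{Gp3}, \eqref{Gp4}, \eqref{Gp5}, truncate using $\cal Z_x\prec\Phi$ and \eqref{xiangmmz} to absorb high powers into $\OO_\prec(N^{-D})$, then compute $\E_x$ as a moment/pairing computation over the independent row entries $(H_{x\cdot})$, merge coinciding summation indices into new atoms, and read off the structural conditions with the $W^{-d/2}$-deficits of higher moments recorded as $f_6$ light weights. Up to bookkeeping language (you speak of cumulants, the paper of raw moments per merged atom together with the dashed-line partition constraints \eqref{kadfl}--\eqref{kadfl2}), this is the paper's argument, and conditions (i)--(vi) and (viii) come out correctly in your version.

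However, there is a genuine gap in your verification of condition (vii). You split $\cal Z_x$ into the uncentered quadratic piece $\sum_{\al,\beta}H_{x\al}H_{x\beta}G^{(x)}_{\al\beta}$ and the deterministic subtraction $-\sum_\al s_{x\al}G^{(x)}_{\al\al}$ before taking $\E_x$, and then argue ``$\deg(\al_k)\ne 2$ implies the block at $\al_k$ has size $\ne 2$, hence an $f_6$ factor''. That implication is false precisely when a size-two block consists of the two hooks $\beta_j,\beta'_j$ of a \emph{single} quadratic term: the edge $G^{(x)}_{\beta_j\beta'_j}$ then collapses to an $f_1$ weight, the new atom has $\deg(\al_k)=0$, and the coefficient is $\E_x H_{x\al_k}^2=s_{x\al_k}=\OO(W^{-d})$ with no extra $W^{-d/2}$ — a graph violating (vii). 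Your separated subtraction piece produces the mirror-image violating graphs ($f_1$ weight, degree $0$, coefficient $s_{x\al}$, no $f_6$). These two families cancel exactly, and that cancellation is the whole point of keeping the centered combination $H_{x\beta}H_{x\beta'}G^{(x)}_{\beta\beta'}-\delta_{\beta\beta'}s_{x\beta}G^{(x)}_{\beta\beta}$ intact, as the paper does: there the constraint \eqref{kadfl2} forbids the pure self-pairing, which yields $\deg(\al_k)<2\Rightarrow\#\mathrm{dot}(\al_k)\ge 3$ and hence (vii). As written, your decomposition either outputs graphs that fail (vii) (breaking the later use of this lemma, e.g.\ in verifying \eqref{3114}), or you must explicitly perform this recombination; the claim you use instead does not close the case $\deg(\al_k)=0$. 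A minor additional slip: the surviving multiplicities need not be even (the entries are not assumed symmetrically distributed; odd moments of order $\ge 3$ survive), though this does not affect your bounds since you anyway allow blocks of any size $\ge 2$.
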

 \begin{proof}
We start by expanding the edges and weights in $\cal G_0$ using \eqref{Gp3}, \eqref{Gp5} and the first identity in \eqref{Gp4}. With these expansions, we can write \eqref{EKgr} as a sum of graphs of the following form
\be\label{Gpin22}
   \wt {\cal G}_{\wt\kappa}:= \quad
   \parbox[c]{0.35\linewidth}{\includegraphics[width=\linewidth]{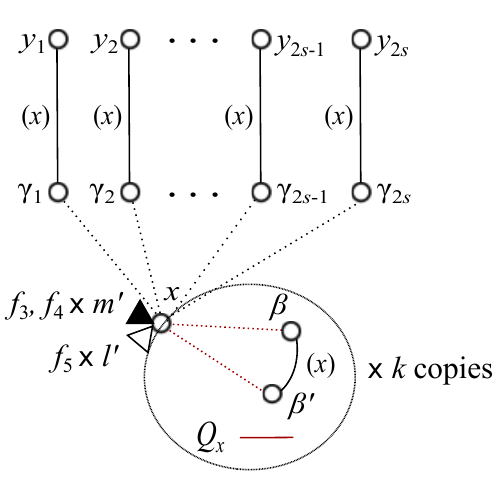}}
   \ee
 More precisely, in the above graph we have 
\begin{itemize}
\item $m'$  $f_3$ type and/or $f_4$ type of weights, which are independent of the atom $x$; 

\item $l'$ $f_5$ type of light weights (i.e., $H_{xx}$), which are independent of the rest of the graph;

\item $k $ copies of $Q_{ x}\left( H_{x\beta}H_{x\beta'}G^{(x)}_{\beta\beta'}\right)$, i.e., 
\begin{align*}
\prod_{j=1}^k Q_{ x} \left( H_{x\beta_{j}}H_{x\beta'_{j}}G^{( x)}_{\beta_{j}\beta'_{j}}\right)
= \prod_{j=1}^k \left( H_{x\beta_j}H_{x\beta'_j}G^{(x)}_{\beta_j\beta'_j}-{\delta_{\beta_j \beta_j'}}s_{x\beta_j}G^{(x)}_{\beta_j\beta _j}\right);
\end{align*}

\item $2s$ dotted lines which connect the atom $x$ with {new atoms} $\gamma_1$, $\gamma_2,$ $\cdots,$ $ \gamma_{2s}$;
 
\item $2s$ solid edges (which come from the $2s$ solid edges in \eqref{EKgr}) connecting atoms $\gamma_j$ with atoms $y_j$, and these edges are now independent of the atom $x$.
\end{itemize} 
With \eqref{Gpin22},  we can now write \eqref{EKgr} as
$$
\sum_{\tilde \kappa} \sum^{(x)}_{\gamma_1, \gamma_2,\cdots, \gamma_{2s}}\sum^{(x)}_{\beta_1, \beta_2,\cdots, \beta_{n}}\sum^{(x)}_{\beta'_1, \beta'_2,\cdots, \beta'_{n}} c_{\tilde \kappa} \wt {\cal G}_{\,\tilde \kappa},
$$
where $c_{\tilde \kappa}$ denotes deterministic coefficients which depend only on $m'$, $l'$ and $k$ (recall \eqref{Gp5}). It is easy to see that if either $k$ or $l'$ in $\wt {\cal G}_{\,\tilde \kappa}$ is
very large, then $\sum^{(x)}_{\bm \gamma }\sum^{(x)}_{\bm\beta}\sum^{(x)}_{\bm\beta'} c_{\tilde \kappa}\wt {\cal G}_{\,\tilde \kappa}$ will be small enough to be treated as error terms due to \eqref{Zi}. Thus we can focus on the graphs $\wt {\cal G}_{\,\tilde \kappa}$ whose $k$ and $l'$ are bounded by some large constant. Moreover, from \eqref{Gp3}-\eqref{Gp5}, it is easy to see that $m'$ is bounded by $n+k+l'+1$.

Now we can calculate $ \E_x  \wt {\cal G}_{\, \tilde \kappa}$, which is quite straightforward due to the following observations. (They are already contained in the previous discussions, but we repeat them here to make the proof clearer.)
\begin{itemize}
\item The solid edges and $f_3$, $f_4$ types of weights are independent of the atom $x$. 

\item The $f_5$ light weights are independent of all the other parts, and $\E _{x}H_{xx}^{n'} =\OO(W^{-n'd/2})$.

\item  The dotted edges can be written as
\be\label{sjkyrtr}
\mathbb E_x \left[\left(\prod_{j=1}^{2s}H_{ x\gamma_j}\right)\prod_{j=1}^n\left(H_{x\beta_j}H_{x\beta'_j}-s_{x\beta_j}\delta_{\beta_j\beta'_j}\right)\right].
\ee
\end{itemize}
Recall that $H_{xa}$ is independent of $H_{xb}$ if $a\ne b$. Thus in order for \eqref{sjkyrtr} to be nonzero, we need to pair the $\gamma$ and $\beta$ atoms. It can be accomplished using dashed lines as follows. We write 
$$\wt {\cal G}_{\, \tilde \kappa}=\sum_{\cal E} \cal E \cdot \wt {\cal G}_{\, \tilde \kappa},$$
where $\cal E$ denotes the dashed-line partitions of $\left(\{\gamma_j\}_{j=1}^{2s}, \{\beta_j\}_{j=1}^n, \{\beta'_j\}_{j=1}^n\right)$. In order to have $\E_x \cal E \cdot\wt {\cal G}_{\, \tilde \kappa}\ne 0$, we must have that
\be\label{kadfl}
{\text{each $\gamma$, $\beta$ or $\beta'$ atom is connected with another $\gamma$, $\beta$ or $\beta'$ atom through a dashed line}},
\ee
and for any fixed $j$,
\be\label{kadfl2} 
{\text{the  $\beta_j$ ($\beta'_j$) atom must be connected with an atom that is not $\beta'_j$ ($\beta_j$) through a dashed line}.}
\ee
Now for any graph $\cal E \cdot \wt {\cal G}_{\, \tilde \kappa}$ satisfying the above two conditions, we merge the  $\gamma$, $\beta$ and $\beta'$ atoms that are connected through dashed lines, and call the new graph $\wt {\cal G}^{\cal E}_{\tilde \kappa}$ (which also includes the dashed lines).
Then we rename the merged $\gamma$, $\beta$ or $\beta'$ atoms in $\wt {\cal G}^{\cal E}_{\tilde \kappa}$ as $\al_1, \; \al_2,\; \cdots, \al_{s'}$, which are all different from each other (i.e. $\al_i -\times- \al_j$ for $1\le i \ne j \le s'$). Note that the solid edges between $\beta$ and $\beta'$ atoms can either become solid edges between the $\al$ atoms or become $f_1$ weights on the $\al$ atoms. Therefore $\E_x  \wt {\cal G}^{\cal E}_{\, \tilde \kappa}$ can be written as a sum of graphs of the following form: 
\be\label{Gpin23}
\cal G_{\, \tilde \kappa, \; \kappa}:= \quad \parbox[c]{0.30\linewidth}{\includegraphics[width=\linewidth]{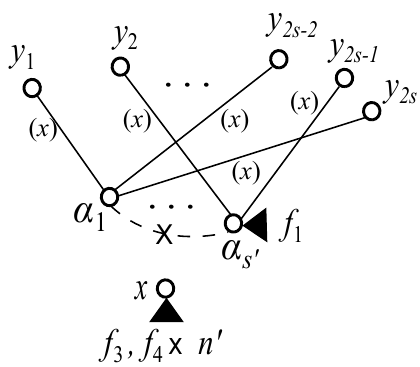}}
\ee
In sum, we have shown that for $\wt {\cal G}_{\, \tilde \kappa}$ in \eqref{Gpin22},
\be\label{rubicz-1}
 \E_x  \wt {\cal G}_{\, \tilde \kappa}=\sum_{\cal E}\E_x \wt {\cal G}^{\cal E}_{\tilde \kappa} =\sum_{\kappa} \sum_{\vec \al }C_{\, \tilde \kappa, \; \kappa}(\vec \al)\cal G_{\, \tilde \kappa, \; \kappa},
\ee
where $C_{\, \tilde \kappa, \; \kappa}(\vec \al)$ comes from 
\eqref{sjkyrtr} and satisfies 
\be\label{rubicz0}
C_{\, \tilde \kappa, \; \kappa}(\vec \al)=O\left(\prod_{j=1}^{s'(\kappa)}\left(W^{-d/2}\right)^{\#\text{dot}(\al_k)}\right){\bf 1}\left(\max_k |\al_k- x|=\OO(W)\right) .
\ee
Here $\#\text{dot}(\al_k)$ denotes the total number of dotted lines connected with the $\al_k$ atom in $\wt{\cal G}^{\cal E}_{\, \tilde \kappa}$. 
The $\E_xH_{xx}^{n'}$ term may make the coefficients even smaller, but we will not consider it in the following proof. 

So far, we have obtained the form in \eqref{Gzah}. It is easy to see that the conditions (i)-(vi) below \eqref{Gzah} hold. It remains to verify the conditions (vii) and (viii). Clearly by \eqref{kadfl}, we have that in $\wt{\cal G}^{\cal E}_{\, \tilde \kappa}$,
\be\label{rubicz}
\#\text{dot}(\al_k)\ge 2, \quad 1\le k \le s'.
\ee
In $\wt {\cal G}_{\, \tilde \kappa}$, each $\gamma$, $\beta$ and $\beta'$ atom is connected with 1 dotted line and 1 solid line. Therefore, we must have
\be\label{sdf;ju}
\#\text{dot}(\al_k)\ge \deg(\al_k ), \quad \text{ in $\wt{\cal G}^{\cal E}_{\, \tilde \kappa}$}.
\ee
On the other hand, we know that  $\deg(\al_k ) $ can be strictly smaller than $\#\text{dot}(\al_k)$. This happens only when the ending atoms of a solid edge are both equal to $\al_k$ and this solid edge then becomes an $f_1$ weight on atom $\al_k$. Moreover, we know that this solid edge can only be $Edge(\beta_k, \beta'_{k})$ in $\wt {\cal G}_{\, \tilde \kappa}$ for some $1\le k\le n$. Note that if there is a dashed line between $\beta_k$ and $\beta_k'$, then by \eqref{kadfl2} $\beta_k$ must be connected with another non-$\beta_k'$ atom through a dashed line. Due to this observation, we must have that
$$\deg(\al_k )<2\implies \#\text{dot}(\al_k)>2.$$
Together with \eqref{sdf;ju}, we get
$$\deg(\al_k )\ne 2\implies  \#\text{dot}(\al_k) \ge 3.$$
Combining with \eqref{rubicz0} and \eqref{rubicz}, one can see that the conditions (vii) and (viii) hold. This completes the proof of Lemma \ref{lemEk}. 
\end{proof} 
 
Now we return to Step 2. We apply Lemma \ref{lemEk} to $Q_{x_i} \cal G^{in}_{ \gamma ,i}=\left( 1-P_{x_i}\right) \cal G^{in}_{ \gamma ,i}$ for $1\le i\le p$, where the atom $x_i$ plays the role of atom $x$ in Lemma \ref{lemEk}. Then we can write  
\be\label{jhyaz}
Q_{x_i} \left(\cal G_{ \gamma,i} \right) =\cal G^{out}_{ \gamma ,i} \cdot \sum_{\kappa} \sum_{\vec \al_i}C^{\gamma,\kappa}_{\vec \al_i, x_i} \cal G^{in, \kappa}_{ \gamma ,i} (\vec \al_i, x_i),\quad \vec \al_i=(\al^1_i, \al^2_i,\cdots),
\ee
where for each fixed $\kappa$, $\cal G^{in, \kappa}_{ \gamma ,i}$ and $C^{\gamma,\kappa}_{\vec \al, x_i}$ satisfy the  conditions in Lemma \ref{lemEk}. 
Now the right-hand side of \eqref{jhyaz} is a linear combination of colorless graphs. Then taking product, we obtain that 
\be\label{prod_Gk}
\cal E _D\prod_{i=1}^{p}Q_{x_i} \left(\cal G_{ \gamma,i} \right) = \cal E _D
\sum_{ \kappa_1,\ldots, \kappa_p} \sum_{\vec \al_1, \cdots, \vec \al_p}\prod_{i=1}^p \left(C^{\gamma,\kappa_i}_{\vec \al_i, x_i}\right)   \prod_{i=1}^{p}\left(\cal G^{out}_{ \gamma ,i}\cdot \cal G^{in, \kappa_i}_{ \gamma ,i}\right).
\ee
Now we simplify the notations as
$$
{\bm \kappa}:=\{\kappa_1, \kappa_2\cdots, \kappa_p\}, \quad {\bm \al}:=\{\vec \al_1, \vec \al_2, \cdots, \vec\al_p\},\quad \vec x:=\{x_1, x_2, \cdots, x_p\},\quad \cal G_{\gamma, {\bm \kappa}}:=\prod_{i=1}^{p}\left(\cal G^{out}_{ \gamma ,i}\cdot \cal G^{in, \kappa_i}_{ \gamma ,i}\right).
$$
Then we can write (\ref{prod_Gk}) as
$$
 \cal E _D\prod_{i=1}^{p}Q_{x_i} \left(\cal G_{ \gamma,i} \right) = \cal E _D \sum_{{\bm \kappa}} \sum_{{\bm \al} } C^{\gamma, {\bm \kappa}}_{{\bm \al}, \vec x} \cdot   \cal G_{\gamma, {\bm \kappa}}({\bm \al}, \vec x),
$$
where 
$$
C^{\gamma, {\bm \kappa}}_{{\bm \al}, \vec x}=O\left(   \left(W^{-d}\right)^{\#\text { of  $\al$ atoms }}\right){\bf 1}\left(\max_{i,j} |\al_i^j-x_i|=\OO(W)\right) .
$$
Now let $\cal E^{\gamma, {\bm \kappa}}$ be a dashed-line partition of the atoms in $\cal G_{\gamma, {\bm \kappa}}$ such that
\begin{enumerate}
\item the restriction of $\cal E ^{\gamma, {\bm \kappa}}$ to the dashed-line partition of $\{x_i\}$ is equal to $\cal E_D$;
\item in $\cal E^{\gamma, {\bm \kappa}}$, we have $x_i-\times- \al_i^j$ and $\al_i^{j}-\times- \al_i^{j'}$, which are consistent with the dashed and $\times$-dashed lines in $\cal G^{in, \kappa}_{ \gamma ,i} (\vec \al_i, x_i)$.
\end{enumerate}
In this case, we shall also say that $\mathcal E^{\gamma,\bm\kappa}$ is a {\it{dashed-line extension}} of $\mathcal E_D$ that is consistent with $\prod_{i=1}^p \cal G^{in, \kappa}_{ \gamma ,i} (\vec \al_i, x_i)$. Then to prove \eqref{yikezz33}, we only need to show that for any fixed $\gamma$, ${\bm \kappa}$ and $\cal E^{\gamma, {\bm \kappa}}$, 
\be\label{yikezz44}
\E \cal F_0 \prec \OO_\tau\left(\Gamma^2\Phi^2 +1\right)^{ p},
\ee
where
\be\label{yikezz44F}
 \cal F_0:= \sum_{\vec x}^{(\star)}   \sum_{{\bm \al}} \wt  C^{\gamma, {\bm \kappa}}_{{\bm \al}, \vec x}  \cdot \cal E^{\gamma, {\bm \kappa}}\cdot   \cal G_{\gamma, {\bm \kappa}}({\bm \al}, \vec x),
 \ee
with
\begin{align*}
\wt  C^{\gamma, {\bm \kappa}}_{{\bm \al}, \vec x} \; = \; &  C^{\gamma, {\bm \kappa}}_{{\bm \al}, \vec x} \cdot  \prod_{i=1}^pb_{x_i}
 \; = \;   O\left(   \left(W^{-d}\right)^{\#\text { of  $\al$ atoms }}\right){\bf 1}\left(\max_{i,j} |\al_i^j-x_i|=\OO(W)\right)  .
\end{align*}

We shall call a linear combination of graphs a {\bf forest}. 
The above $\cal F_0$ is a forest, but with some special structures. We now pick some important structures that are useful for our proof, and call the forest with the desired structures a {\bf standard forest}. Then the proof of Lemma \ref{Qpart} is reduced to showing that for a standard forest, its expectation is always bounded by the right-hand side of \eqref{yikezz44}. 

\begin{definition}[Simple free molecule] 
In a colorless graph $ \cal G$, $\cal M_i$ is called a simple free molecule if it is a free molecule and satisfies
 \begin{enumerate}
 \item $\deg(\cal M_i)=4$ (cf. \eqref{degree});
 \item there is NO dashed edge inside $\cal M_i$; 
 \item there is NO off-diagonal solid edge inside $\cal M_i$;
 \item there is NO light weight inside $\cal M_i$. 
  \end{enumerate}
Here we say that a dashed/solid edge is inside $\cal M_i$ if the ending atoms of this edge are both in $\cal M_i$.
\end{definition}
\begin{definition}[Standard Forest]\label{def: SF} We call $\cal F$ a standard forest if $\cal F$ can be written as
\be\label{fagao}
\cal F=  \sum_{\vec x}^{(\star)}   \sum_{{\bm \al}}
 C _{{\bm \al}, \vec x}  \cdot \cal E\cdot  \cal G ({\bm \al}, \vec x),
\ee
where $\mathcal E$ is a dashed-line partition of all the atoms (including $\star$), and the coefficient $C _{{\bm \al}, \vec x}$ and the graph $ \cal G ({\bm \al}, \vec x)$ satisfy the following properties.
\begin{enumerate}
\item $\cal G ({\bm \al}, \vec x)$ has a $\star$ atom and $p$ molecules: $\cal M_1$, $\cal M_2, \cdots , \cal M_p$, where for each $1\le i\le p$, 
$$
\cal M_i=\{x_i, \; \al_i^1,\;\al_i^2,\; \al_i^3\cdots \}.
$$
Moreover, there are $\times$-dashed edges between all the atoms within one single molecule $\cal M_i$.

\item $ \cal G ({\bm \al}, \vec x)$ has no colors, dotted edges or $f_5$ type of light weights. 
\item $ \cal G ({\bm \al}, \vec x)$ satisfies the IPC nested property. 
\item If $\cal M_i$ is a free molecule, i.e., $\cal M_i\notin \cal Pol_1\cup \cal Pol_2$, then we have (recall \eqref{degree})
\be\label{deg_mi}
\deg (\cal M_i)\in 2\N, \quad \deg (\cal M_i) \ge 4.
\ee

 \item We have
 \be\label{Calphak}
 C _{{\bm \al}, \vec x} =
  \OO\left(   \left(W^{-d}\right)^{\#\text { of  $\al$ atoms }}\right){\bf 1}\left(\max_{i,j} |\al_i^j-x_i| \le (\log N)^{\OO(1)} W \right) .
\ee

\item\label{tianqi} If  $\cal M_i$ is a simple free molecule (which is defined right below), then the charges of the solid edges connected with $\cal M_i$ must be 
\be\label{3113}
  \text{3 positive + 1 negative, or 1 positive + 3 negative,}
\ee
and we must have 
\be\label{3114}
\deg(\al_i^j)\in \{0,\; 2\}, \quad \deg (x_i)\in  \{0,\; 2, \; 4\}.
\ee 
  \end{enumerate}
\end{definition}

By Lemma \ref{lemEk}, one can see that we obtain a standard forest by removing the colors. 

\begin{lemma}\label{panglei}
The forest $\cal F_0$ in \eqref{yikezz44F}, which is derived from \eqref{jhyaz}, is a standard forest in the sense of Definition \ref {def: SF}. 
\end{lemma}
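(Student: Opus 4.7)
The plan is to go through the six conditions of Definition \ref{def: SF} one by one and verify them for $\cal F_0$ using the structural properties of $\cal G^{in,\kappa_i}_{\gamma,i}$ that were just established in Lemma \ref{lemEk}, together with the properties (i)--(vii) of $\cal G_\gamma$ listed below \eqref{yikezz33}. The molecules for $\cal F_0$ are defined in the natural way: set $\cal M_i = \{x_i, \al_i^1, \al_i^2,\cdots\}$, which is consistent with the convention of Lemma \ref{Lumm22} that new atoms created by the expansions at atom $x_i$ are absorbed into the molecule of $x_i$. The coefficient bound (5) is then immediate from part (viii) of Lemma \ref{lemEk} after taking the product over $i=1,\ldots,p$ (and using $\OO(W) \le (\log N)^{\OO(1)}W$). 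Condition (1) for $\cal M_i$ itself is the content of part (v) of Lemma \ref{lemEk}, which provides $\times$-dashed lines between $x_i$ and each $\al_i^j$ and between any two $\al_i^j,\al_i^{j'}$.

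For condition (2), the removal of colors is exactly how Step 2 is set up: each $Q_{x_i}\cal G_{\gamma,i}$ was rewritten in \eqref{jhyaz} as a colorless linear combination. Dotted edges and $f_5$ light weights are absent by part (ii) of Lemma \ref{lemEk} (the $H_{xx}$ factors having been absorbed into the deterministic constants $\E_x H_{xx}^{n'}$). For condition (3), the original graph $\cal G_\gamma$ satisfies the IPC nested property by property (iv) of Step 1 combined with Lemma \ref{Lumm}; Lemma \ref{Lumm22} then guarantees that it is preserved under the resolvent expansions \eqref{Gp3}--\eqref{Gp6} used in the proof of Lemma \ref{lemEk} since the new atoms are kept inside the same molecule as $x_i$. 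Moreover, the contractions performed in \eqref{sjkyrtr} only merge new atoms $\gamma_j,\beta_j,\beta_j'$ lying in a single molecule, so they too preserve the IPC nested structure, which is an inter-molecule property.

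Condition (4) is where the information from Step 1 is needed: property (v) of Step 1 gives $\deg(x_i) \in 2\N$ and $\deg(x_i) \ge 4$ for every free molecule $\cal M_i$. Part (iii) of Lemma \ref{lemEk} shows that each solid edge $\mathrm{Edge}(x_i,y_j)$ emanating from $x_i$ in $\cal G_\gamma$ is replaced by an edge $\mathrm{Edge}(\al_i^{t_j},y_j)$ emanating from $\cal M_i$, so external edges are preserved one-to-one and $\deg(\cal M_i)$ inherits \eqref{deg_mi}. For condition (6), assume $\cal M_i$ is simple free, so $\deg(\cal M_i)=4$, hence $\deg(x_i)=4$ in $\cal G_\gamma$. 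The charge pattern \eqref{3113} then comes from property (vi) of Step 1, and is unchanged by the expansion because part (iii) of Lemma \ref{lemEk} preserves the charges of solid edges. Part (ii) of Lemma \ref{lemEk} gives $\deg(x_i)=0$ in $\cal F_0$ (since no solid edge is attached to $x_i$ anymore), hence $\deg(x_i)\in\{0,2,4\}$ trivially. Finally, the bound $\deg(\al_i^j)\in\{0,2\}$ follows from part (vii) of Lemma \ref{lemEk}: if $\deg(\al_i^j)\ne 2$ then there is an $f_6$ light weight at $\al_i^j$, but a simple free molecule contains no light weights by definition, so the only remaining possibility is $\deg(\al_i^j)=2$ (together with the trivially allowed value $0$).

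The only mildly delicate point, and the one I would phrase most carefully, is the verification that the inter-molecule structure in condition (4) (and the charge and degree constraints in condition (6)) transfer correctly from $\cal G_\gamma$ to $\cal F_0$: the expansions of Lemma \ref{lemEk} \emph{replace} rather than \emph{create} external edges, so the count of off-diagonal edges leaving each molecule, their charges, and the parity of $\deg(\cal M_i)$ are all conserved. Once this is pointed out, everything else is bookkeeping, and the lemma follows.
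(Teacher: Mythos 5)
Most of your verification runs along the same lines as the paper's own (brief) proof: conditions (i)--(v) of Definition \ref{def: SF} are read off from Lemma \ref{lemEk} together with the Step-1 properties, and your treatment of \eqref{3113} and of the $\al$-atom degrees via part (vii) of Lemma \ref{lemEk} is exactly the paper's. However, the step ``Part (ii) of Lemma \ref{lemEk} gives $\deg(x_i)=0$ in $\cal F_0$ (since no solid edge is attached to $x_i$ anymore)'' is false, for two reasons. First, \eqref{jhyaz} is an expansion of $Q_{x_i}\cal G^{in}_{\gamma,i}=(1-P_{x_i})\cal G^{in}_{\gamma,i}$, so the sum over $\kappa$ contains the un-expanded identity term $\cal G^{in}_{\gamma,i}$ itself, whose graphs have no $\al$ atoms and retain all the solid edges attached to $x_i$; this is precisely the source of the $\deg(x_i)=4$ case in \eqref{3114}. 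Second, Lemma \ref{lemEk} is applied only to $\cal G^{in}_{\gamma,i}$, i.e.\ to the edges of $\cal G_{\gamma,i}$ attached to $x_i$; the solid edges attached to $x_i$ that come from the \emph{other} factors $\cal G_{\gamma,j}$, $j\ne i$ --- which must exist for every free molecule by \eqref{gaszj} --- are untouched by this expansion and remain attached to $x_i$ in $\cal F_0$. So $\deg(x_i)$ is typically $2$ or $4$ rather than $0$, and the argument you gave does not actually establish $\deg(x_i)\in\{0,2,4\}$ (in particular it says nothing that would rule out odd values in the cases your premise misses). This is the only genuinely nontrivial point of the lemma, and it is where the paper's proof spends its effort: it distinguishes the case of no $\al$ atoms (then $\deg(x_i)=\deg(\cal M_i)=4$) from the case with $\al$ atoms, where $\deg(\al_i^j)=2$ and $\deg(x_i)\in\{0,2\}$, the value $2$ arising from the product of a term of $P_{x_i}\cal G^{in}_{\gamma,i}$ with a term of some other $\cal G_{\gamma,j}$ visiting $x_i$.

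The gap is local and repairable with ingredients you already have: in a simple free molecule there are no internal off-diagonal solid edges and no light weights, so by part (vii) of Lemma \ref{lemEk} every $\al$ atom present has degree exactly $2$, and therefore $\deg(x_i)=\deg(\cal M_i)-\sum_j\deg(\al_i^j)=4-2m\in\{0,2,4\}$, where $m$ is the number of $\al$ atoms. Either this counting identity or the paper's explicit case analysis closes the argument; but as written, your justification of \eqref{3114} rests on a claim that contradicts both the structure of \eqref{jhyaz} and the constraint \eqref{gaszj}, and also sits uneasily with your own charge-pattern argument, which tacitly uses that the four edges of the molecule are in bijection with the four edges attached to $x_i$ in $\cal G_\gamma$ (two of which may still be attached to $x_i$ after the expansion).
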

\begin{proof} One can easily check that the conditions (i)-(v) in Definition \ref{def: SF} hold for $\mathcal F_0$ using Lemma \ref{lemEk}. 
From Lemma \ref{lemEk} and the condition (vi) below \eqref{yikezz33}, we know that \eqref{3113}  holds for each molecule in $\cal F_0$ whose degree is equal to 4. Hence the simple free molecules in $\cal F_0$  satisfy \eqref{3113}.  For \eqref{3114}, if $\cal M_i$ is a simple free molecule with no $\al$ atoms, then we have $\deg (x_i)=4$ by definition. On the other hand, if there are some $\al$ atoms in this $\cal M_i$, then by the conditions (ii) and (vii) in Lemma \ref{lemEk}, we must have $\deg (\al_i^j)=2$ and $\deg(x_i)\in \{0,2\}$. Here the $\deg(x_i)=0$ case comes from the $P_{x_i} \cal G^{in}_{ \gamma ,i}$ part as in Lemma \ref{lemEk}, and the $\deg(x_i)=2$ case comes from the product of a term from $P_{x_i} \cal G^{in}_{ \gamma ,i}$ with a term from other $\cal G_{\gamma,j}$ with $j\ne i$. In sum, the simple free molecules in $\cal F_0$  satisfy \eqref{3114}. Therefore the forest $\cal F_0$ in \eqref{yikezz44F}, which is derived from \eqref{jhyaz}, is a standard forest. 
\end{proof}


%

We have the following high probability bound on the standard forests, where the simple free molecules play an important role. 

\begin{lemma}\label{kaofu22}
Suppose \eqref{GM1.5} and \eqref{GM1} hold. Let $\cal F$ be a standard forest of the form \eqref{fagao}. Assume \eqref{num_pol} holds and there are $t_s$ ($0\le t_s \le t$) simple free molecules. Then
  \be\label{asdl;po;235}
 |\cal F| \prec \OO_\tau \left( \Phi^{t_2 + 2t  - t_s }\Gamma^{t_2 +2t}\right) \ .
\ee 
\end{lemma}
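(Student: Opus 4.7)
The plan is to bound $|\cal F|$ by reducing each graph in $\cal F$ to a sum of $\Psi$-graphs, applying the IPC nested structure for a Cauchy--Schwarz-based summation over the root atoms of free molecules, and then carefully counting the remaining off-diagonal edges and light weights. First, I would bound every off-diagonal $G$-entry in $\cal G(\bm\al, \vec x)$ by its $\Psi$-variable bound from \eqref{chsz2} and \eqref{chsz2.5}, producing a bounded (in $p,M$) number of $\Psi$-graphs; each such $\Psi$-graph inherits the IPC nested property of $\cal G$ by the path-replacement argument behind Lemma \ref{lem F1} and Lemma \ref{Lumm22}. Regular weights of types $f_1$--$f_4$ contribute only $\OO_\prec(1)$, whereas each light weight (of type $f_5$ or $f_6$) and each internal off-diagonal edge contributes $\OO_\prec(\Phi)$.

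Next, I would merge the dashed-line-connected non-free molecules into $\Psi$-molecules, obtaining $r$ merged $\Psi$-free molecules from $\cal Pol_2$ (with $2r\le t_2$, since each merged cluster has size at least $2$) and $\Psi$-non-free molecules from $\cal Pol_1$; the total number of $\Psi$-free molecules is then $t+r$. Lemma \ref{zuomeng} yields an ordering $\pi\in S_{t+r}$ with the ordered nested property, and summing the root atoms in reverse order via $\sum_{\tilde x}\Psi_{\tilde x y_1}\Psi_{\tilde x y_2}\prec \Gamma^2$ from \eqref{chsz4} extracts a factor $\Gamma^{2(t+r)}$, at the cost of using up two off-diagonal edges attached to each $\Psi$-free molecule.

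The crux is the edge-counting for the remaining off-diagonal edges and light weights. By condition (iv) of Definition \ref{def: SF}, each free molecule has degree at least $4$, so after consuming two edges in the nested summation at least two attached edges remain, giving a factor $\Phi^2$ per free molecule. A non-simple free molecule in addition must exhibit at least one extra $\Phi$-factor, either from $\deg\ge 6$, or from an internal off-diagonal solid edge, internal light weight, or internal dashed edge (which eliminates a summation index and therefore provides a compensating $\Phi$-factor when compared with the simple case); this yields $\Phi$ per non-simple free molecule beyond the simple count, contributing $\Phi^{2(t-t_s)+t_s}=\Phi^{2t-t_s}$ in total across free molecules. On the $\cal Pol_2$ side, each of the $t_2-r$ mergings within a cluster requires at least one supporting off-diagonal solid edge, giving $\Phi^{t_2-r}$ from the Pol2 clusters. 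Combining with Step 3 yields $\Gamma^{2(t+r)}\Phi^{2t-t_s+t_2-r}$, and using the trivial inequality $(\Gamma/\Phi)^{t_2-2r}\ge 1$ (valid since $\Gamma\ge 1$ and $\Phi\le N^{-\delta}$) to convert the sharper cluster bound into the per-molecule bound, this is dominated by $\Phi^{t_2+2t-t_s}\Gamma^{t_2+2t}$ as claimed.

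The main obstacle will be the bookkeeping in the edge-count step: I need to verify that the $2(t+r)$ edges used in Step 3 are indeed distinct from those counted toward the $\Phi$ factors, and to check that each non-simple free molecule really does supply an extra $\Phi$ factor in each of the possible ways it can fail simplicity (internal dashed lines, internal off-diagonal edges, internal light weights, or increased degree). A secondary technical point is that the coefficients $C_{\bm\al,\vec x}$ are supported on $\max_{i,j}|\al_i^j-x_i|\le (\log N)^{\OO(1)}W$ by \eqref{Calphak}, so the sums over the $\al$-atoms inside each molecule are effectively finite (introducing only $\OO_\tau(1)$-type factors absorbed into the $\prec$ notation), which is what allows the molecular-level analysis to control the atomic-level sums.
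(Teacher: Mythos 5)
Your overall strategy coincides with the paper's: pass to $\Psi$-graphs (keeping light weights as $\Phi$-factors), merge the dashed-connected molecules of $\cal Pol_2$ into $r$ new free $\Psi$-molecules with $2r\le t_2$, use the ordered nested property to sum the $t+r$ free $\Psi$-molecules and collect $\Gamma^{2(t+r)}$, and then count leftover off-diagonal edges. The gap is in the counting, in two places. First, your $\cal Pol_2$ accounting is short by a factor of two: you claim only $\Phi^{t_2-r}$, whereas the IPC structure yields $2(t_2-r)$ leftover inter-molecular edges: every molecule of $\cal Pol_2$ emits two edge-disjoint paths, so $\deg(\cal Pol_2)\ge 2t_2$, and the terminal edges of these paths contribute a further $2t_2$ to $\deg\left(\{\star\}\cup\cal Pol_1\right)$, of which only $2r$ edges are consumed by the nested summation. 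With only $t_2-r$ the lemma does not follow: your bound $\Gamma^{2(t+r)}\Phi^{2t-t_s+t_2-r}$ exceeds the target $\Gamma^{2t+t_2}\Phi^{2t+t_2-t_s}$ by the factor $\Gamma^{2r-t_2}\Phi^{-r}$, which for $2r=t_2\ge 2$ (and, say, $\Gamma=1$) equals $\Phi^{-r}\gg 1$; the hypotheses \eqref{GM1.5} give no inequality of the form $\Gamma\ge \Phi^{-1}$ to compensate. Moreover the proposed fix, multiplying by $(\Gamma/\Phi)^{t_2-2r}\ge 1$, goes in the wrong direction: enlarging an upper bound cannot establish domination by a smaller quantity. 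With the full count $2(t_2-r)$ the argument closes, because then increasing $r$ by one multiplies the bound by $\Gamma^2\Phi^{-2}\ge 1$, and the worst case $2r=t_2$ gives exactly $\Gamma^{2t+t_2}\Phi^{t_2+2t-t_s}$.

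Second, your justification of the free-molecule count is internally inconsistent and rests on a false per-molecule claim. ``$\Phi^2$ per free molecule'' plus ``an extra $\Phi$ per non-simple free molecule'' would give $\Phi^{3t-t_s}$, not the $\Phi^{2t-t_s}$ you then use; and the assertion that each free molecule retains two attached off-diagonal edges after the nested summation is wrong in general, since a leftover edge may join two free molecules (and so be attributed twice), and the edges a later molecule uses in the nested order may themselves be attached to an earlier free molecule (for instance, two degree-$4$ free molecules joined by two parallel edges, each also having two edges to $\star$: only two edges remain after the nested summation, not four). This is precisely the distinctness problem you flag as the main obstacle, and per-molecule bookkeeping cannot resolve it. The correct guaranteed count is global, as in \eqref{above_types}--\eqref{above_types2}: the number of inter-molecular edges is at least $2t_2+3t+\frac12\sum_{s\,\text{free}}(\deg(\cal M_s)-4)$; subtracting the $2(t+r)$ nesting edges and adding the internal-structure count $a$ (internal dashed lines, internal off-diagonal edges, internal light weights), with $\frac12\sum_{s\,\text{free}}(\deg(\cal M_s)-4)+a\ge t-t_s$ being where simplicity enters, yields $\Gamma^{2(t+r)}\Phi^{2(t_2-r)+2t-t_s}$ and hence \eqref{asdl;po;235}. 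Your figure $2t-t_s$ for the free part is the right number, but it needs this global accounting rather than the per-molecule one.
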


Note that compared with \eqref{yikezz44}, there is no $\E$ acting on $\cal F$, and the above bound holds even without the condition \eqref{3113}. On the other hand, it has $t_s$ fewer $\Phi$ factors than the right-hand side of \eqref{yikezz44}. In order to get these factors, the condition \eqref{3113} becomes essential. 

\begin{proof} [Proof of Lemma \ref{kaofu22}]

We first count the number of off-diagonal solid edges between molecules. 
By the definition of polymers, one can see that in the following cases the solid edges must connect non-equivalent atoms under $\cal E$: 
 \begin{enumerate}
\item one ending atom is in free molecule;
\item one ending atom is in $\cal Pol_2$ and the other one is not. 
\end{enumerate}
In the following graph, we draw the solid edges that belong to these two cases. 
 \be\label{asdfrgha}
  \parbox[c]{0.65\linewidth}{\includegraphics[width=\linewidth]{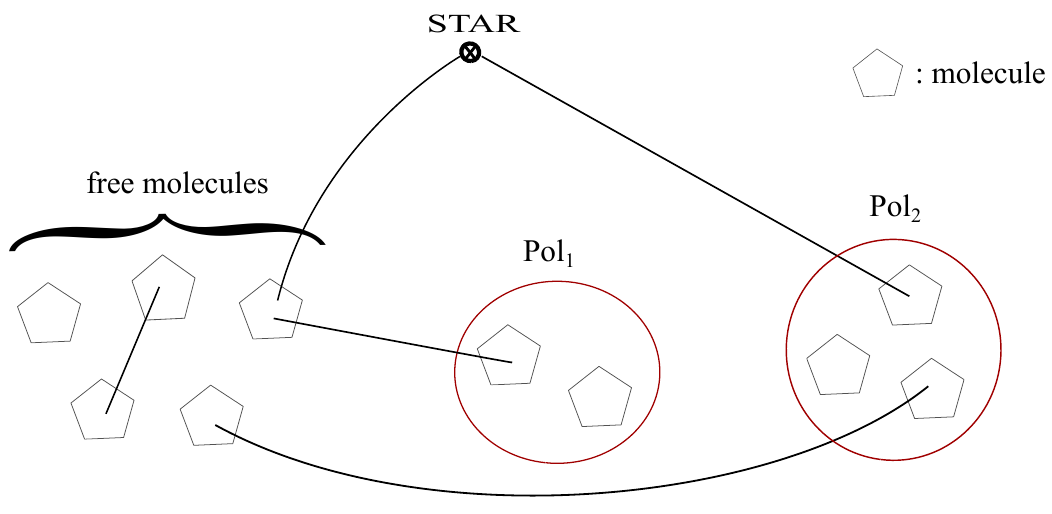}}
\ee
Then it is easy to calculate that the total number of solid edges of the above types (i)-(ii) is (cf. \eqref{degree})
\begin{equation}\label{above_types}
\frac12\left({\rm deg} (\cal Pol_2)+\sum_{s \text{ free}}{\rm deg}(\cal M_s)+{\rm deg}\Big(\{\star\}\cup \cal Pol_1\Big)\right).
\end{equation}
Because of the IPC nested structure, we know ${\rm deg} (\cal Pol_2)\ge 2t_2$, since there are $2t_2$ separate paths starting from molecules in $\cal Pol_2$. Similarly, we have ${\rm deg}\left(\{\star\}\cup \cal Pol_1\right)\ge 2(t+t_2)$, since there are $2(t+t_2)$ separate paths from free molecules and $\cal Pol_2$ to the $\star$ atom and $\cal Pol_1$. Thus we have that 
\be
 \eqref{above_types}  \ge 2t_2+3t+ \frac{1}{2}\sum_{s \text{ free}}\left({\rm deg}(\cal M_s)-4\right).\label{above_types2}
\ee
Furthermore by (\ref{Calphak}) and \eqref{chsz2}, we can bound each solid edge in \eqref{asdfrgha} by $\OO_\prec(\Psi_{x_i x_j})$ if it connects $\cal M_i$ and $\cal M_j$, or $\OO_\prec(\Psi_{x_i\star})$ if it connects $\cal M_i$ and the $\star$ atom.

Now for the graph $\cal G({\bm\al},\vec x)$ in \eqref{fagao} we can define its $\Psi$-graphs $\Psi(\cal G({\bm\al},\vec x), \cal E, {\bm\xi})$ as in Definition \ref{Psi-graph}, except that we now {\it keep all the {\bf  light weights} in $\cal G({\bm\al},\vec x)$ such that each of them represents a $\Phi$ factor in the $\Psi$-graphs.} Without loss of generality, we assume that the free molecules in $\cal E\cdot  \cal G ({\bm \al}, \vec x)$ are $\cal M_1,\cdots, \cal M_t$. Then these molecules are also free $\Psi$-molecules in the $\Psi$-graphs and we denote them by ${\cal M}^{\Psi}_{s}=\cal M_s$, $1\le s \le t$. We denote the new free $\Psi$-molecules by ${\cal M}^{\Psi}_{t+1}, \cdots, {\cal M}^{\Psi}_{t+r_2}$, and the non-free $\Psi$-molecules by ${\cal M}^{\Psi}_{t+r_2 +1}, \cdots, {\cal M}^{\Psi}_{t+r_2+r_1}$, where $0\le 2r_2\le t_2 $ and $0\le r_1 \le t_1$. 

As in Lemma \ref{lem F1}, we have 
\be \nonumber 
\cal E\cdot  \cal G ({\bm \al}, \vec x) \prec \OO_\tau\Big(\sum_{\bm \xi} \Psi(\cal G({\bm\al},\vec x), \cal E, {\bm\xi})\Big) ,
\ee
where each $\Psi(\cal G, \cal E, {\bm\xi})$ satisfies the IPC nested property with the $\Psi$-molecules ${\cal M}^{\Psi}_1, \cdots , {\cal M}^{\Psi}_{t+r_2+r_1}$. Now to conclude \eqref{asdl;po;235}, it suffices to show that for any ${\bm \xi}$,
\be \label{bound_sf}
\sum_{\vec x}^{(\star)}   \sum_{{\bm \al}} C _{{\bm \al}, \vec x} \cdot \Psi(\cal G({\bm\al},\vec x), \cal E, {\bm\xi}) \prec \OO_\tau \left( \Phi^{t_2 + 2t  - t_s }\Gamma^{t_2 +2t}\right) .
\ee 
By Lemma \ref{zuomeng}, $\Psi(\cal G, \cal E, {\bm\xi})$ satisfies the ordered nested property, hence there exists $\pi\in S_{t+r_2}$ such that \eqref{suz} holds for the free $\Psi$-molecules. Without loss of generality, we assume that $\pi=(1,2,\cdots, t+r_2 )\in S_{t+r_2}$. Then we can repeat the arguments between \eqref{yikezz2EE2} and \eqref{phir0}. Using \eqref{above_types2} and \eqref{ljsadfl} for $1\le s \le t+r_2$, we can get that 
$$\Psi(\cal G, \mathcal E, {\bm\xi})\prec \OO_{\wt\tau}\Big(\Phi^{2t_2+3t+ \frac{1}{2}\sum_{s \text{ free}}\left({\rm deg}(\cal M_s)-4\right)-2t-2r_2} \prod_{1\le s\le t+r_2} \Psi_{\wt x_{s} y_s}\Psi_{\wt x_{s} \wt y_s}\Big),$$
where for any $1\le s \le t+r_2$, $\wt x_s \in {\cal M}^{\Psi}_s$ and 
$$ y_s, \wt y_s\in   \{\star\} \cup \left(\cup_{s'<s} \{\wt x_{s'}\}\right) \cup \left(\cup_{t+r_2 <s'\le t+r_2 + r_1 } {\cal M}^\Psi_{ s' }\right) \ .$$
Plugging it into the left-hand side of \eqref{bound_sf} and using the same argument as in \eqref{phir0}, we obtain that
\begin{align*}
 \sum_{\vec x}^{(\star)}   \sum_{{\bm \al}} C _{{\bm \al}, \vec x} \cdot \Psi(\cal G({\bm\al},\vec x), \cal E, {\bm\xi}) &\prec \OO_{\wt\tau} \left(\Phi^{2t_2-2r_2+t+ \frac{1}{2}\sum_{s \text{ free}}\left({\rm deg}(\cal M_s)-4\right)}\cdot\sum_{\wt x_1,\cdots, \wt x_{t+r_2}}^{(\star)}  \prod_{1\le s\le t+r_2} \Psi_{\wt x_{s} y_s}\Psi_{\wt x_{s} \wt y_s} \right) \\
 & \prec \OO_{\tau} \left(\Phi^{2t_2-2r_2+t+ \frac{1}{2}\sum_{s \text{ free}}\left({\rm deg}(\cal M_s)-4\right)} \Gamma^{2(t+r_2)}\right) .
\end{align*}
Furthermore, by considering the internal structure of free molecules, it is easy to improve this bound to 
\be\label{internal_Psi}
\sum_{\vec x}^{(\star)}   \sum_{{\bm \al}} C _{{\bm \al}, \vec x} \cdot \Psi(\cal G({\bm\al},\vec x), \cal E, {\bm\xi}) \prec  \OO_{\tau} \left(\Phi^{2t_2-2r_2+t+ \frac{1}{2}\sum_{s \text{ free}}\left({\rm deg}(\cal M_s)-4\right)} \Gamma^{2(t+r_2)}\Phi^{a}\right) ,
\ee
where $a$ is the number of free molecules that satisfy at least one of the following conditions:
\begin{enumerate}
\item   there exists one dashed line inside the free molecule;
\item   there exists one off-diagonal solid edge inside the free molecule;
\item   there exists one light weight inside the free molecule.
\end{enumerate}
Note that in the first case, the sum of $C_{ {\bm \al}, \vec x}$ over the atoms in the free molecule gives an extra factor $W^{-d}$ due to loss of free indices. 

With \eqref{deg_mi}, it is easy to see that 
$$\frac{1}{2}\sum_{s \text{ free}}\left({\rm deg}(\cal M_s)-4\right) + a \ge t-t_s,$$
the number of {\it non-simple} free molecules.  Moreover, since $\Phi \ll 1$ and $\Gamma\ge 1$ by \eqref{GM1.5}, the right-hand side of \eqref{internal_Psi} increases as $r_2$ increases. Then with $2r_2 \le t_2$, we can further bound \eqref{internal_Psi} by
$$\sum_{\vec x}^{(\star)}   \sum_{{\bm \al}} C _{{\bm \al}, \vec x} \cdot \Psi(\cal G({\bm\al},\vec x), \cal E, {\bm\xi}) \prec  \OO_{\tau} \left(\Phi^{t_2+t} \Gamma^{2t+t_2}\Phi^{t-t_s}\right) = \OO_{\tau} \left(\Phi^{t_2+2t-t_s} \Gamma^{t_2+2t}\right).$$
This proves \eqref{bound_sf}.
\end{proof}

Note that we always have $t_2+2t\le 2p$ under \eqref{num_pol}. Then as in \eqref{heur}, one can see that \eqref{yikezz33} will follow from Lemma \ref{kaofu22} if we can write the left-hand side of \eqref{yikezz33} into a linear combination of $\OO(1)$ many standard forests, where each of them has no simple free molecule. 
By Lemma \ref{panglei}, $\cal F_0$ in \eqref{yikezz44F} is a colorless standard forest. Now to prove \eqref{yikezz44}, it suffices to prove the following lemma. 

 \begin{lemma}\label{kaofu}
Suppose the assumptions in Lemma \ref{Qpart} hold. Let $\cal F$ be a colorless standard forest of the form \eqref{fagao}. Then for any fixed $D>0$, we have 
   \be\label{likjuhh}
   \E \cal F=\sum_\kappa \E \cal F_\kappa+\OO(N^{-D}),
\ee 
where $\cal F_\kappa$ are colorless standard forests containing {\it zero} simple free molecules. Moreover, the total number of $\mathcal F_\kappa$ is of order $\OO(1)$.
\end{lemma}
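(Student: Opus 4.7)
The plan is to prove Lemma \ref{kaofu} by induction on the number $t_s$ of simple free molecules in $\mathcal F$, executing Step 3 and Step 4 of the outline in Section \ref{sec_colors}, followed by a local re-application of Step 2 (Lemma \ref{lemEk}) to remove any freshly created colors. The base case $t_s = 0$ is trivial, so the task is to take a standard forest containing at least one simple free molecule $\mathcal M_i$ and rewrite $\E \mathcal F$ as a linear combination of $O(1)$ many $\E \mathcal F'$, where each $\mathcal F'$ is a colorless standard forest with strictly fewer simple free molecules, modulo an $O(N^{-D})$ error from truncating the resolvent expansions to bounded order.

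First I would create a simple charged atom inside $\mathcal M_i$ (this is Step 3). By the charge constraint \eqref{3113} and the degree pattern \eqref{3114}, $\mathcal M_i$ carries exactly four incident solid edges whose charges split as $3{+}{+}1{-}$ (or the conjugate). A pigeonhole argument then produces an atom $\alpha \in \mathcal M_i$ incident to two solid edges $G_{\alpha\beta_1}, G_{\alpha\beta_2}$ of equal charge. If $\deg(\alpha) > 2$, I would use the resolvent expansion \eqref{Gp3} to ``detach'' the extra incident edges, thereby producing a fresh degree-$2$ atom $\alpha' \in \mathcal M_i$ carrying exactly $G_{\alpha'\beta_1}G_{\alpha'\beta_2}$ and no other edges or weights. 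Placing $\alpha'$ inside $\mathcal M_i$ keeps it within an $O(W)$-neighborhood of $x_i$, so the coefficient bound \eqref{Calphak} is preserved, and Lemma \ref{Lumm22} ensures that the IPC nested property survives.

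Next (Step 4) I would apply the self-consistent equation \eqref{charged_intro1} at $\alpha'$, namely
\[
G_{\alpha'\beta_1}G_{\alpha'\beta_2} \;=\; \sum_{\gamma}\bigl[(1-m^2 S)^{-1}\bigr]_{\alpha'\gamma}\bigl[\,Q_\gamma(G_{\gamma\beta_1}G_{\gamma\beta_2}) + \mathcal E_\gamma\,\bigr].
\]
By the off-diagonal decay \eqref{tianYz} the kernel effectively localizes $\gamma$ to the $(\log N)^2 W$-neighborhood of $\alpha'$, so I would absorb $\gamma$ into $\mathcal M_i$ without violating the distance constraint in \eqref{Calphak}; by Lemma \ref{Lumm22} the IPC nested property is again preserved. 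The error term $\mathcal E_\gamma$ carries an additional $\Phi$-factor (an extra off-diagonal edge or an internal light weight inside $\mathcal M_i$, or else raising $\deg(\mathcal M_i)$ above $4$), which already breaks the ``simple'' property of $\mathcal M_i$ and yields a standard forest with $t_s - 1$ simple free molecules. The $Q_\gamma$ main term instead carries a fresh color, to which I re-run the color-removal procedure of Lemma \ref{lemEk} with $\gamma$ now playing the role of the vertex $x$ there; by properties (vi)--(vii) of that lemma the colorless local structure produced at $\gamma$ either attaches an internal light weight inside $\mathcal M_i$, creates a new internal off-diagonal solid edge, or raises $\deg(\mathcal M_i)$, so in every case the molecule once again ceases to be simple.

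The main obstacle will be the bookkeeping required to verify that the full output still satisfies \emph{all six} clauses of Definition \ref{def: SF} for the \emph{remaining} molecules that were not touched, and in particular that the charge-counting rules \eqref{3113} hold at any untouched simple free molecules after the substitution. Because distinct simple free molecules may share the endpoints $\beta_j$ that sit in neighboring molecules, I would process them strictly one at a time and re-verify Definition \ref{def: SF} after each substitution rather than attempting a simultaneous treatment. The termination and $O(1)$ combinatorial count follow because each step reduces $t_s$ by $1$, each resolvent expansion and each application of Lemma \ref{lemEk} produces only $O(1)$ daughter graphs, and a bounded (in $D$ and $p$) truncation of the geometric series in \eqref{Gp5} and of the expansion depth in Lemma \ref{lemEk} pushes all discarded remainders into the $O(N^{-D})$ error of \eqref{likjuhh}.
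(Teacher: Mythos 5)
Your overall architecture (induction on the number $t_s$ of simple free molecules, Step 3 to create a degree-$2$ same-charge atom, Step 4 to apply the self-consistent equation \eqref{charged_intro1}) matches the paper, but the key mechanism in your Step 4 is wrong, and this is where the induction fails to close. After substituting $G_{\alpha'\beta_1}G_{\alpha'\beta_2}=\sum_\gamma[(1-M^2S)^{-1}]_{\alpha'\gamma}\bigl[Q_\gamma(G_{\gamma\beta_1}G_{\gamma\beta_2})+\mathcal E_\gamma\bigr]$, you claim that ``re-running'' Lemma \ref{lemEk} on the $Q_\gamma$ main term yields only graphs in which $\mathcal M_i$ is no longer simple. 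That is false: writing $Q_\gamma=1-P_\gamma$, the identity part reproduces verbatim a degree-$2$ same-charge atom $\gamma$ with no internal light weight or off-diagonal edge (the molecule is still simple free, indeed you have just rewritten the original expression via its own self-consistent equation), and even the leading output of Lemma \ref{lemEk} applied to $P_\gamma$ — the pairing term $\sim\sum_w s_{\gamma w}(\mathcal Y_\gamma)^{-2}G^{(\gamma)}_{w\beta_1}G^{(\gamma)}_{w\beta_2}$ — again produces a degree-$2$ atom $w$ carrying two same-charge edges, only regular $f_3/f_4$ weights on $\gamma$, and no extra $\Phi$ factor, so $\mathcal M_i$ remains a simple free molecule there too. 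The gain from the $Q_\gamma$ term is not local and cannot be extracted by removing the color on the spot: the cancellation between the two halves of $Q_\gamma=1-P_\gamma$ is exactly the fluctuation you need, and it is only realized under the full expectation. The paper's Step 4B (Lemma \ref{yuiz3i87}) keeps the colored edges, expands every \emph{other} edge and weight with respect to the colored atom $\alpha_\star$ so that the complement becomes independent of it, and then uses $\E_{\alpha_\star}\bigl[Q_{\alpha_\star}(\cdot)\cdot(\text{independent part})\bigr]=0$; the surviving terms are precisely those where the rest of the graph re-visits $\alpha_\star$, which raises $\deg(\mathcal M_i)$ and destroys simplicity, and only afterwards is the color removed via \eqref{shangwmnws}. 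Without this use of $\E_x Q_x=0$ your procedure returns graphs with the same number of simple free molecules, so the induction does not terminate.

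A secondary problem is your Step 3: applying \eqref{Gp3} to ``detach'' an edge from a degree-$4$ atom introduces a dotted edge $H_{x_i\alpha}$, which is forbidden in a colorless standard forest (condition (ii) of Definition \ref{def: SF}) and still depends on the $x_i$-th row of $H$, so it cannot simply be left in place. To eliminate the dotted edges you must take the partial expectation $\E_{x_i}$ of the whole local structure at $x_i$, which is only legitimate after first expanding everything not attached to $x_i$ so that it becomes independent of $x_i$; this is exactly what the paper's Lemma \ref{sjiyyz} does (expansion with \eqref{Gp1}, \eqref{Gp2}, \eqref{Gp6}, then Lemma \ref{lemEk} at $x_{i_0}$), producing the regular type C configuration with the two edge pairs sitting on $\alpha$ atoms. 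Your sketch skips both the independence step and the partial expectation, so as written it does not produce a standard forest at all.
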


For the following proof of Lemma \ref{kaofu}, it suffices to assume that there exists at least one simple free molecule in $\cal F$. Then our proof consists of an induction argument on the number of simple free molecules. More precisely, we will remove the simple free molecules one by one with the following two steps: step 3 in Section \ref{step3}, and step 4 in Section \ref{charged}. 

 \subsection{Proof of Lemma \ref{Qpart}: step 3.}\label{step3}\
 In this step, we turn all the simple free molecules into {\it regular simple free molecules}, in which all the atoms have degree 2 such that we can apply \eqref{charged_intro1}.

\vspace{5pt}

\noindent{\bf Step 3: Regular simple free molecules}. We now pick a simple free molecule, say $\cal M_i$, in the colorless standard forest. Due to \eqref{3114}, there are only 3 possible cases:
  \be\label{fresim}
   \parbox[c]{0.75\linewidth}{\includegraphics[width=\linewidth]{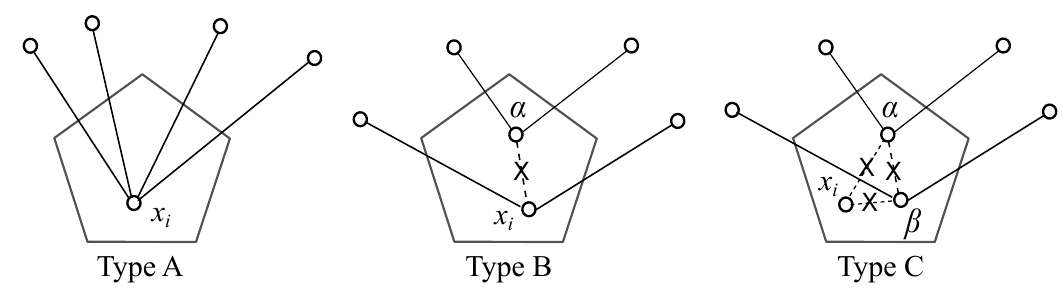}}
\ee
 where we did not draw the weights and the $\al$ atoms with zero degree. We shall call the type C molecules, i.e. the molecules without any solid edge connected with $x_i$ and with only degree 2 atoms, as {\it{regular simple free molecules}}.
 The purpose of this step is to prove the following lemma. 
 
 
 \begin{lemma}\label{sjiyyz} {Suppose} the assumptions in Lemma \ref{Qpart} hold. Let $\cal F$ be a colorless standard forest of the form \eqref{fagao}. For any fixed $D>0$, we have
 $$\E \cal F=\sum_\kappa \E \cal F_\kappa+\OO(N^{-D}),$$
where $\cal F_\kappa$ are colorless standard forests containing only regular (type C) simple free molecules. 
Moreover, the total number of $\mathcal F_\kappa$ is of order $\OO(1)$.
 \end{lemma}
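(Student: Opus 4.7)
\textbf{Proof plan for Lemma \ref{sjiyyz}.} The plan is to proceed by induction on the number of non-regular (type A or type B) simple free molecules in the standard forest $\mathcal F$. Once this number is zero we are done, so it suffices to show that whenever $\mathcal F$ contains at least one type A or type B simple free molecule $\mathcal M_i$, we can expand $\mathbb E \mathcal F$ as a finite linear combination of $\mathbb E \mathcal F_\kappa$ with strictly fewer non-regular simple free molecules, plus an $\OO(N^{-D})$ error, while keeping the standard-forest structure. Iterating this at most $p$ times yields the lemma.

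First I would isolate the sub-expression corresponding to $\mathcal M_i$, writing (analogously to \eqref{aljjayz}) $\mathcal F$ as the product of an ``inner part'' $\mathcal G_i^{in}$ built from the solid edges attached to $x_i$ (the four edges in type A, the two edges in type B) together with any weights on $x_i$, and an ``outer part'' $\mathcal G_i^{out}$ which need not yet be independent of $x_i$. To each solid edge $G_{x_i \beta_j}$ attached to $x_i$ I apply the expansion \eqref{Gp3}, which introduces a new atom $\gamma_j$ connected to $x_i$ by a dotted edge $H_{x_i\gamma_j}$ and to $\beta_j$ by a solid edge $G^{(x_i)}_{\gamma_j\beta_j}$, together with a weight $G_{x_ix_i}$. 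Each newly created $\gamma_j$ lies in the $\OO(W)$-neighborhood of $x_i$ by the variance decay condition \eqref{bandcw1}, so I include it into $\mathcal M_i$ as permitted by Lemma \ref{Lumm22}. I then apply \eqref{Gp5} to each resulting $G_{x_ix_i}$, truncating the series at a constant $m\leq C_D$ using the bound $\mathcal Z_{x_i}\prec \Phi$ from \eqref{Zi} to control the tail by $\OO_\prec(N^{-D})$; this turns the $f_1$ weights into $f_3,f_4$ weights on $x_i$ (which are independent of the $x_i$-row/column) plus extra dotted edges of the form $H_{x_i\alpha}H_{x_i\beta}G^{(x_i)}_{\alpha\beta}$ attached to $x_i$.

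After these expansions every solid edge and weight attached to $x_i$ is either a dotted edge $H_{x_i\cdot}$ or an $f_3/f_4$ weight independent of $x_i$, and all other edges in $\mathcal G_i^{in}$ have $x_i$ in their independent set. I can now copy the argument in the proof of Lemma \ref{lemEk} verbatim: write $\mathbb E \mathcal F = \mathbb E \mathcal G_i^{out}\cdot (\mathbb E_{x_i}\mathcal G_i^{in})$ after expanding the outer edges touching $x_i$ in the same way so that $\mathcal G_i^{out}$ becomes genuinely $x_i$-independent, then split $\mathbb E_{x_i}\mathcal G_i^{in}$ over all dashed-line pairings of the dotted atoms. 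Non-matching configurations vanish because $\mathbb E H_{x_i a}H_{x_i b} = s_{x_i a}\delta_{ab}$, and each surviving pairing collapses a pair $H_{x_i\gamma}H_{x_i\gamma'}$ into $s_{x_i\gamma}\delta_{\gamma\gamma'}$, contributing a factor $W^{-d}$, merging $\gamma$ and $\gamma'$ into a single atom which I rename $\alpha$, and forcing $|\alpha - x_i|=\OO(W)$. What remains in the new $\mathcal M_i$ are: (i) the four (resp. two) off-molecule solid edges $G^{(x_i)}_{\alpha\beta_j}$, now attached to $\alpha$ atoms instead of $x_i$; (ii) $f_3/f_4$ weights on $x_i$; (iii) possibly $f_1$ weights on $\alpha$ atoms and $W^{-d/2}$ light weights, as in Lemma \ref{lemEk}. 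In particular $x_i$ has no incident solid edges, so $\mathcal M_i$ is now of type C.

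Finally I would verify that the resulting expression $\sum_\kappa \mathbb E \mathcal F_\kappa$ consists of standard forests: the IPC nested property is preserved by Lemma \ref{Lumm22} since every expansion replaces an edge between two atoms by a path between the same two atoms within the enlarged molecule; condition \eqref{deg_mi} is unchanged because $\deg(\mathcal M_i)$ is invariant under the redistribution of solid-edge endpoints within $\mathcal M_i$; the coefficient bound \eqref{Calphak} is preserved since each new $\alpha$ contributes $W^{-d}$ from the variance factor $s_{x_i\alpha}$; and the charge condition \eqref{3113} on newly created simple free molecules other than $\mathcal M_i$ is inherited since the expansions do not change charges of solid edges. \textbf{The main obstacle} is to verify condition \eqref{3114} for the (now regular) $\mathcal M_i$ and for any adjacent molecules that might become simple due to the rewiring: one must check that in every surviving pairing $\deg(\alpha)\in\{0,2\}$ for each new $\alpha$, which follows from the fact that the only off-molecule edges left in $\mathcal M_i$ after pairing are the (at most four) original solid edges now carried by the $\alpha$'s, combined with Lemma \ref{lemEk}(vii) which forces any $\alpha$ with $\deg(\alpha)\neq 2$ to carry an $f_6$ light weight, disqualifying it from being simple. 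Ensuring that the entire bookkeeping respects the standard-forest axioms (particularly that every new light weight or internal dashed line pushes a molecule out of the simple category) is the delicate combinatorial content of this step.
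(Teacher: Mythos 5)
Your overall strategy is the same as the paper's: induct on the number of type A/B simple free molecules, make everything not attached to $x_{i}$ independent of $x_{i}$, then take the partial expectation $\E_{x_{i}}$ and pair the dotted edges as in Lemma \ref{lemEk}, absorbing the new atoms into $\cal M_{i}$ and checking the standard-forest axioms. However, there is a concrete gap in how you treat the ``outer'' expansion. Making $\cal G_i^{out}$ genuinely independent of $x_i$ is not a harmless preprocessing step: each application of \eqref{Gp1}, \eqref{Gp2} or \eqref{Gp6} to an edge or weight not attached to $x_i$ produces, besides the $x_i$-independent term, a remainder graph in which two more solid edges attach to $x_i$. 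These remainders are \emph{not} small (they only carry extra $\Phi$ factors, not $N^{-D}$), so they must appear among the forests $\cal F_\kappa$ in the final decomposition; they are exactly the terms in which $\deg(\cal M_i)\ge 6$, hence no longer simple free, and one must verify (as the paper does in statements (a)--(c) of its proof) that they are standard forests, that $\cal M_i$ is no longer simple free in them, and that no previously non-simple molecule becomes simple free --- otherwise the induction count need not decrease. Your proposal collapses all of this into the clause ``after expanding the outer edges touching $x_i$ \ldots so that $\cal G_i^{out}$ becomes genuinely $x_i$-independent'' and then writes $\E\cal F=\E\,\cal G_i^{out}\cdot(\E_{x_i}\cal G_i^{in})$ for the whole forest, which only holds for the fully expanded piece.

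The ordering you chose makes this worse than a bookkeeping omission. You apply \eqref{Gp3} and \eqref{Gp5} to the inner edges and weights \emph{first}, introducing dotted edges $H_{x_i\gamma_j}$ and $\cal Z_{x_i}$-type factors, and only afterwards propose to expand the outer part. The remainder forests generated by that outer expansion would then still contain dotted edges (and $f_5$-type factors), so they violate condition (ii) of Definition \ref{def: SF}, are not standard forests, and the induction hypothesis cannot be applied to them; nor can you simply push $\E_{x_i}$ through them, since their outer parts now depend on the $x_i$-th row and column in a way that is not captured by the dotted edges alone. The paper avoids this by reversing the order: first expand the outer edges and weights with respect to $x_{i_0}$ using \eqref{Gp1}, \eqref{Gp2}, \eqref{Gp6} only (which keeps everything in standard-forest form, as in \eqref{zouasdl}), split off the remainder forests there, and only then apply $\E_{x_{i_0}}$ to the fully expanded graph and invoke Lemma \ref{lemEk}, whose proof performs all the dotted-edge manipulations internally and outputs dotted-edge-free graphs. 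With that reordering (and with the pre-existing $f_{1,2}$ weights on $x_i$ handled by the same mechanism, which your plan only treats for the weights created by \eqref{Gp3}), your argument matches the paper's; as written, the induction step is not closed.
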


 \begin{proof}
{Recall that $\cal F$ in \eqref{fagao} is built with graphs $\cal G ({\bm \al}, \vec x)$.} Suppose for some $i_0$, $\cal M_{i_0}$ is a type $A$ or $B$ simple free molecule in $\mathcal F$. By the definition of simple free molecules, atom $x_{i_0}$ is not equal to any other atoms in graph $\cal G ({\bm \al}, \vec x)$. For a solid edge, weight or light weight that is not attached to the atom $x_{i_0}$, we use \eqref{Gp1}, \eqref{Gp2} and \eqref{Gp6} to write it as a sum of two parts: one part is independent of the atom $x_{i_0}$; {the other part has two solid edges connected with the atom $x_{i_0}$ and may have a new atom, call it a $\beta$ atom}. Corresponding to these two parts, we can write $\cal G$ as a sum of two parts, say $\cal G=\cal G_0^{(1)}+\cal F_1$. Then for the graph $\cal G_0^{(1)}$, we again expand one of its solid edges or (light) weights that is not connected with the atom $x_{i_0}$, and write it as a sum of two parts, say $\cal G_0^{(1)}=\cal G_0^{(2)}+\cal F_2$. Continuing this process until for some $\cal G_0^{(k)}$, $k\in \mathbb N$, $x_{i_0}$ is added into the independent set of all edges, weights and light weights that are not connected with the atom $x_{i_0}$. Then we rename $\cal G_0\equiv \cal G_0^{(k)}$ and write
%
\be\label{zouasdl}
 \cal G ({\bm \al}, \vec x)= \cal G_0({\bm \al}, \vec x)+\sum_\kappa \sum_{{\bm \beta}} C^\kappa({\bm \beta})\cal G_\kappa({\bm \al}, \; {\bm \beta},\; \vec x) ,
 \ee
where
 \be\label{coeffs} C^\kappa({\bm \beta})=\OO\left((W^{-d})^{\# \; \beta \; atoms}\right){\bf 1 }\left(\max_{i,j} |\beta_i^j-x_i|\le (\log N)^{\OO(1)}W  \right),\ee
and $\cal G_\kappa$ denotes all the other $\cal F$ graphs which have two more solid edges visiting $x_{i_0}$.
Here the new $\beta$ atoms can only come from the expansions in \eqref{Gp6}, and the $C^\kappa({\bm \beta})$ comes from the $s$ coefficients. If the expansion happened for a weight on atom $x_{i}$, then the new atoms $\beta_{i}^j $ satisfy $\beta_{i}^j- x_{i}=\OO(W)$. Similarly, if the expansion happened for a weight on atom $\al^{\star}_{i} \in \mathcal M_i$, then the new atoms $\beta_{i}^j $ satisfy $\beta_{i}^j- \al^{\star}_{i} =\OO(W)$, which implies $\beta_{i}^j- x_{i} \le (\log N)^{\OO(1)}W$. We then include these new $\beta^j_{i}$ atoms into the molecule $\cal M_{i}$ of $\cal G_\kappa$.

So far we have explained how to get \eqref{zouasdl}. Now we consider the dashed-line partitions $\cal E_\kappa$ of the atoms in $\mathcal G_\kappa$, and we shall use $\mathcal E_\kappa \succ \mathcal E$ to mean that $\mathcal E_\kappa$ is an extension of the dashed-line partition $\mathcal E$ of the atoms in $\cal F$. 
Then we have
 $$
  \cal E\cdot \cal G ({\bm \al}, \vec x)=   \cal E\cdot \cal G_0({\bm \al}, \vec x)+\sum_\kappa \sum_{{\bm \beta}} {\sum_{\mathcal E_\kappa \succ \mathcal E}} C^\kappa({\bm \beta})  \cal E_\kappa\cdot \cal G_\kappa({\bm \al}, \; {\bm \beta},\; \vec x).
 $$
Then corresponding to the standard forest $\mathcal F$ in \eqref{fagao}, we define the forests
 \be\label{forest_regular}
 \cal F_0:=  \sum_{\vec x}^{(\star)}   \sum_{{\bm \al}}
 C _{{\bm \al}, \vec x}  \cdot \cal E\cdot  \cal G_0 ({\bm \al}, \vec x),\quad 
 \cal F_{\kappa, \mathcal E_\kappa}:=  \sum_{\vec x}^{(\star)}   \sum_{{\bm \al}} \sum_{{\bm \beta}}  C _{{\bm \al}, \vec x} C^\kappa({\bm \beta})\cal E_\kappa\cdot  \cal G_\kappa({\bm \al}, \; {\bm \beta},\; \vec x)  ,
 \ee
where the molecules are chosen as
$$\cal M_i=\{x_i, \; \al_i^1,\;\al_i^2,\; \al_i^3, \cdots, \beta_i^1,\;\beta_i^2,\; \beta_i^3,\cdots \},\quad 1\le i\le p.$$ 
As we assumed above,  $\cal M_{i_0}$ is a simple free molecule {in $\cal F$}. Clearly, $\cal F_0$ is still a standard forest and $\cal M_{i_0}$ is a simple free molecule in $\cal F_0$. On the other hand, we claim that: 
 \begin{itemize}
\item[(a)] $\cal F_{\kappa,\mathcal E_\kappa}$ is a standard forest;
\item[(b)] $\cal M_{i_0}$ is not a simple free molecule in $\cal F_{\kappa,\mathcal E_\kappa}$ anymore;
\item[(c)] the molecules which are not simple free in $\cal F$ are still not simple free in $\cal F_{\kappa,\mathcal E_\kappa}$. 
\end{itemize} 
Now we prove these statements.

\vspace{5pt}
\noindent{\bf Proof of (b):} Recall that we have obtained $\cal G_\kappa({\bm \al}, \; {\bm \beta},\; \vec x)$ by expanding the solid edges, weights and light weights in $ \cal G ({\bm \al}, \vec x)$ with respect to atom $x_{i_0}$ using \eqref{Gp1}, \eqref{Gp2} or \eqref{Gp6}. Then each of these edges, weights or light weights becomes either (1) the same component with $x_{i_0}$ added to the independent set, or (2) two solid edges connected with atom $x_{i_0}$ plus some weights. Therefore, for any $1\le j\le p$,  $\deg(\cal M_{j})$ does not decrease from $\cal F$ to $\cal F_{\kappa,\mathcal E_\kappa}$. Moreover, there must exist some component in $\cal F$ that turns into case (2) in $\cal F_{\kappa,\mathcal E_\kappa}$, which gives $\deg(\cal M_{i_0})\ge 6$. Hence $\cal M_{i_0}$ is not simple free any more in $\cal F_{\kappa,\mathcal E_\kappa}$, and the above statement (b) holds. 
 
\vspace{5pt}

\noindent{\bf Proof of (c):} If $\cal M_{j} $ is not a free molecule in $\cal F$, then $\cal M_{j} $ is still not free in $\cal F_{\kappa,\mathcal E_\kappa}$ since $\mathcal E_\kappa$ is an extension of $\mathcal E$. Now assume that $\cal M_{j}$ is a non-simple free molecule in $\cal F$. Then we have the following four cases, which can be proved easily with the expansions in \eqref{Gp1}, \eqref{Gp2} and \eqref{Gp6}.
 \begin{itemize}
\item If $\deg(\cal M_{j})\ge 6$ in $\cal F$, then $\deg(\cal M_{j})\ge 6$ in $\cal F_{\kappa,\mathcal E_\kappa}$ since $\deg(\cal M_{j})$ does not decrease from $\cal F$ to $\cal F_{\kappa,\mathcal E_\kappa}$.
\item If there is a dashed line inside $ \cal M_{j}$ in $\cal F$, then $ \cal M_{j} $ still contains this dashed line in $\cal F_{\kappa,\mathcal E_\kappa}$.
\item If there is an off-diagonal solid edge inside $ \cal M_{j}$ in $\cal F$, then either $ \cal M_{j} $ still contains this off-diagonal solid edge in $\cal F_{\kappa,\mathcal E_\kappa}$, or $\deg(\cal M_{j})$ increases by at least 2 such that $\deg(\cal M_{j})\ge 6$. 
\item If $ \cal M_{j}$ contains a light weight in $\cal F$, then either $\cal M_{j}$ still has this light weight in $\cal F_{\kappa,\mathcal E_\kappa}$, or $\deg(\cal M_{j})$ increases by at least 2 such that $\deg(\cal M_{j})\ge 6$ in $\cal F_{\kappa,\mathcal E_\kappa}$.
\end{itemize}
Therefore the statement (c) holds.

\vspace{5pt}
\noindent{\bf Proof of (a):} We verify the conditions (i)-(vi) in Definition \ref{def: SF} one by one.  
 \begin{itemize}
\item (i) and (ii) are trivial. (v) is due to \eqref{coeffs}.

\item (iii) is due to Lemma \ref{Lumm} and Lemma \ref{Lumm22}.

\item For any $\cal M_{j}$,  $\deg(\cal M_{j})$ increases by $0$ or $2$ from $\cal F$ to $\cal F_{\kappa,\mathcal E_\kappa}$. Hence the condition (iv) holds.

\item For condition (vi),
we assume that $\cal M_{j}$ is a simple free molecule in $\cal F_{\kappa,\mathcal E_\kappa}$. Then it is must be also a simple free molecule in $\cal F$ by the statement (c) we just proved. By the expansion rules and the fact that $\deg (\cal M_{j})$ does not change, it is easy to see that the solid edges connected with $\cal M_{j}$ do not change from $\cal F$ to $\cal F_{\kappa,\mathcal E_\kappa}$ except that the atom $x_i$ may be added into the independent sets of these edges.
Therefore \eqref{3113} and \eqref{3114} hold for $\cal F_{\kappa,\mathcal E_\kappa}$. \nc
\end{itemize}
Therefore the statement (a) holds.

 \vspace{5pt}
 
By the above statements (a)-(c), we know that the number of simple free molecules in $\cal F_{\kappa,\mathcal E_\kappa}$ is strictly smaller than that of $\cal F$. Now we consider the $\cal F_0$ term. In fact, assuming $\cal M_{i_0}$ is a type A or B simple free molecule, we will show that for any fixed $D>0$, 
\be\label{jiaqian}
\E_{x_{i_0}}\cal F_0=\sum_{\kappa}\cal F_{\kappa} + \OO_\prec(N^{-D}),
\ee
where each $\cal F_{\kappa}$ is a standard colorless forest, in which the total number of type $A$ and type $B$ simple free molecules is strictly smaller than that of $\cal F$. 
Then with mathematical induction, we can finish the proof of Lemma \ref{sjiyyz} by relabelling the standard colorless forests.
 
Now we prove \eqref{jiaqian}. In $\cal G_0$, all the edges and weights are independent of the atom $x_{i_0}$, except for the ones connected with atom $x_{i_0}$ directly. Then we can write
$$
\cal G_0=\cal G_0^{in}\cdot \cal G_0^{out}, \quad \E_{x_{i_0}} \cal G_0=\cal G_0^{out} \cdot \E_{x_{i_0}} \cal G_0^{in},
$$
where $\cal G_0^{in}$ consists of the edges and weights attached to atom $x_{i_0}$. 
Now applying Lemma \ref{lemEk}, we can write $\E_{x_{i_0}} \cal G_0^{in}$ as a linear combination of graphs with new atoms $\al_{new}^j$ as in \eqref{Gzah}: 
$$
\E_{x_{i_0}} \cal G_0= \sum_\kappa \sum_{\vec \al_{new}} C^{\kappa}(\vec \al_{new} )\cdot \left(\cal G^{in}_\kappa (\vec \al_{new}) \cdot \cal G^{out}_0\right)+\OO_\prec(N^{-D}),
 $$
where $ C^{\kappa}(\vec \al_{new} )$ satisfies
$$ C^\kappa(\vec \al_{new} )=\OO(W^{-d})^{ \# \; new \; \al \; atoms}{\bf 1 }\left(\max_{j} |\al_{new}^j-x_{i_0}|\le (\log N)^{\OO(1)}W  \right).$$
 We include these new atoms into the molecule $\cal M_{i_0}$ such that {from $\cal G_0^{in}$ to $\cal G^{in}_\kappa (\vec \al_{new})$, only the internal structure of $\cal M_{i_0}$ changes}. Thus we have
\be\label{jiaqian23}
\E_{x_{i_0}}\cal F_0=\sum_{ \kappa} \sum_{\vec x}^{(\star)}   \sum_{{\bm \al}}\sum_{\vec \al_{new}}\left(C _{{\bm \al}, \vec x} \cdot C^{\kappa}(\vec \al_{new} )\right)\cdot \cal E\cdot \left(\cal G^{in}_\kappa (\vec \al_{new}) \cdot \cal G^{out}_0\right)+\OO_\prec(N^{ -D}).
\ee
Again, let $\cal E_{\kappa}$ denote the dashed-line partition extensions of $\cal E$. We then define the forests
$$
\wt {\cal F}_{\kappa,\mathcal E_\kappa}=\sum_{\vec x}^{(\star)}   \sum_{{\bm \al}}\sum_{\vec \al_{new}}\left(C _{{\bm \al}, \vec x}\cdot C^{\kappa}(\vec \al_{new} )\right)\cdot \cal E_{\kappa }\cdot \left(\cal G^{in}_\kappa (\vec \al_{new})\cdot \cal G^{out}_0\right).
$$
Then we have 
$$
\E_{x_{i_0}}\cal F_0=\sum_{\kappa}{\sum_{\mathcal E_\kappa \succ \mathcal E}}\wt{\cal F}_{\kappa,\mathcal E_\kappa}+\OO_\prec(N^{-D}),
$$
where $\wt{\cal F}_{\kappa,\mathcal E_\kappa}$ are standard colorless forests. Since $\deg(x_{i_0})=0$ in $\wt{\cal F}_{\kappa,\mathcal E_\kappa}$, $\cal M_i$ is not a type $A$ or type $B$ simple free molecule in $\wt{\cal F}_{\kappa,\mathcal E_\kappa}$ anymore. Moreover, for all the other molecules, their types do not change from $\mathcal F_0$ to $\wt{\mathcal F}_{\kappa,\mathcal E_\kappa}$.
Therefore in $\wt{\cal F}_{\kappa,\mathcal E_\kappa}$, the total number of type $A$ and type $B$ simple free molecules is strictly smaller than that of $\cal F_0$. This completes the proof of \eqref{jiaqian} by relabelling the standard colorless forests.
\end{proof} 


 \subsection{Proof of Lemma \ref{Qpart}: step 4.}\label{charged}\
With Lemma \ref{sjiyyz}, it remains to prove Lemma \ref{kaofu} under the assumption that there are only type $C$ simple free molecules in $\cal F$. 
 
 \begin{lemma}\label{yuizhy}  Suppose the assumptions in Lemma \ref{Qpart} hold. Let $\cal F$ be a colorless standard forest of the form \eqref{fagao} and with $t_s$ simple free molecules for some $t_s \ge 1$. Moreover, we assume that they are all type $C$ regular simple free molecules in the sense of \eqref{fresim}. Then for any fixed $D>0$, we have
 $$\E \cal F=\sum_\kappa \E \cal F_\kappa  + \OO(N^{-D}), $$
where each $\cal F_\kappa$ is a colorless standard forest that contains at most $(t_s -1)$ simple free molecules. 
Moreover, the total number of $\mathcal F_\kappa$ is of order $\OO(1)$.
 \end{lemma}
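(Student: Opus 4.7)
The plan is to single out one type C simple free molecule $\mathcal M_{i_0}$ in $\mathcal F$, apply the self-consistent equation of Lemma~\ref{478} to a pair of same-charge external edges of $\mathcal M_{i_0}$, and then use Lemma~\ref{lemEk} to absorb the resulting new colors. I claim the outcome is a linear combination of $\OO(1)$-many standard forests in each of which $\mathcal M_{i_0}$ is no longer simple free, while every previously non-simple-free molecule remains so. First I would locate a ``simple charged atom'' $\alpha_* \in \mathcal M_{i_0}$ whose two attached solid edges carry the same charge: in type C as in~\eqref{fresim}, the atom $x_{i_0}$ carries no solid edges and the four external edges are distributed, two each, among exactly two degree-$2$ atoms of $\mathcal M_{i_0}$; together with the $3{+}1$ charge condition~\eqref{3113}, at least one of them must have both edges of the same charge. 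After using~\eqref{chsz2.1} to remove the (bounded-size) independent sets at the cost of terms with an extra $\Phi$ factor, the pair contributes a factor $G_{\alpha_* \beta_1} G_{\alpha_* \beta_2}$ with $\beta_1, \beta_2$ outside $\mathcal M_{i_0}$.

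Next I would apply Lemma~\ref{478} with $x=\alpha_*$, $w=\beta_1$, $w'=\beta_2$ to rewrite
\[
G_{\alpha_* \beta_1}G_{\alpha_* \beta_2} = Q_{\alpha_*}\!\big(G_{\alpha_* \beta_1}G_{\alpha_* \beta_2}\big) + \sum_{\gamma\ne \beta_1,\beta_2} d_{\alpha_*\gamma}\, Q_\gamma\!\big(G_{\gamma \beta_1}G_{\gamma \beta_2}\big) + \mathcal R_{\alpha_*,\beta_1,\beta_2},
\]
where $d_{\alpha_*\gamma}=\OO(W^{-d})\mathbf 1_{|\alpha_*-\gamma|\le (\log N)^2 W}$ and $\mathcal R_{\alpha_*,\beta_1,\beta_2}=\OO_\prec(\Psi_{\alpha_*\beta_1}\Psi_{\alpha_*\beta_2}\Phi+W^{-d}\Psi_{\beta_1\beta_2}+W^{-d}\delta_{\beta_1\beta_2})$. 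Substituting into $\mathcal F$ splits it into three parts. For the $Q_{\alpha_*}$ part no new atom appears. For each $Q_\gamma$ part with $\gamma\ne \alpha_*$, the support of $d_{\alpha_*\gamma}$ lets me place $\gamma$ inside $\mathcal M_{i_0}$ (so $\mathcal M_{i_0}$ acquires one additional atom, the inter-molecule structure is preserved, and the coefficient bound~\eqref{Calphak} absorbs the new $W^{-d}$ factor). For the error part, the extra $\Phi$ or $W^{-d}$ factor, together with the loss of the $\alpha_*$ free summation in the $W^{-d}$ cases, suffices for Lemma~\ref{kaofu22} to bound the contribution by a $\Phi$ better than what a standard forest with one fewer simple free molecule would provide.

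For the two colored parts I would then apply Lemma~\ref{lemEk} with respect to $\gamma$ to eliminate the color $Q_\gamma$, producing a finite linear combination of colorless graphs in which the inner structure of $\mathcal M_{i_0}$ at the atom $\gamma$ now contains extra elements: $f_3$ or $f_4$ weights, $f_6$ light weights, new atoms with internal off-diagonal solid edges, or dashed-line identifications producing an internal dashed edge. In every case $\mathcal M_{i_0}$ acquires at least one of (i) an internal off-diagonal solid edge, (ii) an internal dashed edge, (iii) an internal light weight, or (iv) total external degree at least $6$, so it is no longer simple free. Since the edges and atoms touched by these operations all lie in (or attach to) $\mathcal M_{i_0}$, no other molecule is affected, and previously non-simple-free molecules remain non-simple-free. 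Verifying the remaining conditions of Definition~\ref{def: SF} is routine: the IPC nested property is preserved by Lemma~\ref{Lumm22} since each replacement is an edge between two atoms by a path between the same two atoms routed through $\mathcal M_{i_0}$; the degree parity~\eqref{deg_mi} at free molecules is preserved because all expansions change degrees by even amounts; and the conditions~\eqref{3113}--\eqref{3114} at any still-simple-free molecule are inherited from $\mathcal F$ because those molecules are untouched.

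The main obstacle I anticipate is the careful handling of the error piece $\mathcal R_{\alpha_*,\beta_1,\beta_2}$: after substituting it into $\mathcal F$ the two edges at $\alpha_*$ are gone, so $\mathcal M_{i_0}$ may lose its IPC nested connectivity and the remainder is not literally a standard forest. The remedy is either to reconstruct the IPC path through the $\Psi_{\beta_1\beta_2}$ or $\delta_{\beta_1\beta_2}$ link (both of which connect the same external neighbors, hence restore the two paths from $\mathcal M_{i_0}$ to $\star$), or to estimate this piece directly via the argument of Lemma~\ref{kaofu22}, using the extra $\Phi$ (or $W^{-d}$ with loss of the $\alpha_*$ sum) to beat the missing $\Phi$ that the simple free molecule would otherwise have provided. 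Once this is in place, the number of resulting forests $\mathcal F_\kappa$ is $\OO(1)$ (the sums over $\gamma$ and newly added $\alpha$-atoms remain internal to single standard forests, not spread across distinct ones), each has at most $t_s-1$ simple free molecules, and iterating Lemma~\ref{yuizhy} together with Lemma~\ref{sjiyyz} completes the proofs of Lemma~\ref{kaofu} and hence Lemma~\ref{Qpart}.
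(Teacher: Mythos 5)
Your Step 4A is essentially the paper's: isolate a simple charged atom $\alpha_\star$ in a type C molecule, apply the self-consistent equation for the same-charge pair $G_{\alpha_\star\beta_1}G_{\alpha_\star\beta_2}$, and absorb the new summation atom into $\mathcal M_{i_0}$ using the support of the coefficients. But the decisive step of your argument fails: you claim that applying Lemma \ref{lemEk} ``with respect to $\gamma$'' to eliminate the colors $Q_{\alpha_\star}$ and $Q_\gamma$ yields colorless graphs in which $\mathcal M_{i_0}$ is no longer simple free. Writing $Q_\gamma=1-P_\gamma$, the identity part simply returns the colorless product $G_{\gamma\beta_1}G_{\gamma\beta_2}$, and the leading output of Lemma \ref{lemEk} for the $P_\gamma$ part is of the form $M_\gamma^2\sum_{\alpha}s_{\gamma\alpha}G^{(\gamma)}_{\alpha\beta_1}G^{(\gamma)}_{\alpha\beta_2}$ with a new degree-$2$ atom carrying no light weight (condition (vii) of Lemma \ref{lemEk} only forces an $f_6$ weight when the degree is $\ne 2$, and the weights on $\gamma$ are regular $f_3/f_4$ weights). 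In both cases the molecule still has degree $4$, no internal off-diagonal edge, no internal dashed edge and no light weight, i.e.\ it is still simple free, so your induction does not advance. This is not a repairable detail of bookkeeping: the extra $\Phi$ carried by a $Q$-colored pair is invisible at the level of local algebra and only materializes after pairing with the rest of the graph under the full expectation. The missing ingredient is the paper's Step 4B (Lemma \ref{yuiz3i87}): keep the two edges painted, expand \emph{all other} edges and weights of the graph with respect to the painted atom $\alpha_\star$, use $\E_{\alpha_\star}\mathcal Q_{\alpha_\star}(\mathcal G_0)=0$ (see \eqref{mpzx}) to annihilate the fully expanded main term, so that every surviving graph has either two more edges attached to $\mathcal M_{i_0}$ (degree $\ge 6$) or internal structure; only afterwards is the color removed via \eqref{shangwmnws}, restoring colorless standard forests with at most $t_s-1$ simple free molecules.

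A second, related gap is your use of Lemma \ref{478} itself: its error term is only an $\OO_\prec\!\left(\Psi_{\alpha_\star\beta_1}\Psi_{\alpha_\star\beta_2}\Phi+W^{-d}\Psi_{\beta_1\beta_2}+W^{-d}\delta_{\beta_1\beta_2}\right)$ bound, and such a bound cannot be substituted into $\mathcal F$ while preserving the standard-forest structure (needed both for the IPC nested property and for iterating the induction and performing the subsequent expectation step). The paper proves the separate, graph-level Lemma \ref{qingqz} precisely for this purpose: its ``error'' contributions are explicit colorless graphs which carry an internal dashed line, a light weight, or an internal off-diagonal edge inside the molecule, hence are again standard forests in which $\mathcal M_{i_0}$ is no longer simple free. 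Your proposed remedies (reconstructing an IPC path through $\Psi_{\beta_1\beta_2}$, or bounding the error piece directly in the spirit of Lemma \ref{kaofu22}) are left as sketches and, even if carried out, would not supply the structural decomposition that the statement of Lemma \ref{yuizhy} asserts and that the induction in Lemma \ref{kaofu} requires.
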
 
Clearly, together with Lemma \ref{sjiyyz}, the above Lemma \ref{yuizhy} shows that Lemma \ref{kaofu} holds by induction and hence completes the proof of Lemma \ref{Qpart}.  

For the standard forest $\cal F$ in Lemma \ref{yuizhy}, in each simple free molecule there are exactly 2 atoms that are connected with solid edges, as shown in \eqref{fresim}. Thus the condition \eqref{3113} in Definition \ref{def: SF} implies that one of the atom must be connected with two solid edges of the same charge. {Here we define the} charged atom {to be} an atom whose total charge with respect to the solid edges is not neutral. In this section, we focus on the charged atoms in type C simple free molecules.  

\begin{definition}[Simple charged atom] \label{Def_charge}We call a degree 2 charged atom in a simple free molecule a simple charged atom. 
 \end{definition}
 

 
\noindent{\bf $\mathcal Q_{\al}$ operation:} Let $\al_i$ be a simple charged atom in a simple free molecule $\cal M_i$. For a standard forest $\mathcal F$ of the form \eqref{fagao}, we define an operator $ \cal Q_{\al_i}$, which ``paints" the edges connected with atom $\al_i$ using the color $Q_{\al_i}$:
 $$
 \cal Q_{\al_i}\left(\cal F\right): =\sum_{\vec x}^{(\star)}   \sum_{{\bm \al}}
 C _{{\bm \al}, \vec x}  \cdot \cal E\cdot  \cal Q_{\al_i}\left( \cal G ({\bm \al}, \vec x)\right).
 $$   
The operation of $\mathcal Q_{\alpha}$ can be described graphically as
   \be\label{f,ku}
    \parbox[c]{0.60\linewidth}{\includegraphics[width=\linewidth]{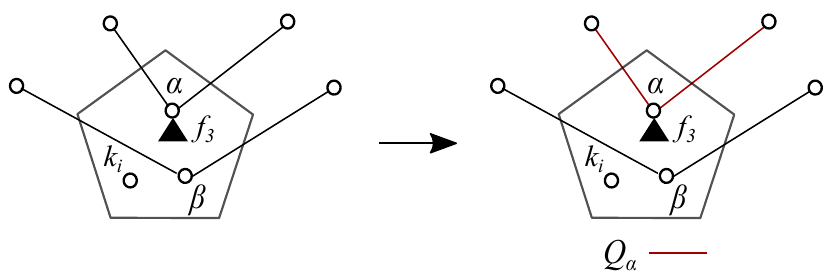}}
   \ee
{Be careful that $\cal Q_{\al}$ is different from $Q_{\alpha}$ acting on the graph: $\cal Q_{\al}$ only paints the edges connected with atom $\al$, while $Q_{\alpha}$ acting on the graph paints the whole graph with $Q_\al$ color. }

\vspace{5pt}

The proof of Lemma \ref{yuizhy} consists of the following two parts. 

\vspace{5pt}
\noindent{\bf Step 4A: Painting the simple charged atoms.} In this step, we apply \eqref{charged_intro1} to the two edges connected with a simple charged atom, i.e., we paint these two edges with some $Q$-color. Rigorously speaking, we shall prove the following lemma. 

 
 \begin{lemma}\label{yuizhy765} 
 Under the assumptions of Lemma \ref{yuizhy}, for any fixed {$D>0$}, we can write 
 \be
 \label{lametn} \cal F=\sum_\kappa \cal F_\kappa+\sum_{\tilde \kappa}    \cal Q_{\al_\star}\left(\cal F_{\tilde \kappa}\right)+\OO_\prec(N^{-D}),
 \ee
where $\cal F_\kappa$ and $\cal F_{\tilde \kappa}$ are colorless standard forests, and $\al_\star$ is some simple charged atom in $\cal F_{\tilde \kappa}$. Furthermore, each $\cal F_\kappa$ contains at most $(t_s-1)$ simple free molecules, and each $\cal F_{\tilde \kappa}$ contains at most $t_s$ simple free molecules. Here the total number of $\cal F_\kappa$ and $\cal F_{\tilde \kappa}$ is of order $\OO(1)$. 
\end{lemma}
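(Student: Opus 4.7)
The plan is to implement the mechanism outlined in Section \ref{sec_colors} (item iii) and equation \eqref{charged_intro1}, namely, to apply a self-consistent equation to the pair of same-charge edges attached to a simple charged atom in some type C regular simple free molecule. Concretely, let $\mathcal M_i$ be a type C simple free molecule containing a simple charged atom $\alpha$ with exactly two attached solid edges, whose charges must agree by \eqref{3113}; call them $G^{(\mathbb T_1)}_{\alpha\beta_1}$ and $G^{(\mathbb T_2)}_{\alpha\beta_2}$ with $\beta_1,\beta_2\notin \mathcal M_i$ (and with identical direction after complex conjugation, thanks to type C). I first remove the independent-set decorations $\mathbb T_1,\mathbb T_2$ using \eqref{chsz2.1} and Lemma \ref{lem_exp1}, producing a bounded number of additional standard forests which already satisfy the conclusion (all new off-diagonal edges and new light weights strictly reduce the simple-free count in $\mathcal M_i$ by conditions (ii)--(iv) of the simple-free definition).

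After this reduction I may assume $\mathbb T_1=\mathbb T_2=\emptyset$. I would then derive the self-consistent equation for the product $G_{\alpha\beta_1}G_{\alpha\beta_2}$ in the same manner as the $T$-equation \eqref{Tequation} and the derivation in Lemma \ref{478}: apply \eqref{sq root formula} to one of the edges, take $\E_\alpha$ using independence of the $\alpha$-th row of $H$ from $G^{(\alpha)}$, use \eqref{Gij Gijk} to restore $G^{(\alpha)}$ to $G$, and rewrite the result as the vector identity
\begin{equation*}
G_{\alpha\beta_1}G_{\alpha\beta_2}
= M_\alpha^2 \sum_{\eta} s_{\alpha\eta}\, G_{\eta\beta_1}G_{\eta\beta_2}
+ Q_\alpha\bigl(G_{\alpha\beta_1}G_{\alpha\beta_2}\bigr) + \mathcal R_\alpha ,
\end{equation*}
where $\mathcal R_\alpha$ is an error term of size $\OO_\prec(\Phi\,\Psi_{\alpha\beta_1}\Psi_{\alpha\beta_2})$ coming from the light-weight $G_{\alpha\alpha}-M_\alpha$ and the subleading Schur terms. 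Inverting $(1-M^2 S)$ and applying \eqref{tianYz} to truncate the resulting convolution to $|\gamma-\alpha|\le (\log N)^2 W$ up to an $N^{-c\log N}$ error yields
\begin{equation*}
G_{\alpha\beta_1}G_{\alpha\beta_2}
= \sum_{\gamma:\,|\gamma-\alpha|\le (\log N)^2 W}\!\!\!\bigl[(1-M^2 S)^{-1}\bigr]_{\alpha\gamma}\Bigl[Q_\gamma\bigl(G_{\gamma\beta_1}G_{\gamma\beta_2}\bigr) + \widetilde{\mathcal R}_\gamma\Bigr] + \OO_\prec(N^{-D}),
\end{equation*}
with $\widetilde{\mathcal R}_\gamma$ still of size $\OO_\prec(\Phi\,\Psi_{\gamma\beta_1}\Psi_{\gamma\beta_2})$ and the coefficients bounded by $\OO(W^{-d})\mathbf 1(|\alpha-\gamma|\le(\log N)^2 W)$ by \eqref{tianYz}.

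I substitute this expansion into the two edges at $\alpha$ inside $\mathcal F$. The new atom $\gamma$ is placed into the same molecule $\mathcal M_i$ (its proximity to $\alpha$ respects the diameter bound $\OO((\log N)^{C}W)$ in Definition \ref{def: SF}), and the IPC nested property is preserved because the replacement swaps the two edges at $\alpha$ for two edges at $\gamma$ with the same external endpoints $\beta_1,\beta_2$; this is the same ``path-between-the-same-endpoints'' principle that underlies Lemmas \ref{Lumm} and \ref{Lumm22}. The coefficient bound \eqref{Calphak} is preserved by the $\OO(W^{-d})$ bound on $(1-M^2 S)^{-1}_{\alpha\gamma}$. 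The main term, after renaming $\gamma$ to $\alpha_\star$, is by construction of the form $\mathcal Q_{\alpha_\star}(\mathcal F_{\tilde\kappa})$ with $\mathcal F_{\tilde\kappa}$ a colorless standard forest in which $\mathcal M_i$ still contains a type C regular simple free molecule (now centered at $\alpha_\star$), so its simple-free count stays $\le t_s$. The remainder $\widetilde{\mathcal R}_\gamma$ contributes a linear combination of $\mathcal F_\kappa$'s in which $\mathcal M_i$ acquires either an additional off-diagonal edge or a light weight, so by conditions (ii)--(iv) of the simple-free definition $\mathcal M_i$ is no longer simple free in $\mathcal F_\kappa$, reducing the simple-free count to at most $t_s-1$.

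The main obstacle I foresee is the verification that each $\mathcal F_\kappa$ produced from $\widetilde{\mathcal R}_\gamma$ genuinely satisfies every clause of Definition \ref{def: SF}, in particular clause (vi) concerning the charge/degree structure of any simple free molecule that survives, and clause (iii) on the IPC nested property. The subtle point is that $\widetilde{\mathcal R}_\gamma$ is assembled from several Schur-type pieces (the $G_{\alpha\alpha}-M_\alpha$ light weight, the $H_{\alpha\alpha}$-type corrections from $\mathcal Z_\alpha$, and diagonal $\eta=\beta_i$ contributions), and I must confirm case by case that none of these reintroduces a new type A or type B simple free molecule elsewhere — if any do, an additional invocation of Lemma \ref{sjiyyz} will be needed to convert them back to type C before the induction in Lemma \ref{yuizhy} can close. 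A secondary but nontrivial difficulty is ensuring that the truncation error from \eqref{tianYz} and the hidden powers of $N^\tau$ accumulated through $\OO_\tau(\cdot)$ bounds remain absorbed into the $\OO_\prec(N^{-D})$ tail; since the induction in Lemma \ref{yuizhy} is applied only $\OO(1)$ times (at most $t_s\le p$), a fixed but sufficiently large choice of $D$ at the outset should suffice.
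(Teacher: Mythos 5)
Your overall route is the same as the paper's: use \eqref{3113} to find a simple charged atom $\al_\star$ whose two edges carry the same charge, expand the product of these two edges by the $(1-M^2S)^{-1}$ self-consistent equation (the paper's Lemma \ref{qingqz}, the rigorous form of \eqref{charged_intro1}), keep the $Q_{\al_\star}$ and $Q_{\tilde\al_\star}$ main terms as $\cal Q$-painted standard forests, and route the remainder into standard forests in which $\cal M_{i_0}$ is no longer simple free; this is exactly the decomposition \eqref{xjlajzh}. However, your preliminary step of stripping the independent sets $\mathbb T_1,\mathbb T_2$ contains a genuine error. When you expand $G^{(\mathbb T_1)}_{\al\beta_1}=G^{(\mathbb T_1\setminus\{w\})}_{\al\beta_1}-G^{(\mathbb T_1\setminus\{w\})}_{\al w}G^{(\mathbb T_1\setminus\{w\})}_{w\beta_1}\big/G^{(\mathbb T_1\setminus\{w\})}_{ww}$ and $w$ lies \emph{outside} $\cal M_i$ (the typical case: by Steps 1--3 the independent sets are populated with the $x_j$'s of other molecules), the correction graph replaces the outgoing edge $(\al,\beta_1)$ by the outgoing edge $(\al,w)$ of the same charge, leaves $\deg(\cal M_i)=4$, and adds no dashed line, no internal off-diagonal edge and no light weight \emph{inside} $\cal M_i$. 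Hence $\cal M_i$ remains a type C simple free molecule with $\al$ still simple charged, the charge count \eqref{3113} and degrees \eqref{3114} unchanged. Such a correction is neither $\cal Q$-painted nor does it have at most $t_s-1$ simple free molecules, so it does not ``already satisfy the conclusion'' as you claim; your appeal to conditions (ii)--(iv) of the simple-free definition only covers the case $w\in\cal M_i$. The gap is fixable: either process these corrections recursively (each still contains a simple charged atom, and the total size of the independent sets on its two edges strictly decreases, so the recursion terminates in $\OO(1)$ steps), or avoid stripping altogether as the paper does, by proving the self-consistent expansion directly for $G^{(\dot I)}_{\al\gamma_1}G^{(\ddot I)}_{\al\gamma_2}$ with arbitrary $\dot I,\ddot I$ (this is precisely why Lemma \ref{qingqz} is stated for $\dot G,\ddot G$).

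Two further points you flag as obstacles are indeed where the remaining work lies, and the paper resolves them inside the proof of Lemma \ref{qingqz}: the remainder $\wt{\cal R}_\gamma$ must be produced as an explicit $\OO(1)$ linear combination of \emph{colorless} graphs (the $P_\al$-colored pieces, including the $\cal Z_\al$ and $\dot\Lambda_\al,\ddot\Lambda_\al$ contributions, are removed by a Lemma \ref{lemEk}-type partial expectation expansion), not merely bounded by $\OO_\prec(\Phi\Psi\Psi)$; and the terms where the new summation atom coincides with an existing atom of the graph (the set $J$) must be split off via dashed-line partition extensions and sent to the $\cal F_\kappa$ family — the paper does this by restricting the $Q_w$ sum to $w\notin\{x\}\cup J$ in \eqref{shangwmnzi} and choosing $\wt{\cal E}$ so that $\tilde\al_\star$ is unequal to all other atoms, which is needed for $\tilde\al_\star$ to qualify as a simple charged atom in $\cal F_{\tilde\kappa}$ and for Step 4B to apply.
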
 
 
 \vspace{5pt}
\noindent{\bf Step 4B: Annihilation of $Q$-colored simple free molecules.} As discussed in the introduction, taking expectation over the $Q$-colored graphs in \eqref{lametn} will decrease the number of simple free molecules. We state it in terms of the following lemma.
 
 \begin{lemma}\label{yuiz3i87} Under the assumptions of Lemma \ref{yuizhy}, if $\al_\star$ is a simple charged atom in $\cal F$, then 
   $$\E \cal Q_{\al_\star} (\cal F)=\sum_\kappa \E \cal F_\kappa + \OO(N^{-D}), $$
where each $\cal F_\kappa$ is a colorless standard forest that contains at most $(t_s-1)$ simple free molecules. Moreover, the total number of $\cal F_\kappa$ is of order $\OO(1)$. 
\end{lemma}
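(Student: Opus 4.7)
The plan is to combine the orthogonality $\E_{\al_\star}Q_{\al_\star}(\cdot)=0$ with an $\al_\star$-row/column resolvent expansion of the unpainted part of the graph, and then to restore colorlessness via Lemma~\ref{lemEk}. Because $\al_\star$ is a simple charged atom in the type~$C$ simple free molecule $\cal M_i$, properties \eqref{3113}--\eqref{3114} force the two solid edges incident to $\al_\star$ to carry the same charge, so in each summand of $\cal F$ I can factor
$$\cal G(\bm\al,\vec x)=\cal P\cdot \cal R,\qquad \cal P=G_{\al_\star\beta_1}G_{\al_\star\beta_2}$$
(the conjugate case being identical), where $\beta_1,\beta_2$ lie outside $\cal M_i$ and $\cal R$ collects everything else. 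By the definition of $\cal Q_{\al_\star}$ only the two edges making up $\cal P$ are painted, so $\cal Q_{\al_\star}(\cal G)=\cal R\cdot Q_{\al_\star}(\cal P)$.

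Next I build an $\al_\star$-independent counterpart $\cal R^{(\al_\star)}$ of $\cal R$ by iteratively applying \eqref{Gp1}, \eqref{Gp2}, \eqref{Gp5}, and \eqref{Gp6} to push $\al_\star$ into the independent set of every solid edge and every weight of $\cal R$; because each such step adds at least one off-diagonal edge or one of the light weights $f_5,f_6$, truncating after $\OO(1)$ steps leaves only an error of size $\OO_\prec(N^{-D})$ by the deterministic bound \eqref{xiangmmz}. Since $\cal R^{(\al_\star)}$ depends only on $H^{[\al_\star]}$ and $\E_{\al_\star}Q_{\al_\star}(\cal P)=0$, the tower property yields the key identity
$$\E\bigl[\cal R\cdot Q_{\al_\star}(\cal P)\bigr]=\E\bigl[(\cal R-\cal R^{(\al_\star)})\cdot Q_{\al_\star}(\cal P)\bigr]+\OO(N^{-D}).$$
The difference $\cal R-\cal R^{(\al_\star)}$ is by construction a finite linear combination of subgraphs, each carrying at least one ``extra'' structure: either a new pair of off-diagonal solid edges incident to $\al_\star$ (from \eqref{Gp1}, \eqref{Gp2}, or \eqref{Gp6}), an $f_5=H_{\al_\star\al_\star}$ light weight at $\al_\star$, or a $W^{-d/2}$ factor (from \eqref{Gp5}). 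Under the molecule-assignment rule of Lemma~\ref{Lumm22}, every such extra factor lands inside $\cal M_i$ or pushes $\deg(\cal M_i)$ to at least~$6$, violating at least one of the four defining conditions of a simple free molecule.

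To restore colorlessness I apply Lemma~\ref{lemEk} with $(x,n,s)=(\al_\star,0,1)$, which expresses $P_{\al_\star}(\cal P)$ as a finite colorless combination of graphs (modulo $\OO_\prec(N^{-D})$); consequently $Q_{\al_\star}(\cal P)=\cal P-P_{\al_\star}(\cal P)$ is also colorless, with the new atoms produced by Lemma~\ref{lemEk} absorbed into $\cal M_i$. Substituting this together with the expansion of $\cal R-\cal R^{(\al_\star)}$ into the identity above and collecting terms produces $\E\cal Q_{\al_\star}(\cal F)=\sum_\kappa \E\cal F_\kappa+\OO(N^{-D})$ with $\cal F_\kappa$ a finite family of colorless graphs. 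I then verify that each $\cal F_\kappa$ is a standard forest in the sense of Definition~\ref{def: SF}: the IPC nested property transfers by Lemma~\ref{Lumm} and Lemma~\ref{Lumm22}; the coefficient bound \eqref{Calphak} follows from the inherited $C_{\bm\al,\vec x}$ together with the $s_{x\al}$ factors of \eqref{Gp6} and the $W^{-ds'(\kappa)}$ of \eqref{swlkj}; the degree parity in \eqref{deg_mi} is preserved because each of \eqref{Gp1}--\eqref{Gp6} changes all degrees by multiples of two; and the charge/degree conditions \eqref{3113}--\eqref{3114} on any \emph{surviving} simple free molecule $\cal M_j\ne \cal M_i$ are inherited via the same mechanism used in the proof of Lemma~\ref{sjiyyz}, since the expansions only add $\al_\star$ to independent sets of other edges or create new edges attached to $\al_\star$. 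Because $\cal M_i$ itself is non-simple in every $\cal F_\kappa$ by the previous paragraph, the total count of simple free molecules has dropped by at least one, which is the desired conclusion.

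The hardest part is the combinatorial bookkeeping for Definition~\ref{def: SF}: the joint expansion of $\cal R-\cal R^{(\al_\star)}$ and of $P_{\al_\star}(\cal P)$ introduces potentially many new atoms, dotted edges, and (light) weights, and I need to be certain they can be absorbed into $\cal M_i$ in a manner simultaneously compatible with the IPC nested property, the coefficient bound \eqref{Calphak}, and the conditions \eqref{3113}--\eqref{3114} for all other surviving simple free molecules. Following the molecule-assignment prescription of Lemma~\ref{Lumm22} and tracking charges as in Lemma~\ref{sjiyyz} handles all these, but checking each of the six items of Definition~\ref{def: SF} on every output graph is where the bulk of the work lies.
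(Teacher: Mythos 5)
Your overall architecture is the same as the paper's: isolate the painted pair $\cal P=G_{\al_\star\beta_1}G_{\al_\star\beta_2}$, expand the unpainted part with respect to $\al_\star$ so that the fully expanded piece is killed by $\E_{\al_\star}Q_{\al_\star}(\cdot)=0$, and then decolor $Q_{\al_\star}(\cal P)$. Your decoloring via $Q_{\al_\star}(\cal P)=\cal P-P_{\al_\star}(\cal P)$ together with Lemma \ref{lemEk} is an acceptable variant of the paper's step, which instead invokes \eqref{shangwmnws} of Lemma \ref{qingqz}; both are the same rewriting of $Q_{\al_\star}$ acting on a same-charged pair, and either form is compatible with Definition \ref{def: SF}.

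The genuine gap is in the construction of $\cal R^{(\al_\star)}$ and the claimed structure of $\cal R-\cal R^{(\al_\star)}$. The expansions \eqref{Gp1}, \eqref{Gp2}, \eqref{Gp6} only make edges and weights \emph{not attached} to $\al_\star$ independent of it; for the weights sitting on $\al_\star$ itself you invoke \eqref{Gp5} (equivalently \eqref{Gp4}), and the remainder of that expansion is not ``an $f_5$ light weight or a $W^{-d/2}$ factor'': it contains $\cal Z_{\al_\star}=-H_{\al_\star\al_\star}+Q_{\al_\star}\big(\sum_{\al,\beta}H_{\al_\star\al}H_{\al_\star\beta}G^{(\al_\star)}_{\al\beta}\big)$, i.e. dotted edges and a residual $Q_{\al_\star}$ color (and $f_5$ weights, which condition (ii) of Definition \ref{def: SF} forbids). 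Hence the graphs $(\cal R-\cal R^{(\al_\star)})\cdot\big(\cal P-P_{\al_\star}(\cal P)\big)$ you end with are not colorless standard forests as your conclusion requires; these pieces are only $\OO_\prec(\Phi)$-small, not $\OO(N^{-D})$, and they are not annihilated by the orthogonality trick because $\cal Z_{\al_\star}$ is not $H^{[\al_\star]}$-measurable. Removing them would force you to redo an $\E_{\al_\star}$-pairing computation in the style of the proof of Lemma \ref{lemEk} for the whole local structure at $\al_\star$, including the (de)painted pair, and then to re-verify that $\cal M_{i_0}$ remains non-simple and that all items of Definition \ref{def: SF} survive --- none of which is described. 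The paper's proof avoids the issue entirely: since $\cal M_{i_0}$ is simple free, the weights at $\al_\star$ are normal, so the $f_{1,2}$ ones are split as $M_{\al_\star}+(G_{\al_\star\al_\star}-M_{\al_\star})$ (the deterministic part joins the $\al_\star$-independent graph $\cal G_0$, the light-weight part goes to the remainder and simultaneously certifies that $\cal M_{i_0}$ is no longer simple), while $f_{3,4}$ weights are left untouched because $\cal Y_{\al_\star}$ is already independent of the $\al_\star$-row and column; no dotted edges, $f_5$ weights or colors are created at this stage. With that replacement, the rest of your verification (IPC structure, coefficient bound \eqref{Calphak}, inheritance of \eqref{3113}--\eqref{3114} for surviving simple free molecules) goes through exactly as in the paper.
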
 

Clearly, Lemma \ref{yuizhy765} and Lemma \ref{yuiz3i87} together prove Lemma \ref{yuizhy}. 


\begin{proof}[Proof of Lemma \ref{yuizhy765}]  
We assume that $\al_\star$ is a simple charged atom in a simple free molecule $\cal M_{i_0}$ of $\cal F$. By definition, we have
\be\label{nyuan}
\cal F=  \sum_{\vec x}^{(\star)}   \sum_{{\bm \al}}
 C _{{\bm \al}, \vec x}  \cdot \cal E\cdot  \cal G ({\bm \al}, \vec x),
\ee
and there are two same charged solid edges connected with $\al_\star$ in  $\cal G ({\bm \al}, \vec x)$. Up to the choice of directions and charges, we can assume that these two solid edges are 
\be\label{zjxiang}
G^{(\dot I)}_{\al_\star\gamma_1}G ^{(\ddot I )}_{\al_\star\gamma_2},
\ee
where $\dot I$ and $\ddot  I$ are independent sets of these two solid edges, and $\gamma_1$, $\gamma_2$ are atoms outside $\cal M_{i_0}$. In the following proof, we only focus on this case, but the proof works for all the other choices of directions and charges. We can rewrite \eqref{zjxiang} with the following lemma. We postpone its proof to Appendix \ref{pf qingqz}. 

\begin{lemma}\label{qingqz} 
Given atoms $y,y'$ and independent sets $\dot I, \ddot  I$. Let $x\notin\{y,y'\}\cup \dot I\cup \ddot I$, and $J$ be any set of atoms such that 
$$
\{y,y'\}\cup \dot I\cup \ddot I\subset J ,\quad \quad x\notin J. 
$$
Define the notations
$$
\dot G:=G^{(\dot I)}, \quad \ddot G:= G^{(\ddot I)}, \quad 
\dot \Lambda_x :=\dot G_x -M_x , \quad \ddot \Lambda_x :=\ddot G_x -M_x .
$$
Then {under the assumptions of Lemma \ref{Qpart}}, for any fixed $D>0$ we can write  
\be \label{shangwmnzi}  
\dot G_{xy}\ddot G_{xy'}= Q_x \left(\dot G_{xy}\ddot G_{xy'}\right) +\sum_{w\notin \{x\}\cup J} 
c_{xw} Q_w \left(\dot G_{wy}\ddot G_{wy'}\right) +\sum_\kappa \sum_{\vec\al}C^{\kappa}_{\vec \al }\,\cdot \cal G_\kappa(\vec \al , x, y, y')+\OO_\prec(N^{ -D }),
 \ee 
and
 \be\label{shangwmnws} 
Q_x\left(\dot G_{xy}\ddot G_{xy'}\right)= \dot G_{xy}\ddot G_{xy'}+
 \sum_{y}\wt c_{xw}\dot G _{wy}\ddot G _{wy'}+\sum_\kappa \sum_{\vec \al}{\wt C}^{\kappa}_{\vec \al }\,\cdot \cal {\wt G}_\kappa(\vec \al , x, y, y')+\OO_\prec(N^{ -D }),
\ee
where $\vec \al$ denotes the new atoms, the deterministic coefficients satisfy 
 $$
 c_{xw},\, \wt c_{xw}=\OO(W^{-d}){\bf 1}_{|x-w|\le (\log N)^2W}, \quad C^{\kappa}_{\vec \al },\;  {\wt C}^{\kappa}_{\vec \al } =\OO\left(\left(W^{-d}\right)^{\#\text{ of $\al$ atoms}}\right)  {\bf 1}\left(\max_l |x-\al_l |\le (\log N)^2W\right),
 $$
and $\cal G_{\kappa}$, $\cal {\wt G}_{\kappa}$ are colorless graphs look like 
  \be\label{kjlksj8iu}
   \parbox[c]{0.80\linewidth}{\includegraphics[width=\linewidth]{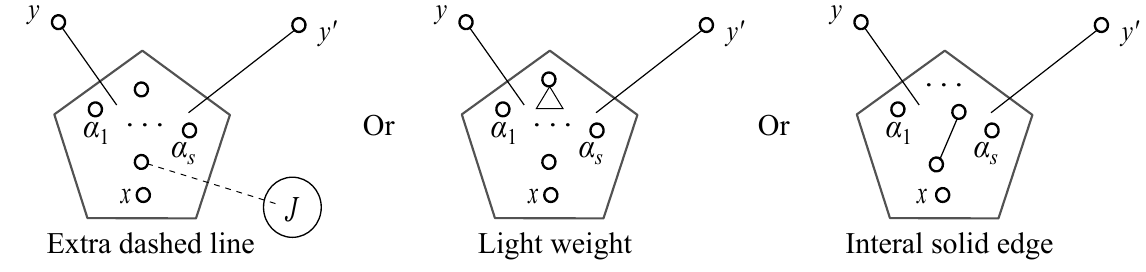}}
   \ee 
More precisely, $\cal G_{\kappa}$, $\cal {\wt G}_{\kappa}$ are  colorless graphs with atoms 
$ x,\al_1,  \al_2,\cdots $, and $y,y'$. For simplicity of presentation, we shall call $\cal M:=\{x, \al_1,  \al_2,\cdots\}$ a molecule (which is consistent with our previous definition). Then in each $\cal G_{\kappa}$ or $\cal {\wt G}_{\kappa}$, there are two solid edges connecting atoms $y$ and $y'$ to the atoms in $\cal M$, and there are no dotted lines. Moreover, at least one of the following three cases holds.
 \begin{itemize}
\item There exists at least one dashed line connecting an atom in $\cal M$ to an atom in $J$.  
Here by convention, we assume that the atoms in $J$ are also included in the graphs $\cal G_\kappa$ or $\cal {\wt G}_{\kappa}$.  
\item There exists at least one light weight on the atoms in $\cal M$. 
\item  There exists at least one off-diagonal solid edge between atoms in $\cal M$. 
\end{itemize}
Finally, the total number of $\cal G_{\kappa}$ and $\cal {\wt G}_{\kappa}$ graphs is of order $\OO(1)$. 
 \end{lemma}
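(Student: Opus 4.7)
The plan is to derive \eqref{shangwmnzi} from a self-consistent vector equation for the quantity $F_x:=\dot G_{xy}\ddot G_{xy'}$, and then invert it using the off-diagonal decay of $(1-M^2S)^{-1}$ provided by \eqref{tianYz}; the identity \eqref{shangwmnws} will then follow by rearrangement of $Q_x(F_x)=F_x-P_x(F_x)$.

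The first step is to compute $P_x F_x$ along the lines of Lemma \ref{478}. Using \eqref{Gp3} to expand both $\dot G_{xy}=-\dot G_{xx}\sum_\alpha H_{x\alpha}\dot G^{(x,\dot I)}_{\alpha y}$ and $\ddot G_{xy'}=-\ddot G_{xx}\sum_\beta H_{x\beta}\ddot G^{(x,\ddot I)}_{\beta y'}$ and taking $\mathbb E_x$, the only surviving pairing is $\alpha=\beta$ via $s_{x\alpha}$, yielding a main term $M_x^2\sum_\alpha s_{x\alpha}\dot G^{(x,\dot I)}_{\alpha y}\ddot G^{(x,\ddot I)}_{\alpha y'}$ plus corrections coming from (a) replacing $\dot G_{xx}\ddot G_{xx}$ by $M_x^2$, which costs a light weight $(\dot G_{xx}-M_x)$ or $(\ddot G_{xx}-M_x)$ on $x$, and (b) higher moments $\mathbb E H_{x\alpha}^{\,k}$ for $k\geq 3$, which are $\OO(W^{-kd/2})$ and force additional dotted edges that upon the same pairing analysis as in Lemma \ref{lemEk} produce extra off-diagonal solid edges inside $\mathcal M$. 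Next I apply the minor-removal expansion \eqref{Gij Gijk} to convert $\dot G^{(x,\dot I)}_{\alpha y}$ and $\ddot G^{(x,\ddot I)}_{\alpha y'}$ to $\dot G_{\alpha y}$ and $\ddot G_{\alpha y'}$; each application costs a pair of off-diagonal edges attached to $x$, which become off-diagonal edges inside $\mathcal M$. After this step one obtains a vector identity of the form $F_x - Q_x(F_x)=M_x^2\sum_\alpha s_{x\alpha}F_\alpha + \mathcal R_x$, where $\mathcal R_x$ is a sum of graphs that already fit template \eqref{kjlksj8iu} and satisfy either condition (ii) (light weight in $\mathcal M$) or condition (iii) (off-diagonal edge in $\mathcal M$).

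The second step is inversion: multiplying through by $(1-M^2S)^{-1}$ gives $F_x=\sum_w [(1-M^2S)^{-1}]_{xw}[Q_w(F_w)+\mathcal R_w]$. By \eqref{tianYz}, the diagonal entry $[(1-M^2S)^{-1}]_{xx}=1+\OO(W^{-d})$ produces the leading $Q_x(F_x)$ term (the $\OO(W^{-d})$ correction is absorbed into the new $\mathcal G_\kappa$ graphs), the off-diagonal entries with $|x-w|\le(\log N)^2 W$ yield the coefficients $c_{xw}=\OO(W^{-d})$, and the tail $|x-w|>(\log N)^2 W$ is bounded by $N^{-c_1\log N}$ which, since $F_w=\OO_\prec(1)$ by \eqref{chsz2}, is absorbed into the $\OO_\prec(N^{-D})$ remainder for any fixed $D$. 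The contribution $\mathcal R_w$ weighted by $[(1-M^2S)^{-1}]_{xw}$, supported on $|x-w|\le(\log N)^2 W$, adds new graphs that also fit \eqref{kjlksj8iu}; the coefficient estimate $C^\kappa_{\vec\alpha}=\OO(W^{-d\cdot\#\alpha})\mathbf{1}(|x-\alpha_l|\le(\log N)^2W)$ is preserved since each expansion step costs exactly one $s_{x\alpha}$ factor or equivalent. Finally, the terms with $w\in J$ in the sum $\sum_{w}c_{xw}Q_w(F_w)$ must be moved into the $\mathcal G_\kappa$ family: for such $w$ the pre-existing membership $w\in J$ manifests in the graph as a dashed-line identification between an atom of $\mathcal M$ (namely $w$) and an atom of $J$, so condition (i) of the statement is automatically fulfilled after expanding $Q_w(F_w)$ back to colorless graphs by the same procedure as in Step 2 of the proof of Lemma \ref{Qpart} (the color removal of Section \ref{40+}, applied to the single simple color $Q_w$). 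The companion identity \eqref{shangwmnws} is then obtained by isolating $Q_x(F_x)=F_x-P_x(F_x)$ and substituting the just-derived expression for $P_x(F_x)$.

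The main obstacle is the careful bookkeeping needed to verify that \emph{every} new graph produced along the way genuinely satisfies at least one of the three structural conditions listed for $\mathcal G_\kappa$ and $\widetilde{\mathcal G}_\kappa$. Light-weight and off-diagonal-edge errors appear naturally from the $G_{xx}-M_x$ corrections, from the $(\cdot)\to(\cdot,\dot I)$ minor conversions, and from the higher-moment residuals in the pairing analysis, but tracking that no ``clean'' term (one with no light weight, no internal off-diagonal edge in $\mathcal M$, and no dashed line to $J$) survives requires a careful case-by-case audit. A secondary subtlety is that $\dot I\ne\ddot I$ in general, so the two resolvent chains must be expanded independently, and one has to verify that intermediate cancellations (e.g.\;between different orderings of the $\dot{}$ and $\ddot{}$ expansions) are either accounted for in the $c_{xw}$ coefficients via the $(1-M^2S)^{-1}$ inversion or otherwise flow cleanly into the $\mathcal G_\kappa$ error family without introducing terms that violate the coefficient bound on $C^\kappa_{\vec\alpha}$.
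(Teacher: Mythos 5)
Your proposal follows essentially the same route as the paper's proof: the same self-consistent vector equation for $\dot G_{wy}\ddot G_{wy'}$ obtained from \eqref{Gp3} and \eqref{Gij Gijk}, inversion by $(1-M^2S)^{-1}$ with \eqref{tianYz} controlling the diagonal, near-diagonal and tail contributions, the $w\in J$ terms absorbed into the condition-(i) family via their dashed identification with $J$, and the $P_x$-colored remainders de-colored by Lemma \ref{lemEk}-type expansions. The only point to make explicit is that, before inverting, the vector equation must be extended to the degenerate rows $w\in\{y,y'\}\cup \dot I\cup \ddot I$, where the \eqref{Gp3}-based derivation does not apply (the paper does this by simply defining the remainder $\cal B_w$ there so that the identity holds by fiat); with that small patch your treatment of the $w\in J$ contributions goes through as you describe.
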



\vspace{5pt}

In applying Lemma \ref{qingqz} to \eqref {zjxiang}, we let $\al_\star$ play the role of $x$ in \eqref{shangwmnzi}, $\tilde \al_\star$ play the role of $w$, $\gamma_{1}$ and $\gamma_2$ play the role of $y$ and $y'$, $J$ be the set of all atoms in $\cal G ({\bm \al}, \vec x)$ of \eqref{nyuan}, and we label the new atoms by $\vec\beta_{i_0}$. Then we obtain that
\be\label{zjxiang23}
\begin{split}
\dot G_{\al_\star\gamma_1}\ddot G_{\al_\star\gamma_2} & =
Q_{\al_\star}\left(\dot G_{\al_\star\gamma_1}\ddot G_{\al_\star\gamma_2}\right) +\sum_{\tilde \alpha_\star \notin \{\al_\star\}\cup J} 
c_{\al_\star\tilde \alpha_\star} Q_{\tilde \alpha_\star}\left( \dot G_{\tilde \alpha_\star \gamma_1}\ddot G_{\tilde \alpha_\star\gamma_2} \right) \\
& +\sum_\kappa \sum_{\vec \beta_{i_0}} C^{\kappa}_{\vec \beta_{i_0} }\, \cal G_\kappa\left(\vec \beta_{i_0} , \al_\star, \gamma_1, \gamma_2\right) +\OO_\prec(N^{ -D}),
\end{split}
\ee
with 
$$ 
 c_{\tilde \al_\star} =\OO(W^{-d}){\bf 1}\left(|\tilde \alpha_\star-\al_\star|\le (\log N)^2W\right), \ \
 C^{\kappa}_{\vec \beta_{i_0} } =O\left(\left(W^{-d}\right)^{\#\text{ of $\beta$ atoms}}\right)  
 {\bf 1}\left(\max_l |\beta_{i_0}^l -\al_\star |\le (\log N)^2W\right).
 $$ 
 Now we plug \eqref{zjxiang23} into the graph $\cal G ({\bm \al}, \vec x)$, and we include these new $\beta$'s atoms into the molecule $\cal M_{i_0}$, which contains $x_{i_0}$ and $\alpha_\star$. Since $|\al_\star-x_{i_0}|\le (\log N)^{\OO(1)} W $, we have 
 $$ 
 c_{\tilde \al_\star} =\OO(W^{-d}){\bf 1}\left(|\tilde \al_\star-x_{i_0}  |\le (\log N)^{\OO(1)} W \right),$$
 and
 $$
 C^{\kappa}_{\vec \beta_{i_0} } =O\left(\left(W^{-d}\right)^{\#\text{ of $\beta$ atoms}}\right)  
 {\bf 1}\left(\max_l  |\beta_{i_0}^l -x_{i_0}|\le (\log N)^{\OO(1)} W \right).
 $$ 
Now with these coefficients and \eqref{zjxiang23}, we can write $\cal F$ in \eqref{nyuan} as 
\be\label{xjlajzh}
\cal F= \cal Q_{\al_\star}\cal F + \cal Q_{\tilde \al_\star} \cal {\wt F} +\sum_\kappa \sum_{\cal E_\kappa \succ \cal E}\cal F_{\kappa, \mathcal E_\kappa}+\OO_\prec(N^{-D}),
\ee
where $\wt{ \cal F}$ and $\cal F_{\kappa, \mathcal E_\kappa}$ are colorless forests defined as follows:
$$
  \wt{ \cal F} : =\sum_{\vec x}^{(\star)}   \sum_{{\bm \al}}\sum_{\tilde\al_\star} \left( C _{{\bm \al}, \vec x}\cdot c_{\al_\star \tilde\al_\star}\right) \cdot \wt {\cal E}\cdot \wt{  \cal G} ({\bm \al}, \tilde \al_\star, \vec x) , \qquad  \wt{  \cal G} ({\bm \al}, \tilde \al_\star, \vec x) :=
   \frac{{  \cal G} ({\bm \al},  \vec x) }{\dot G_{\al_\star \gamma_1}\ddot G_{\al_\star\gamma_2} } \dot G_{\tilde \al_\star \gamma_1}\ddot G_{\tilde \al_\star\gamma_2} ,
$$
where $\wt {\cal E}$ is the unique dashed-line partition extension of $\cal E$ in which $\tilde\al_\star$ is not equal to any other atoms in $\wt{  \cal G} ({\bm \al}, \tilde \al_\star, \vec x) $; 
$$  \cal F_{\kappa,\mathcal E_\kappa}: =\sum_{\vec x}^{(\star)}   \sum_{{\bm \al}} \sum_{\vec \beta_{i_0}}
\left( C _{{\bm \al}, \vec x} \cdot  C^{\kappa}_{\vec \beta_{i_0} } \right) \cdot   \cal E_{\kappa}\cdot  \cal G_{\kappa}  ({\bm \al}, \vec \beta_{i_0}, \vec x),  \qquad \cal G_{\kappa}  ({\bm \al}, \vec \beta_{i_0}, \vec x)  : = \frac{ {  \cal G} ({\bm \al},  \vec x) }{\dot G_{\al_\star \gamma_1}\ddot G_{\al_\star\gamma_2} }\cal G_\kappa(\vec \beta_{i_0} , \al_\star, \gamma_1, \gamma_2 ) ,
$$
where $\cal E_\kappa$ is a dashed-line partition extension of $\cal E$. 
Here we recall that $\cal G$ is the graph in \eqref{nyuan} and $\cal G_\kappa$ is the graph in \eqref{zjxiang23}. 

For the above $\wt F$ and $\cal F_{\kappa,\mathcal E_\kappa}$, it is easy to prove that they are standard colorless forest. Furthermore, compared with $\cal F$, there is no new simple free molecules appearing in $\cal {\wt F}$ and $\cal F_{\kappa,\mathcal E_\kappa}$, and $\cal M_i$ is not simple free anymore in $\cal F_{\kappa,\mathcal E_\kappa}$. Hence $\cal F_{\kappa,\mathcal E_\kappa}$ has at most $(t_s-1)$ simple free molecules, and $\cal {\wt F}$ has at most $t_s$ simple free molecules. Now with \eqref{xjlajzh}, we can finish the proof of Lemma \ref{yuizhy765}. Note that for the second term on the right-hand side of \eqref{xjlajzh}, we need to switch the names of $\al_\star$ and $\tilde \al_\star$, and relabel the standard forests.
\end{proof}

Next we prove Lemma \ref{yuiz3i87} using Lemma \ref{qingqz}.
 
  \begin{proof}[Proof of Lemma \ref{yuiz3i87}. ]
 We consider 
\be \nonumber
 \mathcal Q_{\al_\star}\left(\cal F\right) =  \sum_{\vec x}^{(\star)}   \sum_{{\bm \al}} C _{{\bm \al}, \vec x}  \cdot \cal E\cdot  \mathcal Q_{\al_\star}\left(\cal G ({\bm \al}, \vec x)\right),
\ee
where $\al_\star$ is a simple charged atom in a simple free molecule $\cal M_{i_0}$. In particular, atom $\al_\star$ is not equal to any other atoms in the graph $\cal G ({\bm \al}, \vec x)$, and there are only two edges painted with the $Q_{\al_\star}$ color in $ \cal Q_{\al_\star} (\cal F)$. 
As we did in the proof of Lemma \ref{sjiyyz}, we expand all {\it colorless} solid edges, weights and light weights in $ \cal G ({\bm \al}, \vec x)$ with respect to the atom $\al_\star$ as follows. 
 \begin{itemize}
\item We use \eqref{Gp1} and \eqref{Gp2} to expand the solid edges, $f_{1,2}$ weights and $f_{1,2}$ light weights which are not connected with atom $\al_\star$. We use \eqref{Gp6} to expand the $f_{3,4}$ weights and $f_{3,4}$ light weights which are not attached to atom $\al_\star$. 

\item Since $\cal M_{i_0}$ is a simple free molecule, the weights on atom $\al_\star$ must be normal weights (i.e., not light weights). For these weights, we write them as
 $$G_{x_{i_0}x_{i_0}}=M_{x_{i_0}}+ (G_{x_{i_0}x_{i_0}}-M_{x_{i_0}}), \quad 
\left(G_{x_{i_0}x_{i_0}}\right)^{-1}=M^{-1}_{x_{i_0}}+ \left(\left(G_{x_{i_0}x_{i_0}}\right)^{-1}-M^{-1}_{x_{i_0}}\right),$$ 
i.e. we expand $f_{1,2}$ weights into an order 1 deterministic quantity plus $f_{1,2}$ light weights. For $f_{3,4}$ weights on atom $\al_\star$, we keep them unchanged, since they are already independent of the $\al_\star$ atom. 
\end{itemize}

Then as in the proof of Lemma \ref{sjiyyz}, we can expand the edges and weights, which are not connected with the atom $\al_\star$ directly, step by step: 
$$\cal G=\cal G_0^{(1)}+\cal F_1=\cal G_0^{(2)}+\cal F_1+\cal F_2=\cdots,$$
until we get that
\be\label{zouasdl22}
 \cal G ({\bm \al}, \vec x)= \cal G_0({\bm \al}, \vec x)+\sum_\kappa \sum_{{\bm \beta}} C^\kappa({\bm \beta})\cal G_\kappa({\bm \al}, \; {\bm \beta},\; \vec x),
\ee
where 
$$C^\kappa({\bm \beta})=O\left((W^{-d})^{\# \; \beta \; atoms}\right){\bf 1 }\left(\max_{i,j} |\beta_{i}^j-x_{i}| \le (\log N)^{\OO(1)}W  \right).$$
Note that \eqref{zouasdl22} corresponds to \eqref{zouasdl} in the proof of Lemma \ref{sjiyyz}. In $\cal G_0$, expect for the two solid edges connected with ${\al_\star}$, all the other edges and weights are independent of the $\al_\star$ atom.
Thus we immediately get that
\be\label{mpzx}
\E_{\al_\star}\mathcal Q_{\al_\star}\left(\cal G_0({\bm \al}, \vec x)\right)=0.
\ee
As explained in the proof of Lemma \ref{sjiyyz}, the new $\beta$ atoms in $\cal G_\kappa$ come from the expansions in \eqref{Gp6}, and the coefficients $C^\kappa({\bm \beta})$ come from the $s$ coefficients in \eqref{Gp6}. If the expansion happened for a weight on atom $x_{i}$, then the new atom $\beta_{i}^j $ satisfies  $\beta_{i}^j- x_{i}=\OO(W)$. Similarly, if the expansion happened for a weight on atom $\al$ in molecule $\cal M_i$, then the new atom $\beta_{i}^j$ satisfies  $\beta_{i}^j- \al =\OO(W)$, which implies $|\beta_{i}^j- x_{i} |\le (\log N)^{\OO(1)}W$. We then include these new $\beta^j_{i}$ atoms into the molecule $\cal M_{i}$ of $\cal G_\kappa$. 

We define
$$\cal F_{\kappa,\mathcal E_\kappa}: = \sum_{\vec x}^{(\star)}   \sum_{{\bm \al}} \sum_{{\bm \beta}} 
\left( C _{{\bm \al}, \vec x} \cdot C^\kappa({\bm \beta})\right) \cdot \cal E_\kappa\cdot  \cal G_\kappa({\bm \al}, \; {\bm \beta},\; \vec x).$$
Note that it has the same structure as the forest $\cal F_{\kappa, \cal E_\kappa}$ in \eqref{forest_regular} in the proof of Lemma \ref{sjiyyz}. Suppose in $\cal G({\bm \al},\vec x)$, the two solid edges connected with $\al_\star$ are $G^{(\dot I)}_{\al_\star\beta_\star}G^{(\ddot I)}_{\al_\star\gamma_\star}$. Then we define
$${\cal G}^{Q}_\kappa({\bm \al}, \; {\bm \beta},\; \vec x):=\frac{\cal G_\kappa({\bm \al}, \; {\bm \beta},\; \vec x)}{G^{(\dot I)}_{\al_\star\beta_\star}G^{(\ddot I)}_{\al_\star\gamma_\star}}Q_{\al_\star}\left(G^{(\dot I)}_{\al_\star\beta_\star}G^{(\ddot I)}_{\al_\star\gamma_\star}\right),$$
and the forest
$$\cal F^{Q}_{\kappa,\mathcal E_\kappa}: = \sum_{\vec x}^{(\star)}   \sum_{{\bm \al}} \sum_{{\bm \beta}} 
\left( C _{{\bm \al}, \vec x} \cdot C^\kappa({\bm \beta})\right) \cdot \cal E_\kappa\cdot  \cal G^Q_\kappa({\bm \al}, \; {\bm \beta},\; \vec x).$$
With \eqref{mpzx}, we obtain that
$$
\E  \cal Q_{\al_\star} (\cal F)=\sum_\kappa \sum_{\mathcal E_{ \kappa} \succ \mathcal E}\E  \cal F^Q_{\kappa,\mathcal E_\kappa}.
$$
Note that $\cal F^Q_{\kappa, \cal E_\kappa}$ has the same structure as $\cal F_{\kappa, \cal E_\kappa}$ defined above except that $\cal F^Q_{\kappa, \cal E_\kappa}$ has two $Q$-colored edges. 

For the colorless forest $\cal F_{\kappa,\mathcal E_\kappa}$, the statements (a)-(c) in the proof of Lemma \ref{sjiyyz} also hold here. Therefore we have that the number of simple free molecules in $\cal F_{\kappa,\mathcal E_\kappa}$ is strictly smaller than that of $\cal F$. Finally, we apply \eqref{shangwmnws} to the $Q$-colored solid edges $Q_{\al_\star}\left(G_{\al_\star\beta_\star}G_{\al_\star\gamma_\star}\right)$ with $\al_\star$ playing the role of atom $x$ in \eqref{shangwmnws}, and we add the new $w$ and $\al$ atoms in \eqref{shangwmnws} into the molecule $\cal M_{i_0}$ (i.e. the one containing $\al_\star$). 
Then for each $\cal F^{Q}_{\kappa,\mathcal E_\kappa}$, we can write 
  $$
 \cal F^Q_{\kappa,\mathcal E_\kappa}=\sum_{\kappa'} \cal F_{\kappa,\mathcal E_\kappa, \kappa'} + \OO_\prec(N^{-D}),
 $$
where $\cal F_{\kappa,\mathcal E_\kappa, \kappa'}$ are colorless standard forests by replacing $Q_{\al_\star}\left(G_{\al_\star\beta_\star}G_{\al_\star\gamma_\star}\right)$ with the terms on the right-hand side of \eqref{shangwmnws}. Moreover, it is easy to see that each $\cal F_{\kappa,\mathcal E_\kappa, \kappa'}$ contains at most the same number of simple free molecules as $\cal F_{\kappa,\mathcal E_\kappa}$, i.e., each $\cal F_{\kappa,\mathcal E_\kappa, \kappa'}$ contains at most $(t_s-1)$ simple free molecules. This completes the proof of Lemma \ref{yuiz3i87} by relabelling the standard forests.
\end{proof}

 \appendix
\section{Proof of \eqref{zlende}}\label{appd}

With Taylor expansion, we can write
\begin{equation}\label{taylor}
 (1-|m|^2 S)^{-1}S = \left( 1-|m|^{2K}S^{K}\right)^{-1}\sum_{k=0}^{K-1}|m|^{2k}S^{k+1}.
 \ee
Since $\|S\|_{l^\infty \to l^\infty} = 1$ and $|m|\le 1 - c\eta$ for some constant $c>0$ by \eqref{msc}, it is easy to see that by taking $K=\eta^{-1} $ in \eqref{taylor}, we have
\be\label{taylor1}
0\le  \left[(1-|m|^2 S)^{-1}S\right]_{xy} \le C \max_{x, y}\sum_{k=1}^{\eta^{-1}} (S^{k})_{xy}.
\ee
Since $S$ is a doubly stochastic matrix, $(S^{k})_{xy}$ can be understood through a $k$-step random walk on the torus $\Z_N^d$. We first prove the following lemma. Here different from the previous proof, for any vector $v\in \R^d$ we denote $| v|\equiv \| v\|_2$. 

\begin{lemma}\label{reg RW2} 
Let $B_n = \sum_{i=1}^n X_i$ be a random walk on $\mathbb Z^d$ with $i.i.d.$ steps $X_i$ that satisfy the following conditions: (i) $X_1$ is symmetric; (ii) $|X_1| \le L$ almost surely; (iii) there exists constants $C^*,c_*>0$ such that 
\begin{equation}\label{core_condition2}
c_* L^{-d} {\mathbf 1} _{|x| \le  c_* L}\le  \mathbb P(|X_1|=x)  \le C^* L^{-d} {\mathbf 1} _{|x| \le L}, \quad  j\in \Z.
\end{equation}
Let $\Sigma$ be the covariance matrix of $X_1$ with $\Sigma_{ij}=\mathbb E [ (X_1)_i (X_1)_j ]$. Assume that $n\in \mathbb N$ satisfies 
\begin{equation}\label{nL_relation}
 \log n \ge c_0\log L
\end{equation}
for some constant $c_0>0$. Then for any fixed (large) $D>0$, we have
\be\label{RW_diffusion2}
\mathbb P\left(B_n = x\right) = \frac{1+\oo_n(1)}{(2\pi n)^{d/2} \sqrt{\det(\Sigma)}}e^{-\frac1{2} x^T (n\Sigma)^{-1} x}+\OO(L^{-D}),
\ee
for large enough $L$ (and $n$).
\end{lemma}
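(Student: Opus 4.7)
The plan is to prove Lemma \ref{reg RW2} by a Fourier-analytic local central limit theorem argument. Let $\phi(\theta):=\mathbb{E}[e^{i\theta\cdot X_1}]$ for $\theta\in[-\pi,\pi]^d$; by the assumed symmetry of $X_1$, $\phi$ is real-valued and even. The starting point is the Fourier inversion formula on $\mathbb{Z}^d$:
$$\mathbb{P}(B_n = x) = \frac{1}{(2\pi)^d}\int_{[-\pi,\pi]^d}\phi(\theta)^n\, e^{-i\theta\cdot x}\, d\theta.$$
The goal is then to show that this integral equals the Gaussian density up to an $O(L^{-D})$ additive error.

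First I would split the domain into a small-$\theta$ region $A:=\{|\theta|\le L^{-1+\epsilon}\}$ and its complement $B$, where $\epsilon>0$ is a small parameter to be chosen in terms of $c_0$ and $D$. The key deterministic input from the regularity condition \eqref{core_condition2} is a quantitative bound on $\phi$: writing
$$1-\phi(\theta)=2\sum_x\mathbb{P}(X_1=x)\sin^2(\theta\cdot x/2)$$
and using the lower bound $\mathbb{P}(X_1=x)\ge c_*L^{-d}$ on $|x|\le c_*L$, a direct computation (considering separately the regimes $|\theta|\lesssim 1/L$ and $|\theta|\gtrsim 1/L$ on the torus) gives
$$|\phi(\theta)|\;\le\; 1 - c\,\min\bigl(L^2|\theta|^2,\,1\bigr),\qquad \theta\in[-\pi,\pi]^d,$$
for a constant $c>0$ depending only on $c_*$. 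Combined with $n\ge L^{c_0}$, this yields $|\phi(\theta)|^n\le\exp(-c\,n\,L^{2\epsilon})$ on $B\cap\{|\theta|\le 1/L\}$ and $|\phi(\theta)|^n\le e^{-cn}$ on $B\cap\{|\theta|\ge 1/L\}$; integrated against $|B|=O(1)$, both contributions are super-polynomially small in $L$ and absorbed into the $O(L^{-D})$ error.

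Next, on the small-$\theta$ region $A$ I would Taylor expand $\phi$. Symmetry kills the odd-order terms, and the moment bound $|X_1|\le L$ gives
$$\log\phi(\theta)=-\tfrac{1}{2}\theta^T\Sigma\,\theta+O\bigl(L^4|\theta|^4\bigr),$$
so that $\phi(\theta)^n=\exp\bigl(-\tfrac{n}{2}\theta^T\Sigma\,\theta\bigr)\bigl(1+O(nL^4|\theta|^4)\bigr)$ uniformly on $A$. Substituting into the Fourier integral over $A$, the Gaussian weight $e^{-\frac{n}{2}\theta^T\Sigma\theta}$ localizes $\theta$ to scale $(n\Sigma)^{-1/2}$, so the cubic and higher remainder produces a multiplicative error of size $o_n(1)$. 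Extending the Gaussian integral from $A$ to all of $\mathbb{R}^d$ contributes only a Gaussian tail of order $\exp(-cL^{2\epsilon})$, again absorbable into $O(L^{-D})$. Evaluating the resulting Gaussian integral and comparing to the Fourier transform of the target density yields the main term on the right-hand side of \eqref{RW_diffusion2}.

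The main technical obstacle is the third step, namely obtaining the quantitative lower bound $1-|\phi(\theta)|\ge c\min(L^2|\theta|^2,1)$ uniformly on the torus. This is exactly where the two-sided regularity \eqref{core_condition2} is essential: the lower bound on $\mathbb{P}(X_1=x)$ forces a positive fraction of the mass to look like a uniform measure on a cube of side $L$, whose characteristic function is a product of sinc factors that decays away from the origin and dominates $1-|\phi|$ from below. A secondary, more bookkeeping-style issue is the calibration of $\epsilon$: one must choose it small enough that $nL^4|\theta|^4$ is a genuine small error against the Gaussian weight on $A$, yet large enough that $\exp(-cnL^{2\epsilon})$ beats $L^{-D}$, which is where the hypothesis $\log n\ge c_0\log L$ enters. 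Once these two points are in hand, the rest is standard manipulation of Gaussian integrals and Taylor remainders.
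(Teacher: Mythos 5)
Your overall strategy is the same as the paper's: Fourier inversion on the torus, a quantitative bound $|\phi(\theta)|\le 1-c\min(L^2|\theta|^2,1)$ from the two-sided core condition to kill the large-$\theta$ region using $n\ge L^{c_0}$, and a Taylor/cumulant expansion near $\theta=0$. However, there is a genuine gap in your error analysis of the small-$\theta$ region. You bound the Taylor remainder in absolute value, $\phi(\theta)^n=e^{-\frac n2\theta^T\Sigma\theta}\bigl(1+\OO(nL^4|\theta|^4)\bigr)$, and then integrate; this produces an \emph{additive} error of order $\int e^{-\frac n2\theta^T\Sigma\theta}\,nL^4|\theta|^4\,\rd\theta\sim n^{-1}(nL^2)^{-d/2}$, which carries no dependence on $x$. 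That quantity is $\oo_n(1)$ times the \emph{peak} value of the Gaussian density, but the lemma requires the error to be either $\oo_n(1)$ times the Gaussian \emph{at} $x$ (which is smaller by a factor $e^{-\frac12 x^T(n\Sigma)^{-1}x}$, as small as $e^{-cn^{2\tau_0}}$ in the relevant range) or $\OO(L^{-D})$ for arbitrary $D$; since $n$ may be as small as $L^{c_0}$ with $c_0$ small, $n^{-1}(nL^2)^{-d/2}$ is only a fixed negative power of $L$ and is not $\OO(L^{-D})$. So the claim that the remainder "produces a multiplicative error of size $\oo_n(1)$" does not follow from the argument as written. The paper closes exactly this point by (i) first discarding $|x|\ge Ln^{1/2+\tau_0}$ via a large-deviation bound, and (ii) expanding the characteristic function to a fixed finite order in cumulants and keeping the oscillatory factor $e^{-\ii\theta\cdot x}$ in each correction term: the corrections are then explicit integrals of the form $\int e^{-\ii q\cdot y}e^{-q^2/2}|L\Sigma^{-1/2}q|^k\,\rd q$ (Hermite-polynomial/stationary-phase type), which retain the factor $e^{-y^2/2}$ with $y=(n\Sigma)^{-1/2}x$ and are therefore genuinely multiplicative, of size $\OO(n^{-(1/6-\tau_0)k})$ relative to the main term.

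A secondary calibration issue: your expansion is not uniform on $A=\{|\theta|\le L^{-1+\epsilon}\}$, since there $nL^4|\theta|^4$ can be as large as $nL^{4\epsilon}$ and the quartic correction can dominate the quadratic exponent (the ratio $L^2|\theta|^2$ exceeds $1$ on the outer part of $A$). You should either take the cutoff at scale $|\theta|\le n^{-1/2+\tau_0}L^{-1}$ as in the paper, handling the intermediate region $n^{-1/2+\tau_0}L^{-1}\le|\theta|\le\pi$ with the pointwise bound $|\phi(\theta)|\le 1-c\min(L^2|\theta|^2,1)$, or take $|\theta|\le\delta/L$ with a small constant $\delta$ so that the quartic term is absorbed into a fraction of the quadratic one. (Your bound $e^{-cnL^{2\epsilon}}$ on $B\cap\{|\theta|\le 1/L\}$ should also read $e^{-cn}$, since the gain saturates at $\min(L^2|\theta|^2,1)=1$; this is harmless because $n\ge L^{c_0}$ already makes $e^{-cn}$ superpolynomially small.)
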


\begin{proof} [Proof of Lemma \ref{reg RW2}]
Note that \eqref{RW_diffusion2} is in accordance with the central limit theorem. Our proof below is in fact a variant of the proof of CLT with characteristic functions. 

Combining condition (ii), i.e., $|X_i|\le L$, with a large deviation estimate, with \eqref{nL_relation}, we get that
$$\P\left(|B_n| \ge L n^{1/2+\tau}\right) = \OO(L^{-D}), $$
for any fixed (small) $\tau >0$ and (large) $D>0$. Thus to prove \eqref{RW_diffusion2}, we only need to focus on the case 
$$|x|=\OO(L n^{1/2+\tau_0})$$ 
for some small enough constant $\tau_0>0 $. In the following proof, we always make this assumption.

For $p\in\R^d $ with $0<|p|\le L^{-1}n^{-1/2+\tau_0}$, we have 
$$ \log \E e^{ \ii p \cdot X_1}= - \frac12 p^T \Sigma p + \sum_{k\ge 3} \frac{\kappa_k(\hat p)}{k!} (\ii |p|)^k,$$
where $\hat p= p/|p|$ and $\kappa_k(\hat p)$ is the $k$-th cumulant of $\hat p\cdot X_1$. It gives that
$$\frac{1}{n}\log \E e^{ \ii p \cdot  B_n}= - \frac12 p^T \Sigma p + \sum_{k\ge 3} \frac{\kappa_k(\hat p)}{k!} (\ii |p|)^k. $$
By the condition (\ref{core_condition2}), it is easy to verify that
\be\label{operator_sigma}
C^{-1}L^{2}\le \Sigma \le CL^2
\ee
in the sense of operators, and 
$$ |\kappa_k(\hat p)|\le  C^k k! L^k, \quad k\in \N, \quad \hat p \in \mathbb S^d,$$
for some constant $C>0$. 
Then for $|p|\le L^{-1}n^{-1/2+\tau_0}$, we have 
\be\label{ganzz} 
\E e^{\ii p \cdot B_n}=e^{-\frac12 n p^T\Sigma p}\Big(1+\sum_{3 \le k \le K_D}\al_k(\hat p)( Ln^{1/2} |p|)^k\Big) + \OO(L^{-D}), 
 \ee
where $\al_k\in \C$ are coefficients (independent of $p$) satisfying
 \be\label{amp}
  \al_k(\hat p)=\OO(n^{1-k/2})=\OO(n^{-k/6}), \quad k\ge 3, 
  \ee
and $K_D=\OO(1)$ is a fixed integer depending only on $D$ and the constant $c_0$ in \eqref{nL_relation}. 
 
Now we estimate $\E e^{\ii p\cdot B_n}$ for large $p$. 
Because of the existence of the core in \eqref{core_condition2}, it is easy to see that for some constant $c>0$, 
$$| \E e^{ \ii p\cdot X_1}|\le 1-c\min\{1,(L|p|)^2\},\quad   L^{-1}n^{-1/2+\tau_0} \le |p|\le  \pi,$$
 which implies that for some $c>0$,
 $$\left| \E e^{\ii p \cdot B_n}\right|\le e^{-cn^{\tau_0}},\quad |p|\ge L^{-1}n^{-1/2+\tau_0}. $$
 Together with \eqref{ganzz}, with 
 $$
 y:=(n\Sigma)^{-1/2}x, \quad |y|=\OO(n^{\tau_0}), \quad q:=(n\Sigma)^{1/2}p ,
 $$ 
and $H_n$ being  the Hermite polynomials,  we have 
\begin{equation}\nonumber
\begin{split}
\P\left(B_n =x \right)& = \frac{1}{(2\pi)^d}\int_{|p| \le L^{-1}n^{-1/2+\tau_0}} \rd p \, e^{-\ii p\cdot x}e^{-\frac12 n p^T\Sigma p}\Big(1+\sum_{3 \le k \le K_D}\al_k(\hat p)( Ln^{1/2} |p|)^k\Big) +\OO(L^{-D})\\
& =\frac{1}{(2\pi)^d\sqrt{ n^d \det(\Sigma)}} \int_{|L\Sigma^{-1/2}q|\le n^{\tau_0}} \rd q \, e^{ -\ii q \cdot y}e^{-\frac{q^2}{2}}\left(1+\sum_{3\le k\le K_D}\alpha_k(\hat p) |L\Sigma^{-1/2}q|^k\right)+\OO(L^{-D})\\
& =\frac{1}{(2\pi)^d\sqrt{ n^d \det(\Sigma)}} \int_{q\in \mathbb R^d} \rd q \, e^{ -\ii q \cdot y}e^{-\frac{q^2}{2}}\left(1+\sum_{3\le k\le K_D}\alpha_k(\hat p) |L\Sigma^{-1/2}q|^k\right)+\OO(L^{-D})\\
&=\frac{1}{(2\pi n)^{d/2} \sqrt{\det(\Sigma)} }\left(1+\sum_{3\le k \le K_D}\OO(n^{-({1}/{6} - \tau_0) k} ) \right) e^{-\frac{y^2}{2}}+ \OO(L^{-D})
\end{split}
\end{equation}
where in the third step we used $C^{-1/2}|q|\le |L\Sigma^{-1/2}q|\le C^{1/2}|q|$ by \eqref{operator_sigma} and approximated the $\int_{|L\Sigma^{-1/2}q|\le n^{\tau_0}}$ with $\int_{q\in \R}$ up to an error $\OO(L^{-D})$ due to the factor $e^{-q^2/2}$, and in the last step we used \eqref{amp}, $|y|=\OO(n^{\tau_0})$ and stationary approximation to bound the integrals. This proves \eqref{RW_diffusion2}.
\end{proof} 
%
%

Now we can give a proof of \eqref{zlende}.
\begin{proof}[Proof of \eqref{zlende}]
Fix any small constant $\tau>0$. We now bound the sum in \eqref{taylor1}. Let $B_n = \sum_{i=1}^n X_i$ be a random walk on $\Z_N^d$ with $i.i.d.$ steps $X_i$, with distribution $\mathbb P(X_1 = y-x)=s_{xy}$. Then it is easy to see that
$$(S^k)_{xy}=\mathbb P(B_k = y - x).$$

For $1\le k \le N^{\tau}$, with \eqref{bandcw1} we can bound 
\be\label{Sk1} (S^k)_{xy} \le \mathbf 1_{|x-y| \le C_s k W} W^{-d} \lesssim \frac{N^{(d-2)\tau}}{W^2\langle x-y\rangle^{d-2}}.\ee
For $N^{\tau} \le k \le N^{2-\tau}/W^2$, we have a large deviation estimate
\be\nonumber
\P\left(|B_k| \ge |x-y| \right) \le \exp\left( - \frac{c|x-y|^2}{k^2 W^2}\right)
\ee
for some constant $c>0$. In particular, with high probability, $b_x$ can be regarded as a random walk on the full lattice $\Z^d$ if $k \le N^{2-\tau}/W^2$, and we can apply \eqref{RW_diffusion2} to get that 
\begin{align}\label{Sk2}
(S^k)_{xy}=\mathbb P( B_k= y-x) \lesssim \frac{1}{k^{d/2} W^d}e^{-\frac{c}{2kW^2} |y-x|^2 }+\OO(N^{-D}),
\end{align}
 for some constant $c>0$ and for any large constant $D>0$. Finally, for $N^{2-\tau}/W^2 \le k \le \eta^{-1}$, using $\|S\|_{l^\infty \to l^\infty} \le 1$ we get that
 \be\label{Sk3}
 (S^k)_{xy} \le \max_{x,y}(S^{N^{2-\tau}/W^2})_{xy} \le \frac{1}{N^{d-d\tau/2}}
 \ee
where we used \eqref{Sk2} in the last step. Applying \eqref{Sk1}-\eqref{Sk3} to \eqref{taylor1}, we obtain that
\begin{align*}
\sum_{k=1}^{\eta^{-1}} (S^{k})_{xy} \lesssim \frac{N^{(d-2)\tau}}{W^2\langle x-y\rangle^{d-2}} +\frac{N^{d\tau/2}}{N^{d}\eta}+ \sum_{N^\tau \le k\le N^{2-\tau}/W^2} \frac{1}{k^{d/2} W^d}\mathbf 1_{k\ge N^{-\tau}|x-y|^2/W^2} + \OO(N^{-D}),
\end{align*}
where it is easy to verify that
\begin{align*}
\sum_{N^\tau \le k\le N^{2-\tau}/W^2} \frac{1}{k^{d/2} W^d}\mathbf 1_{k\ge N^{-\tau}|x-y|^2/W^2} &\lesssim \frac{1}{W^d}\mathbf 1_{|x-y|\le N^
\tau W} + \frac{1}{W^d (N^{-\tau}|x-y|^2/W^2)^{d/2-1}}\mathbf 1_{|x-y|\ge N^
\tau W} \\
&\lesssim \frac{N^{(d-2)\tau}}{W^2 \langle x-y\rangle^{d-2}}.
\end{align*}
This finishes the proof of \eqref{zlende} since $\tau$ can be arbitrarily small and $D$ can be arbitrarily large.
\end{proof}

\section{Proof of Lemma \ref{qingqz}}\label{pf qingqz}
 We fix $x,y, y'$ in the proof. For simplicity, we ignore ``$x$" from the coefficients $c_{xw}$ and $\wt c_{xw}$. 
With \eqref{Gp3}, we can write  
 $$
 G_{xy}G_{xy'}=M_x^2\sum^{(x)}_{s,s'} H_{xs} H_{xs'}G^{(x)}_{sy}G^{(x)}_{s'y'}+\left(2\Lambda_x M_x+\Lambda_x ^2\right)\frac{G_{xy}}{G_{xx}}\frac{G_{xy'}}{G_{xx}}, \quad \Lambda_x:= G_{xx}-M_x.
 $$
Similarly for $x\notin\{y,y'\}\cup \dot I\cup \ddot I$, together with \eqref{Gp1}, we get that
\begin{equation}\label{hangkong}
\begin{split}
& \E_x \dot G_{xy}\ddot G_{xy'}
 =M_x^2\sum_{w} s_{xw}\dot G^{(x)}_{wy}\ddot G^{(x)}_{w y'}
 + \E_x \left(\left(\dot \Lambda_x M_x+\ddot\Lambda_x  M_x+\ddot \Lambda_x \dot\Lambda_x\right)\frac{\dot G_{xy}}{\dot G_{xx}}\frac{\ddot G_{xy'}}{\ddot G_{xx}}\right)\\
& =M_x^2\sum_{y} s_{xw}\dot G _{wy}\ddot G _{wy'}
-M_x^2\sum_{y} s_{xw}\left(\dot G _{wy}\frac{\ddot G_{wx}\ddot G_{xy'}}{\ddot G_{xx}}
+ \frac{\dot G_{wx}\dot G_{xy}}{\dot G_{xx}}\ddot G_{wy'}
- \frac{\dot G_{wx}\dot G_{xy}}{\dot G_{xx}} \frac{\ddot G_{wx}\ddot G_{xy'}}{\ddot G_{xx}}\right)\\
&+ \E_x \left(\left(\dot \Lambda_x M_x+\ddot\Lambda_x  M_x+\ddot \Lambda_x \dot\Lambda_x\right)\frac{\dot G_{xy}}{\dot G_{xx}}\frac{\ddot G_{xy'}}{\ddot G_{xx}}\right).
\end{split}
\end{equation}
Now we define the ``error" part as
\be\label{sdfuyys}
\begin{split}
\cal B_x : & = -M_x^2\sum_{y} s_{xw}\left(\dot G _{wy}\frac{\ddot G_{wx}\ddot G_{xy'}}{\ddot G_{xx}}
+ \frac{\dot G_{wx}\dot G_{xy}}{\dot G_{xx}}\ddot G_{wy'}
- \frac{\dot G_{wx}\dot G_{xy}}{\dot G_{xx}} \frac{\ddot G_{wx}\ddot G_{xy'}}{\ddot G_{xx}}\right)\\
&+ \E_x \left(\left(\dot \Lambda_x M_x+\ddot\Lambda_x  M_x+\ddot \Lambda_x \dot\Lambda_x\right)\frac{\dot G_{xy}}{\dot G_{xx}}\frac{\ddot G_{xy'}}{\ddot G_{xx}}\right), \quad \text{ if } x\notin \{y,y'\}\cup {\dot I\cup \ddot I},
\end{split}
\ee
and
\be\nonumber
\cal B_x:=\E_x\dot G_{xy}\ddot G_{xy'}-M_x^2\sum_{y} s_{xw}\dot G _{wy}\ddot G _{wy'}, \quad \text{ if } x\in \{y,y'\}\cup {\dot I\cup \ddot I}.
\ee
With the above definition and \eqref{hangkong}, we have for any $x\in \Z_N^d$,
\be\label{yyxj}
\dot G_{xy}\ddot G_{xy'}=
M_x^2\sum_{y} s_{xw}\dot G _{wy}\ddot G _{wy'}+Q_x\left(\dot G_{xy}\ddot G_{xy'}\right)+\cal B_x.
\ee
It implies (with $\{y,y'\}\cup \dot I\cup \ddot I\subset J$) 
\begin{align*}
\dot G_{xy}\ddot G_{xy'} & =\sum_{y}\left[(1-M^2S)^{-1}\right]_{xw} \left(Q_w\left(\dot G_{wy}\ddot G_{wy'} \right)
+  \cal B_w\right)\\
&=\sum_{w\notin J}  \left[(1-M^2S)^{-1}\right]_{xw} \left(Q_w\left(\dot G_{wy}\ddot G_{wy'} \right)
+  \cal B_w\right)\\
& +  \sum_{w\in J}  \left[(1-M^2S)^{-1}\right]_{xw}
\sum_{w'}  \left( 1-M^2S  \right)_{ww'} \left(\dot G_{w' y}\ddot G_{w' y'}\right).
\end{align*}
Then using (\ref{tianYz}), we obtain that for any fixed $D>0$, 
\begin{equation}\label{suyys}
\begin{split}
\dot G_{xy}\ddot G_{xy'} -Q_x \left(\dot G_{xy}\ddot G_{xy'} \right)
  &= \sum_{w\notin J}
   c_{w}   Q_w\left(\dot G_{wy}\ddot G_{wy'}  \right)
+ { \cal B_x}+  \sum_{j\notin J}
   c_{w} \cal B_w \\
& +  
\sum_{w\in J } c_{w}   \dot G_{wy}\ddot G_{wy'}  
+ \sum_{w\in J}\sum_{w'} d_{ww'} \dot G_{w' y}\ddot G_{w' y'} 
+\OO_\prec(N^{ -D}),
\end{split}
\end{equation}
for some coefficients satisfying
$$c_{w}=\OO(W^{-d}){\bf 1}_{|x-w|\le (\log N)^2W} ,\quad d_{ww'}= \OO(W^{-2d}){\bf 1}_{|x-w|+|x-w'|\le (\log N)^2W}.
$$
Furthermore, by the definition of $\cal B_w$, we have 
\begin{equation*}
\begin{split}
\sum_{j\notin J}
   c_w    \cal B_w &=\sum_{w\notin J}  \sum_{v} c'_{w} s_{wv} \left(\dot G _{vy}\frac{\ddot G_{vw}\ddot G_{wy'}}{\ddot G_{ww}} + \frac{\dot G_{vw}\dot G_{wy}}{\dot G_{ww}}\ddot G_{vy'} - \frac{\dot G_{vw}\dot G_{wy}}{\dot G_{ww}} \frac{\ddot G_{vw}\ddot G_{wy'}}{\ddot G_{ww}}\right) \\
&+\sum_{w\notin J}c_w \E_w \left(\left(\dot \Lambda_w M_{w}+\ddot\Lambda_w  M_{w}+\ddot \Lambda_w \dot\Lambda_w\right)\frac{\dot G_{wy}}{\dot G_{ww}}\frac{\ddot G_{wy'}}{\ddot G_{ww}}\right),
\end{split}
\end{equation*}
for some coefficients
\be\label{cj12}
c'_w =\OO(W^{-d}){\bf 1}_{|x-w|\le (\log N)^2W}.
\ee
Therefore, up to the error term $\OO_\prec(N^{-D})$, $\dot G_{xy}\ddot G_{xy'}-Q_x\left(\dot G_{xy}\ddot G_{xy'}\right)$ is equal to (see the explanation below)
  \be\label{tiank}
   \parbox[c]{0.78\linewidth}{\includegraphics[width=\linewidth]{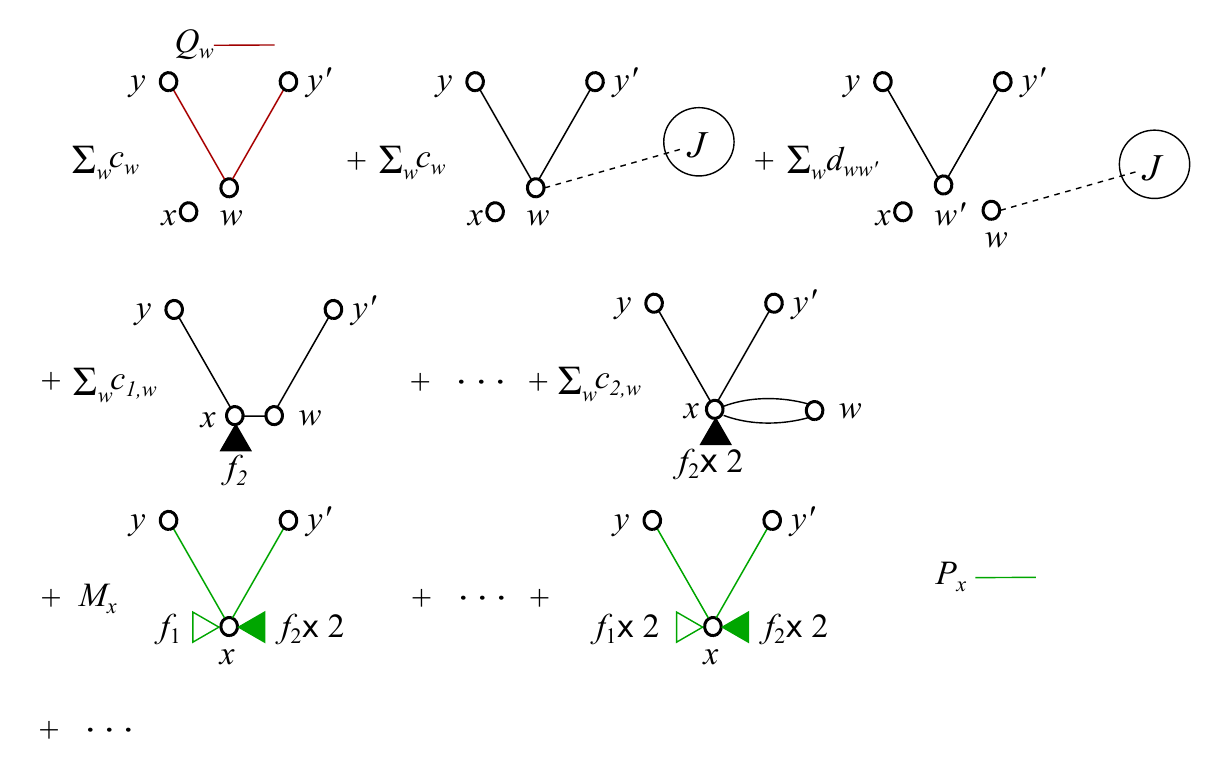}}
\ee
where $c_{1,w}$ and $c_{2,w}$ are some coefficients that also satisfy (\ref{cj12}). Here we have only drawn the dashed lines and ignored the $\times$-dashed lines. Moreover, the $y$ and $y'$ can be the same atom, but we did not draw this case. The first graph in the first row represents the first term on the right-hand side of \eqref{suyys}. The second and third graphs in the first row represent the two terms in the second line of \eqref{suyys}. The second row of \eqref{tiank} represents the the first row of \eqref{sdfuyys}, and  the third row of \eqref{tiank} represents the the second row of \eqref{sdfuyys}. The graphs of $\sum_{w\notin J}c_w \cal B_w $ have the same structures as the graphs in the second and third rows of \eqref{tiank}, and we used ``$\cdots$" to represent them in the fourth row.

Now the first graph in \eqref{tiank} gives the second term on the right-hand side of \eqref{shangwmnzi}. All the other graphs in the first and second rows of \eqref{tiank} can be included into the third term on the right-hand side of \eqref{shangwmnzi} by relabelling $w$, $w'$ as $\al$ atoms. It is easy to check that these graphs satisfy the conditions for $\cal G_\kappa$ below \eqref{kjlksj8iu}. Therefore to finish the proof of \eqref{shangwmnzi}, it remains to write the graphs in the third line of \eqref{tiank} into the form of the third term on the right-hand side of \eqref{shangwmnzi}.

Following the idea in the proof for Lemma \ref{lemEk}, we can write the graph with $P_x$ color into a sum of colorless graphs.  More precisely, 
using  
$$
\frac{\dot G_{xy}}{\dot G_{xx}}\frac{\ddot G_{xy'}}{\ddot G_{xx}}=
\sum_{\al_1,\al_2}H_{x\al_1}H_{x\al_2}\dot G^{(x)}_{\al_1 y}\ddot G^{(x)}_{\al_2 y'},\quad 
 \dot G _{xx}-M_x = \left(\dot {\cal Y}_{x}\right)^{-1}-M_x +  \sum_{m=1}^\infty (\cal Y_{x})^{-m-1} (\cal Z_x)^m ,
$$
and taking partial expectation $\E_x$, we can write the graphs in the third row of \eqref{tiank} as
\be\label{3rdrow}
\sum_\kappa \sum_{\vec \al}C^{\kappa}_{\vec \al }\cdot \cal G_\kappa(\vec \al , x,  y,y')+\OO_\prec(N^{-D}),
\ee
where
$$
C^{\kappa}_{\vec \al }=O\left(\left(W^{-d}\right)^{\#\text{ of $\al$ atoms}}\right){\bf 1}\left(\max_l{|x-\al_l |\le (\log N)^2W}\right),
$$
and 
$\cal G_\kappa(\vec \al , x, y,y')$ are colorless graphs which look like the graphs in \eqref{kjlksj8iu}.
 In fact, it is easy to check that $\cal G_\kappa$ either has a light weight (i.e. $f_4$ light weight on the atom $x$ or $f_6$ weight on some $\al$ atom) or there exists a solid line between $\al$ atoms. 
Hence (\ref{3rdrow}) can be written into the form of the third term on the right-hand side of \eqref{shangwmnzi} and satisfies the conditions below \eqref{kjlksj8iu}. This completes the proof of \eqref{shangwmnzi} in Lemma \ref{qingqz}. 

For \eqref{shangwmnws}, we use \eqref{yyxj} and see that it suffices to write $\cal B_x$ into the form of the third term on the right-hand side of \eqref{shangwmnws}, which have been done above. Thus we finish the proof of \eqref{shangwmnws}.
 
  
 \begin{bibdiv}
\begin{biblist}

\bib{Anderson}{article}{
      author={Anderson, P.~W.},
       title={Absence of diffusion in certain random lattices},
        date={1958},
     journal={Phys. Rev.},
      volume={109},
       pages={1492\ndash 1505},
}

\bib{BaoErd2015}{article}{
      author={Bao, Z.},
      author={Erd{\H{o}}s, L.},
       title={Delocalization for a class of random block band matrices},
        date={2017},
     journal={Probab. Theory Related Fields},
      volume={167},
      number={3},
       pages={673\ndash 776},
}

\bib{isotropic}{article}{
      author={Bloemendal, A.},
      author={Erd{\H o}s, L.},
      author={Knowles, A.},
      author={Yau, H.-T.},
      author={Yin, J.},
       title={Isotropic local laws for sample covariance and generalized
  {W}igner matrices},
        date={2014},
     journal={Electron. J. Probab.},
      volume={19},
      number={33},
       pages={1\ndash 53},
}

\bib{BouErdYauYin2017}{article}{
      author={Bourgade, P.},
      author={Erd{\H o}s, L.},
      author={Yau, H.-T.},
      author={Yin, J.},
       title={Universality for a class of random band matrices},
        date={2017},
     journal={Advances in Theoretical and Mathematical Physics},
      volume={21},
      number={3},
       pages={739\ndash 800},
}

\bib{PartII}{article}{
      author={Bourgade, P.},
      author={Yang, F.},
      author={Yau, H.-T.},
      author={Yin, J.},
       title={Random band matrices in the delocalized phase, {II}: Generalized
  resolvent estimates},
        date={2019},
     journal={Journal of Statistical Physics},
}

\bib{PartI}{article}{
      author={Bourgade, P.},
      author={Yau, H.-T.},
      author={Yin, J.},
       title={Random band matrices in the delocalized phase, {I}: Quantum
  unique ergodicity and universality},
        date={2018},
     journal={arXiv:1807.01559},
}

\bib{ConJ-Ref2}{article}{
      author={Casati, G.},
      author={Guarneri, I.},
      author={Izrailev, F.},
      author={Scharf, R.},
       title={Scaling behavior of localization in quantum chaos},
        date={1990Jan},
     journal={Phys. Rev. Lett.},
      volume={64},
       pages={5\ndash 8},
}

\bib{ConJ-Ref1}{article}{
      author={Casati, G.},
      author={Molinari, L.},
      author={Izrailev, F.},
       title={Scaling properties of band random matrices},
        date={1990Apr},
     journal={Phys. Rev. Lett.},
      volume={64},
       pages={1851\ndash 1854},
}

\bib{DisPinSpe2002}{article}{
      author={Disertori, M.},
      author={Pinson, L.},
      author={Spencer, T.},
       title={Density of states for random band matrices},
        date={2002},
     journal={Comm. Math. Phys.},
      volume={232},
       pages={83\ndash 124},
}

\bib{Efe1997}{article}{
      author={Efetov, K.},
       title={Supersymmetry in disorder and chaos},
        date={1997},
     journal={Cambridge University Press},
}

\bib{ErdKno2013}{article}{
      author={Erd{\H{o}}s, L.},
      author={Knowles, A.},
       title={Quantum diffusion and delocalization for band matrices with
  general distribution},
        date={2011},
     journal={Ann. Henri Poincar\'e},
      volume={12},
      number={7},
       pages={1227\ndash 1319},
}

\bib{ErdKno2011}{article}{
      author={Erd{\H{o}}s, L.},
      author={Knowles, A.},
       title={Quantum diffusion and eigenfunction delocalization in a random
  band matrix model},
        date={2011},
     journal={Communications in Mathematical Physics},
      volume={303},
      number={2},
       pages={509\ndash 554},
}

\bib{EKY_Average}{article}{
      author={Erd{\H o}s, L.},
      author={Knowles, A.},
      author={Yau, H.-T.},
       title={Averaging fluctuations in resolvents of random band matrices},
        date={2013},
     journal={Ann. Henri Poincar\'e},
      volume={14},
       pages={1837\ndash 1926},
}

\bib{delocal}{article}{
      author={Erd{\H{o}}s, L.},
      author={Knowles, A.},
      author={Yau, H.-T.},
      author={Yin, J.},
       title={Delocalization and diffusion profile for random band matrices},
        date={2013},
     journal={Comm. Math. Phys.},
      volume={323},
      number={1},
       pages={367\ndash 416},
}

\bib{Semicircle}{article}{
      author={Erd{\H{o}}s, L.},
      author={Knowles, A.},
      author={Yau, H.-T.},
      author={Yin, J.},
       title={The local semicircle law for a general class of random matrices},
        date={2013},
     journal={Elect. J. Prob.},
      volume={18},
      number={59},
       pages={1\ndash 58},
}

\bib{EKYY2}{article}{
      author={Erd{\H o}s, L.},
      author={Knowles, A.},
      author={Yau, H.-T.},
      author={Yin, J.},
       title={Spectral statistics of {E}rd{\H{o}}s-{R}{\'e}nyi graphs {II}:
  Eigenvalue spacing and the extreme eigenvalues},
        date={2012},
     journal={Comm. Math. Phys.},
      volume={314},
       pages={587\ndash 640},
}

\bib{EKYY1}{article}{
      author={Erd{\H o}s, L.},
      author={Knowles, Antti},
      author={Yau, Horng-Tzer},
      author={Yin, Jun},
       title={Spectral statistics of {E}rd{\H{o}}s-{R}{\'e}nyi graphs {I}:
  Local semicircle law},
        date={2013},
     journal={Ann. Probab.},
      volume={41},
      number={3B},
       pages={2279\ndash 2375},
}

\bib{Bulk_generalized}{article}{
      author={Erd{\H{o}}s, L.},
      author={Yau, H.-T.},
      author={Yin, J.},
       title={Bulk universality for generalized {W}igner matrices},
        date={2012},
     journal={Probab. Theory Related Fields},
      volume={154},
      number={1-2},
       pages={341\ndash 407},
}

\bib{EYY_rigid}{article}{
      author={Erd{\H{o}}s, L.},
      author={Yau, H.-T.},
      author={Yin, J.},
       title={Rigidity of eigenvalues of generalized {W}igner matrices},
        date={2012},
     journal={Adv. Math.},
      volume={229},
      number={3},
       pages={1435\ndash 1515},
}

\bib{EYY_B}{article}{
      author={Erd{\H o}s, L.},
      author={Yau, H.-T.},
      author={Yin, J.},
       title={Universality for generalized {W}igner matrices with {B}ernoulli
  distribution},
        date={2011},
     journal={J. Combinatorics},
      volume={1},
      number={2},
       pages={15\ndash 85},
}

\bib{ConJ-Ref4}{article}{
      author={Feingold, M.},
      author={Leitner, D.~M.},
      author={Wilkinson, M.},
       title={Spectral statistics in semiclassical random-matrix ensembles},
        date={1991Feb},
     journal={Phys. Rev. Lett.},
      volume={66},
       pages={986\ndash 989},
}

\bib{fy}{article}{
      author={Fyodorov, Y.~V.},
      author={Mirlin, A.~D.},
       title={Scaling properties of localization in random band matrices: A
  $\sigma$-model approach.},
        date={1991},
     journal={Phys. Rev. Lett.},
      volume={67},
       pages={2405\ndash 2409},
}

\bib{HeMa2018}{article}{
      author={He, Y.},
      author={Marcozzi, M.},
       title={Diffusion profile for random band matrices: a short proof},
        date={2018},
     journal={arXiv:1804.09446},
}

\bib{Anisotropic}{article}{
      author={Knowles, A.},
      author={Yin, J.},
       title={Anisotropic local laws for random matrices},
        date={2017},
     journal={Probab. Theory Related Fields},
      volume={169},
      number={1},
       pages={257\ndash 352},
}

\bib{PelSchShaSod}{article}{
      author={Peled, R.},
      author={Sodin, S.},
      author={Schenker, J.},
      author={Shamis, M.},
       title={{On the Wegner Orbital Model}},
        date={201707},
     journal={International Mathematical Research Notices},
}

\bib{PY}{article}{
      author={Pillai, Natesh~S.},
      author={Yin, Jun},
       title={Universality of covariance matrices},
        date={2014},
     journal={Ann. Appl. Probab.},
      volume={24},
      number={3},
       pages={935\ndash 1001},
}

\bib{Sch2009}{article}{
      author={Schenker, J.},
       title={Eigenvector localization for random band matrices with power law
  band width},
        date={2009},
     journal={Comm. Math. Phys.},
      volume={290},
       pages={1065\ndash 1097},
}

\bib{SchMT}{article}{
      author={Shcherbina, M.},
      author={Shcherbina, T.},
       title={Characteristic polynomials for 1d random band matrices from the
  localization side},
        date={2017},
     journal={Communications in Mathematical Physics},
      volume={351},
      number={3},
       pages={1009\ndash 1044},
}

\bib{1Dchara}{article}{
      author={Shcherbina, Mariya},
      author={Shcherbina, Tatyana},
       title={Universality for 1d random band matrices},
        date={2019},
     journal={arXiv:1910.02999},
}

\bib{Sch1}{article}{
      author={Shcherbina, T.},
       title={On the second mixed moment of the characteristic polynomials of
  1d band matrices},
        date={2014},
     journal={Comm. Math. Phys.},
      volume={328},
       pages={45\ndash 82},
}

\bib{Sch2014}{article}{
      author={Shcherbina, T.},
       title={Universality of the local regime for the block band matrices with
  a finite number of blocks},
        date={2014},
     journal={J. Stat. Phys.},
      volume={155},
       pages={466\ndash 499},
}

\bib{Sch2}{article}{
      author={Shcherbina, T.},
       title={Universality of the second mixed moment of the characteristic
  polynomials of the 1d band matrices: Real symmetric case},
        date={2015},
     journal={J. Math. Phys.},
      volume={56},
}

\bib{Sod2010}{article}{
      author={Sodin, S.},
       title={The spectral edge of some random band matrices},
        date={2010},
     journal={Ann. of Math.},
      volume={173},
      number={3},
       pages={2223\ndash 2251},
}

\bib{Spencer2}{incollection}{
      author={Spencer, T.},
       title={Random banded and sparse matrices},
        date={2011},
   booktitle={Oxford handbook of random matrix theory},
      editor={Akemann, G.},
      editor={Baik, J.},
      editor={Francesco, P.~Di},
   publisher={Oxford University Press},
     address={New York},
}

\bib{Spencer1}{book}{
      author={Spencer, T.},
       title={{SUSY} statistical mechanics and random band matrices},
   publisher={Lecture notes},
        date={2012},
}

\bib{Wigner}{article}{
      author={Wigner, E.~P.},
       title={Characteristic vectors of bordered matrices with infinite
  dimensions},
        date={1955},
     journal={Annals of Mathematics},
      volume={62},
      number={3},
       pages={548\ndash 564},
}

\bib{ConJ-Ref6}{article}{
      author={Wilkinson, M.},
      author={Feingold, M.},
      author={Leitner, D.~M.},
       title={Localization and spectral statistics in a banded random matrix
  ensemble},
        date={1991},
     journal={Journal of Physics A: Mathematical and General},
      volume={24},
      number={1},
       pages={175},
}

\end{biblist}
\end{bibdiv}

  \end{document}